\theoremstyle{definition}
\newtheorem{thm}{Theorem}[section]
\newtheorem{lem}[thm]{Lemma}
\newtheorem{cor}[thm]{Corollary}
\newtheorem{prop}[thm]{Proposition}
\newtheorem{rmk}[thm]{Remark}
\newtheorem{defn}[thm]{Definition}
\newcommand{\R}{\mathbb{R}}  
\newcommand{\Z}{\mathbb{Z}}
\newcommand{\N}{\mathbb{N}}
\newcommand{\Q}{\mathcal{Q}}
\newcommand{\T}{\mathbb{T}}
\newcommand{\TT}{\mathcal{T}}
\newcommand{\TP}{\overline{\partial}{}}
\newcommand{\TJ}{\langle\TP\rangle}
\newcommand{\TL}{\overline{\Delta}{}}
\newcommand{\curl}{\text{curl }}
\newcommand{\dive}{\text{div }}
\newcommand{\ST}{\text{ ST}}
\newcommand{\RT}{\text{ RT}}
\newcommand{\bd}[1]{\mathbf{#1}}  
\newcommand{\RR}{\mathcal{R}}      
\newcommand{\Om}{\Omega}
\newcommand{\q}{\quad}
\newcommand{\p}{\partial}
\newcommand{\dd}{\mathfrak{D}}
\newcommand{\DD}{\mathcal{D}}
\newcommand{\den}{{\bar\rho}_{0}}
\newcommand{\nab}{\nabla}
\newcommand{\lkk}{\Lambda_{\kk}}
\newcommand{\lap}{\Delta}
\newcommand{\no}{\nonumber}
\newcommand{\di}{\text{div}\,}
\newcommand{\lleq}{\stackrel{L}{=}}
\newcommand{\cnab}{\overline{\nab}}
\newcommand{\dx}{\,\mathrm{d}x}
\newcommand{\dt}{\,\mathrm{d}t}
\newcommand{\dtt}{\,\mathrm{d}\tau}
\newcommand{\deta}{\,\mathrm{d}\eta}
\newcommand{\kk}{\kappa}
\newcommand{\Er}{\mathring{E}}
\newcommand{\Kr}{\mathring{K}}
\newcommand{\Mr}{\mathring{M}}
\newcommand{\ee}{\mathfrak{e}}
\newcommand{\Rr}{\mathring{\mathcal{R}}}
\newcommand{\cc}{\mathfrak{C}}
\newcommand{\ccr}{\mathring{\mathfrak{C}}}
\newcommand{\ddr}{\mathring{\mathfrak{D}}}
\newcommand{\eer}{\mathring{\mathfrak{e}}}
\newcommand{\ssr}{\mathring{\mathcal{S}}}
\newcommand{\Rrr}{\mathring{\mathcal{R}}}
\newcommand{\sss}{\mathcal{S}}
\newcommand{\h}{\mathcal{H}}
\newcommand{\VV}{\mathbf{V}}
\newcommand{\FF}{\mathbf{F}}
\newcommand{\EE}{\mathfrak{E}}
\newcommand{\HE}{\mathcal{E}}
\newcommand{\VVr}{\mathring{\mathbf{V}}}
\newcommand{\QQr}{\mathring{\mathbf{Q}}}
\newcommand{\hhr}{\mathring{\mathfrak{h}}}
\newcommand{\fn}{{\mathfrak{n}}}
\newcommand{\fm}{{\mathfrak{m}}}
\newcommand{\MM}{{\mathfrak{M}}}
\newcommand{\FH}{\mathfrak{H}}
\newcommand{\fr}{\mathring{f}}
\newcommand{\qh}{\underline{q}}
\newcommand{\qc}{\check{q}}
\newcommand{\pc}{\check{p}}
\newcommand{\FFr}{\mathring{\mathbf{F}}}
\newcommand{\Ur}{\mathring{U}}
\newcommand{\Ir}{\mathring{I}}
\newcommand{\PP}{\mathcal{P}}
\newcommand{\LP}{\mathbf{P}}
\newcommand{\uuk}{\mathbf{u}_{\kk, 0}}
\newcommand{\uu}{\textbf{u}_0}
\newcommand{\ww}{\textbf{w}_0}
\newcommand{\idt}{\int_{\mathcal{D}_t}}
\newcommand{\io}{\int_{\Omega}}
\newcommand{\is}{\int_{\Sigma}}
\newcommand{\isb}{\int_{\Sigma_b}}
\newcommand{\ddt}{\frac{\mathrm{d}}{\mathrm{d}t}}
\numberwithin{equation}{section}
\newcommand{\eps}{\varepsilon}
\providecommand{\len}[1]{\langle #1 \rangle }
\providecommand{\ino}[1]{\left\| #1 \right\| }
\providecommand{\bno}[1]{\left| #1 \right| }
\newcommand{\orange}{{}}
\newcommand{\lam}{\lambda}
\newcommand{\Lam}{\Lambda}
\newcommand{\pk}{\widetilde{\varphi}}
\newcommand{\pr}{\mathring{\varphi}}
\newcommand{\pd}{\dot{\varphi}}
\newcommand{\pkd}{\dot{\pk}}
\newcommand{\pkr}{\mathring{\pk}}
\newcommand{\vr}{\mathring{v}}
\newcommand{\rhor}{\mathring{\rho}}
\newcommand{\psk}{\widetilde{\psi}}
\newcommand{\psr}{\mathring{\psi}}
\newcommand{\psd}{\dot{\psi}}
\newcommand{\pskd}{\dot{\psk}}
\newcommand{\pskr}{\mathring{\psk}}
\newcommand{\nnr}{{(n+1)}}
\newcommand{\nnn}{{(n)}}
\newcommand{\nnl}{{(n-1)}}
\newcommand{\nnll}{{(n-2)}}
\newcommand{\npk}{\widetilde{N}}
\newcommand{\npkr}{\mathring{\npk}}
\newcommand{\NN}{\mathbf{N}}
\newcommand{\Npk}{\widetilde{\NN}}
\newcommand{\Npkr}{\mathring{\Npk}}
\newcommand{\Npr}{\mathring{\NN}}
\newcommand{\Npkd}{\dot{\Npk}}
\newcommand{\QQ}{\mathbf{Q}}
\newcommand{\vb}{\overline{v}}
\newcommand{\vbr}{\overline{\vr}}
\newcommand{\ff}{\mathcal{F}}
\newcommand{\ffr}{\mathring{\mathcal{F}}}
\newcommand{\qr}{\mathring{q}}
\newcommand{\ls}{{\lam, \sigma}}
\newcommand{\qi}{\mathfrak{p}}
\newcommand{\nn}{\mathcal{N}}
\newcommand{\vk}{\widetilde{v}}
\newcommand{\pp}{\p^{\varphi}}
\newcommand{\nabp}{\nab^{\varphi}}
\newcommand{\ppk}{\p^{\pk}}
\newcommand{\ppkr}{\p^{\pkr}}
\newcommand{\nabpk}{\nab^{\pk}}
\newcommand{\nabpkr}{\nab^{\pkr}}
\newcommand{\lapp}{\lap^{\varphi}}
\newcommand{\dn}{\mathfrak{N}_\psi}
\newcommand{\Dtp}{D_t^{\varphi}}
\newcommand{\Dtpk}{D_t^{\pk}}
\newcommand{\Dtpkr}{D_t^{\pkr}}
\newcommand{\dvt}{\mathrm{d}\mathcal{V}_t}
\newcommand{\DT}{(\p_t+\vb\cdot \TP)}
\newcommand{\TDt}{\overline{D_t}}
\newcommand{\Tpp}{\overline{\p}^{\varphi}}
\newcommand{\Rbf}{\mathbf{R}}
\newcommand{\Cbf}{\mathbf{C}}
\begin{document}
\title{\bf Compressible Gravity-Capillary Water Waves with Vorticity: \\ Local Well-Posedness, Incompressible\\ and Zero-Surface-Tension Limits}
\date{}
\author{{\sc Chenyun Luo}\thanks{Department of Mathematics, The Chinese University of Hong Kong, Shatin, NT, Hong Kong. CL is supported in part by the Hong Kong RGC grants CUHK--24304621, CUHK--14302922,  CUHK--14304424, and the CUHK Science Faculty Collaborative Research Impact Matching Scheme (CRIMS). 
Email: \texttt{cluo@math.cuhk.edu.hk}}\,\,\,\,\,\, {\sc Junyan Zhang} \thanks{School of Mathematical Sciences, University of Science and Technology of China. 96 Jinzhai Road, Baohe District, Hefei, Anhui 230026, China.
Email: \texttt{yx3x@ustc.edu.cn}} }
\maketitle

\begin{abstract}
We consider the 3D compressible isentropic Euler equations describing the motion of a liquid in an unbounded initial domain with a moving boundary and a fixed flat bottom at finite depth. The liquid is under the influence of gravity and surface tension, and it is not assumed to be irrotational. We prove local well-posedness by combining a carefully designed approximate system and a hyperbolic approach, which allows us to avoid using Nash-Moser iteration. The energy estimates yield no regularity loss and are uniform in both Mach number and surface tension coefficient, provided the Rayleigh-Taylor sign condition is satisfied. We thus simultaneously obtain incompressible and zero surface tension limits. Moreover, we can drop the uniform boundedness (with respect to Mach number) on high-order time derivatives by applying the paradifferential calculus to the analysis of the free-surface evolution.
\end{abstract}

\noindent\textbf{Mathematics Subject Classification (2020): }35L65, 35Q35, 76N10.

\noindent \textbf{Keywords}: compressible water waves, free-boundary problem, well-posedness, incompressible limit, paradifferential calculus.

\setcounter{tocdepth}{1}
\tableofcontents

\section{Introduction}

In this paper, we study the motion of  water waves in $\mathbb{R}^3$ described by the compressible Euler equations:
\begin{equation}\label{wweq}
\begin{cases}
\uprho (\p_t+u\cdot\nabla)u =-\nab p-\uprho ge_3,&~~~ \text{in}\,\DD\\
\p_t\uprho + \nab\cdot (\uprho u)=0 &~~~ \text{in}\,\DD\\
p=p(\uprho) &~~~ \text{in}\, \DD \\
\end{cases}
\end{equation} 
where $\DD=\bigcup\limits_{0\leq t\leq T}\{t\}\times \DD_t$ with $\DD_t:=\{(x_1, x_2,x_3)\in\R^3: -b<x_3< \psi(t,x_1,x_2)\}$ with $b>10$ a given constant representing the unbounded domain with finite depth occupied by the fluid at each fixed time $t$, whose boundary $\p\DD_t$ is determined by a moving surface represented via the graph $\Sigma_t:=\{(x_1,x_2, x_3)\in\R^3:x_3=\psi(t,x_1, x_2)\}$ and a flat bottom $\Sigma_b:=\{(x_1,x_2,x_3)\in\R^3:x_3=-b\}$. {We will consider the case when $\Sigma_t\cap \Sigma_b =\emptyset$. This is easy to achieve in a short interval by assuming $\|\psi(0, \cdot)\|_{L^\infty(\R^2)}\leq 1$.}

In the first two equations of \eqref{wweq}, $u,\uprho, p$ represent the fluid's velocity, density, and pressure, respectively. Additionally, we assume that the fluid is under the influence of gravity $\uprho ge_3$, where $g>0$ and $e_3=(0,0,1)^\top$. The third equation of \eqref{wweq} is known to be the equation of states which satisfies 
\begin{align}
p'(\uprho)> 0, \q \text{for}\,\, \uprho\geq \bar{\uprho}_0,
\label{EoS}
\end{align}
where $\bar{\uprho}_0$ is a positive constant (we set $\den=1$ for simplicity), which is in the case of an isentropic \textit{liquid}\footnote{In general, the equation of state is $p=p(\uprho, S)$ where $S$ denotes the entropy of the fluid and satisfies $(\p_t+u\cdot\nabla)S=0$. It is required to have $\p p/\p\uprho>0$. When $S$ is a constant, we say the fluid is isentropic. Also, the assumptions $p'(\uprho)>0$ and $\uprho\geq\bar{\uprho}_0$ ensure the hyperbolicity of \eqref{wweq}.}. The equation of states is required to close the system of compressible Euler equations. We mention here that in the case of a \textit{gas} $\den=0$,  and we shall not discuss this in the paper.

The initial and boundary conditions of the system \eqref{wweq} are
\begin{align}
\DD_0=\{x:(0,x)\in \DD\},\q\text{and}\,\,
u=u_0,\uprho=\uprho_0~~~ \text{on}\,\{t=0\}\times\DD_0,\label{IC}\\
D_t|_{\p\DD}\in T(\p\DD), \q  {u_3|_{\Sigma_b}=0},\q
p|_{\Sigma_t}=\sigma\h,\label{BC}
\end{align}
where $D_t:=\p_t +u\cdot \nab$ is the material derivative, and $T (\p\DD)$ stands for the tangent bundle of $\p\DD$. The first condition in \eqref{BC} is the kinematic boundary condition, which indicates that the free surface boundary moves with the normal component of the velocity (see \eqref{kinematic BC} for an explicit illustration). The second condition is the slip condition imposed on the flat bottom $\Sigma_b$. The last condition in \eqref{BC} shows that the pressure is balanced by surface tension on the moving surface $\Sigma_t$. Here, $\sigma>0$ is called the surface tension constant, and $\h$ denotes the mean curvature of the free boundary of the fluid domain. Note that $\h,~T(\p \DD)$ and $p$ are functions of the unknowns $u,~\uprho$ and $\DD$. These quantities are not known a priori and must therefore be determined alongside a solution to the problem. 
The equations modeling the motion of compressible gravity-capillary water waves read
\begin{equation}\label{CWWST Eulerian}
\begin{cases}
\uprho D_t u =-\nab p-\uprho ge_3,&~~~ \text{in}\,\DD,\\
\p_t\uprho + \nab\cdot(\uprho u)=0, &~~~ \text{in}\,\DD,\\
p=p(\uprho), &~~~ \text{in}\, \DD, \\
(u,\uprho,\DD)|_{t=0}=(u_0,\uprho_0,\DD_0),
\end{cases}
\end{equation}
equipped with the boundary conditions
\begin{equation}\label{CWWST Eulerian BC}
\begin{cases}
p=\sigma\h &~~~\text{on}~\bigcup_{0\leq t\leq T}\{t\}\times\Sigma_t, \\
{u_3}=0&~~~\text{on}~[0,T]\times\Sigma_b,\\
D_t|_{\p\DD}\in T(\p\DD).
\end{cases}
\end{equation}
System \eqref{CWWST Eulerian} together with \eqref{CWWST Eulerian BC} admits a conserved quantity
\[
\mathsf{E}_0(t):=\frac12\idt\uprho|u|^2\dx+\idt\uprho\mathsf{Q}(\uprho)\dx+\idt (\uprho-1)gx_3\dx+\int_{\Sigma_t}g|\psi|^2+\sigma\left(\sqrt{1+|\cnab\psi|^2}-1\right)\dx',
\]where $\mathsf{Q}(\uprho):=\int_1^{\uprho}p(r)r^{-2}\,\mathrm{d}r$ and $\dx':=\dx_1\dx_2$. A direct calculation (cf. \cite[Section 6.1]{WangXin2015good}) shows $\mathsf{E}_0'(t)=0$. Note that we need a \textit{localized} initial data such that $\mathsf{E}_0(0)<+\infty$, which can be achieved similarly as in \cite[Section 7]{Luo2018CWW}.

\subsection{Fixing the fluid domain}
We shall convert \eqref{CWWST Eulerian}-\eqref{CWWST Eulerian BC} into a system of equations defined on the fixed domain $$\Omega = \{(x_1,x_2,x_3) :  -b<x_3< 0\}.$$ One way to achieve this would be to consider the Lagrangian coordinates. Nevertheless, here, we consider a family of diffeomorphisms $\Phi(t, \cdot): \Omega\to \DD_t$ characterized by the moving surface boundary. In particular, let 
\begin{equation}
\Phi(t,x_1,x_2, x_3) = \left(x_1, x_2, \varphi(t,x_1,x_2, x_3)\right), \label{change of variable}
\end{equation} 
where
\begin{align}
 \varphi (t,x_1,x_2, x_3) = x_3+\chi(x_3)\psi (t,x_1,x_2).
\end{align}
Here, $\chi\in C_c^\infty (-b, 0]$ is a smooth cut-off function satisfying the following bound for some small constant $\delta_0>0$:
\begin{equation}
{\|\chi'\|_{L^\infty(-b, 0]}\leq \frac{1}{\|\psi_0\|_{\infty}+1},\quad \sum_{j=1}^5 \|\chi^{(j)}\|_{L^\infty(-b,0]}\leq C, }\quad\chi=1\,\,\,\, \text{on}\,\, (-\delta_0, 0], \label{chi}
\end{equation}
for some generic constant $C>0$.

We will write $x'=(x_1,x_2)$ throughout the rest of this paper. 
It can be seen that 
$$\p_3 \varphi(t, x', x_3) = 1+\chi' (x_3)\psi(t,x')>0,\quad t\in[0,T],
$$
 for some small $T>0$, which ensures that $\Phi(t)$ is a diffeomorphism.  

Let $x=(x',x_3) \in \Omega$. We denote respectively by
\begin{equation}
v(t, x) = u(t, \Phi(t,x)), \quad \rho(t,x) = \uprho(t, \Phi(t,x)),\quad q(t,x) = p(t, \Phi(t,x))
\end{equation}
the velocity, density, and pressure defined on the fixed domain $\Omega$. Also, we introduce the differential operators
\begin{align}
\p_t^\varphi = &~\p_t - \frac{\p_t \varphi}{\p_3 \varphi}\p_3,\label{pt}\\
\nab^\varphi_a= &~\pp_a = \p_a -\frac{\p_a \varphi}{\p_3\varphi}\p_3,\quad a=1,2,\label{nabp 1,2}\\
\nab_3^\varphi = &~\p_3^\varphi = \frac{1}{\p_3 \varphi} \p_3,\label{nabp 3}
\end{align}
and thus the following identities hold:
\begin{align}
\p_\alpha u \circ \Phi = \pp_\alpha v,\quad 
\p_\alpha \uprho \circ \Phi = \pp_\alpha \rho,\quad 
\p_\alpha p \circ\Phi = \pp_\alpha q,\quad \alpha=t,1,2,3. 
\end{align}
Moreover, setting 
\[
\cnab=\TP := (\p_1, \p_2), 
\]
 the boundary condition \eqref{CWWST Eulerian BC} is turned into
\begin{align}
q =-\sigma \cnab \cdot \left( \frac{\cnab \psi}{\sqrt{1+|\cnab\psi|^2}}\right), &\quad \text{on}\,\,[0,T]\times \Sigma,\\
\p_t \psi =v\cdot N, \quad N = (-\p_1\psi, -\p_2\psi, 1)^\top,&\quad  \text{on}\,\,[0,T]\times \Sigma, \label{kinematic BC}\\
v_3=0,&\quad  \text{on}\,\,[0,T]\times \Sigma_b, \label{slip BC}
\end{align} 
respectively, where $\Sigma =\{x_3=0\}$ and $\Sigma_b=\{x_3=-b\}$. Let $D_t^\varphi:=\pp_t + v\cdot \nab^\varphi$. Then the system \eqref{CWWST Eulerian} and \eqref{CWWST Eulerian BC} are converted into
\begin{equation}\label{CWWST0}
\begin{cases}
\rho \Dtp v +\nabp q=-\rho ge_3&~~~ \text{in}~[0,T]\times \Omega,\\
\pp_t\rho+\nabp\cdot (\rho v)=0 &~~~ \text{in}~[0,T]\times \Omega,\\
q=q(\rho) &~~~ \text{in}~[0,T]\times \Omega, \\
q=-\sigma\cnab \cdot \left( \frac{\cnab \psi}{\sqrt{1+|\cnab\psi|^2}}\right) &~~~\text{on}~[0,T]\times\Sigma, \\
\p_t \psi = v\cdot N &~~~\text{on}~[0,T]\times\Sigma,\\
v_3=0&~~~\text{on}~[0,T]\times\Sigma_b,\\
{(v,\rho,\psi)|_{t=0}=(v_0, \rho_0, \psi_0)}.
\end{cases}
\end{equation}
The second equation of \eqref{CWWST0}, i.e., the continuity equation, can be re-expressed as 
\begin{equation}\label{continuity eq}
\Dtp \rho +\rho \nabp\cdot v=0.
\end{equation}
 Let $\ff=\ff(q):= \log \rho(q)$. Since $q'(\rho)>0$ indicates $\ff'(q)>0$,  then \eqref{continuity eq} is equivalent to 
\begin{equation} \label{continuity eq f}
\ff'(q) \Dtp q  +\nabp\cdot v=0. 
\end{equation}

Also, by invoking \eqref{pt}-\eqref{nabp 3}, we can alternatively write the material derivative $\Dtp$ as
\begin{equation}
\Dtp = \p_t + \vb\cdot\cnab+\frac{1}{\p_3 \varphi} (v\cdot \NN-\p_t\varphi)\p_3, \label{Dt alternate}
\end{equation}
where 
$\vb \cdot \cnab = v_1\p_1+v_2\p_2$, and $\NN:= (-\p_1\varphi, -\p_2 \varphi, 1)$. 
This formulation provides a good motivation to define the smoothed material derivative in Section \ref{sect nonlinear approx problem} and the linearized material derivative in Section \ref{sect linear LWP}. 

\subsection{The new formulation with modified pressure}
Since the gravity term $\rho ge_3\notin L^2(\Omega)$, we then use $\pp_i\varphi=\delta_{i3}$ to rewrite the momentum equation as
\[
\rho\Dtp v+\nabp \qc=-(\rho-1)ge_3,
\]
where 
\begin{equation}
\qc:=q+g\varphi, \label{modified q}
\end{equation}
 is the ``modified" pressure balanced by gravity. Under this setting, the fluid pressure gradient $\nabp\qc$ becomes an $L^2(\Omega)$ function and the source term becomes $(\rho-1)ge_3$ which is also in $L^2(\Omega)$ if we assume the initial data $\rho_0-1\in L^2(\Omega)$. We then directly calculate that $\Dtp \varphi=v_3$, so the continuity equation \eqref{continuity eq f} now becomes
\begin{equation} \label{continuity eq qc}
\ff'(q) \Dtp \qc  +\nabp\cdot v=\ff'(q)g\Dtp\varphi=\ff'(q)gv_3 , 
\end{equation}and thus the compressible gravity-capillary water wave system is now reformulated as follows:
\begin{equation}\label{CWWST}
\begin{cases}
\rho \Dtp v +\nabp \qc=-(\rho-1)ge_3&~~~ \text{in}~[0,T]\times \Omega,\\
\ff'(q)\Dtp \qc+\nabp\cdot v=\ff'(q) gv_3&~~~ \text{in}~[0,T]\times \Omega,\\
q=q(\rho),  \qc=q+g\varphi &~~~ \text{in}~[0,T]\times \Omega, \\
\qc=g\psi-\sigma\cnab \cdot \left( \frac{\cnab \psi}{\sqrt{1+|\cnab\psi|^2}}\right) &~~~\text{on}~[0,T]\times\Sigma,\\
\p_t \psi = v\cdot N &~~~\text{on}~[0,T]\times\Sigma,\\
v_3=0&~~~\text{on}~[0,T]\times\Sigma_b,\\
(v,\rho,\psi)|_{t=0}=(v_0, \rho_0, \psi_0).
\end{cases}
\end{equation}

\subsection{The equation of states and sound speed} \label{sect mach number definition}
Part of this paper is devoted to studying the behavior of the solution of \eqref{CWWST} as either the sound speed goes to infinity or the surface tension $\sigma$ coefficient goes to $0$. The former is known to be the incompressible limit, and the latter is known to be the zero surface tension limit. Mathematically, it is convenient to view the sound speed $c_s:=\sqrt{q'(\rho)}$ as a family of parameters.  As in \cite{Disconzi2017limit, DL19limit, Ebin1982limit, LL2018priori, Luo2018CWW}, we consider a family $\{q_{\lam'} (\rho)\}$ parametrized by $\lam' \in (0, \infty)$, where 
\begin{align}
(\lam')^2: = q'_{\lam'} (\rho)|_{\rho=1}.
\end{align}
Here and in the sequel, we slightly abuse the terminology and refer to $\lam'$ as the sound speed. A typical choice of the equation of states $q_{\lam'}(\rho)$ would be the Tait type equation
\begin{align}
q_{\lam'}(\rho) = \gamma^{-1} (\lam')^2 (\rho^\gamma-1), \quad \gamma\geq 1.
\end{align}
When viewing the density as a function of the pressure, this indicates
\begin{align}
\rho_{\lam'} (q) =  \left(\frac{\gamma}{(\lam')^2} q+1\right)^{\frac{1}{\gamma}}, \quad \text{and}\,\,\log \left(\rho_{\lam'} (q)\right) = \gamma^{-1} \log\left(\frac{\gamma}{(\lam')^2} q+1 \right). 
\end{align}
Hence,  we can view $\ff(q)$ as a parametrized family $\{\ff_\lam (q)\}$ as well, where $\lam = \frac{1}{\lam'}$. Indeed, we have
\begin{equation}
\ff_{\lambda} (q) = \gamma^{-1} \log (\lam^2\gamma q +1).
\end{equation}
We again slightly abuse the terminology and call $\lam$ the Mach number\footnote{The Mach number is defined to be $M=u/c_s$. In the paper, the velocity is always of size $O(1)$ (in $L^2(\Omega)$) and thus $M=O(\lambda)$.}. Furthermore, there exists $C>0$ such that
\begin{align}
C^{-1} \lam^2 \leq \ff'_{\lam}(q) \leq C \lam^2. \label{ff' is lambda}
\end{align}
Also, we assume
\begin{align} \label{ff property}
|\ff_{\lam}^{(s)} (q) |\leq C, \quad |\ff_{\lam}^{(s)} (q) | \leq C |\ff'_{\lam} (q)|^s \leq C \ff'_{\lam} (q)
\end{align}
holds for $0\leq s\leq 4$. 

\begin{rmk}[\textbf{Issue with the infinite depth case}]
In this paper, we study the case of water waves with a fixed bottom $\Sigma_b$ at finite depth. On the other hand, it is also interesting to consider the case of infinite depth, that is, $\DD_t=\{(x', x_3): -\infty<x_3<\psi(t,x')\}$. 
Nevertheless, the equation of state should be modified such that the pressure also depends on the depth. For instance, one may assume that $p$ satisfies $\frac{\p p}{\p x_3}|_{\rho=1}=-g$ (cf. Jang-Tice-Wang \cite{JTW}). Otherwise, the Mach number $\lambda$ may also be $x_3$-dependent. We also note that this issue does not arise in the setting of incompressible gravity water waves with infinite depth, where $\qc$ is a Lagrange multiplier thus unrelated to the density. 
\end{rmk}


\subsection{An overview of previous results}\label{sect review}

The study of free-surface inviscid fluids has blossomed over the past two decades or so. Most previous studies have focused on incompressible fluid models, where the fluid velocity satisfies $\dive u = 0$ and thus the density $\rho$ is a constant. In this case, the fluid pressure $p$ is not determined by the equation of states but appears as a Lagrangian multiplier enforcing the divergence-free constraint. For the local well-posedness (LWP) for the free-boundary incompressible Euler equations, the first breakthrough came in Wu \cite{Wu1997LWP, Wu1999LWP} for the irrotational case\footnote{The vorticity $\curl u_0=\mathbf{0}$. This condition is preserved by the evolution.} and Christodoulou-Lindblad \cite{CL2000priori} and Lindblad \cite{Lindblad2002LWP, Lindblad2004LWP} for the case of nonzero vorticity. We also refer to \cite{Nalimov1974LWP,Yosihara1982LWP,Iguchi2001LWP,AM2005ww,Lannes2005LWP,MZ2009} for the irrotational flows and \cite{CS2007LWP,ZZ2008LWP,Lindblad2009priori,SZ2008geometry,SZ2008priori,SZ2011LWP,ABZ2014wwLWP,ABZ2014wwSTLWP,WZZZ2015LWP} for the case of nonzero vorticity. 
In addition to the LWP theory, incompressible and irrotational water waves have attracted significant attention due to their long-term existence. We refer to Wu \cite{Wu2009GWP, Wu2011GWP} for the first breakthrough and numerous related works \cite{GMS2012GWP, GMS2015GWP, Alazard2013GWP, Ionescu2015wwGWP, Deng2017wwSTGWP, HIT2016ww, HIT2017ww, IT2016ww2, IT2016ww3, WangXC2018GWP2D, ZhengF2019GWP} See also \cite{BMSW2017GWP} for the bounded domain case and \cite{IT2016ww1, Su2020GWP} for some special cases when the vorticity is nonzero.

It is well-known that one can reduce the incompressible Euler equations to a system of equations on the moving boundary when the velocity is irrotational. This method cannot be adapted to the study of compressible water waves. The development of free-boundary compressible Euler equations is much less, especially for the case of a liquid as opposed to a gas in a physical vacuum satisfying $\rho|_{\Sigma}=0$. For the gas model, we refer to \cite{Jang2009LWPgas,CLS2010priorigas,CS2012LWPgas,LXZ2014LWPgas,Jang2015LWPgas,IT2020LWPgas} and references therein. For the liquid model, most previous works focus on the case of a bounded domain, and we refer to Lindblad \cite{Lindblad2003LWP, Lindblad2005LWP} and related works \cite{CS2013LWP, LL2018priori, DL19limit, GLL2020LWP}. When the fluid domain is unbounded, as in the compressible gravity water waves problem, the existing literature often neglects the effect of surface tension. Trakhinin \cite{Trakhinin2009LWP} first proved the LWP for the non-isentropic case using Nash-Moser iteration, which results in a loss of regularity from the initial data to the solution. The a priori estimate without loss of regularity is shown in Luo \cite{Luo2018CWW}, but it is still difficult to use the energy constructed there to prove the local existence. Recently, in \cite{LuoZhang2020CWWLWP},  the authors proved the LWP for compressible gravity water waves without using Nash-Moser iteration. 

Regarding the incompressible limit of inviscid fluids, specifically the singular limit as the Mach number approaches $0$, numerous studies have been conducted on the Cauchy problem or fixed-domain problems. We refer to \cite{Klainerman1981limit, Klainerman1982limit, Ebin1982limit, Schochet1986limit, Disconzi2017limit} for ``well-prepared initial data" ($\dive u_0=O(\lam)$ and $\p_t u|_{t=0}=O(1)$, wehre $\lam$ is the Mach number) and \cite{Ukai1986limit, Asano1987limit, Isozaki1987limit, Iguchi1997limit, Metivier2001limit, Alazard2005limit} for ``ill-prepared (general) initial data"  ($\dive u_0=O(1)$ and $\p_t u|_{t=0}=O(\lam^{-1})$). However, much less is known about the incompressible limit of free-surface inviscid fluids: Lindblad and the first author \cite{LL2018priori}, the first author \cite{Luo2018CWW}, and Disconzi and the first author \cite{DL19limit} established incompressible limit results for free-surface Euler equations with zero or nonzero surface tension.

It should be noted that the uniform energy estimates are not consistent with the ones obtained by the local existence result. Moreover, \textit{the uniform boundedness (with respect to Mach number $\lam$) of top-order time derivatives of the velocity is necessary} in \cite{LL2018priori, Luo2018CWW, DL19limit}, which is more restrictive than the commonly-used definition of ``well-prepared initial data". Very recently, the second author \cite{Zhang2021elastoLWP} established LWP and the incompressible limit simultaneously with the same energy functional for compressible elastodynamics, which can be directly applied to the Euler equations without surface tension. Also, only $\p_t^2 u|_{t=0}=O(1)$ is required in \cite{Zhang2021elastoLWP} which is an essential improvement of \cite{LL2018priori, Luo2018CWW, DL19limit} and is also an optimal requirement of well-prepared data for free-surface inviscid fluids without surface tension, as the propagation of Rayleigh-Taylor sign condition already requires the uniform boundedness of $\nab\p_t q\sim \p_t^2 u$. However, the method and observations in \cite{Zhang2021elastoLWP} heavily rely on the vanishing boundary condition for the pressure on the free surface, which cannot be generalized to the case of nonzero surface tension or two-phase vortex-sheet problems.

\subsection{The goal of our paper}
This paper has two main objectives. Since the precise model of water waves studied in this manuscript (i.e., compressible, on an unbounded domain, with surface tension) has not been explicitly considered in the existing well-posedness literature, our first objective is to establish its well-posedness. Meanwhile, 
we seek to develop a \textit{unified framework} that simultaneously addresses the well-posedness for incompressible and compressible free-boundary Euler equations, in cases both with and without surface tension. We achieve this by developing a carefully designed approximate system that is compatible with both the incompressible and zero surface tension limits. In particular, this system is asymptotically consistent with the $\lambda$-uniform estimate and can be further adapted to the $\sigma$-uniform case, provided the Rayleigh--Taylor sign condition holds. 

Second, we aim to improve the incompressible limit by relaxing the assumptions on the initial data. In particular, we do not require boundedness of $\p_t^k \qc$ for $k \geq 2$. To establish a uniform-in-Mach-number energy estimate in this setting, we are motivated by Shatah-Zeng \cite{SZ2008priori, SZ2011LWP} to decompose the pressure into a ``harmonic part" and a ``wave part". We analyze the harmonic part (which is, in fact, contributed by the surface tension) by adapting the paradifferential approach developed by Alazard-Burq-Zuily \cite{ABZ2014wwSTLWP, ABZ2014wwLWP} on the moving boundary, while addressing the wave part in the interior. In other words, we have successfully applied a robust technique commonly used for incompressible water waves to adapt it for use with compressible water waves.


\subsection{The main theorems}
The first theorem concerns the local well-posedness of the motion of compressible gravity-capillary water waves, modeled by \eqref{CWWST}, provided that the initial data satisfy certain compatibility conditions. Particularly, we say the data $(\psi_0, v_0, q_0)$, where $q_0 =q (\rho_0)$, satisfies the $k$-th ($k=0,1,2,3,\cdots$) compatibility conditions if 
\begin{equation} \label{comp cond intro}
\begin{aligned}
&(\Dtp)^k q |_{t=0} =  (\Dtp)^k (\sigma \mathcal{H})|_{t=0},\quad &\text{on}\,\,\Sigma, \\
&\p_t^k v_3|_{t=0} = 0, \quad &\text{on}\,\,\Sigma_b, 
\end{aligned}
\end{equation}
hold. 

\begin{thm}[\textbf{Local well-posedness}]\label{main thm, WP}
Let $b>10$, and $\sigma >0$ be fixed. We define 
\begin{equation} \label{energy main them, WP}
\begin{aligned}
E(t):=&~E_0(t)+E_4(t),\quad \text{where}\\
E_0(t):=&~\|\rho(t)-1\|_0^2+g|\psi|_0^2+\sum_{k=0}^3\|\sqrt{\ff'(q)}\p_t^k\qc(t)\|_{0}^2,\quad \text{and}\\
E_4(t):=&\sum_{k=0}^4\left(\|\p_t^k v(t)\|_{4-k}^2+|\sqrt{\sigma}\cnab\p_t^k\psi(t)|_{4-k}^2\right)+\sum_{k=0}^3\|\p_t^k\p\qc(t)\|_{3-k}^2+\|\sqrt{\ff'(q)}\p_t^4\qc(t)\|_0^2,\\
\end{aligned}
\end{equation}
to be the energy of \eqref{CWWST} expressed in terms of $(\psi, v, \qc)$. 
Let $(\psi_0, v_0, \rho_0-1) \in H^5(\Sigma)\times H^4(\Omega)\times H^4(\Omega)$ be the initial data of \eqref{CWWST} that verifies the compatibility conditions \eqref{comp cond intro} up to the third order,  such that $|\psi_0|_{\infty}\leq 1$ and $E(0)\leq \mathcal{C}$, where $\mathcal{C}$ depends on $\psi_0$, $v_0$ and $\qc_0$. Then, there exists $T>0$, depending only on the initial data, such that \eqref{CWWST} admits a unique solution, expressed in terms of  $(\psi(t), v(t), \qc(t))$, satisfying
\begin{subequations}\label{regularity theorem 1.1 }
\begin{align}
&\p_t^k v(t,\cdot)\in H^{4-k}(\Omega), \quad k=0,\cdots, 4,\\
&\p_t^k \p \qc(t,\cdot)\in H^{3-k}(\Omega),\quad k=0, \cdots ,3, \\
&\lam\p_t^k\qc(t,\cdot)  \in L^2(\Omega), \quad k=0,\cdots, 4,\\
&\psi(t,\cdot)\in L^2(\Sigma), \quad \sqrt{\sigma}\p_t^k \cnab\psi(t,\cdot)\in H^{4-k}(\Sigma), \quad k=0,\cdots, 4. 
\end{align}
\end{subequations}
as well as the energy estimate:
\begin{equation} \label{energy esti intro}
\sup_{0\leq t\leq T}E(t)\leq C(\sigma^{-1}) P(E(0)),
\end{equation}
where $P(\cdot)$ is a generic non-negative continuous function (independent of $\lambda$ and $\sigma$) in its arguments.
In addition to this, we have 
\begin{equation}
\sup_{t\in [0,T]} |\psi (t)|_{\infty} \leq 10.
\end{equation}
Moreover, let $(\bar{\psi}, \bar{v}, \bar{\qc})$ denote the solution of \eqref{CWWST} corresponding to another set of initial data $(\bar{\psi}_0, \bar{v}_0, \bar{\rho}_0-1)\in H^5(\Sigma)\times H^4(\Omega)\times H^4(\Omega)$ that verifies the compatibility conditions \eqref{comp cond intro} up to the third order,  and
$
[E](0) \leq \epsilon, 
$
where 
\begin{align}\label{def [E] intro}
[E](t):=\sum_{k=0}^3&\left(\|\p_t^k(v-\bar{v})(t,\cdot)\|_{3-k}^2+\sigma|\cnab\p_t^{k}(\psi-\bar{\psi})(t,\cdot)|_{3-k}^2+\|\sqrt{\ff'(q)}\p_t^k(\qc-\bar{\qc})(t,\cdot)\|_0^2\right)\notag\\
&+g|(\psi-\bar{\psi})(t,\cdot)|_0^2+\sum_{k=0}^2\|\p \p_t^k(\qc-\bar{\qc})(t,\cdot)\|_{2-k}^2.
\end{align}
Then, there exists some positive constant $\mathcal{C}(\sigma^{-1})$ such that
\begin{equation}
\sup_{0\leq t\leq T}E(t)\leq  \mathcal{C}(\sigma^{-1})\epsilon,
\end{equation}
possibly after choosing $T>0$ smaller. 
\end{thm}

In above and throughout, we use $\|\cdot\|_s$ and $|\cdot|_s$ to represent respectively the interior Sobolev norm $\|\cdot\|_{H^s(\Omega)}$ and the boundary Sobolev norm $\|\cdot\|_{H^s(\Sigma)}$. 
\begin{rmk}
	In Appendix \ref{sect CWWST data}, we show that we can construct smooth initial data $(\psi_0, v_0, \qc_0)$ that satisfy the compatibility conditions up to order 3. These compatibility conditions are required so that we can show $E(0)\leq \mathcal{C}$ by adapting the arguments in \cite[Section 4.3]{DL19limit}.
\end{rmk}
\begin{rmk}
The second line in \eqref{energy main them, WP} is the $L^2$-part of the energy, where $\|\qc(t)\|_0^2$ is $\sqrt{\ff'(q)}$-weighted, which ties to the Mach number. That is why we write $\|\p\qc\|_3$ instead of $\|\qc\|_4$ in the first line. Also, in \eqref{regularity theorem 1.1 }, $\lam\qc(t,\cdot)$ and $\lam\p_t^4\qc(t,\cdot)$ are $\lam$-weighted, since the corresponding quantities in the energy \eqref{energy main them, WP} are weighted by $\sqrt{\ff'(q)}$, where $\sqrt{\ff'(q)}$ is comparable to $\lam$ thanks to \eqref{ff' is lambda}. 
\end{rmk}

Moreover, Theorem \ref{main thm, WP} can be readily generalized to the case of smoother solutions, provided that the initial data is with sufficient regularity assumptions.
\begin{cor}[Persistence of regularity]\label{cor: persistence regularity}
 Let $s\geq 5$ be a fixed integer. 
Let  
 \begin{equation} \label{energy main them, WP'}
\begin{aligned}
E_s(t):=\|\rho(t)-1\|_0^2+g|\psi|_0^2+\sum_{k=0}^s\left(\|\p_t^k v(t)\|_{s-k}^2+\|\sqrt{\ff'(q)}\p_t^k\qc(t)\|_0^2+|\sqrt{\sigma}\cnab\p_t^k\psi(t)|_{s-k}^2\right)+\sum_{k=0}^{s-1}\|\p_t^k\p\qc(t)\|_{s-k-1}^2,\\
\end{aligned}
\end{equation}
Let $(\psi_0, v_0, \rho_0-1) \in H^{s+1}(\Sigma)\times H^s(\Omega)\times H^s(\Omega)$ be the initial data of \eqref{CWWST} that verifies the compatibility conditions \eqref{comp cond intro} up to the $(s-1)$-th order,  such that $|\psi_0|_{\infty}\leq 1$ and $E(0)\leq \mathcal{C}_s(\psi_0, v_0, \qc_0)$. Then, there exists $T_s>0$, depending only on the initial data and $s$, such that \eqref{CWWST} admits a unique solution, expressed in terms of  $(\psi(t), v(t), \qc(t))$, satisfying
\begin{subequations}\label{regularity theorem 1.1' }
\begin{align}
&\p_t^k v(t,\cdot)\in H^{s-k}(\Omega), \quad k=0,\cdots, s,\\
&\p_t^k \p \qc(t,\cdot)\in H^{s-1-k}(\Omega),\quad k=0, \cdots ,s-1, \\
&\lam\p_t^k\qc(t,\cdot)  \in L^2(\Omega) \quad k=0,\cdots, s,\\
&\psi(t,\cdot)\in L^2(\Sigma), \quad \sqrt{\sigma}\p_t^k \cnab\psi(t,\cdot)\in H^{s-k}(\Sigma), \quad k=0,\cdots, s. 
\end{align}
\end{subequations}
as well as the energy estimate:
\begin{equation} \label{energy esti intro'}
\sup_{0\leq t\leq T_s}E_s(t)\leq C(\sigma^{-1}) P(E_s(0)),
\end{equation}
\end{cor}

The next main theorem concerns the incompressible and zero surface tension limits. We consider the Euler equations modeling the motion of incompressible gravity water waves satisfied by $(\xi, w, q_{in})$ with localized initial data $(w_0,\xi_0)$:
\begin{equation} \label{WW intro}
\begin{cases}
\Dtp w +\nabp \qi=0&~~~ \text{in}~[0,T]\times \Omega,\\
\nabp\cdot w=0&~~~ \text{in}~[0,T]\times \Omega,\\
\qi=q_{in}+g\varphi &~~~ \text{in}~[0,T]\times \Omega, \\
\qi=g\xi &~~~\text{on}~[0,T]\times\Sigma,\\
\p_t \xi = w\cdot \nn &~~~\text{on}~[0,T]\times\Sigma,\\
w_3=0&~~~\text{on}~[0,T]\times\Sigma_b,\\
(w,\xi)|_{t=0}=(w_0, \xi_0), 
\end{cases}
\end{equation}
where we slightly abuse the notation by still setting $\varphi(t,x) = x_3+\chi(x_3) \xi(t,x')$ to be the extension of $\xi$ in $\Omega$. Denote by $(\psi^\ls, v^\ls, \rho^\ls)$ the solution of \eqref{CWWST} indexed by $\sigma$ and $\lam$, we prove that $(\psi^\ls, v^\ls, \rho^\ls)$ converges to $(\xi, w,1)$ as $\lam, \sigma \rightarrow 0$ provided the convergence of the initial data in a suitable sense. Note that the convergence of the compressible initial data implies that it is also localized.

\begin{thm}[\textbf{Incompressible and zero surface tension limits}] \label{main thm, double limits}
Let $(\psi_0^\ls, v_0^\ls, \rho_0^\ls-1)$ be the initial data of \eqref{CWWST} for each fixed $(\lam, \sigma)\in \R^+\times \R^+$, verifying:
\begin{enumerate}
\item [a.] The sequence of initial data $(\psi_0^\ls, v_0^\ls, \rho_0^\ls-1) \in H^5(\Sigma)\times H^4(\Omega)\times H^4(\Omega)$ satisfies \eqref{comp cond intro} for $0\leq k\leq 3$, and {$|\psi_0^\ls|_{\infty} \leq 1$}. 
\item [b.] $(\psi_0^\ls, v_0^\ls, \rho_0^\ls-1) \to (\xi_0, w_0, 0)$ in $\orange{H^4(\Sigma)}\times H^4(\Omega)\times H^3(\Omega)$ as $\lam, \sigma\to 0$. 
\item [c.] \textit{Both} incompressible and compressible pressures $q^{\ls}$ and $q_{in}$ satisfy the Rayleigh-Taylor sign condition
\begin{align}
-\p_3 q^{\ls} \geq c_0>0, \quad \text{on}\,\,\{t=0\}\times \Sigma, \label{RT intro comp}\\
-\p_3 q_{in} \geq c_0>0, \quad \text{on}\,\,\{t=0\}\times \Sigma, \label{RT intro incomps}
\end{align}
for some $c_0>0$. 
\end{enumerate} 
Then it holds that 
\begin{align*}
(\psi^\ls, v^\ls, \rho^{\ls}-1)\rightarrow (\xi, w, 0),
\end{align*}
weakly* in $L^\infty([0,T]; \orange{H^4(\Sigma)}\times H^4(\Omega)\times H^3(\Omega))$, 
and strongly in $C^0([0,T]; \orange{H_{\text{loc}}^{4-\delta}(\Sigma)}\times H_{\text{loc}}^{4-\delta}(\Omega)\times H_{\text{loc}}^{3-\delta}(\Omega))$
for any $\delta\in (0,1]$.
\end{thm}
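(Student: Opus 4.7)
The plan is to combine uniform-in-$(\lam,\sigma)$ energy estimates with a compactness argument, and to identify the limit with the unique solution of \eqref{WW intro}. By Theorem \ref{main thm, WP}, for each $(\lam,\sigma)\in\R^+\times\R^+$ the problem \eqref{CWWST} admits a solution $(\psi^\ls, v^\ls, \rho^\ls)$; however, the estimate \eqref{energy esti intro} blows up as $\sigma\to 0$. The first and most delicate step is to design an energy $\mathcal{E}^\ls(t)$ in which the surface-tension contribution $|\sqrt{\sigma}\,\cnab\p_t^k\psi|_{4-k}^2$ is replaced or supplemented by a Rayleigh-Taylor boundary term of the form $\int_\Sigma (-\p_3 q^\ls)\,|\cnab^{4-k}\p_t^k\psi^\ls|^2$. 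Under assumption (c), the sign condition $-\p_3 q^\ls\geq c_0/2$ persists on a short interval $[0,T]$ with $T$ independent of $(\lam,\sigma)$, and one can close the estimate $\sup_{[0,T]}\mathcal{E}^\ls(t)\leq P(\mathcal{E}^\ls(0))$; assumption (b) ensures $\mathcal{E}^\ls(0)$ is bounded uniformly in $(\lam,\sigma)$.

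With uniform bounds in hand, I would turn to compactness. The continuity equation $\ff'(q)\Dtp\qc + \nabp\cdot v = \ff'(q)\,g\,v_3$ together with \eqref{ff' is lambda} gives $\|\nabp\cdot v^\ls\|_{H^3}\lesssim \lam^2\|\Dtp\qc^\ls - g v_3^\ls\|_{H^3}$, which is $O(\lam^2)$ since the right-hand side is controlled by $\mathcal{E}^\ls$. Since $\rho=(\lam^2\gamma q+1)^{1/\gamma}$ the same argument gives $\rho^\ls-1\to 0$ in $H^3(\Omega)$. By Banach-Alaoglu there are limits $(\xi,w,\qi)$ such that along a subsequence $(\psi^\ls, v^\ls, \rho^\ls-1)\rightharpoonup (\xi, w, 0)$ weakly-$*$ in $L^\infty([0,T]; H^4(\Sigma)\times H^4(\Omega)\times H^3(\Omega))$. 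Uniform bounds on $\p_t\psi^\ls,\ \p_t v^\ls,\ \p_t\rho^\ls$ combined with Aubin-Lions upgrade this to strong convergence in $C^0([0,T]; H^{4-\delta}_{\mathrm{loc}}\times H^{4-\delta}_{\mathrm{loc}}\times H^{3-\delta}_{\mathrm{loc}})$ for any $\delta\in(0,1]$.

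Passing to the limit in \eqref{CWWST}, the divergence-free condition $\nabp\cdot w=0$ follows from the previous paragraph; the momentum equation gives $\Dtp w+\nabp\qi=0$ after using $\rho^\ls\to 1$; the kinematic boundary condition $\p_t\psi^\ls = v^\ls\cdot N$ passes directly; and in the dynamic boundary condition the surface-tension term $\sigma\,\cnab\cdot(\cnab\psi^\ls/\sqrt{1+|\cnab\psi^\ls|^2})$ vanishes in the limit since $\sigma\to 0$ multiplies a quantity bounded in $H^2(\Sigma)$, leaving $\qi=g\xi$ on $\Sigma$. Thus $(\xi,w,\qi)$ solves \eqref{WW intro}; since assumption (c) ensures the incompressible Rayleigh-Taylor condition, the Cauchy problem for \eqref{WW intro} with data $(w_0,\xi_0)$ is uniquely solvable, so the entire sequence (not just a subsequence) converges.

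The hard part is the uniform energy estimate in the first paragraph. The bound in Theorem \ref{main thm, WP} is $\sigma^{-1}$-dependent, and producing a $\sigma$-free estimate requires redesigning the top-order energy so that the surface-tension dominant term can be dropped and replaced by a positive Rayleigh-Taylor quadratic form on the free boundary --- the subtlety being that surface tension controls $|\cnab^5\psi|_0$ while Rayleigh-Taylor controls only $|\cnab^4\psi|_0$, so the two regimes involve genuinely different top-order norms and the $H^4(\Sigma)$ space in the limit reflects the Rayleigh-Taylor side. A secondary difficulty is keeping the estimate uniform in $\lam$ while requiring only $\p_t^2 v|_{t=0}=O(1)$ rather than uniform boundedness of $\p_t^4 v|_{t=0}$, which is handled via the paradifferential decomposition of the free-surface evolution mentioned in the introduction.
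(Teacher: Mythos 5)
Your proposal is correct and follows essentially the same route as the paper: the $\sigma^{-1}$-dependence in Theorem \ref{main thm, WP} is removed by observing that, under the Rayleigh--Taylor sign condition, the boundary term $\RT$ produces the coercive quantity $\is(-\p_3 q)|\TT^\alpha\psi|^2\dx'$ in place of the $\sigma$-weighted control, after which Banach--Alaoglu, Aubin--Lions, and the continuity equation (for $\nabp\cdot v^\ls\to 0$ and $\rho^\ls\to 1$) give the stated weak-$*$ and strong convergences, with the limit identified as the unique solution of \eqref{WW intro}. The only caveat is that the paradifferential refinement you cite as a ``secondary difficulty'' belongs to Theorem \ref{main thm, well data} and is not needed here, since Theorem \ref{main thm, double limits} is proved under the full data hypotheses of Theorem \ref{main thm, WP} (compatibility up to order 3 in $H^5\times H^4\times H^4$), for which the energy \eqref{energy main them, WP} is already uniform in $\lam$.
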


Theorem \ref{main thm, double limits} is a direct consequence of uniform-in-$\ls$ estimates for the compressible gravity-capillary water wave system \eqref{CWWST} and the Aubin-Lions lemma. Indeed, the energy estimate \eqref{energy esti intro} established in Theorem \ref{main thm, WP} is already uniform in Mach number $\lambda$. In addition to this, one can show that \eqref{energy esti intro} is uniform in the surface tension coefficient $\sigma$ provided that the Rayleigh-Taylor sign condition \eqref{RT intro comp} holds initially. 

\begin{rmk}
Although our energy functional $E(t)$ is expressed in terms of $\qc$, the incompressible limit is given in $(\psi^{\ls},v^{\ls},\rho^{\ls})$ which converges to $(\zeta,w,1)$. We do not expect the compressible pressure $q$ to converge to the incompressible pressure $q_{in}$ as $\lam\rightarrow 0$, because the former is the solution to a quasilinear symmetric hyperbolic system, whereas the latter appears as a Lagrangian multiplier. Indeed, as was indicated by \cite{LL2018priori, Luo2018CWW,Zhang2021elastoLWP}, it is the enthalpy $h(\rho):=\int_1^{\rho}q'(r)/r~dr$ of the compressible equations that converges to the incompressible pressure $q_{in}$. On the other hand, the convergence of $\|\rho^{\ls}-1\|_3$ can be easily proved if we write the continuity equation to be $\Dtp(\rho-1)=-\rho(\nabp\cdot v)$ and use Gr\"onwall's inequality for its $H^3$-estimate.
\end{rmk}

It should be noted that the energy \eqref{energy main them, WP} requires that the time derivatives up to at least order $3$ are bounded initially, i.e., $\p_t^k\qc(0)=O(1)$, $0\leq k\leq 3$, while $\p_t^4\qc(0)=O(\lambda^{-1})$, or equivalently the uniform boundedness for the top-order time derivatives of the velocity $\p_t^4 v=O(1)$. \textit{This condition can certainly be weakened}. In fact, the propagation of the Rayleigh-Taylor sign condition only requires the boundedness of $\p_t \p_3 q$, or equivalently $\p_t^2 v=O(1)$, not including higher-order time derivatives. Motivated by this, we prove the following improved estimates.

\begin{thm}[\textbf{Improved uniform estimates in $\ls$}]\label{main thm, well data}
Under the hypothesis of Theorem \ref{main thm, WP}, if we further assume  $(\psi_0, v_0, \rho_0-1) \in H^6(\Sigma)\times H^5(\Omega)\times H^5(\Omega)$ satisfying the compatibility conditions up to the fourth order and assume the Rayleigh-Taylor sign condition \eqref{RT intro comp} holds for the initial data of \eqref{CWWST}, then
\begin{align}
\sup_{0\leq t\leq T}\EE(t) \leq P(\EE(0)), 
\end{align}
holds uniform in both $\lambda$ and $\sigma$,
where
\begin{equation}
\begin{aligned}\label{energyls intro}
\EE(t):=&~E_0(t)+\EE_4(t)+E_5(t),\\
\EE_4(t):=&~\|v\|_4^2+\|\p\qc\|_3^2+|\sqrt{\sigma}\psi|_5^2+|\psi|_{4}^2+\|\p_t v,\p_t\qc\|_3^2+|\sqrt{\sigma}\p_t\psi|_4^2+|\p_t\psi|_{3.5}^2 \\
&+ \|\p_t^2 v,\lam\p_t^2\qc\|_2^2+|\sqrt{\sigma}\p_t^2\psi|_3^2+|\p_t^2\psi|_{2.5}^2+|\p_t^3\psi|_{1.5}^2  \\
&+ \sum_{k=3}^4\|\lam \p_t^k(v,\qc)\|_{4-k}^2+|\sqrt{\sigma}\lam\p_t^k\psi|_{5-k}^2+|\lam \p_t^4\psi|_{0.5}^2 \\
E_5(t):=&\sum_{k=0}^5\ino{\lam^2\p_t^k(v, (\ff'(q))^{\frac{(k-4)_+}{2}}\qc)}_{5-k}^2+\bno{\sqrt{\sigma}\lam^2\p_t^k\psi}_{6-k}^2+\bno{\lam^2\p_t^k\psi}_{5-k}^2,
\end{aligned}
\end{equation}and $(k-4)_+:=\max\{0,k-4\}$.
\end{thm}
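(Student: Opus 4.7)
The plan is to close the improved energy $\EE(t)=E_0(t)+\EE_4(t)+E_5(t)$ by a three-tier hierarchy, where each tier adjusts the $\lambda$-weight so that only the weighted quantities appear at the top of each tier. Tier 0 is the basic $L^2$ conservation identity, obtained as in the derivation of $\mathsf{E}_0(t)$: multiply the momentum equation by $v$, the transport-equation-for-$\qc$ by $\qc$, integrate over $\Omega$, and use $\p_t\psi=v\cdot N$ on $\Sigma$ together with $v_3=0$ on $\Sigma_b$; the boundary contribution produces $g|\psi|_0^2$ plus the capillary $|\sqrt{\sigma}\,\cnab\psi|_0^2$ term by paralinearizing $\sigma\cnab\cdot(\cnab\psi/\sqrt{1+|\cnab\psi|^2})$. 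Tier 1 (the $\EE_4$ part) is obtained by applying $\p_t^k$ for $k=0,1,2$ without $\lambda$-weight, and for $k=3,4$ with a $\lambda$-weight prefactor; the crucial point is that, under the Rayleigh-Taylor sign condition, propagation of $-\p_3 q\geq c_0/2$ only requires control of $\p_t\p_3 q \sim \p_t^2 v$, so the bound $\p_t^2 v\in L^\infty_tH^2$ suffices and the higher-order $\p_t^3, \p_t^4$ quantities need only be controlled with a $\lambda$-weight. Tier 2 (the $E_5$ part) carries $\lambda^2$-weights at every order $k=0,\ldots,5$, and here standard energy estimates can be performed since the hyperbolic prefactors $\ff'(q)\sim \lambda^2$ absorb the $\lambda^2$-weight to give uniform bounds.

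At each order $k$, after commuting $\p_t^k$ through the symmetrized system $(\rho\Dtp v+\nabp\qc, \ff'(q)\Dtp\qc+\nabp\cdot v)=(\text{lower-order}, \text{lower-order})$, I would pair $\p_t^k v$ with $\rho\p_t^k v$ and $\p_t^k\qc$ with $\ff'(q)\p_t^k\qc$ and integrate by parts; the boundary terms produce exactly the mixed norms $|\sqrt{\sigma}\p_t^k\psi|_{5-k}$ and $|\p_t^k\psi|_{4-k}$ that appear in $\EE_4$ and $E_5$. To control the commutators $[\p_t^k,\nabp]$ and $[\p_t^k, D_t^\varphi]$ uniformly in $\lambda$ without losing a time derivative, I would employ the Shatah-Zeng pressure decomposition $\qc=\qc_{\mathrm{el}}+\qc_{\mathrm{hyp}}$, where the elliptic part solves a mixed Dirichlet-Neumann problem with boundary data $g\psi-\sigma\cnab\cdot(\cnab\psi/\sqrt{1+|\cnab\psi|^2})$ and the hyperbolic part carries the $\lambda$-dependent residual. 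The elliptic part is bounded independently of $\lambda$ by standard elliptic theory, while the hyperbolic residual is naturally controlled by $\ff'(q)^{1/2}$-weighted norms.

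The treatment of the free-surface equation $\p_t\psi=v\cdot N$ at the top order is where the paradifferential calculus of Alazard-Burq-Zuily enters. After $k$ time derivatives, the principal part of the linearization of the capillary pressure is a first-order self-adjoint operator $\sigma\, T_{\lambda_{\sigma,\psi}}$ on $\Sigma$ (with principal symbol $\sigma(1+|\cnab\psi|^2)^{-1/2}|\xi|-\sigma(1+|\cnab\psi|^2)^{-3/2}(\cnab\psi\cdot\xi)^2$); paralinearizing the mean-curvature operator and symmetrizing against the Dirichlet-to-Neumann operator on $\Omega$ yields the coercive estimate $|\sqrt{\sigma}\p_t^k\psi|_{5-k}^2+|\p_t^k\psi|_{4-k}^2\lesssim \ldots$ uniformly in $\sigma$ under the Rayleigh-Taylor condition. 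This replaces the loss-of-a-time-derivative one would incur by differentiating the curvature naively, and is what ultimately lets us drop the uniform-in-$\lambda$ assumption on $\p_t^3\qc$ and $\p_t^4\qc$.

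The main obstacle is the simultaneous uniformity in $\lambda$ and $\sigma$ together with the dropping of the uniform boundedness for the top-order time derivatives: a direct $H^k$-estimate for $\p_t^{4-k}$ of the momentum equation requires an $L^2$-bound on $\p_t^{4-k+1}\qc$, which naively produces a $\lambda^{-1}$ factor. Circumventing this requires us simultaneously to (i) trade each unweighted $\p_t^3,\p_t^4$ for a $\lambda$-weighted counterpart, (ii) absorb the resulting commutator errors using the paradifferential symmetrization on $\Sigma$, and (iii) show that the Rayleigh-Taylor sign condition propagates from the $\p_t\p_3 q$-control provided by $\EE_4$ alone, so that the capillary estimate remains coercive at all times. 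The bookkeeping of how the $(k-4)_+$-exponent in $E_5$ matches the $\ff'(q)$-prefactors in the symmetrization, and how it dovetails with the $\sigma$-weights on $\psi$, is the most delicate part of the argument.
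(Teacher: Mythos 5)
Your proposal assembles several of the right ingredients --- the Shatah--Zeng splitting, the Alazard--Burq--Zuily paralinearization and symmetrization against the Dirichlet-to-Neumann operator, the observation that Rayleigh--Taylor propagation only needs $\p_t\p_3 q\sim\p_t^2 v$, and a $\lam^2$-weighted fifth-order tier --- but the architecture you describe for the unweighted block $\|\p_t^2 v\|_2^2+|\p_t^2\psi|_{2.5}^2+|\p_t^3\psi|_{1.5}^2$ does not close. You propose to keep doing interior tangential estimates at every order (``pair $\p_t^k v$ with $\rho\p_t^k v$ and $\p_t^k\qc$ with $\ff'(q)\p_t^k\qc$'') and to repair the commutators with a pressure decomposition. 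But the loss of a Mach-number weight in the mixed $\TP^2\p_t^2$ estimate is structural: the Alinhac commutator $\cc_i(\qc)$ produces terms of the schematic form $\TP\NN_i\,\TP\p_3\p_t^2\qc$, in which the spatial derivative landing on the normal vector $\NN$ cannot absorb a time derivative, so one would need $\p_t^2\qc$ unweighted in $H^2$ while the energy only carries $\lam\p_t^2\qc$. A harmonic-plus-wave splitting of $\qc$ does not rescue this: the harmonic part of $\p_t^2\qc$ would require boundary data $\sigma\p_t^2\h$ in $H^{2.5}(\Sigma)$, i.e.\ $\sigma|\p_t^2\psi|_{4.5}$, far beyond the $|\sqrt{\sigma}\p_t^2\psi|_3$ available. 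The paper's resolution is to abandon the interior tangential estimate for $\p_t^2 v$ altogether: $\|\p_t^2 v\|_2$ is recovered from the div-curl inequality \eqref{divcurlNN} through the normal trace $|\p_t^2 v\cdot N|_{1.5}=|\p_t^3\psi+\vb\cdot\cnab\p_t^2\psi|_{1.5}$, and that trace is controlled by paralinearizing the evolution equation \eqref{psi equ} for $\p_t^2\psi$, obtained by applying $\p_t^2$ to $\rho\TDt^2\psi=-\p_3\qc-(\rho-1)g$ and passing to the good unknown $\Q=\p_t^2\qc-\p_t^2\varphi\pp_3\qc$. This step --- converting the Neumann data of $\Q$ into a DtN contribution plus a remainder --- is the missing idea.

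Relatedly, you treat $E_5$ as an independent tier that ``standard energy estimates'' close, but never explain why a $\lam^2$-weighted fifth-order energy must appear at all. In the paper it is forced by the source term $N\cdot\nabp\Q_w$ in \eqref{psi equ}: the wave part $\Q_w$ of the Shatah--Zeng splitting has zero Dirichlet data on $\Sigma$, so $|N\cdot\nabp\Q_w|_{1.5}$ reduces to $\|\lapp\Q_w\|_1$, whose source contains $\rho\lam^2(\Dtp)^2\Q$ and hence $\|\lam^2\p_t^4\qc\|_1$ --- exactly a fifth-order, $\lam^2$-weighted quantity. Without this coupling the fifth-order tier is unmotivated and the unweighted surface estimates cannot be closed. (A minor point: the principal symbol you write for the linearized capillary operator is first order in $\xi$; the mean-curvature operator is second order and its composition with the first-order DtN operator is third order, which is why the symmetrizer $T_\fm$ in the paper has order $3/2$.)
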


\begin{rmk}
We require the solutions to possess greater smoothness than those constructed in Theorem \ref{main thm, WP}. This enhanced regularity is obtained via Corollary \ref{cor: persistence regularity}. 
\end{rmk}

\begin{rmk}
The above estimate only requires $\nab\p_t q(0)\sim \p_t^2 u(0)$ to be bounded (with respect to $\lam$) because we need to control the evolution of the Rayleigh-Taylor sign, namely $\|\p_t\p_3 q\|_{L^\infty(\Sigma)}$, when taking the \textit{incompressible and zero surface tension limits simultaneously}. However, we do not require $\p_t^k v(0)$ to be uniformly bounded for $k>2$. On the other hand, the propagation of the Rayleigh-Taylor sign condition requires the boundedness of $\p_t q$, so we have reached the minimal requirement for the initial data being ``well-prepared". 
\end{rmk}

\subsection*{List of Notations}
\begin{itemize}
\item (Fixed domain and its boundary) $\Omega := \{x\in \R^3: -b<x_3< 0\}$. $x=(x_1,x_2,x_3)$, and $x'=(x_1,x_2)$. $\Sigma :=\{x\in\R^3:x_3=0\},~ \Sigma_b :=\{x\in\R^3:x_3=-b\}$.
\item (Tangential derivatives) $\TT_0= \p_t$, $\TT_1=\TP_1$, $\TT_2 = \TP_2$, $\TT_3=\omega(x_3)\p_3$, where $\omega(x_3)\in C^{\infty}(-b,0)$ is assumed to be bounded, comparable to $|x_3|$ in $[-2,0]$ and vanishing on $\Sigma\cup\Sigma_b$.
\item (Norms) We define $\|\cdot\|_{\infty}:= \|\cdot\|_{L^\infty(\Omega)}$, $|\cdot|_{\infty}:=\|\cdot\|_{L^\infty(\Sigma)}$, $\|\cdot \|_s:= \|\cdot \|_{H^s(\Omega)}$, and $|\cdot|_s:=\|\cdot \|_{H^s(\Sigma)}$. In particular, the identification $\Sigma \cong \mathbb{R}^2$ is understood when applying the norms $|\cdot|_{\infty}$ and $|\cdot|_s$ to functions defined only on $\mathbb{R}^2$. 
\item (Generic continuous functions) We denote by $P(\cdot)$ a generic non-negative continuous function in its arguments, which is independent of $\lambda$ and $\sigma$. Also, we set $\PP_0:=P(E(0)),~\PP^\kk_0:=P(E^{\kk}(0))$. 
\item (Commutators) $[T,f]g=T(fg)-f(Tg)$, $[T,f,g]:=T(fg)-T(f)g-fT(g)$ where $T$ is a differential operator and $f,g$ are functions.
\item (Equality modulo lower order terms) $A\lleq B$ means $A= B$ modulo lower order terms. 
\end{itemize}

\section{An overview of our methodology}\label{sect overview intro}

Before presenting the detailed proofs, we will briefly introduce our methodology for deriving energy estimates that are uniform in both surface tension and Mach number, as well as the construction of solutions to the linearized and nonlinear problems via a carefully designed approximation scheme. 

\subsection{Uniform estimates in Mach number and surface tension}\label{sect apriori intro}
Let us temporarily focus on the a priori energy estimate of the original system \eqref{CWWST} instead of the construction of solutions. Indeed, the strategies on the a priori estimate will illustrate why we need the approximation scheme defined in the next subsection.

\subsubsection{Div-Curl analysis and reduction of pressure}\label{sect divcurl intro}

The first step is to reduce the normal derivatives for \eqref{CWWST}, and we start with the control of $\|v\|_4$. Using the div-curl decomposition, $\|v\|_4$ is bounded by $\|\nabp\times v\|_3$, $\|\nabp\cdot v\|_3$, and $\|\TP^4 v\|_0$, where the curl part can be directly controlled by analyzing its evolution equation. The continuity equation reduces the divergence to $\|\ff'(q)\Dtp q\|_3$, which is a tangential derivative and includes a time derivative. As for the pressure $\qc$, the momentum equation indicates that $-\nabla \qc\sim \Dtp v$, which again converts a normal derivative to a tangential derivative. This reduction can also be applied to the time derivatives of $v$ and $\qc$ up to the third order. As a consequence, the control of the full Sobolev norms of $v$ and $\qc$ (and their time derivatives) is reduced to the control of $\TT^\alpha v$ and $\TT^\alpha \qc$ ($|\alpha|=4$) in $L^2(\Omega)$ with appropriate weights in Mach number where $\TT$ represents any of the tangential derivatives $\p_t,\TP$ or $\omega(x_3)\p_3$ where $\omega(x_3)\in C^{\infty}(-b,0)$ is bounded, comparable to $|x_3|$ in $x_3\in(-2,0)$ and vanishing on $\Sigma\cup\Sigma_b$.


\subsubsection{Tangential estimates: Alinhac good unknowns}\label{sect AGU intro}

We define $\TT^{\alpha}$ to be $\p_t^{\alpha_0}\TP_1^{\alpha_1}\TP_2^{\alpha_2}(\omega\p_3)^{\alpha_3}$ with $|\alpha|:=\alpha_0+\alpha_1+\alpha_2+\alpha_3=4$. In $\TT^\alpha$-tangential estimates, we need to commute $\TT^{\alpha}$ with $\nabp_i$. When $i=t,1,2$, the commutator $[\TT^\alpha,\nabp_i]f$ includes the term $(\p_3\varphi)^{-1}\TT^\alpha\p_i\varphi\p_3 f$, where the $L^2(\Omega)$-norm of $\TT^{\alpha}\p_i\varphi$ is controlled by $|\TT^{\alpha}\p_i\psi|_0$. However, the regularity of $\psi$ obtained in $\TT^{\alpha}$-estimates is $|\sqrt{\sigma}\TT^{\alpha}\cnab\psi|_0$. Thus, the direct control of the aforementioned commutator fails to be uniform in $\sigma$. To overcome this difficulty, we introduce the Alinhac's method which reveals that the ``essential" leading order term in $\TT^\alpha(\nabp f)$ is not $\nabp(\TT^{\alpha} f)$ but the covariant derivative of $\FF$ (i.e., $\nabp \FF$), where $\FF:=\TT^{\alpha}f-\TT^{\alpha}\varphi\pp_3 f$. Here, $\FF$ is the so-called Alinhac good unknown associated with $f$, which satisfies
\begin{align}
\label{AGUDD}  \TT^{\alpha}\nabp_i f=\nabp_i \FF+\cc_i(f),~~~\TT^{\alpha}\Dtp f=\Dtp \FF+\dd(f),
\end{align}where $\|\cc_i(f)\|_0$ and $\|\dd(f)\|_0$ can be directly controlled. In other words, the reformulation in Alinhac good unknowns takes into account the covariance under the change of coordinates, such that we can proceed with the tangential estimates in the same way as the $L^2$-estimate and avoid the additional regularity on the nonlinear coefficients that cannot be controlled in a $\sigma$-uniform fashion. Such a remarkable observation was due to Alinhac \cite{Alinhac1989good} and was first applied (implicitly) to free-surface inviscid fluids by Christodoulou-Lindblad \cite{CL2000priori}. See also \cite{MR2012good, WangXin2015good} for the explicit calculations for the inviscid limit of incompressible free-boundary Navier-Stokes equations.

Let $\VV,\QQ$ be the Alinhac good unknowns of $v,\qc$ associated with $\TT^\alpha$, and then we obtain several major terms from the tangential estimates
\begin{equation}\label{AGUEE}
\begin{aligned}
&\ddt\frac12\left(\io\rho|\VV|^2+\ff'(q)|\QQ|^2\dvt\right)\\
=&\ST+\RT+\is\TT^\alpha \qc [\TT^\alpha, v\cdot, N]\dx'-\io\TT^\alpha\qc [\TT^\alpha, \p_3 v\cdot, \NN]~\dvt+\text{ controllable terms, }
\end{aligned}
\end{equation}where $\dvt:=\p_3\varphi\dx$ and
\begin{align}
\ST:=-\is\TT^{\alpha}(\sigma\h)\p_t\TT^{\alpha}\psi\dx',\quad \RT:=-\is(-\p_3 q)\TT^\alpha\psi \p_t\TT^\alpha\psi\dx.
\end{align}Also note that $\TT^\alpha$ only contains $\p_t$ and $\TP$ on $\Sigma\cup\Sigma_b$ as the weight function $\omega(x_3)$ vanishes on the boundary.

For the term ST, invoking the explicit formula for the mean curvature and integrating $\cnab\cdot$ by parts, we obtain 
\begin{equation}\label{AGUBDRYST}
\begin{aligned}
\ST=-\frac{\sigma}{2}\ddt\is\frac{|\TT^{\alpha}\cnab\psi|^2}{\sqrt{1+|\cnab\psi|^2}}-\frac{|\cnab\psi\cdot\TT^{\alpha}\cnab\psi|^2}{{\sqrt{1+|\cnab\psi|^2}}^3}\dx'+\cdots,
\end{aligned}
\end{equation}which together with the following inequality gives the boundary energy $|\sqrt{\sigma}\TP^{\alpha}\cnab\psi|_0^2$:
\begin{equation}\label{Cauchy}
\forall\mathbf{a}\in\R^2,~~\frac{|\mathbf{a}|^2}{\sqrt{1+|\cnab\psi|^2}}-\frac{|\cnab\psi\cdot\mathbf{a}|^2}{{\sqrt{1+|\cnab\psi|^2}}^3}\geq \frac{|\mathbf{a}|^2}{{\sqrt{1+|\cnab\psi|^2}}^3}.
\end{equation}

For the term RT, it produces the boundary energy without $\sigma$-weight \textit{provided that the Rayleigh-Taylor sign condition\footnote{The Rayleigh-Taylor sign condition is just a constraint for the initial data. One can easily prove its short-time propagation by using the boundedness of $\p_t\p_3 q$. See \cite[Section 3.7]{LuoZhang2020CWWLWP}.} $-\p_3 q_0|_{\Sigma}\geq c_0>0$ holds}. However, the Rayleigh-Taylor sign condition is only assumed when taking the zero surface tension limit, but not in the proof of local well-posedness for each \textit{given} $\sigma>0$. Therefore, we have to use the $\sqrt{\sigma}$-weighted energy to control this term when proving local well-posedness. Indeed, \textit{it is the direct control of $|\TT^\alpha \psi|_0$ and $|\p_t\TT^{\alpha}\psi|_0$ that yields the only possibility that the energy estimate depends on $\sigma^{-1}$}. 

The remaining two terms contribute to a crucial structure for the incompressible limit. When $\TT^\alpha=\p_t^4$ is a full time derivative, we cannot control them individually due to a loss of Mach number weight. Instead, we shall combine them and use the divergence theorem to reduce a time derivative on $\qc$. The leading-order terms are
\begin{align*}
4\is\p_t^4\qc\p_t^3v\cdot\p_t N\dx'-4\io\p_t^4\qc\p_t\NN\cdot\p_3\p_t^3 v\dx=\ddt\io\left(\p_t^3\p_3\qc\p_t\NN+\p_t^3\qc\p_t\p_3\NN\right)\cdot\p_t^3 v\dx+\cdots,
\end{align*}which can be directly controlled under the time integral.

Combining the steps above, we finish the control of Alinhac good unknowns $\VV,\QQ$. Then by using the definition of good unknowns, we know $\|\FF-\TT^{\alpha}f\|_0\leq|\TT^{\alpha}\psi|_0\|\p f\|_{\infty}$ which is already controlled by the boundary energy of $\psi$. Therefore, the a priori estimate for the system \eqref{CWWST} is closed, which is uniform in Mach number and also uniform in $\sigma$ under the Rayleigh-Taylor sign condition.

\subsection{Improved incompressible limit}\label{sect limit intro}
The uniform estimates obtained above require the uniform boundedness of top-order time derivatives of $v$, which is far more restrictive than the usual definition of ``well-prepared initial data" ($\nabp\cdot v|_{t=0}=O(\lam),~\p_t v|_{t=0}=O(1)$). A natural question is whether we can remove such boundedness assumption on high-order time derivatives, which is a necessary step to find a possible way to study the case of ``ill-prepared data" ($\nabp\cdot v|_{t=0}=O(1),~\p_t v|_{t=0}=O(\lam^{-1})$). 

\subsubsection{Difficulties in free-boundary problems}
There have been numerous results for fixed-domain problems or the Cauchy problem \cite{Klainerman1981limit, Klainerman1982limit, Ebin1982limit, Schochet1986limit}, but this is rather nontrivial under the free-surface setting due to the interaction between the free-surface motion and the interior pressure waves. Indeed, when commuting $\TT^\alpha$ with $\nabp$ when $\TT^\alpha$ contains both spatial derivatives and time derivatives, the usage of $\nab q\sim\p_t v$ actually produces an extra time derivative without $\lam$-weight. When $\p_t^k v$ is assigned with a different $\lam$-weight from that of $\p_t^k\qc$ in the energy functional, there exhibits a loss of $\lam$-weight due to the substitution $\nab \qc\sim\p_t v$, which is actually \textit{caused by the free-surface motion}. The second author \cite{Zhang2021elastoLWP} dropped such assumption for the case of zero surface tension, but this result heavily relies on the vanishing boundary value of $q$ as stated at the end of Section \ref{sect review}.

The above analysis indicates that we should avoid the interior tangential estimates. Instead, when treating the time derivatives, we shall use another div-curl inequality
\begin{equation}\label{divcurlNN}
\|X\|_s^2\lesssim C(|\psi|_{s+\frac12},|\cnab\psi|_{W^{1,\infty}})\left(\|X\|_0^2+\|\nabp\cdot X\|_{s-1}^2+\|\nabp\times X\|_{s-1}^2+|X\cdot N|_{s-\frac12}^2\right),\quad\forall s\geq 1,
\end{equation}in order to directly analyze the evolution of the free surface. In view of the new energy $\EE_4(t)$ defined in \eqref{energyls intro}, we shall apply this inequality to $X=\p_t^2 v$ and the kinematic boundary condition indicates us to control $|\p_t^3\psi|_{1.5}$ without any weights of $\lam,\sigma$.

\subsubsection{The evolution equation of the free surface and its paralinearization}
The evolution equation of the free surface is derived by time-differentiating the kinematic boundary condition and invoking the momentum equation, which leads to $\rho\TDt^2\psi=-\p_3 \qc-(\rho-1)g$ with $\TDt:=\Dtp|_{\Sigma}=\p_t+\vb\cdot\cnab$. We shall further differentiate this equation with $\p_t^2$ and \textit{convert the Neumann boundary value of $\qc$ to a Dirichlet-type condition} in order to utilize the boundary condition $\qc=\sigma \h$.  We introduce the Alinhac good unknown $\Q:=\p_t^2\qc-\p_t^2\varphi\pp_3 \qc$ to obtain $$\rho\TDt^2\p_t^2\psi=- N\cdot\nabp\Q+\cdots.$$  The next step is to separate the contribution of $\qc$ on the boundary from that in the interior. We notice that $\Q$ satisfies a wave equation
\[
\rho\lam^2(\Dtp)^2\Q-\lapp\Q=\cdots\text{ in }\Om,\quad \Q|_{\Sigma}=\sigma\p_t^2\h-\p_3 q\p_t^2\psi,\quad \p_3\Q|_{\Sigma_b}=-\p_t^2\rho g.
\]Inspired by Shatah-Zeng \cite{SZ2008priori, SZ2011LWP}, we define $\Q=\Q_h+\Q_w$ where
\begin{align*}
-\lapp\Q_h=0\text{ in }\Om,\quad \Q_h=\Q\text{ on }\Sigma,\quad \p_3\Q_h=0\text{ on }\Sigma_b,\\
-\lapp\Q_w=-\rho\lam^2(\Dtp)^2\Q+\cdots \text{ in }\Om,\quad \Q_w=0\text{ on }\Sigma,\quad \p_3\Q_w=\p_3\Q\text{ on }\Sigma_b.
\end{align*}
Under this setting, we obtain the following evolution equation
\begin{align}
\rho\TDt^2\p_t^2\psi + \sigma\dn(\p_t^2 \h) - \dn(\p_3 q\p_t^2\psi) =- N\cdot\nabp \Q_w +\cdots \quad\text{ on }\Sigma
\end{align}where $\dn$ is the Dirichlet-to-Neumann (DtN) operator associated to $(\Om,\psi)$ and we refer to Definition \ref{defn DtN} for details. Since the DtN operator is a first-order operator with positive principal symbol and the mean curvature operator is a second-order elliptic operator, we formally have 
$$\rho\TDt^2(\p_t^2\psi) +\sigma \underbrace{C_1(\cdots)}_{>0} \TJ^3(\p_t^2\psi) + (-\p_3 q)  \underbrace{C_2(\cdots)}_{>0}\TJ(\p_t^2\psi)=- N\cdot\nabp \Q_w +\cdots \quad\text{ on }\Sigma,
$$where $\TJ:=\sqrt{1-\TL}$ and $\TL=\TP_1^2+\TP_2^2$. Thus, we can adopt the paralinearization used in Alazard-Burq-Zuily \cite{ABZ2014wwSTLWP, ABZ2014wwLWP} to calculate the principal symbol of their composition in order for an explicit uniform-in-$\lam$ energy estimate of $|\p_t^3\psi|_{1.5}$ and $|\sqrt{\sigma}\p_t^2\psi|_{3}$  (and also $|\p_t^2\psi|_{2}$, uniformly in $\sigma$, under the Rayleigh-Taylor sign condition). We refer to Section \ref{sect para DtN}-\ref{sect E5} for detailed computations.

\subsubsection{Necessity of the weighted fifth-order energy}
Note that the new energy $\EE(t)$ defined in \eqref{energyls intro} also includes a $\lam^2$-weighted fifth-order energy. This is actually necessary to control the contribution of the pressure wave, namely the term $|N\cdot\nabp \Q_w|_{1.5}$. Since $\Q_w$ has zero boundary value on $\Sigma$ and its Neumann boundary value on $\Sigma_b$ is easy to control, we can convert it to the control of $\|\lapp \Q_w\|_1$ which further requires the bound for $\|\lam^2\p_t^4\qc\|_1$, which is exactly a $\lam^2$-weighted fifth-order term. 

Note that the control of $E_5(t)$ in \eqref{energyls intro} is completely parallel to that of $E_4(t)$ defined in \eqref{energy main them, WP}, as the structure of these two energy functionals is the same except that each term of $E_5$ is assigned with a $\lam^2$-weight. One can check that the control of all commutators arising from tangential estimates leads to no loss of $\lam$ weight, and we refer to Section \ref{sect E5-2} for details.

\begin{rmk}
The combination of the pressure decomposition and the paralinearization of the free-surface motion allows us to ``separate" the contribution of free-surface motion (in particular, the surface tension) and interior pressure waves, and these two parts are related via the term $N\cdot\nabp \Q_w$, which naturally leads to the fifth-order energy. This method essentially improves upon the previous results \cite{LL2018priori, Luo2018CWW, DL19limit}, where the uniform boundedness of top-order time derivatives of $v$ is necessary. Also, our method no longer relies on the vanishing boundary value of pressure as in \cite{Zhang2021elastoLWP}. Thus, we believe that the approach developed in this paper can be applied to other ``coupled" fluid models or the vortex-sheet problems\footnote{The second author recently applied this method to the incompressible limit for current-vortex sheets in ideal compressible MHD. See \cite{Zhang2023CMHDVS2}. This is, to our knowledge, the first result about the incompressible limit of inviscid vortex sheets.}. Furthermore, our method may open up the possibility of studying the incompressible limit of free-surface fluids with ill-prepared initial data.
\end{rmk}

\subsection{The approximation scheme to prove the existence}\label{sect nonlinearkk intro}

\subsubsection{Motivation to design the approximation}\label{sect kk intro}
For free-surface inviscid fluids, the local existence is not a direct consequence of the a priori estimate. For example, if we try to do Picard iteration for the linearized system whose coefficient $\varphi$ is replaced by a given function $\pr$, then a crucial difference from the nonlinear system is that we may no longer obtain the boundary regularity from the analogue of the ST term as in \eqref{AGUBDRYST}. Specifically, we consider \eqref{AGUBDRYST} with full spatial tangential derivatives:
\begin{equation}\label{AGUSTR}
\begin{aligned}
\ST=\sigma\is\TP^{\alpha}\cnab\cdot\left(\frac{\cnab\psi}{1+|\cnab\psr|^2}\right)\p_t\TP^{\alpha}\psi\dx'=-\frac{\sigma}{2}\ddt\is\frac{|\TP^{\alpha}\cnab\psi|^2}{\sqrt{1+|\cnab{\psr|^2}}}-\frac{(\cnab\psi\cdot\TP^{\alpha}\cnab\psi)(\cnab\psr\cdot\TP^{\alpha}\cnab\psr)}{{\sqrt{1+|\cnab\psr|^2}}^3}\dx'+\cdots,
\end{aligned}
\end{equation}where the second term has no control because inequality \eqref{Cauchy} is not applicable here. Such a linearization yields the loss of a tangential derivative. Besides, the unknowns with full time derivatives only have $L^2(\Om)$ integrability and thus have no boundary regularity. Some crucial cancellations no longer hold after linearization. Therefore, it is natural to regularize the coefficient $\varphi$ in both $t$ and $x'$ variables.

\subsubsection{The approximation system: important steps of its construction}\label{sect mismatch intro}
For each $\kk>0$, we define $\lkk$ to be the standard convolution mollifier on $\R^2$ with parameter $\kk>0$ and then define $\psk:=\lkk^2\psi$ and $\pk(t,x):=x_3+\chi(x_3)\psk(t,x')$ to be the smoothed coefficients. We introduce the following nonlinear system with artificial viscosity whose coefficients are replaced by $\pk,\psk$ that is asymptotically consistent with the original system \eqref{CWWST} as $\kk\to 0_+$. 
\begin{equation}\label{CWWSTkk00}
\begin{cases}
\rho \Dtpk v +\nabpk \qc=-(\rho-1) ge_3,&~~~ \text{in}~[0,T]\times \Omega,\\
\ff'(q) \Dtpk \qc  +\nabpk\cdot v=\ff'(q) gv_3, &~~~ \text{in}~[0,T]\times \Omega,\\
q=q(\rho), \qc=q+g\pk &~~~ \text{in}~[0,T]\times \Omega, \\
\qc=g\psk-\sigma\cnab \cdot \left( \frac{\cnab \psk}{\sqrt{1+|\cnab\psk|^2}}\right)+\kk^2(1-\TL)(v\cdot \npk) &~~~\text{on}~[0,T]\times\Sigma, \\
\p_t \psi = v\cdot \npk &~~~\text{on}~[0,T]\times\Sigma,\\
v_3=0 &~~~\text{on}~[0,T]\times\Sigma_b,\\
(v,\rho,\psi)|_{t=0}=(v_0^\kk, \rho_0^\kk, \psi_0^\kk).
\end{cases}
\end{equation}
Here, 
\begin{align}
\nabpk_i=&~ \ppk_i = \p_i-\frac{\p_i \pk}{\p_3\pk}\p_3,~i=1,2,~~\nabpk_3 =~\ppk_3 = \frac{1}{\p_3 \pk} \p_3,\label{nabpk00}\\
\Dtpk =&~\p_t + \vb\cdot\cnab+\frac{1}{\p_3 \pk} (v\cdot \Npk-\p_t\varphi)\p_3, \label{Dtpk00}
\end{align}
and $\vb:=(v_1,v_2)$, $\cnab:=(\p_1,\p_2)$ are the horizontal velocities and derivatives, $\TL :=\cnab\cdot\cnab= \p_{1}^2+\p_{2}^2$ is the flat tangential Laplacian,  $\npk:= (-\p_1\psk, -\p_2 \psk, 1)^\top$ is the smoothed Eulerian normal vector and $\Npk:= (-\p_1\pk, -\p_2 \pk, 1)^\top$ is the extension of $\npk$ into $\Omega$. 

The tangential smoothing method was first introduced in \cite{CS2007LWP} to study the incompressible Euler equations and subsequently generalized to investigate various free-surface inviscid fluids in Lagrangian coordinates. However, the free surface is now assumed to be a graph, and the construction of a nonlinear approximate system differs significantly from that in Lagrangian coordinates. The following issues are crucial and very technical.
\begin{itemize}
\item \textbf{Design the smoothed material derivative $\Dtpk$.} When restricted on $\Sigma$, the weight function in front of $\p_3$ in $\Dtpk$ should agree with the kinematic boundary condition. Otherwise, there will be a \textit{boundary mismatched term that cannot be controlled} when studying $\frac{d}{dt}E(t)$. Therefore, we cannot mollify $\p_t\varphi$ in $\Dtpk$.
\item \textbf{Introduce the artificial viscosity to control the mismatched terms.} The tangential mollification leads to some mismatched terms that the artificial viscosity should control.
\begin{itemize}
\item [a.] The commutator $\dd(f)$ in \eqref{AGUDD} now involves a new term $\ee(f)=\p_t\TT^\alpha(\pk-\varphi)\ppk_3 f$ which should be bounded by $\kk|\cnab\p_t\TT^{\alpha}\psi|_0$ after using the mollifier property \eqref{lkk33}.
\item [b.] The analysis of the ST term introduces two extra commutators, whose control requires the bound for $\kk|\cnab\p_t\TT^{\alpha}\psi|$.
\end{itemize}
To control the above two crucial mismatched terms, we introduce the artificial viscosity term $-\kk^2(1-\TL)\p_t\psi$ which gives the energy $|\kk\TJ\TT^{\alpha}\p_t\psi|_0$ to enhance the regularity of $\p_t\psi$. Due to technical reasons, it should be noted that the coefficient must be $\kk^2$ instead of any other power of $\kk$ in the artificial viscosity. The details are explained in Section \ref{sect uniformkk} below \eqref{I22}. 
\end{itemize}

It should also be noted that the design of the linearized $\kk$-regularized problem is also crucial and technically complicated, as we must define the``new free surface" in each iteration step and ensure that the boundary conditions remain consistent with the nonlinear problem. We refer to those rather technical constructions to the beginning of Section \ref{sect linear LWP}. 

Now, once the coefficients involving $\varphi$ are regularized in both $t$ and $x'$ variables, the loss of derivatives can be compensated by such regularization for each fixed $\kk>0$. That is, the existence of the nonlinear approximate problem \eqref{CWWSTkk00} is resolved for each fixed $\kk>0$. Based on the strategies introduced in Section \ref{sect apriori intro} and the above analysis of the mismatched terms, we can derive the uniform-in-$\kk$ a priori estimates for the nonlinear approximate system \eqref{CWWSTkk00}. We can also prove the initial data $(v_{0,\kk},\rho_{0,\kk},\psi_{0,\kk})$ of \eqref{CWWSTkk00} converges to the initial data of \eqref{CWWST} as $\kk\to 0$. This completes the proof of the existence of the original system \eqref{CWWST}.

\section{Nonlinear approximate $\kk$-problem}\label{sect nonlinear approx problem}
 The first step in proving local well-posedness is to introduce our approximation scheme. For each $\kk>0$, we construct a suitable approximate problem indexed by $\kk$ which is asymptotically consistent with \eqref{CWWST}. 
\subsection{The tangential mollification}
Let $\zeta=\zeta(x')\in C_c^\infty (\R^2)$, satisfying $0\leq \zeta\leq 1$ and $\int_{\R^2} \zeta\,dx'=1$, be a standard cut-off function supported in the closed unit ball $\overline{B_1(\mathbf{0})}$. For each $\kk>0$, we set
$$
\zeta_\kk (x') = \kk^{-2} \zeta (\kk^{-1}x'),
$$
and for each $f:\R^2\to \R$, we define
\begin{equation}
\lkk f(x') :=\int_{\R^2} \zeta_\kk (x'-z') f(z')\,dz'. 
\end{equation}
Also,  for each $g:\R^3\to \R$, we set
\begin{equation}
\lkk g(x', z) := \int_{\R^2} \zeta_\kk (x'-z')g(z', x_3)\,dz'.
\end{equation}
In other words, when acting on a function of three independent variables, $\lkk$ becomes the smoothing operator in the tangential direction only. The next lemma records the properties that $\lkk$ enjoys. This will be frequently used (sometimes silently) in the rest of this paper.
\begin{lem}[{\cite[Lemma 2.6]{LuoZhang2020CWWLWP}}]\label{tgsmooth} 
Let $f:\R^2\rightarrow \R$ be a smooth function. For each $\kk>0$, we have:
\begin{align}
\label{lkk1 2D} |\lkk f|_s&\lesssim |f|_s,~~\forall s\geq -0.5;\\ 
\label{lkk2} |\TP\lkk f|_0&\lesssim \kk^{-s}|f|_{1-s}, ~~\forall s\in [0,1];\\  
\label{lkk3} |f-\lkk f|_{\infty}&\lesssim \sqrt{\kk}|\TP f|_{0.5}\\
\label{lkk33} |f-\lkk f|_{L^p}&\lesssim \kk|\TP f|_{L^p}.
\end{align}
Also, for a smooth function $g:\R^3\to \R$, then
\begin{equation}
\|\lkk g\|_s\lesssim \|g\|_s,~~\forall s\geq 0. \label{lkk1 3D}
\end{equation}
Moreover, let $h:\R^2\to \R$, and $[\lkk,f]h:=\lkk(fh)-f\lkk(h)$. Then we have:
\begin{align}
\label{lkk4} |[\lkk,f]g|_0 &\lesssim|f|_{L^{\infty}}|g|_0,\\ 
\label{lkk5} |[\lkk,f]\TP g|_0 &\lesssim |f|_{W^{1,\infty}}|g|_0, \\ 
\label{lkk6} |[\lkk,f]\TP g|_{0}&\lesssim \kk|f|_{W^{1,\infty}}|\TP g|_{0}.
\end{align}
\end{lem}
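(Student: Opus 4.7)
The plan is to verify each of the eight inequalities using standard tangential-mollifier techniques on $\R^2$, exploiting the fact that $\lkk$ is pure convolution with $\zeta_\kk$ in the tangential direction $x'$. The $L^2$-based boundedness estimates \eqref{lkk1 2D} and \eqref{lkk1 3D} follow from Young's convolution inequality and $\|\zeta_\kk\|_{L^1(\R^2)}=1$: for integer $s \geq 0$ tangential derivatives commute with $\lkk$ so $\TP^\alpha \lkk f = \lkk\TP^\alpha f$; non-integer $s \geq 0$ is handled by interpolation, and negative $s \in [-0.5, 0)$ by duality, using the self-adjointness of $\lkk$. Estimate \eqref{lkk2} is proved at its two endpoints — $s=0$ from $|\TP\lkk f|_0 = |\lkk\TP f|_0 \leq |f|_1$, and $s=1$ from $\TP\lkk f = (\TP\zeta_\kk)*f$ together with the scaling $\|\TP\zeta_\kk\|_{L^1} \lesssim \kk^{-1}$ — and then filled in by real interpolation.

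For the approximation estimates \eqref{lkk3}--\eqref{lkk33}, I would begin from the pointwise identity $f(x') - \lkk f(x') = \int \zeta_\kk(y')(f(x') - f(x'-y'))\,dy'$ and use the mean-value expansion $f(x') - f(x'-y') = \int_0^1 y'\cdot\TP f(x'-\tau y')\,d\tau$. Since $|y'| \leq \kk$ on the support of $\zeta_\kk$, Minkowski's inequality directly yields \eqref{lkk33}. For \eqref{lkk3}, the same expansion combined with the 2D embedding $H^{1/2}(\R^2) \hookrightarrow L^4$ applied to $\TP f$, followed by Cauchy--Schwarz on the $y'$-integral restricted to $|y'|\leq\kk$, produces the $\sqrt{\kk}$ factor.

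The main work is in the commutator bounds. For \eqref{lkk4}, writing $[\lkk,f]g(x') = \int \zeta_\kk(x'-z')(f(z')-f(x'))g(z')\,dz'$ and bounding the difference by $2|f|_\infty$ closes it via Young. For \eqref{lkk5}, I would integrate by parts in $z'$ to move $\TP$ off $g$, producing a term with $\TP f$ controlled directly by $|\TP f|_\infty |g|_0$, and a term with kernel $\TP\zeta_\kk(x'-z')$; the latter combined with $|f(z')-f(x')| \leq |\TP f|_\infty |z'-x'|$ gives a kernel $|x'-z'|\,|\TP\zeta_\kk(x'-z')|$ whose $L^1$-norm is $O(1)$ uniformly in $\kk$ by the scaling of $\zeta_\kk$. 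Finally, for \eqref{lkk6} the key is \emph{not} to integrate by parts; instead, keep $\TP g$ intact inside the convolution and substitute $f(z')-f(x') = (z'-x') \cdot \int_0^1 \TP f(x'+\tau(z'-x'))\,d\tau$, which extracts an extra factor $|z'-x'|\leq\kk$ from the support of $\zeta_\kk$ and gives the $\kk$ gain via Young on $\zeta_\kk$.

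The main obstacle is the careful bookkeeping in \eqref{lkk6}: the full power of $\kk$ there is critical for controlling the mismatched terms produced by the tangential smoothing in the nonlinear approximate problem, and it is easy to waste a factor of $\kk$ by naively integrating by parts instead of using the mean-value representation. The remaining estimates are routine consequences of the scaling of $\zeta_\kk$ once the correct identity is chosen.
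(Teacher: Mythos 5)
Your proof is correct and follows the standard route for these tangential-mollifier estimates, which is precisely the argument behind the cited Lemma 2.6 of \cite{LuoZhang2020CWWLWP} (this paper only quotes the result without reproving it): Young/Fourier-multiplier bounds plus interpolation and duality for \eqref{lkk1 2D}, \eqref{lkk2}, \eqref{lkk1 3D}; the mean-value identity $f(x')-f(x'-y')=\int_0^1 y'\cdot\TP f(x'-\tau y')\,d\tau$ on the support $|y'|\leq\kk$ for \eqref{lkk3}--\eqref{lkk33}; and the kernel-difference representation of $[\lkk,f]$, with integration by parts only in \eqref{lkk5} and the crucial retention of $\TP g$ in \eqref{lkk6}. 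The one cosmetic point is in \eqref{lkk3}: the pairing of $\zeta_\kk$ against $\TP f\in L^4$ is a H\"older $L^{4/3}\times L^4$ estimate (equivalently, Cauchy--Schwarz followed by H\"older on the ball $|y'|\leq\kk$), and either reading of your step yields the stated $\sqrt{\kk}$ gain since $\|\zeta_\kk\|_{L^{4/3}}\sim\kk^{-1/2}$.
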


\subsection{Construction of the $\kk$-problem}\label{sect CWWSTkkeq}
Let $\psk:=\lkk^2\psi$, $\pk(t,x)=x_3+\chi(x_3)\psk(t,x')=\lkk^2\varphi(t,x)$, and $\npk:=(-\p_1\psk, -\p_2\psk, 1)^\top$. Then we set the approximate $\kk$-problem of \eqref{CWWST} to be
\begin{equation}\label{CWWSTkk}
\begin{cases}
\rho \Dtpk v +\nabpk \qc=-(\rho-1) ge_3&~~~ \text{in}~[0,T]\times \Omega,\\
\ff'(q) \Dtpk \qc  +\nabpk\cdot v=\ff'(q) gv_3&~~~ \text{in}~[0,T]\times \Omega,\\
q=q(\rho), \qc=q+g\pk &~~~ \text{in}~[0,T]\times \Omega, \\
\qc=g\psk-\sigma\cnab \cdot \left( \frac{\cnab \psk}{\sqrt{1+|\cnab\psk|^2}}\right)+\kk^2(1-\TL)(v\cdot \npk) &~~~\text{on}~[0,T]\times\Sigma, \\
\p_t \psi = v\cdot \npk &~~~\text{on}~[0,T]\times\Sigma,\\
v_3=0 &~~~\text{on}~[0,T]\times\Sigma_b,\\
(v,\rho,\psi)|_{t=0}=(v_{\kk,0}, \rho_{\kk, 0}, \psi_{\kk, 0}).
\end{cases}
\end{equation}
Here, 
\begin{align}
\ppk_t = &~\p_t - \frac{\p_t \varphi}{\p_3 \pk}\p_3,\label{ptk}\\
\nabpk_a=&~ \ppk_a = \p_a -\frac{\p_a \pk}{\p_3\pk}\p_3,\quad a=1,2,\label{nabpk 12}\\
\nabpk_3 =& ~\ppk_3 = \frac{1}{\p_3 \pk} \p_3,\label{nabpk 3}\\
\Dtpk = &~\ppk_t+v\cdot\nabpk \label{Dtpk},
\end{align}
and $\TL = \p_x^2+\p_y^2$ is the flat tangential Laplacian. Thanks to \eqref{ptk}, the smoothed material derivative $\Dtpk$ is equivalent to
\begin{equation}
\Dtpk = \p_t + \vb\cdot\cnab+\frac{1}{\p_3 \pk} (v\cdot \Npk-\p_t\varphi)\p_3, \label{Dtpk alternate}
\end{equation}
where 
$\Npk:= (-\p_1\pk, -\p_2 \pk, 1)^\top$. Note that we do not replace $v\cdot \Npk-\p_t\varphi$ by $v\cdot \Npk-\p_t\pk$ in the last term, as this would generate a severe structural mismatch in the boundary estimates. 

The approximate $\kk$-system \eqref{CWWSTkk} is asymptotically consistent with \eqref{CWWST} as $\kk\to 0$. Furthermore, the artificial viscosity $\kk(1-\TL)(v\cdot \npk)$ in the modified boundary condition 
$$
\qc=g\psk-\sigma\cnab \cdot \left( \frac{\cnab \psk}{\sqrt{1+|\cnab\psk|^2}}\right)+\kk^2(1-\TL)(v\cdot\npk) ~~~\text{ on }\Sigma
$$
is necessary to control the terms generated due to the loss of symmetry in \eqref{CWWSTkk}. 

\section{Uniform energy estimates for the nonlinear $\kk$-problem}\label{sect uniformkk}
For each fixed $\kk>0$, we denote by $(v^{\kk}(t),\rho^{\kk}(t),\qc^{\kk}(t),\psi^{\kk}(t))$ the solution of the nonlinear $\kk$-system \eqref{CWWSTkk}. Let $\sigma>0$ be fixed. We aim to show that $\{v^\kk(t),\qc^\kk(t),\rho^\kk(t),\psi^\kk(t)\}_{\kk>0}$ has a convergent subsequence that approximates the solution to the original system \eqref{CWWST} as $\kk\to0$ in some time interval $[0, T]$ with $T$ being independent of $\kk$. From now on, we drop the superscript $\kk$ when analyzing the nonlinear $\kk$-approximate system for the sake of clean notations. Let
\begin{equation} \label{energykk}
\begin{aligned}
E^{\kk}(t)=&E^{\kk}_{0}(t)+E^{\kk}_{4}(t), \quad \text{where}\\
E^{\kk}_{0}(t)=&\|\rho(t)-1\|_0^2+g|\lkk \psi|_0^2+\sum_{k=0}^3\|\sqrt{\ff'(q)}\p_t^k\qc(t)\|_{0}^2,\\
E^{\kk}_{4}(t)=&\sum_{k=0}^4\|\p_t^kv(t)\|_{4-k}^2+\sigma|\cnab \p_t^k \lkk \psi(t)|^2_{4-k}+\left\|\sqrt{\ff'(q)}\qc(t)\right\|_0^2+\sum_{k=0}^3\|\p_t^k\p\qc(t)\|_{3-k}^2\\
&+\left\|\sqrt{\ff'(q)}\p_t^4\qc(t)\right\|_0^2+{\sum_{k=0}^4\int_0^t\left|\kk\p_t^{k+1}\psi(\tau)\right|_{5-k}^2\mathrm{d}\tau}.
\end{aligned}
\end{equation}

\begin{thm} \label{prop uniform kk energy est}
For each fixed $\sigma>0$, 
there exists some $T_{\sigma}>0$, independent of $\kk$ and $\sqrt{\ff'(q)}$ (and thus $\lam$), such that
\begin{equation}
E^{\kk}(t)\lesssim P(E^{\kk}(0))=:\PP_0^\kk, \quad \text{ for every } 0\leq t\leq T_{\sigma},
\end{equation}
provided that the solutions of \eqref{CWWSTkk} possess the regularity
\begin{subequations}\label{regularity theorem 4.1 }
\begin{align}
&\p_t^k v(t,\cdot)\in L^{\infty}([0, T_{\sigma}];H^{4-k}(\Omega)), \quad k=0,\cdots, 4,\\
&\p_t^k \p \qc(t,\cdot)\in L^{\infty}([0, T_{\sigma}]; H^{3-k}(\Omega)),\quad k=0, \cdots ,3, \\
&\lam\p_t^k\qc(t,\cdot)  \in L^{\infty}([0, T_{\sigma}]; L^2(\Omega)), \quad k=0,\cdots, 4,\\
&\psi(t,\cdot)\in L^{\infty}([0, T_{\sigma}]; L^2(\Sigma)), \quad \sqrt{\sigma}\p_t^k \cnab\psi(t,\cdot)\in L^{\infty}([0, T_{\sigma}]; H^{4-k}(\Sigma)), \quad k=0,\cdots, 4,\\
&\kk\p_t^{k+1}\psi(t,\cdot) \in L^2([0, T_{\sigma}]; H^{5-k}(\Sigma)), \quad k=0,\cdots, 4. \label{regularity from viscosity}
\end{align}
\end{subequations}
\end{thm}
\begin{rmk}
Compared with the regularity of the solutions to the original nonlinear system \eqref{regularity theorem 1.1 }, the additional regularity requirement \eqref{regularity from viscosity} comes from the artificial viscosity. 
\end{rmk}

Thanks to the generalized Gr\"onwall's inequality (Theorem \ref{thm: generalized gronwall}),
the key step of proving Theorem \ref{prop uniform kk energy est} is to show that
\begin{equation} \label{Ek pre Gronwall}
\sup_{0\leq t\leq T}E^{\kk}(t)\leq \PP_0^\kk + \int_0^T P(E^\kk (t))\dt, 
\end{equation}
for some $T>0$ chosen sufficiently small.
The control of $E^\kk (t)$ will be divided into three steps, i.e., the basic $L^2$ estimate, the div-curl analysis, and the interior tangential estimates. We note that the compatibility conditions on $\Sigma$ have changed due to the presence of artificial viscosity. 
The new compatibility conditions, expressed in terms of $\qc$, are
\begin{equation} \label{kk comp con}
(\Dtpk)^k \qc |_{t=0} =  (\Dtpk)^k (-g\psk+\sigma \mathcal{H})|_{t=0} + (\Dtpk)^k \left (\kk^2 (1-\TL) (v\cdot\npk)\right){|_{t=0}},\quad k=0,1,2,3,\quad \text{on}\,\,\Sigma 
\end{equation}
We however are still able to construct initial data satisfying \eqref{kk comp con} in terms of $(\psi_{\kk, 0}, v_{\kk, 0}, \qc_{\kk, 0})$, that is uniformly bounded and converges to $(\psi_0, v_0, \qc_0)$ as $\kk \to 0$. The details can be located in Appendix \ref{sect CWWSTkk data}. 

\subsection{$L^2$-estimate}\label{sect kkL2}
First, we establish $L^2$-energy estimate for \eqref{CWWSTkk}. Invoking Theorem \ref{transport thm nonlinear}, the identity $\nabpk\pk=e_3$, and then integrating by parts, we have:
\begin{equation}\label{kkL2 1}
\begin{aligned}
\frac12\ddt\io\rho|v|^2\p_3\pk\dx=&-\io v\cdot\nabpk \qc\p_3\pk\dx-\io (\rho-1) g v_3\p_3\pk\dx+\frac12\io\rho|v|^2\p_3\p_t(\pk-\varphi)\dx\\
=&\io \qc(\nabpk\cdot v)\p_3\pk\dx+\isb v_3\qc\dx'-\is\p_t\psi q\dx'-\is g\psk \p_t\psi\dx'\\
&-\io (\rho-1) g v_3\p_3\pk\dx+\frac12\io\rho|v|^2\p_3\p_t(\pk-\varphi)\dx.
\end{aligned}
\end{equation}
Plugging the continuity equation into the first integral, we get
\begin{equation}\label{kkL2 q}
\begin{aligned}
\io \qc(\nabpk\cdot v)\p_3\pk\dx=&-\frac12\ddt\io\ff'(q)|\qc|^2\p_3\pk\dx+\frac12\io\rho\Dtpk(\rho^{-1}\ff'(q))|\qc|^2\dx+\frac12\io\ff'(q)|\qc|^2\p_3\p_t(\pk-\varphi)\dx\\
&+\io\qc\ff'(q)gv_3\p_3\pk\dx\\
\lesssim&-\frac12\ddt\left\|\sqrt{\ff'(q)}q\right\|_0^2 +\left\|\sqrt{\ff'(q)}q\right\|_0^2\left(\|\Dtpk\rho\|_{\infty}+\|\p_3\p_t(\pk-\varphi)\|_{\infty}+\left\|\sqrt{\ff'(q)}v_3\right\|_0\right).
\end{aligned}
\end{equation}
Here and in the sequel, we employ the notation $A\lesssim B$ to mean that $A\leq CB$ for a universal constant $C$. 
The boundary integral on $\Sigma_b$ vanishes due to $v_3|_{\Sigma_b}=0$. Then we plug $q=-\sigma\cnab \cdot \left( \frac{\cnab \psk}{\sqrt{1+|\cnab\psk|^2}}\right)+\kk^2(1-\TL)\p_t\psi$ into the first boundary term in \eqref{kkL2 1} and integrate by parts to get:
\begin{equation}\label{kkL2 STkk}
-\is\p_t\psi q\dx'=-\sigma\is\left( \frac{\cnab \psk}{\sqrt{1+|\cnab\psk|^2}}\right) \cdot\cnab\p_t\psi\dx'+\is\left|\kk\TJ\p_t\psi\right|_0^2\dx',
\end{equation}
where $\langle\cdot\rangle$ denotes the Japanese bracket. 
To treat the first term,  we use the self-adjointness of $\lkk$ in $L^2(\Sigma)$ to move one $\lkk$ from $\cnab\psk$ to $\p_t\psi$:
\begin{equation}\label{kkL2 ST}
\begin{aligned}
&-\sigma\is\left( \frac{\cnab \psk}{\sqrt{1+|\cnab\psk|^2}}\right) \cdot\cnab\p_t\psi\dx'=-\sigma\is\frac{\cnab\lkk\psi\cdot\p_t\cnab\lkk\psi}{|\npk|}\dx'-\sigma\is\cnab\lkk\psi\cdot\left([\lkk,|\npk|^{-1}]\cnab\p_t\psi\right)\dx'\\
\lesssim&-\frac12\ddt\left|\sqrt{\sigma}\frac{1}{|\npk|^{\frac{1}{2}}}\cnab\lkk\psi\right|_0^2+\frac12\is\p_t(|\npk|^{-1})\left|\sqrt{\sigma}\cnab\lkk\psi\right|^2+P(|\cnab\psk|_{W^{1,\infty}})\sigma\left|\sqrt{\sigma}\cnab\lkk\psi\right|_0^2+\eps\left|\kk\cnab\p_t\psi\right|_0^2.
\end{aligned}
\end{equation}
Now, we get the non-weighted $L^2$-boundary energy from the second boundary integral in \eqref{kkL2 1}:
\begin{equation}\label{kkL2 psi}
\begin{aligned}
-\is g\p_t\psi\psk\dx'=-\frac12\ddt \io g|\lkk\psi|^2\dx'.
\end{aligned}
\end{equation}
Next, we prove an $L^2$-estimate for $\rho-1$.
We use $\Dtpk\rho=\Dtpk(\rho-1)$ and $\Dtpk\pk=v_3+\p_t(\pk-\varphi)$ to rewrite the continuity equation in terms of $\rho-1$:
\[
\Dtpk(\rho-1)+\rho(\nabpk\cdot v)=-\p_t(\pk-\varphi).
\] Testing this with $\rho-1$ in $L^2(\Omega)$ and using the mollifier property \eqref{lkk33}, we get
\begin{equation}\label{kkL2 rho}
\frac12\ddt\|\rho-1\|_0^2\lesssim \|\rho-1\|_0(\|\p v\|_0+\kk|\TP\p_t\psi|_0).
\end{equation}

Let
\begin{equation} \label{kkE0}
E_0^{\kk}(t)=\|v\|_0^2+\left\|\sqrt{\ff'(q)}\qc\right\|_0^2+\|\rho-1\|_0^2+|\sqrt{g}\lkk\psi|_0^2+\left|\sqrt{\sigma}\cnab\lkk\psi\right|_0^2+\int_0^T\is\left|\kk\TJ\p_t\psi\right|_0^2\dx'\dt.
\end{equation}
Since $1\leq |\npk| = \sqrt{1+(\p_1 \psk)^2+(\p_2 \psk)^2}$,  we combine \eqref{kkL2 1}-\eqref{kkL2 rho} and obtain
\begin{equation}\label{kkL2}
E_0^{\kk}(T)-E_0^{\kk}(0)\lesssim\int_0^T P(|\cnab\psk|_{W^{1,\infty}},\|\p v\|_{\infty},|\kk\TP\p_t\psi|_{0.5})E_0^{\kk}(t)\dt,
\end{equation}
after choosing $\eps>0$ suitably small in \eqref{kkL2 ST}.
Here, we note that, using $\p_3\p_t (\pk-\varphi) = \chi'(x_3)\left(\p_t\psk(t,x')-\p_t\psi (t,x')\right)$
together with \eqref{chi} and \eqref{lkk3} of Lemma \ref{tgsmooth},
we have
\begin{align}\label{AGU TTM1}
\|\p_3 \p_t (\pk-\varphi)\|_{\infty} \leq  |\p_t\psk-\p_t\psi|_{\infty}
\lesssim \sqrt{\kk} |\TP\p_t\psi|_{0.5},
\end{align}where right side is directly controlled by invoking $\p_t\psi=v\cdot \Npk=-(\vb\cdot\cnab)\psk+v_3$ on $\Sigma$ and the Sobolev trace lemma. 

\subsection{Reduction of pressure}\label{sect reduction q}
We show how to reduce the control of the pressure to that of the velocity when there is at least one spatial derivative on $q$. This follows from using the momentum equation 
$\rho\Dtpk v=-\nabpk\qc-(\rho-1)ge_3$. 
In particular, by considering the third component of the momentum equation, we get
\begin{align} \label{mom 3}
-(\p_3\pk)^{-1} \p_3 \qc-(\rho-1) ge_3=\rho\Dtpk v_3.
\end{align}
Since $\p_3\pk$ is bounded from below, i.e., there exists $c_0>0$ such that $\p_3 \pk \geq c_0$,  then
\begin{equation} 
\|\p_3 \qc\|_{0}\lesssim_{g, c_0} \|\rho-1\|_0+\|\rho\|_{\infty} \|\Dtpk v_3\|_0,
\end{equation}
where $\Dtpk v_3 = \p_t v_3 +\vb\cdot \cnab v_3+ \frac{1}{\p_3\pk} (v\cdot \Npk-\p_t\varphi)\p_3 v_3$. This implies that the $L^2$-norm of $\p_3\qc$ is reduced to the $L^2$-norms of $\rho-1$, $\p_t v_3,\TP v_3$ and $\omega(x_3)\p_3 v_3$. Here $\omega(x_3)\in C^{\infty}(-b,0)$ is assumed to be bounded, comparable to $|x_3|$ in $[-2,0]$ and vanishing on $\Sigma$.

Let $\TT=\p_t$ or $\TP$ or $\omega(x_3)\p_3$ and $D=\p$ or $\p_t$.  The above estimate yields the control of $\|D^k \p_3 \qc\|_0$ after taking $D^k$, $k\geq 1$ to \eqref{mom 3}. Specifically, at the leading order, $\|D^k \p_3 \qc\|_0$ is controlled by
\begin{align} 
C(g, c_0)\left(\|\ff'(q)D^k \qc\|_0+\|\ff'(q)D^k \pk\|_0+\|\rho\|_{L^\infty}\|D^k\TT v_3\|_0\right).
\end{align}

In addition, by considering the first two components of the momentum equation, we have:
\begin{equation} \label{reduction q tangential}
-\p_i \qc=-(\p_3\pk)^{-1}\TP_i\pk\p_3 \qc+\rho\Dtpk v_i,~~i=1,2.
\end{equation}
and thus the control of $\TP \qc$ is reduced to $\p_3 \qc$ and $\Dtpk v_i=\p_t v_i+(\vb\cdot\cnab) v_i+(\p_3\pk)^{-1}(v\cdot\Npk-\p_t\varphi)\p_3 v_i$.

Lastly,  using \eqref{mom 3} and \eqref{reduction q tangential}, we obtain
\begin{align}
\|\p_3 \qc\|_{\infty}\lesssim_{g, c_0} \|\rho-1\|_{\infty}+\|\rho\|_{\infty} \|\Dtpk v_3\|_{\infty},\\
\|\TP \qc\|_{\infty} \lesssim_{g, c_0^{-1}} |\TP \psk|_{\infty} \|\p_3 \qc\|_{\infty} +\|\rho\|_{\infty} \|\Dtpk \vb\|_{\infty}. 
\end{align}
Thus, 
\begin{equation}\label{Dq Linf}
\|\p q\|_{\infty}\lesssim_{g, c_0, c_0^{-1}}  P(|\TP \psk|_{\infty}, \|\rho\|_{\infty})\left(\|\rho-1\|_{\infty} + \|\Dtpk v\|_{\infty}\right).
\end{equation}
Invoking the definition of $\Dtpk v$, 
\eqref{Dq Linf} implies that  $\|\p \qc\|_{\infty}$ is reduced to $\p_t v$, $\TP v$ and $\omega(x)\p_3 v$ for some weight function $\omega(x)$ vanishing on $\Gamma$. 

\subsection{Div-Curl analysis}\label{sect div curl}
To estimate the Sobolev norm of $v$, we can use the div-curl analysis to convert one normal derivative to the divergence and curl. First, we record the well-known div-curl decomposition lemma and refer to \cite[Lemma B.2]{GLL2020LWP} for the proof.
\begin{lem}[Hodge-type elliptic estimates]\label{hodgeTT}
For any sufficiently smooth vector field $X$ and $s\geq 1$, one has
\begin{equation}
\|X\|_s^2\lesssim C(|\psk|_s,|\cnab\psk|_{W^{1,\infty}})\left(\|X\|_0^2+\|\nabpk\cdot X\|_{s-1}^2+\|\nabpk\times X\|_{s-1}^2+\|\TP^{\alpha}X\|_0^2\right),
\end{equation} 
for any multi-index $\alpha$ with $|\alpha|=s$. The constant $C(|\psk|_s,|\cnab\psk|_{W^{1,\infty}})>0$ depends linearly on $|\psk|_s^2$.
\end{lem}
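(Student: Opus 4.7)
The plan is to prove this in two stages: first establish the base case $s=1$ by a flat div--curl identity combined with a flat-to-geometric conversion, and then extend to higher $s$ by induction using tangential derivatives. The flat-slab structure of $\Omega=\{-b<x_3<0\}$ is what makes the geometric part tractable, since the outward unit normals on $\Sigma$ and $\Sigma_b$ are constant.

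For the base case $s=1$, I would start from the three elementary algebraic identities in $\R^3$ that express each component of $\partial_3 X$ as a linear combination of $\nabla\cdot X$, $\nabla\times X$, and the horizontal derivatives of $X$. I then convert $\nabla\cdot$ and $\nabla\times$ into their geometric counterparts $\nabpk\cdot$ and $\nabpk\times$ via the algebraic relations dictated by $\nabpk_i=\partial_i-(\partial_i\pk/\partial_3\pk)\partial_3$ for $i=1,2$ and $\nabpk_3=(\partial_3\pk)^{-1}\partial_3$. Substituting the conversion formulas back into the flat identities produces a $3\times 3$ linear system in $\partial_3 X^1,\partial_3 X^2,\partial_3 X^3$ whose coefficient matrix is a perturbation of the identity by entries controlled by $|\cnab\psk|_{L^\infty}$. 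Because the choice of $\chi$ in \eqref{chi} together with $|\psi|_\infty\leq 1$ keeps $|\cnab\psk|_{L^\infty}$ uniformly small and $\partial_3\pk$ bounded from below, this matrix is invertible, and inverting it expresses $\partial_3 X$ as a bounded linear combination of $\nabpk\cdot X$, $\nabpk\times X$, and the horizontal derivatives of $X$, with constant a continuous function of $|\cnab\psk|_{L^\infty}$. This gives the $s=1$ estimate.

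For the inductive step from $s-1$ to $s$, I would apply a tangential derivative $\TP^\beta$ with $|\beta|=s-1$ to $X$ and invoke the $s=1$ estimate on $\TP^\beta X$. This reduces the claim to the already-proven case modulo the commutators $[\TP^\beta,\nabpk\cdot]X$ and $[\TP^\beta,\nabpk\times]X$. Expanding these commutators produces terms of the schematic form (at most $s$ derivatives of $\pk$) $\times$ (at most $s-1$ derivatives of $X$). Using the extension formula $\pk=x_3+\chi(x_3)\psk$ one has $\|\pk\|_s\lesssim |\psk|_s$, so standard Sobolev product estimates bound each commutator by $C(|\cnab\psk|_{W^{1,\infty}})|\psk|_s\|X\|_{s-1}$, which after squaring produces precisely the linear dependence of $C$ on $|\psk|_s^2$ claimed in the lemma.

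The principal obstacle I anticipate is handling the top-order commutator terms in which nearly all derivatives fall on $\pk$, threatening an unacceptable $|\psk|_{s+1}$ requirement. To rule this out I would exploit the structure of $\nabpk$: every coefficient contains at least one horizontal derivative of $\pk$ (or the factor $\chi'\psk$), so the top-order commutator terms always involve at most $s$ derivatives of $\psk$, never $s+1$. Careful bookkeeping of which index can become a normal derivative, combined with an induction on the lower-order norms of $X$ already controlled by the preceding step, then closes the estimate with the dependence on $|\psk|_s^2$ and $|\cnab\psk|_{W^{1,\infty}}$ stated in the lemma.
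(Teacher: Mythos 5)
The paper does not prove this lemma itself (it defers to \cite[Lemma B.2]{GLL2020LWP}), and your overall strategy --- solve a pointwise algebraic system for $\p_3 X$ in terms of $\nabpk\cdot X$, $\nabpk\times X$ and horizontal derivatives, then climb in regularity --- is indeed the standard route. However, two steps as you have written them do not hold up. First, your justification of the invertibility of the $3\times 3$ system is wrong: condition \eqref{chi} together with $|\psi_0|_{\infty}\leq 1$ controls $\chi'\psk$, hence keeps $\p_3\pk=1+\chi'\psk$ bounded above and below, but it says nothing about $|\cnab\psk|_{L^\infty}$ being small --- the lemma's constant is allowed to depend on $|\cnab\psk|_{W^{1,\infty}}$ precisely because no smallness is assumed. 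If your argument genuinely required the coefficient matrix to be a small perturbation of the identity, it would fail. Fortunately the system is unconditionally solvable: writing $A_i=\p_i\pk/\p_3\pk$ and $J=\p_3\pk$, the matrix formed from the divergence and the first two curl components has determinant $J^{-1}\left(A_1^2+A_2^2+J^{-2}\right)\geq J^{-3}>0$, so the correct argument is this exact computation (equivalently, positive definiteness of the induced metric), not a perturbation argument. You should replace the smallness claim by this.

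Second, the induction does not close as described. Applying the $s=1$ case to $\TP^\beta X$ with $|\beta|=s-1$ only controls derivatives of the form $\p\,\TP^{s-1}X$, i.e.\ those containing \emph{at most one} normal derivative; it never produces $\p_3^k\TP^{s-k}X$ for $k\geq 2$, which are part of $\|X\|_s$. The induction must instead be organized by the number of normal derivatives: having solved $\p_3X=M^{-1}(\nabpk\cdot X,\nabpk\times X,\TP X)$, apply $\p_3^{k-1}\TP^{s-k}$ to this identity for $k=1,\dots,s$. The derivatives landing on $\nabpk\cdot X$ and $\nabpk\times X$ are absorbed by the full $H^{s-1}$ norms on the right-hand side of the lemma (this is why those norms, and not merely their tangential parts, appear there), the term $\p_3^{k-1}\TP^{s-k+1}X$ has one fewer normal derivative and is handled by the induction on $k$, and the derivatives falling on the coefficients give at most $s$ derivatives of $\psk$, yielding the stated linear dependence on $|\psk|_s^2$ after the product estimates you sketch. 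With these two repairs the proof goes through.
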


We will apply Lemma \ref{hodgeTT} to $\|\p_t^k v\|_{4-k}$ for $0\leq k \leq 3$. Starting from $k=0$, we have:
\begin{align}
\label{vdivcurl}\|v\|_4^2\lesssim&~ C(|\psk|_4,|\cnab\psk|_{W^{1,\infty}})\left(\|v\|_0^2+\|\nabpk\cdot v\|_3^2+\|\nabpk\times v\|_3^2+\|\TP^4 v\|_0^2\right),\\
\label{vtdivcurl} \|\p_t^k v\|_{4-k}^2\lesssim &~C(|\psk|_{4-k},|\cnab\psk|_{W^{1,\infty}})\left(\|\p_t^k v\|_0^2+\|\nabpk\cdot \p_t^kv\|_{3-k}^2+\|\nabpk\times \p_t^kv\|_3^2+\|\TP^{4-k}\p_t^{k} v\|_0^2\right),
\end{align} where the $L^2$-norm has been controlled in \eqref{kkL2} and the tangential derivatives will be studied in the next section by using Alinhac good unknowns. The divergence part is reduced to the estimates of $q$ by using the continuity equation
\begin{equation}\label{divv}
\|\nabpk\cdot v\|_3^2=\left\|\ff'(q)\Dtpk \qc\right\|_3^2+\left\|\ff'(q)gv_3\right\|_3^2,
\end{equation}which will be further reduced to the tangential estimates of $v$ by using the argument in Section \ref{sect reduction q}. Similarly, when $k=1,2,3$, we have
\begin{align*}
\nabpk\cdot\p_t^k v=-\p_t^k(\ff'(q)\Dtpk \qc)+\p_t^k(\ff'(q) gv_3)+[\nabpk\cdot,\p_t^k]v\lleq - \ff'(q) \p_t^k \Dtpk \qc+ \ff'(q) g \p_t^k v_3 + (\p_3\pk)^{-1}\TP\p_t^k\pk\p_3 v,
\end{align*}
where $\lleq$ means equality modulo lower-order terms.

In particular, the lower-order terms consist of $$-[\p_t^k, \ff'(q)]\Dtpk \qc, \quad  g[\p_t^k, \ff'(q)]v_3, \quad  \left[\p_t^k, \frac{1}{\p_3\pk}\right]\p_a \pk \p_3 v^a,
 \quad \text{and}\,\, -\left[\p_t^k, \frac{1}{\p_3\pk}\right]\p_3 v^3,
$$
where $a=1,2$. Note that there are $\leq k-1$ time derivatives on $\Dtpk \qc$, $v_3$, $\p v$. Also, since $\pk(t,x', x_3)=x_3+\chi(x_3)\psk(t,x')$, there are $\leq k-1$ time derivatives on $\TP \psk$, and $\leq k$ time derivatives on $\psk$. From \eqref{ff property} and because $k\leq 3$, these lower-order terms are controlled by $\PP_0^\kk+\int_0^T P(E^\kk(t))\dt$ through a combination of Young's inequality, Sobolev embeddings, and fundamental theorem of calculus.

This implies 
\begin{equation}\label{divvt}
\begin{aligned}
\|\nabpk\cdot\p_t^k v\|_{3-k}^2\leq &~C(c_0, g, \|v\|_{W^{1,\infty}}) \left (\left\|\ff'(q)\p_t^{k}\Dtp \qc\right\|_{3-k}^2+\left|\TP\p_t^{k}\psk\right|_{3-k}^2
+\PP_0^\kk+\int_0^T P(E^\kk(t))\dt\right),
\end{aligned}
\end{equation}
where the last two terms control all lower-order terms generated above. 
Since the material derivative $\Dtp=\p_t+\vb\cdot \cnab$ on $\Sigma$, the term $\ff'(q)\p_t^{k}\Dtp \qc$ involves only tangential derivatives with appropriate $\ff'$-weight. 
By combining this div-curl analysis and the reduction of pressure in Section \ref{sect reduction q}, we eventually only need to control the mixed space-time tangential derivatives of $v$, $\psi$, and $\qc$. We refer to Propositions \ref{TT spatial 4'}, \ref{TT spatial time}, and \ref{TT time 4'} for the details. 


Next, we analyze the vorticity term. We take $\nabpk\times$ in the momentum equation $\rho \Dtpk v=-\nabpk \qc+(\rho-1) ge_3$ to get 
\[
\rho\Dtpk(\nabpk\times v)=-\nabpk\times((\rho-1) ge_3)-(\nabpk\rho)\times\Dtpk v-\rho[\nabpk\times,\Dtpk]v,
\]where the first term on the right side is equal to $(-g\ppk_2\rho,g\ppk_1\rho,0)^\top$ and the second term, using $\Dtpk v=-\rho^{-1}\nabpk q-ge_3$, is equal to
\[
-(\nabpk\rho)\times\Dtpk v=\underbrace{\rho'(q)(\nabpk q)\times(\nabpk q)}_{=\vec{0}}+\nabpk \rho\times ge_3=(g\ppk_2\rho,-g\ppk_1\rho,0)^{\top}
\]which exactly cancels the first term. Using $[\ppk_i,\Dtpk](\cdot)=\ppk_iv^l\ppk_l(\cdot)+\ppk_i\p_t(\pk-\varphi)\ppk_3(\cdot)$, we get the evolution of the smoothed vorticity to be
\begin{equation}\label{curlv eq}
\rho\Dtpk(\nabpk\times v)_i=-\rho\epsilon^{ijk}\ppk_jv^l\ppk_l v_k-\rho\epsilon^{ijk}\ppk_j\p_t(\pk-\varphi)\ppk_3 v_k,
\end{equation}where $\epsilon^{ijk}$ denotes the sign of the permutation $(ijk)\in S_3$.

To control $\|\nabpk\times v\|_3$, we take $\p^3$ in \eqref{curlv eq} to get
\begin{equation}\label{curlv eq3}
\rho\Dtpk\left(\p^3(\nabpk \times v)_i\right)=-\epsilon^{ijk}\p^3(\rho\ppk_jv^l\ppk_l v_k)-\epsilon^{ijk}\p^3(\rho\ppk_j\p_t(\pk-\varphi)\ppk_3 v_k)-[\p^3,\rho\Dtpk](\nabpk \times v)_i.
\end{equation}

It is not necessary to write out the specific form of the right side of \eqref{curlv eq3}, but we just need to know the source terms in \eqref{curlv eq3} contain $\leq 4$ derivatives of $v$ and $\pk$ except the mismatched term involving $\pk-\varphi$. This is easy to see because the only term containing five derivatives is the one on the left side of \eqref{curlv eq3}. Therefore, a straightforward $L^2$ estimate for \eqref{curlv eq3} gives us the energy estimate
\begin{equation}\label{curlv3}
\ddt\frac12\|\nabpk\times v\|_3^2\leq P(\|v\|_4,|\psk|_4,\|\ff'(q)\p q\|_{\infty},\|\ff'(q)\p^2 q\|_1,\kk|\cnab\p_t\psi|_4),
\end{equation}where the mismatched term is controlled by using mollifier property \eqref{lkk6} and $\varphi(t,x)=x_3+\chi(x_3)\psi(t,x')$.

Similarly, we replace $\p^3$ by $\p_t^{k}\p^{3-k}$ for $0\leq k \leq 3$ to get
\begin{equation}
\rho\Dtpk\left(\p^{3-k}\p_t^k(\nabpk \times v)_i\right)=-\epsilon^{ijk}\p_t^{k}\p^{3-k}(\rho\ppk_jv^l\ppk_l v_k)-\epsilon^{ijk}\p_t^k\p^{3-k}(\rho\ppk_j\p_t(\pk-\varphi)\ppk_3 v_k)-[\p_t^{k}\p^{3-k},\rho\Dtpk](\nabpk \times v)_i,
\end{equation}and thus
\begin{equation}\label{curlvt1}
\ddt\frac12\|\p_t^k(\nabpk\times v)\|_{3-k}^2\leq P(E^\kk(t)).
\end{equation}

Then we need to estimate the commutator $\|[\p_t^k,\nabpk\times] v\|_{3-k}$ to get the control of $\|\nabpk\times\p_t^k v\|_{3-k}$. Similarly, as in the control of divergence, we know the highest order term in the commutator should be $\|(-\p_3\pk)^{-1}\TP\p_t^k\pk\p_3 v\|_{3-k}\lesssim \|\p v\|_{3-k}\|\TP\TJ^{3-k}\p_t^k\pk\|_0\leq \|\p v\|_{3-k}|\TP\p_t^k\psk|_{3-k}$. So we have the following conclusion 
\begin{equation}\label{curlvt}
\|\nabpk\times \p_t^k v\|_{3-k}^2\leq |\TP\p_t^k\psk|_{3-k}^2+\PP_0^{\kk}+\int_0^TP(E^{\kk}(t))\dt.
\end{equation}

Combining \eqref{vdivcurl}, \eqref{divv}, \eqref{divvt}, \eqref{curlv3}, \eqref{curlvt} and the argument in Section \ref{sect reduction q}, it remains to control the tangential derivatives of $v$ and full time derivatives of $q$, namely $\|\ff'(q)\p_t^4 \qc\|_0$.

\subsection{The $\TT^\alpha$-differentiated equations}\label{sect tg AGU}
By the div-curl analysis, the crucial step is to study the higher-order tangential energy estimate of \eqref{CWWSTkk}. In particular, we define the following tangential derivatives
\begin{equation}
\TT_0=\p_t,\quad \TT_1 = \p_1, \quad \TT_2=\p_2, \quad \TT_3=\omega(x_3) \p_3,
\end{equation} where $\omega\in C^{\infty}(-b,0)$ is assumed to be bounded, comparable to $|x_3|$ when $-2\leq x_3\leq 0$ and vanishing on $\Sigma$. This requires us to commute $\TT^\alpha$ with \eqref{CWWSTkk}, where $\TT^\alpha:=\TT_0^{\alpha_0}\TT_1^{\alpha_1}\TT_2^{\alpha_2}\TT_3^{\alpha_3}$, and $|\alpha| \leq 4$. 
\begin{rmk}
We need the tangential derivative $\TT_3=\omega(x_3)\p_3$ to control the $(\p_3\varphi)^{-1}(v\cdot\Npk-\p_t\varphi)\p_3$ in the material derivative $\Dtpk$. We do not include it in $E^{\kk}(t)$ as $\omega$ is comparable to 1. However, we still need the estimates of $\TT_3$ in the reduction of $\qc$. 
\end{rmk}

 We will not directly commute $\TT^\alpha$ with $\nabpk$. Instead, for $i=1,2,3$, we observe that
\begin{equation}
\TT^\alpha \ppk_i f = \ppk_i \TT^\alpha f - \ppk_3 f \ppk_i \TT^\alpha \pk + \mathfrak{C}_i'(f), \label{AGU pre}
\end{equation}
where for $i=1,2$, 
\begin{align}\label{AGU comm Ci'}
\mathfrak{C}_i'(f) =
-\left[ \TT^\alpha, \frac{\p_i \pk}{\p_3\pk}, \p_3 f\right]-\p_3 f \left[ \TT^\alpha, \p_i \pk, \frac{1}{\p_3\pk}\right] -\p_i\pk \p_3 f\left[\TT^{\alpha-\gamma}, \frac{1}{(\p_3\pk)^2}\right] \TT^\gamma \p_3 \pk
-\frac{\p_i \pk}{\p_3 \pk} [\TT^\alpha, \p_3] f+ \frac{\p_i \pk}{(\p_3\pk)^2}\p_3f [\TT^\alpha, \p_3]\pk,
\end{align}
with $|\gamma|=1$, and 
\begin{align}\label{AGU comm C3'}
\mathfrak{C}_3'(f) = 
\left[ \TT^\alpha, \frac{1}{\p_3\pk}, \p_3 f\right] + \p_3 f\left[\TT^{\alpha-\gamma}, \frac{1}{(\p_3\pk)^2}\right] \TT^\gamma \p_3 \pk
+\frac{1}{\p_3 \pk} [\TT^\alpha, \p_3] f- \frac{1}{(\p_3\pk)^2}\p_3f [\TT^\alpha, \p_3]\pk.
\end{align}
Since $\ppk_i$ and $\ppk_3$ commute, the identity \eqref{AGU pre} implies
\begin{equation}
\TT^\alpha \ppk_i f = \ppk_i (\TT^\alpha f - \ppk_3 f \TT^\alpha \pk)+\underbrace{\ppk_3\ppk_i f \TT^\alpha \pk + \mathfrak{C}_i'(f)}_{:=\mathfrak{C}_i(f)}. \label{AGU comm 1}
\end{equation}
The quantity $\TT^\alpha f - \ppk_3 f \TT^\alpha \pk$ is the so-called Alinhac good unknown associated with $f$. It was first observed by Alinhac \cite{Alinhac1989good} that the top-order term of $\pk$ does not appear when we use the above good unknown. It is not hard to see that we can obtain the control of $\|\TT^\alpha f\|_0$ from that of $\|\TT^\alpha f - \ppk_3 f \TT^\alpha \pk\|_0$. In particular,
\begin{align}
\|\TT^\alpha f\|_0 \leq \|\TT^\alpha f - \ppk_3 f \TT^\alpha \pk\|_0+ \| \ppk_3 f\|_{\infty} \|\TT^\alpha \pk\|_0. 
\label{AGU L2 bd}
\end{align}

In addition to this, we need to commute $\TT^\alpha$ with 
$$\Dtpk=\p_t + \vb\cdot\cnab+\frac{1}{\p_3 \pk} (v\cdot \Npk-\p_t\varphi)\p_3.$$ 
A direct computation yields:
\begin{align}
\TT^\alpha \Dtpk f &= \TT^\alpha \p_t f + \TT^\alpha (\vb \cdot \TP f) + \TT^\alpha \left(\frac{1}{\p_3\pk} (v\cdot \Npk-\p_t\varphi)\p_3 f\right)\nonumber\\
&=\Dtpk \TT^\alpha f + (v\cdot \TT^\alpha \Npk-\p_t \TT^\alpha \varphi)\ppk_3 f-\ppk_3 \TT^\alpha \pk(v\cdot \Npk-\p_t\varphi) \ppk_3 f+\mathfrak{D}'(f), \label{comm Dt TT pre}
\end{align}
where
\begin{align}
\mathfrak{D}'(f) = [\TT^\alpha, \vb]\cdot \TP f + \left[\TT^\alpha, \frac{1}{\p_3\pk}(v\cdot \Npk-\p_t\varphi), \p_3 f\right]+\left[\TT^\alpha, v\cdot \Npk-\p_t\varphi, \frac{1}{\p_3\pk}\right]\p_3 f+\frac{1}{\p_3\pk} [\TT^\alpha, v]\cdot \Npk \p_3 f\nonumber\\
-(v\cdot \Npk-\p_t\varphi)\p_3 f\left[ \TP^{\alpha-\gamma}, \frac{1}{(\p_3 \pk)^2}\right]\TT^\gamma \p_3 \pk+\frac{1}{\p_3\pk}(v\cdot \Npk-\p_t\varphi) [\TT^\alpha, \p_3] f+ (v\cdot \Npk-\p_t\varphi) \frac{\p_3 f}{(\p_3\pk)^2}[\TT^\alpha, \p_3] \pk, \label{AGU comm D}
\end{align}
with $|\gamma|=1$. 

Since $v\cdot \TT^\alpha \Npk = -v_1\p_1 \TT^\alpha \pk -v_2\p_2 \TT^\alpha \pk$, then we must have
\begin{equation} \label{AGU comm 3}
\begin{aligned}
 &(v\cdot \TT^\alpha \Npk-\p_t \TT^\alpha \varphi)\ppk_3 f-\ppk_3 \TT^\alpha \pk(v\cdot \Npk-\p_t\varphi) \ppk_3 f\\
 =& (v\cdot \TT^\alpha \Npk-\p_t \TT^\alpha \pk)\ppk_3 f-\ppk_3 \TT^\alpha \pk(v\cdot \Npk-\p_t\varphi) \ppk_3 f+ \p_t \TT^\alpha (\pk-\varphi)\ppk_3 f\\
 =& -\ppk_3 f\left (\p_t + \vb\cdot\cnab + (v\cdot \Npk-\p_t\varphi)\ppk_3 \right)\TT^\alpha \pk + \underbrace{\p_t \TT^\alpha (\pk-\varphi)\ppk_3 f}_{:=\mathfrak{E}(f)}\\
 =& -\ppk_3 f \Dtpk \TT^\alpha \pk+ \mathfrak{E}(f).
 \end{aligned}
\end{equation}
Thus, 
\begin{align}
\TT^\alpha \Dtpk f&=\Dtpk \TT^\alpha f  -\ppk_3 f \Dtpk \TT^\alpha \pk +\mathfrak{D}'(f)+\mathfrak{E}(f)\nonumber\\
=& \Dtpk \left(\TT^\alpha f  - \ppk_3 f \TT^\alpha \pk \right) +\mathfrak{D}(f)+\mathfrak{E}(f), \label{AGU comm 2}
\end{align}
where $\mathfrak{D}(f)=(\Dtpk \ppk_3 f) \TT^\alpha \pk +\mathfrak{D}'(f)$.

Let 
\begin{align}
\VV_i : = \TT^\alpha v_i - \ppk_3 v_i \TT^\alpha \pk, \quad \QQ:=\TT^\alpha \qc - \ppk_3 \qc \TT^\alpha \pk
\end{align}
respectively be the Alinhac good unknowns of $v$ and $\qc$. Motivated by \eqref{AGU comm 1} and \eqref{AGU comm 2}, we take $\TT^\alpha$ to the first two equations of \eqref{CWWST Eulerian} to obtain
\begin{align}
\rho \Dtpk \VV_i + \ppk_i \QQ &=\mathcal{R}^1_i,\label{AGU TT mom}\\
\ff'(q) \Dtpk \QQ + \nabpk\cdot \VV&=\mathcal{R}^2- \mathfrak{C}_i(v^i), \label{AGU TT mass}
\end{align}
where
\begin{align}
\label{rr1} \mathcal{R}^1_i:=& -[\TT^\alpha, \rho] \Dtpk v_i-\rho\left(\mathfrak{D}(v_i)+\mathfrak{E}(v_i)\right)-\mathfrak{C}_i(\qc),\\
\label{rr2} \mathcal{R}^2 :=&-[\TT^\alpha, \ff'(q)]\Dtpk \qc- \ff'(q)\left(\mathfrak{D}(\qc)+\mathfrak{E}(\qc)\right)+\TT^{\alpha}(\ff'(q)gv_3).
\end{align}
In addition, since $\TT^\alpha$ reduces to $\TP^\alpha$ on $\Sigma$ and $\TP^\alpha \npk = (-\p_1 \TP^\alpha \psk, -\p_2 \TP^\alpha \psk, 0)^\top$,  the $\TP^\alpha$-differentiated  kinematic boundary condition then reads
\begin{equation}
\p_t \TP^\alpha \psi + (\vb\cdot\cnab) \TP^\alpha\psk - \VV\cdot \npk=\mathcal{S}_1 \quad \text{on}\,\,\Sigma, \quad \text{ and } \VV_3=0\quad \text{on}\,\,\Sigma_b, \label{AGU TT BC kinematic} 
\end{equation}
where
\begin{align}
\mathcal{S}_1:=\p_3 v\cdot \npk \TP^\alpha\psk+[\TP^\alpha, v\cdot, N].
\end{align}
Also, since $\qc=q+g\psk$ and {$\p_3\pk|_{\Sigma}=1$}, we have {$\QQ|_{\Sigma} = (\TP^\alpha\qc - \p_3 \qc \TP^\alpha \psk)|_{\Sigma}=\TP^{\alpha}q+g\TP^{\alpha}\psk-(\p_3q+g)\TP^{\alpha}\psk=\TP^{\alpha}q-\p_3 q\TP^{\alpha}\psk$}, and thus the boundary condition of $\QQ$ on $\Sigma$ reads:
\begin{align}
\QQ =-\sigma \TP^\alpha\cnab \cdot \left( \frac{\cnab \psk}{\sqrt{1+|\cnab\psk|^2}}\right)+\kk^2(1-\TL)\TP^\alpha (v\cdot\npk)-\p_3 q\TP^\alpha \psk. \label{AGU TT BC Q}
\end{align}

\subsection{Tangential energy estimate with full spatial derivatives}\label{sect TT AGU}
In this subsection, we study the spatially-differentiated equations, i.e., the equations obtained by commuting $\TT^\alpha$, $\alpha_0=0$, and $|\alpha|=4$, with \eqref{CWWSTkk}. We aim to prove the following estimate:
\begin{prop}\label{TT spatial 4'}
For $\TT^{\alpha}$ with multi-index $\alpha$ satisfying $\alpha_0=0$ and $|\alpha|=4$, we have
\begin{equation}\label{TT spatial 4}
\|\TT^{\alpha}v(T)\|_0^2+\left\|\sqrt{\ff'(q)}\TT^{\alpha} \qc(T)\right\|_0^2+\left|\sqrt{\sigma}\cnab\TP^{\alpha}\lkk\psi(T)\right|_0^2+ \int_0^T\left|\kk\TP^{\alpha}\p_t\psi(t)\right|_{1}^2\dt\lesssim\PP_0^{\kk}+\int_0^T P(E^{\kk}(t))\dt.
\end{equation}
\end{prop}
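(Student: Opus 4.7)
\begin{pf}[Proof Proposal]
The plan is to carry out an energy estimate on the Alinhac-good-unknown system \eqref{AGU TT mom}--\eqref{AGU TT BC Q} for $\TT^{\alpha}=\TP^{\alpha}$ with $|\alpha|=4$, mirroring the $L^2$ computation of Section \ref{sect kkL2} but now at the top tangential level. First, multiply \eqref{AGU TT mom} by $\VV^i\,\p_3\pk$, multiply \eqref{AGU TT mass} by $\QQ\,\p_3\pk$, sum over $i$, and integrate over $\Omega$. Using the transport identity (the analogue of \eqref{kkL2 1} with $\p_3\pk$ as weight) together with $\nabpk\pk=e_3$ and integration by parts in the coupling term, one obtains the identity
\begin{equation*}
\frac{1}{2}\ddt\io\bigl(\rho|\VV|^2+\ff'(q)|\QQ|^2\bigr)\dvtk
=\is \QQ(\VV\cdot\npk)\dx'-\isb \QQ\VV_3\dx'
+\io \bigl(\VV\cdot\mathcal{R}^1+\QQ(\mathcal{R}^2-\mathfrak{C}_i(v^i))\bigr)\dvtk+\mathcal{I},
\end{equation*}
where $\mathcal{I}$ collects the lower-order commutator contributions produced by the transport theorem (cf.\ \eqref{kkL2 q}) together with the $\tfrac12\io\rho|\VV|^2 \p_3\p_t(\pk-\varphi)\dx$-type term, which is handled exactly as in \eqref{AGU TTM1}. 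The bottom boundary integral vanishes by $\VV_3|_{\Sigma_b}=0$.

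Next, I would treat the top boundary integral on $\Sigma$ by substituting \eqref{AGU TT BC Q} for $\QQ$ and using the kinematic relation \eqref{AGU TT BC kinematic} to convert $\VV\cdot\npk$ into $\p_t \TP^{\alpha}\psi+(\vb\cdot\cnab)\TP^{\alpha}\psk-\mathcal{S}_1$. This decomposes the boundary contribution into three main pieces:
\begin{equation*}
\is\QQ(\VV\cdot\npk)\dx' = \ST + \RT^{\sigma} + \kk^2\is (1-\TL)\TP^{\alpha}(v\cdot\npk)\,(\VV\cdot\npk)\dx' + (\text{commutators}),
\end{equation*}
with $\ST:=-\sigma\is \TP^{\alpha}\cnab\cdot\!\bigl(\tfrac{\cnab\psk}{\sqrt{1+|\cnab\psk|^2}}\bigr)(\VV\cdot\npk)\dx'$ and $\RT^\sigma:=-\is (\p_3 q)\,\TP^{\alpha}\psk\,(\VV\cdot\npk)\dx'$. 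For the ST term, I would first transfer one $\lkk$ from $\cnab\psk$ to the second factor (creating a commutator $[\lkk,|\npk|^{-1}]$ bounded by \eqref{lkk6}), then integrate $\cnab\cdot$ by parts exactly as in \eqref{kkL2 STkk}--\eqref{kkL2 ST}, and use the pointwise inequality \eqref{Cauchy} to extract the positive energy $\tfrac12\ddt|\sqrt{\sigma}\cnab\TP^{\alpha}\lkk\psi|_0^{2}$ modulo lower-order terms. The $\RT^\sigma$-term is not positive (no Rayleigh--Taylor sign is assumed at this stage), but for $\sigma>0$ fixed it is absorbed by $|\sqrt{\sigma}\cnab\TP^{\alpha}\lkk\psi|_0^{2}$ via interpolation and $|\psk|\lesssim |\lkk\psi|$, which is the origin of the $\sigma^{-1}$-dependence in \eqref{energy esti intro}. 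For the artificial-viscosity term, integrating the $(1-\TL)$ by parts and substituting $\p_t\psi=v\cdot\npk$ from the kinematic condition produces the dissipation $|\kk\TJ\TP^{\alpha}\p_t\psi|_0^{2}$ after time-integration, modulo further commutators.

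The next step is to estimate the interior commutator contributions $\mathcal{R}^1,\mathcal{R}^2,\mathfrak{C}_i(v^i)$ and the boundary commutators inside $\mathcal{S}_1$. All terms in $\mathfrak{C}_i',\mathfrak{D}'$ and $[\TT^{\alpha},\rho],[\TT^{\alpha},\ff'(q)]$ are handled by standard product and Kato--Ponce estimates, giving bounds by $P(E^{\kk}(t))$ since they involve at most $4$ tangential derivatives of $v,q,\pk$ and the normal-derivative part is already reduced in Section \ref{sect reduction q}. The genuinely delicate term is $\mathfrak{E}(f)=\p_t\TT^{\alpha}(\pk-\varphi)\ppk_3 f$, because $\pk-\varphi=\chi(x_3)(\psk-\psi)$ is a purely mollification-induced mismatch. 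Using \eqref{lkk33} yields
\begin{equation*}
\|\mathfrak{E}(f)\|_0\lesssim \|\ppk_3 f\|_{\infty}\,\kk\,|\TP\,\TP^{\alpha}\p_t\psi|_0,
\end{equation*}
so by Cauchy--Schwarz it is absorbed into $|\kk\TP^{\alpha}\p_t\psi|_1^{2}$ after time integration; this is precisely why the viscosity coefficient must be $\kk^2$, matching the power that appears when $\mathfrak{E}$ is squared. The same mechanism handles the analogous commutators arising in ST and in $\mathcal{S}_1$ (differences between $\psk$ and $\psi$, and between $\lkk\cnab\psi$ and $\cnab\lkk\psi$).

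Once the energy identity is closed, I would apply \eqref{AGU L2 bd} to pass from $\|\VV\|_0^2+\|\sqrt{\ff'(q)}\QQ\|_0^2$ to $\|\TT^{\alpha}v\|_0^2+\|\sqrt{\ff'(q)}\TT^{\alpha}\qc\|_0^2$, using $\|\TT^{\alpha}\pk\|_0\lesssim |\TP^{\alpha}\psk|_0\lesssim |\TP^{\alpha}\lkk\psi|_0$, which is already controlled by the boundary-energy piece just produced. Combining the time-integrated identity with Gr\"onwall's inequality yields \eqref{TT spatial 4}. The main obstacle, as indicated above, is the simultaneous handling of the ST structural identity and the mismatched $\mathfrak{E}$-type and $[\lkk,\cdot]$-type terms: the positive surface-tension energy must survive the $\lkk$-swap on $\cnab\psk$, and every asymmetry between $\psk$ and $\psi$ must generate an explicit $\kk$-factor that is then absorbed by the $\kk^2(1-\TL)$ dissipation, which is exactly what forces the particular scaling chosen in \eqref{CWWSTkk}.
\end{pf}
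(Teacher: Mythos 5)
Your proposal is correct and follows essentially the same route as the paper: the weighted energy identity for the Alinhac good unknowns, the ST/RT/artificial-viscosity decomposition of the boundary integral with the $\lkk$-swap and the pointwise inequality \eqref{Cauchy}, and the absorption of every $\psk-\psi$ mismatch (in particular $\mathfrak{E}(f)$) into the $\kk^2$-dissipation, followed by the recovery of $\|\TT^{\alpha}v\|_0$ from $\|\VV\|_0$. Two small bookkeeping points: the boundary term in your displayed identity should carry a minus sign, $-\is\QQ(\VV\cdot\npk)\dx'$ (your stated conclusions already reflect the correct sign), and the paper bounds the RT term by $|\p q|_{\infty}\,|\TP^{\alpha}\lkk\psi|_0\,|\p_t\TP^{\alpha}\lkk\psi|_0$ under the time integral, so closing it requires the mixed space-time boundary energy of Proposition \ref{TT spatial time} (which is where the $\sigma^{-1}$ enters) rather than interpolation against $|\sqrt{\sigma}\cnab\TP^{\alpha}\lkk\psi|_0^2$ alone.
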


 We will not directly consider the $\TT^{\alpha}$-differentiated variables but use Alinhac good unknowns to get rid of higher order terms of $\psk$. Invoking Lemma \ref{int by parts lem} and Theorem \ref{transport thm nonlinear}, testing \eqref{AGU TT mom} with $\VV$ and then integrating over $\Omega$ with respect to the measure $\p_3\pk\dx$, we get
\begin{align}\label{energy estimate AGU spatial}
\frac{1}{2} \frac{d}{dt} \io \rho |\VV|^2\p_3\pk\dx=\frac{1}{2}\io \rho |\VV|^2\p_3\p_t (\pk-\varphi)\dx +\io \QQ (\nabpk\cdot \VV) \p_3\pk\dx-\is \QQ (\VV\cdot \npk)\dx'+ \io \VV\cdot \mathcal{R}^1\p_3\pk\dx,
\end{align}where the boundary integral on $\Sigma_b$ vanishes thanks to $\VV_3|_{\Sigma}=0$. From now on, we will no longer write any boundary integral on $\Sigma_b$ due to the same reason.
Before estimating the integrals in \eqref{energy estimate AGU spatial},
we record some important properties that Alinhac good unknowns enjoy. 
\begin{lem}\label{AGU TT error}
Let $\mathbf{F}:= \TT^\alpha f - \ppk_3 f \TT^\alpha \pk$ with $|\alpha|=4$ and $\alpha_0=0$ be the Alinhac good unknowns associated with the smooth function $f$. 
Suppose that $\p_3 \pk \geq c_0>0$, then
\begin{equation}
\|\TT^\alpha f\|_0 \leq \|\mathbf{F}\|_0+ P\left(c_0^{-1}, |\psk|_4\right)\|\p_3 f\|_{\infty}. \label{AGU TT4 bd}
\end{equation}
 Furthermore, let $\mathfrak{C}(f)$, $\mathfrak{D}(f)$, and $\mathfrak{E}(f)$ be the remainder terms defined respectively in \eqref{AGU comm 1}, \eqref{AGU comm 3}, and \eqref{AGU comm 2}.  Then
\begin{align}
\|\mathfrak{C}_i(f)\|_{0} \leq&~P\left(c_0^{-1}, |\psk|_4\right)\cdot \|f\|_{4},\quad i=1,2,3,\label{C spatial}\\
\|\mathfrak{D}(f)\|_{0} \leq&~P\left(c_0^{-1}, |\psk|_4, |\p_t \psk|_3\right)\cdot \left(\|f\|_{4}+\|\p_t f\|_3\right), \label{D spatial}\\
\|\mathfrak{E}(f)\|_0 \leq&~\kk |\cnab\TT^{\alpha}\p_t \psi|_0 \|\p f\|_{\infty}. \label{E spatial}
\end{align}
\end{lem}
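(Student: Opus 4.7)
\begin{pf}[Proof proposal for Lemma \ref{AGU TT error}]
The plan is to handle the four estimates in turn, all of which reduce to product/commutator calculus once one observes that the tangential derivatives $\TT_1,\TT_2,\TT_3$ annihilate or trivialize interactions with the graph extension $\pk(t,x)=x_3+\chi(x_3)\psk(t,x')$. In particular, since $\psk$ is independent of $x_3$, one has $\TT_3\psk=\omega(x_3)\p_3\psk=0$ and any $\TT^\alpha\pk$ with $\alpha_0=0$, $|\alpha|=4$ is an $\R$-linear combination of products of the form $\bigl(\omega^{\alpha_3}\chi^{(j)}(x_3)\bigr)\,\TP_1^{\alpha_1}\TP_2^{\alpha_2}\psk$, $0\le j\le\alpha_3$. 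Using \eqref{chi} and the trace embedding, this yields $\|\TT^\alpha\pk\|_0\lesssim|\psk|_4$, and analogously $\|\TT^\beta\pk\|_s\lesssim|\psk|_{|\beta|+s}$. Together with $\p_3\pk\ge c_0$, the pointwise identity $\TT^\alpha f=\mathbf{F}+\ppk_3 f\,\TT^\alpha\pk$ gives \eqref{AGU TT4 bd} by triangle inequality and $\|\ppk_3 f\|_\infty\le c_0^{-1}\|\p_3 f\|_\infty$.

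For \eqref{C spatial}, expand each summand in \eqref{AGU comm Ci'} and \eqref{AGU comm C3'}. The three commutators $[\TT^\alpha,a,b]$, $[\TT^\alpha,a]b$, $[\TT^{\alpha-\gamma},a]\TT^\gamma b$ all take derivatives off the factor carrying the top-order object; by the standard Moser/Kato--Ponce product estimate in $H^s$ combined with the preceding paragraph, each such term is bounded in $L^2(\Omega)$ by $P(c_0^{-1},|\psk|_4)\,\|f\|_4$, since the coefficient $1/\p_3\pk$ and its derivatives are handled via the Faà di Bruno formula and lower bound $c_0$. The two remaining pieces, $(\p_i\pk/\p_3\pk)[\TT^\alpha,\p_3]f$ and its analogue, are harmless: $[\TT^\alpha,\p_3]$ produces at most terms of the form $\omega^{(\beta)}\p_3\TT^{\alpha'}f$ with $|\alpha'|\le 4$, so the $L^2$ bound again reduces to $\|f\|_4$.

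Estimate \eqref{D spatial} is handled by the same mechanism applied to the six commutator terms in \eqref{AGU comm D} plus the extra piece $(\Dtpk\ppk_3 f)\TT^\alpha\pk$ from the rewriting \eqref{AGU comm 2}. The only new feature is the presence of time derivatives: the factor $v\cdot\Npk-\p_t\varphi$ and its derivatives bring in $\p_t\pk=\chi(x_3)\p_t\psk$, so the estimate requires $|\p_t\psk|_3$ in addition to $|\psk|_4$. The material-derivative term $(\Dtpk\ppk_3 f)\TT^\alpha\pk$ is estimated in $L^2$ by $\|\Dtpk\ppk_3 f\|_\infty|\psk|_4$ which, after expanding $\Dtpk$, is absorbed into $P(c_0^{-1},|\psk|_4,|\p_t\psk|_3)(\|f\|_4+\|\p_t f\|_3)$ via Sobolev embedding $H^2(\Omega)\hookrightarrow L^\infty(\Omega)$.

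The key point, and the only place where a nontrivial structural observation is needed, is \eqref{E spatial}. Here one uses that $\pk-\varphi=\chi(x_3)\bigl(\psk-\psi\bigr)=\chi(x_3)(\lkk^2-\mathrm{id})\psi$, so
\[
\mathfrak{E}(f)=\p_t\TT^\alpha(\pk-\varphi)\ppk_3 f=\chi(x_3)\bigl(\lkk^2-\mathrm{id}\bigr)\TP^\alpha\p_t\psi\cdot\ppk_3 f,
\]
when $\alpha_3=0$; when $\alpha_3>0$ only harmless factors of $\chi$ and its derivatives are additionally produced, still acting on $(\lkk^2-\mathrm{id})\TP_1^{\alpha_1}\TP_2^{\alpha_2}\p_t\psi$. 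Applying the mollifier estimate \eqref{lkk33} with $p=2$ to $\TP^\alpha\p_t\psi$, one gets $\|\mathfrak{E}(f)\|_0\lesssim\kk\,|\cnab\TT^\alpha\p_t\psi|_0\,\|\p f\|_\infty$. The main obstacle in the whole argument is this last point: one has to be careful that \emph{all} derivatives of $\pk-\varphi$ appearing in $\p_t\TT^\alpha(\pk-\varphi)$ carry a full $\lkk^2-\mathrm{id}$ factor, so that the $\kk$ can actually be extracted. This is precisely why we mollify $\psi$ (and not, e.g., $\p_t\varphi$) in the construction of the approximate system in Section \ref{sect CWWSTkkeq}.
\end{pf}
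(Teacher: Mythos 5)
Your proof is correct and follows essentially the same route as the paper's: \eqref{AGU TT4 bd} via the triangle inequality and $\|\TT^\alpha\pk\|_0\lesssim|\psk|_4$, \eqref{C spatial}--\eqref{D spatial} via standard product/commutator estimates on the explicit formulas (which the paper simply attributes to ``Sobolev inequalities''), and \eqref{E spatial} via the identity $\pk-\varphi=\chi(x_3)(\lkk^2-\mathrm{id})\psi$ combined with the mollifier property \eqref{lkk33}, which is exactly the paper's argument. Your closing remark correctly identifies the structural point (every factor of $\pk-\varphi$ carries a full $\lkk^2-\mathrm{id}$) that makes the $\kk$-gain possible.
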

\begin{proof}
Since $\ppk_3 = (\p_3 \pk)^{-1} \p_3$, 
we have
\begin{align}
\| \ppk_3 f\|_{\infty} \|\TT^\alpha \pk\|_0 \leq P\left(c_0^{-1}, |\psk|_4\right)\|\p_3 f\|_{\infty},
\end{align}
and so \eqref{AGU TT4 bd} follows from \eqref{AGU L2 bd}. 
Also, the estimates \eqref{C spatial} and \eqref{D spatial} follow from the definition of $\mathfrak{C}(f)$ and $\mathfrak{D}(f)$, \eqref{chi}, \eqref{lkk1 3D} in Lemma \ref{tgsmooth},  and the Sobolev inequalities. 
To establish \eqref{E spatial}, we notice that
\begin{align*}
\|\mathfrak{E}(f)\|_0 \leq \|\p_t\TT^\alpha (\pk-\varphi)\|_0 \|\ppk_3 f\|_{\infty}+ \|\ppk_3 \TT^\alpha \pk\|_0 \|\p_t (\pk-\varphi)\|_{\infty} \|\ppk_3 f\|_{\infty}. 
\end{align*}
Thus, it suffices to control the leading order terms $\|\p_t\TT^\alpha (\pk-\varphi)\|_0$ and $\|\ppk_3 \TT^\alpha \pk\|_0$. We have
\begin{align*}
\p_t\TT^\alpha (\pk-\varphi) =&~\p_t\TT^\alpha \left(\chi(x_3) \psk-\chi(x_3) \psi\right)\\
\leq&~\chi(x_3) \p_t\TP^\alpha (\psk-\psi) + \left[\TT^\alpha, \chi(x_3)\right] \p_t (\psk-\psi).
\end{align*}
The $L^2$-norm of the second term can be controlled by the RHS of \eqref{E spatial} thanks to \eqref{chi}. 
By \eqref{lkk33} in Lemma \ref{tgsmooth}, we have
\begin{align*}
|\p_t\TP^\alpha (\psk-\psi) |_0 \leq \kk |\p_t \psi|_5. 
\end{align*}
Also, 
\begin{align*}
\ppk_3 \TT^\alpha \pk = \ppk_3 \TT^\alpha \left(\chi(x_3)\psk\right)= \left(\ppk_3 \chi(x_3)\right) \TT^\alpha\psk + \left( \ppk_3 [\TT^\alpha, \chi(x_3)]\right) \psk,
\end{align*}
and so $\|\ppk_3 \TT^\alpha \pk\|_0$ can be controlled by the RHS of \eqref{E spatial}. 
\end{proof}
\begin{rmk}
The appearance of $\mathfrak{E}(f)$ is a consequence of the tangential smoothing. This estimate of $\|\mathfrak{E}(f)\|_0$ yields a top order term $\kk |\p_t\psi|_5$, which can only be controlled by the energy contributed by the artificial viscosity. In other words, the artificial viscosity compensates for the loss of symmetry in the $\kk$-equations.
\end{rmk}

\subsubsection{Control of $\io \rho |\VV|^2\p_3\p_t (\pk-\varphi)\dx$: The integral contains the mismatched term.}  We have
\begin{align}\label{AGU TTM0}
\io \rho |\VV|^2\p_3\p_t (\pk-\varphi)\dx \leq \|\rho\|_{\infty}\|\VV\|_0^2 \|\p_3 \p_t (\pk-\varphi)\|_{\infty}\lesssim\sqrt{\kk}\|\VV\|_0^2 |\TP\p_t\psi|_{0.5}.
\end{align}

\subsubsection{Control of $\io \VV\cdot \mathcal{R}^1\p_3\pk\dx$: Error terms}  We have
\begin{align}\label{AGU TTR0}
\io  \VV\cdot \mathcal{R}^1 \p_3\pk\dx \leq \|\VV\|_0 \|\mathcal{R}^1\|_0 \|\p_3\pk\|_{\infty},
\end{align}
where the $L^2$-norm of $\mathcal{R}^1$ is directly controlled by using \eqref{rr1} and \eqref{C spatial}--\eqref{E spatial}:
\begin{equation}\label{AGU TTR1}
\|\mathcal{R}^1\|_0\leq P(\|\p_3\pk\|_{\infty},|\psk|_4,|\p_t\psk|_3)\left(\kk|\cnab\TT^{\alpha}\p_t \psi|_0\|v\|_4+{\|v\|_4+\|\p_t v\|_3+\|\qc\|_4}\right),
\end{equation}where the term containing $\kk|\cnab\TT^{\alpha}\p_t \psi|_0$ should be controlled under time integral as we will get $L_t^2H_{x'}^1([0,T]\times\Sigma)$ bound for $\kk\p_t \TT^{\alpha}\psi$ later.

\subsubsection{Control of $\io \QQ (\nabpk\cdot \VV) \p_3\pk\,dx$: Tangential energy for $\QQ$} 
Equation \eqref{AGU TT mass} indicates that
\begin{equation}\label{AGU TTQ0}
\io \QQ (\nabpk \cdot \VV)\p_3\pk\dx = -\io \ff'(q) \QQ (\Dtpk \QQ)\p_3\pk\dx+ \io \QQ (\mathcal{R}^2-\cc_i(v^i))\p_3\pk\dx.
\end{equation}
For the second term on the RHS of \eqref{AGU TTQ0},  we invoke the second inequality in \eqref{ff property} and then apply it to the definition of $\mathcal{R}^2$ in \eqref{rr2} to get:
\begin{align}
\io \QQ \mathcal{R}^2\p_3\pk\dx \leq \|\sqrt{\ff'(q)}\QQ\|_0 \|\mathcal{R}^2\|_0 \|\p_3\pk\|_{\infty}.
\end{align}
In other words, we ``borrow" one $\sqrt{\ff'(q)}$ from $\mathcal{R}^2$ and attach it to $\QQ$. Thanks to \eqref{C spatial}-\eqref{E spatial},
we control the $L^2$-norm of the rest of terms in $\mathcal{R}^2$ directly by
\begin{equation}
 P(\|\p_3\pk\|_{\infty},|\psk|_4,|\p_t\psk|_3)\left(\kk|\cnab\TT^{\alpha}\p_t \psi|_0\left\|\sqrt{\ff'(q)}\qc\right\|_4+\left\|\sqrt{\ff'(q)} \qc\right\|_4+\left\|\sqrt{\ff'(q)} \p_t \qc\right\|_3+\left\|\sqrt{\ff'(q)} gv_3\right\|_3\right),
\end{equation}where the term containing $\kk|\cnab\TT^{\alpha}\p_t \psi|_0$ should be controlled under time integral as we will get $L_t^2H_{x'}^1([0,T]\times\Sigma)$ bound for $\kk\p_t \TT^{\alpha}\psi$ later. Then the contribution of $\cc_i(v^i)$ is controlled by
\begin{equation}\label{AGU TT CV}
-\io\QQ(\cc_i(v^i))\p_3\pk\dx\leq P(|\psk|_4,|\cnab\psk|_{W^{1,\infty}})|\TT^{\alpha}\psk|_0\|v\|_{4}\|\QQ\|_0.
\end{equation}
Here, $\|\QQ\|_0$ contributes to $\|\TP^\alpha \qc\|_0$ and $\|\ppk_3 \qc \TP^\alpha \psk\|_0$. The first term $\|\TP^\alpha \qc\|_0$ is not weighted by $\sqrt{\ff'(q)}$ and thus cannot be controlled directly by \eqref{TT spatial 4}. Fortunately, we can overcome this issue by invoking \eqref{reduction q tangential}. Similarly, $\|\ppk_3 \qc \TP^\alpha \psk\|_0 \leq \|\ppk_3 \qc\|_{\infty} \|\TP^\alpha \psk\|_0$, where we use \eqref{Dq Linf} to treat $\|\ppk_3 \qc\|_{\infty}$, and so this can be controlled uniformly as $\ff'(q) \to 0$. 

Furthermore, invoking the integration by parts formula \eqref{transpt nonlinear without rho}, the first integral on the RHS of \eqref{AGU TTQ0} becomes
\begin{equation}\label{AGU TTQ}
\begin{aligned}
\io \ff'(q) \QQ (\Dtpk \QQ)\p_3\pk\dx =&-\frac{1}{2}\ddt \io \ff'(q) |\QQ|^2\p_3\pk\dx+\frac{1}{2}\io  (\Dtpk \ff'(q))|\QQ|^2 \p_3\pk\dx\\
&+\frac{1}{2} \io (\nabpk\cdot v) \ff'(q)|\QQ|^2\p_3\pk\dx+\frac{1}{2}\io \ff'(q) |\QQ|^2\p_3\p_t (\pk-\varphi)\p_3\pk\dx\\
\lesssim& -\frac12\ddt\left\|\sqrt{\ff'(q)}\QQ\right\|_0^2 + \|\p_3\pk\|_{\infty}\|\sqrt{\ff'(q)}\QQ\|_0^2\left(\|\p v\|_{\infty}+\kk|\cnab\p_t\psi|_{0.5}\right).
\end{aligned}
\end{equation}

\subsubsection{Control of $-\is \QQ (\VV\cdot \npk)\dx'$: Boundary energy contributed by surface tension and artificial viscosity}\label{sect AGU TT ST}

Note that $\TT_3=\vec{0}$ on $\Sigma$ implies the corresponding good unknown $\QQ=0$ on $\Sigma$, so it suffices to consider the case $\TT^{\alpha}=\TP^{\alpha}$ when analyzing the boundary integral. Using \eqref{AGU TT BC kinematic}, we have
\begin{align}\label{AGUbdry}
-\is \QQ (\VV\cdot \npk)\dx'= -\is \QQ \left(\p_t \TP^\alpha \psi + (\vb\cdot\cnab) \TP^\alpha\psk -\mathcal{S}_1\right)\dx'.
\end{align}

The first term is expected to contribute to two coercive terms if we invoke the boundary condition \eqref{AGU TT BC Q} of $\QQ$:
\begin{equation}\label{AGUbdry1}
\begin{aligned}
I_1:=-\is \QQ\p_t\TP^{\alpha}\psi\dx'=&~\sigma\is \TP^{\alpha}\cnab\cdot\left(\frac{\cnab\psk}{\sqrt{1+|\cnab\psk|^2}}\right)\p_t\TP^{\alpha}\psi\dx'-\kk^2\is\TP^{\alpha}(1-\TL)\p_t\psi\cdot\TP^{\alpha}\p_t\psi\dx'+\is\p_3 q\TP^{\alpha}\psk\p_t\TP^{\alpha}\psi\dx'\\
=&:\ST_{1}+\ST_{2}+\RT.
\end{aligned}
\end{equation}
Since $1-\TL = \TJ^2$, where $\langle \cdot \rangle$ denotes the Japanese bracket, we find the term$\ST_2$ gives us $\sqrt{\kk}$-weighted enhanced energy after integration by parts :
\begin{equation}\label{ST2}
\ST_2=-\kk^2\is\left|\TP^{\alpha}\TJ\p_t\psi\right|^2\dx'=-\ddt\left|\kk\TP^{\alpha}\p_t\psi\right|_{L_t^2H_{x'}^1}^2.
\end{equation}

In the control of$\ST_1$, we will repeatedly use 
\begin{equation}\label{tpnpk}
\TP\left(\frac{1}{|\npk|}\right)=\frac{\cnab\psk\cdot\TP\cnab\psk}{|\npk|^3},
\end{equation}where $|\npk|=\sqrt{1+|\cnab\psk|^2}$ denotes the length of the smoothed normal vector $\npk=(-\TP_1\psk,-\TP_2\psk,1)^{\top}$. 
Now we integrate $\cnab\cdot$ by parts in$\ST_1$ to get 
\begin{equation}\label{ST10}
\begin{aligned}
\ST_1=&-\sigma\is\frac{\TP^{\alpha}\cnab\psk}{|\npk|}\cdot\p_t\TP^{\alpha}\cnab\psi\dx'+\sigma\is\frac{\cnab\psk\cdot\TP^{\alpha}\cnab\psk}{|\npk|^3}\cnab\psk\cdot\p_t\TP^{\alpha}\cnab\psi\dx'\\
&-\sigma\is\left(\left[\TP^{\alpha-\alpha'},\frac{1}{|\npk|}\right]\TP^{\alpha'}\cnab\psk+\cnab\psk\left[\TP^{\alpha-\alpha'},\frac{1}{|\npk|^3}\right](\cnab\psk\cdot\TP^{\alpha'}\cnab\psk)-\frac{1}{|\npk|^3}\left[\TP^{\alpha-\alpha'},\cnab\psk\right]\TP^{\alpha'}\cnab\psk\right)\cdot\p_t\cnab\TP^{\alpha}\psi\dx'\\
=&:\ST_{11}+\ST_{12}+\ST_{13},
\end{aligned}
\end{equation}where $\alpha'$ is a multi-index with $|\alpha'|=1$.

The first two terms in \eqref{ST10} are expected to produce the energy contributed by the surface tension. Before that, we need to move one mollifier from the top order term of $\psk=\lkk^2\psi$ to the top order term of $\psi$ by using the self-adjointness of $\lkk$ in $L^2(\Sigma)$:
\begin{equation}\label{ST1112}
\begin{aligned}
\ST_{11}+\ST_{12}=&-\sigma\is\dfrac{\TP^{\alpha}\cnab\lkk\psi\cdot\p_t\TP^{\alpha}\cnab\lkk\psi}{|\npk|}-\dfrac{(\cnab\psk\cdot\TP^{\alpha}\cnab\lkk\psi)(\cnab\psk\cdot\p_t\TP^{\alpha}\cnab\lkk\psi)}{|\npk|^3}\dx'\\
&-\sigma\is\TP^{\alpha}\cnab\lkk\psi\cdot\left(\left[\lkk,\frac{1}{|\npk|}\right]\cnab\p_t\TP^{\alpha}\psi\right)\dx'+\sigma\is\TP^{\alpha}\cnab_i\lkk\psi\cdot\left(\left[\lkk,\frac{\cnab_i\psk\cnab_j\psk}{|\npk|^3}\right]\cnab_j\p_t\TP^{\alpha}\psi\right)\dx'\\
=:&\ST_{10}+\ST_{11}^{R}+\ST_{12}^{R}.
\end{aligned}
\end{equation}
Then we find
\begin{align}
\label{ST100}\ST_{10}=&-\frac{\sigma}{2}\ddt\is\frac{|\TP^{\alpha}\cnab\lkk\psi|^2}{\sqrt{1+|\cnab\psk|^2}}-\frac{|\cnab\psk\cdot\TP^{\alpha}\cnab\lkk\psi|^2}{{\sqrt{1+|\cnab\psk|^2}}^3}\dx'\\
&\label{ST10R}+\frac{\sigma}{2}\is\p_t\left(\frac{1}{\sqrt{1+|\cnab\psk|^2}}\right)\left|\TP^{\alpha}\cnab\lkk\psi\right|^2-\p_t\left(\frac{1}{{\sqrt{1+|\cnab\psk|^2}}^3}\right)\left|\cnab\psk\cdot\TP^{\alpha}\cnab\lkk\psi\right|^2\dx'.
\end{align}
To deal with the first term in$\ST_{10}$, we plug $\mathbf{a}=\TP^{\alpha}\cnab\lkk\psi$ into the following inequality, which can be proved by direct calculation:
\begin{equation}\label{STineq}
\frac{|\mathbf{a}|^2}{\sqrt{1+|\cnab\psk|^2}}-\frac{|\cnab\psk\cdot\mathbf{a}|^2}{{\sqrt{1+|\cnab\psk|^2}}^3}\geq \frac{|\mathbf{a}|^2}{{\sqrt{1+|\cnab\psk|^2}}^3},
\end{equation}
in order to get
\begin{equation}\label{ST1}
\int_0^T\ST_{10}\dt+\frac{\sigma}{2}\is\frac{|\TP^{\alpha}\cnab\lkk\psi|^2}{{\sqrt{1+|\cnab\psk|^2}}^3}\dx'\leq P(|\cnab\psk_0|_{L^{\infty}})\left|\sqrt{\sigma}\TP^{\alpha}\cnab\lkk\psi_0\right|_{0}^2 +\int_0^T\eqref{ST10R}\dt,
\end{equation}where the terms in \eqref{ST10R} can be controlled directly:
\begin{equation}\label{ST10RR}
\eqref{ST10R}\leq P(|\cnab\psk|_{L^{\infty}})|\p_t\cnab\psk|_{L^{\infty}}\left|\sqrt{\sigma}\TP^{\alpha}\cnab\lkk\psi\right|_{0}^2.
\end{equation}

To finish the control of$\ST_{1}$, it remains to control$\ST_{13}$ and$\ST_{11}^R,\ST_{12}^R$. The last two terms can be controlled by using the mollifier property \eqref{lkk6} and the $\kk$-weighted energy contributed by the artificial viscosity. For $\ST_{11}^R$, we have
\begin{equation}\label{ST11R}
\begin{aligned}
\int_0^T\ST_{11}^R\lesssim &\int_0^T\left|\sqrt{\sigma}\TP^{\alpha}\lkk\psi\right|_0 P\left(|\cnab\psk|_{\infty}\right)|\cnab\psk|_{W^{1,\infty}}\left|\kk\p_t\TP^{\alpha}\psi\right|_{\dot{H}^1}\dt\\
\lesssim&~\eps\left|\kk\p_t\TP^{\alpha}\psi\right|_{L_t^2H_{x'}^1}^2+\int_0^T P\left(|\cnab\psk|_{\infty}\right)|\cnab\psk|_{W^{1,\infty}}^2\left|\sqrt{\sigma}\TP^{\alpha}\lkk\psi\right|_0^2 \dt.
\end{aligned}
\end{equation}
Additionally, $\ST_{12}^R$ can be controlled similarly. 

As for$\ST_{13}$ in \eqref{ST10}, we find that all three commutators have similar structures and the same leading order terms, so we only show the analysis of the first commutator. Note that the leading order term in $[\TP^{\alpha-\alpha'},|\npk|^{-1}]\TP^{\alpha'}\cnab\psk$ appears when $\TP^{\alpha-\alpha'}$ falls on $|\npk|^{-1}$ or $\TP^{\alpha''}$ falls on $|\npk|^{-1}$ and $\TP^{\alpha-\alpha'-\alpha''}$ falls on $\TP^{\alpha'}\cnab\psk$ for some $|\alpha''|=1$. In either of the two cases, the top-order term contributes to the following integral:
\begin{equation}\label{ST130}
-\sigma\is  |\npk|^{-3} \cnab\psk~\TP^{\alpha-\alpha'}\cnab\psk~\TP^{\alpha'}\cnab\psk\cdot\p_t\cnab\TP^{\alpha}\psi\dx'.
\end{equation}We integrate one $\cnab$ by parts to get
\[
\sigma\is  |\npk|^{-3} \cnab\psk~\TP^{\alpha-\alpha'}\cnab^2\psk~\TP^{\alpha'}\cnab\psk~\p_t\TP^{\alpha}\psi\dx'
\]
modulo lower order terms, 
and then we move one $\lkk$ from $\TP^{\alpha-\alpha'}\cnab^2\psk$ to $\p_t\TP^{\alpha}\psi$ such that the main term is directly controlled as:
\begin{equation}\label{ST131}
\sigma\is  |\npk|^{-3} \cnab\psk~\TP^{\alpha-\alpha'}\cnab^2\lkk\psi~\TP^{\alpha'}\cnab\psk~\p_t\TP^{\alpha}\lkk\psi\dx'\lesssim P(|\cnab\psk|_{\infty})|\cnab\psk|_{W^{1,\infty}}\left|\sqrt{\sigma}\TP^{\alpha}\cnab\lkk\psi\right|_0\left|\sqrt{\sigma}\p_t\TP^{\alpha}\lkk\psi\right|_0,
\end{equation}where the last term will be controlled in $\p_t\TP^3$-estimates. Besides, we have to analyze the commutator involving $\lkk$:
\begin{equation}\label{ST132}
\sigma\is \TP^{\alpha-\alpha'}\cnab^2\lkk\psi\left(\left[\lkk,P(\cnab\psk)\TP^{\alpha'}\cnab\psk\right]\p_t\TP^{\alpha}\psi\right)\dx',
\end{equation}
which is controlled under the time integral:
\begin{equation}\label{ST133}
\begin{aligned}
\int_0^T\eqref{ST132}\dt\lesssim&\sqrt{\sigma}\int_0^T|\sqrt{\sigma}\cnab\TP^{\alpha}\lkk\psi|_0\cdot\kk|\TP\cnab\psk|_{W^{1,\infty}} P(|\cnab\psk|_{W^{1,\infty}})|\p_t\TP^{\alpha}\psi|_0\dt\\
\lesssim&~\eps\left|\kk\p_t\TP^{\alpha}\psi\right|_{L_t^2L_{x'}^2}^2+\int_0^T\left|\sqrt{\sigma}\cnab\TP^{\alpha}\lkk\psi\right|_0^2|\sqrt{\sigma}\cnab\psk|_{3.5}^2P(|\cnab\psk|_{2.5})\dt.
\end{aligned}
\end{equation}

Therefore, 
\begin{equation}\label{STbound}
\int_0^T(\ST_1+\ST_2)\dt+\left|\kk\TP^{\alpha}\p_t\psi\right|_{L_t^2H_{x'}^1}^2+\frac{\sigma}{2}\left|\cnab\TP^{\alpha}\lkk\psi(T)\right|_0^2\lesssim\PP_0^{\kk}+\int_0^T P(E^{\kk}(t))\dt,
\end{equation}where we have chosen $\eps>0$ that appears above to be suitably small such that all $\eps$-terms are absorbed by the $\kk$-weighted energy.

To finish the control of $I_1$ defined in \eqref{AGUbdry1}, it remains to control the term $\RT$. Note that when we drop the mollifier and have the Rayleigh-Taylor sign condition $-\p_3 q\geq \frac{c_0}{2}>0$ assumed on $\Sigma$,$\RT$ should directly give us the non-$\sigma$-weighted boundary energy. But since we are now solving the gravity-capillary water wave system for fixed $\sigma>0$ instead of taking the vanishing surface tension limit,  we cannot assume $-\p_3 q\geq \frac{c_0}{2}>0$ on $\Sigma$. Thus, this term is controlled by the surface tension energy after moving one $\lkk$:
\begin{equation}\label{RTbound}
\begin{aligned}
\int_0^T\RT\dt=&-\int_0^T\is\p_3 q\TP^{\alpha}\lkk\psi\cdot\p_t\TP^{\alpha}\lkk\psi\dx'\dt-\int_0^T\is\TP^{\alpha}\lkk\psi\cdot\left(\left[\lkk,\p_3 q\right]\p_t\TP^{\alpha}\psi\right)\dx'\dt\\
\lesssim&\int_0^T|\p q|_{L^{\infty}}\left|\TP^{\alpha}\lkk\psi\right|_0\left|\p_t\TP^{\alpha}\lkk\psi\right|_0\dt+\eps\left|\kk\p_t\TP^{\alpha}\psi\right|_{L_t^2L_{x'}^2}^2+\int_0^T|\p q|_{W^{1,\infty}}^2\left|\TP^{\alpha}\lkk\psi\right|_0^2\dt\\
\lesssim&~\eps\left|\kk\p_t\TP^{\alpha}\psi\right|_{L_t^2L_{x'}^2}^2+\int_0^T P\left(\|\qc\|_{4},\left|\TP^{\alpha}\lkk\psi\right|_0,\left|\p_t\TP^{\alpha}\lkk\psi\right|_0\right)\dt,
\end{aligned}
\end{equation}where the term $\left|\p_t\TP^{\alpha}\lkk\psi\right|_0$ is the energy term obtained in $\TP^{\alpha-\alpha'}\p_t$-estimates for $|\alpha'|=1$. 
\begin{rmk}
The RHS of \eqref{RTbound} is not uniform in $\sigma$. However, as mentioned earlier, $-\int_0^T\is\p_3 q\TP^{\alpha}\lkk\psi\cdot\p_t\TP^{\alpha}\lkk\psi\dx'\dt$ contributes to a non-$\sigma$-weighted energy term $\is (-\p_3 q) |\TP^\alpha \lkk \psi|^2\dt$ provided the Rayleigh-Taylor sign condition holds. We shall revisit the control of RT in Section \ref{section double lim}, where the zero surface tension limit is considered. 
\end{rmk}
Combining this with \eqref{STbound}, we get the estimate for $I_1$
\begin{equation}\label{AGUbdryI1}
\int_0^T I_1\dt+\left|\kk\TP^{\alpha}\p_t\psi\right|_{L_t^2H_{x'}^1}^2+\frac{\sigma}{2}\left|\cnab\TP^{\alpha}\lkk\psi(T)\right|_0^2\lesssim\PP_0^{\kk}+\int_0^T P(E^{\kk}(t))\dt,
\end{equation}after choosing $\eps>0$ that appears above to be suitably small.

The second term in \eqref{AGUbdry} gives
\begin{equation}\label{AGUbdry2}
\begin{aligned}
I_2:=-\is \QQ(\vb\cdot\cnab)\TP^{\alpha}\psk=&~\sigma\is \TP^{\alpha}\cnab\cdot\left(\frac{\cnab\psk}{\sqrt{1+|\cnab\psk|^2}}\right)(\vb\cdot\cnab)\TP^{\alpha}\psk\dx'-\kk^2\is\TP^{\alpha}(1-\TL)\p_t\psi\cdot(\vb\cdot\cnab)\TP^{\alpha}\psk\dx'\\
&+\is\p_3 q\TP^{\alpha}\psk(\vb\cdot\cnab)\TP^{\alpha}\psk\dx'\\
=&:I_{21}+I_{22}+I_{23},
\end{aligned}
\end{equation}where we find that $I_{22}, I_{23}$ can be directly controlled as follows:
\begin{equation}\label{I22}
\begin{aligned}
\int_0^TI_{22}\dt\overset{\cnab}{=}&-\kk^2\int_0^T\is\TP^{\alpha}\cnab\p_t\psi\cdot\cnab\left((\vb\cdot\cnab)\TP^{\alpha}\psk\right)\dx'\dt-\kk^2\int_0^T\is\TP^{\alpha}\p_t\psi\cdot(\vb\cdot\cnab)\TP^{\alpha}\psk\dx'\dt\\
\lesssim&~\int_0^T\left|\kk\TP^{\alpha}\p_t\psi\right|_0|\cnab \vb|_{\infty}\left|\kk\cnab^2\TP^{\alpha}\psk\right|_0\dt+\kk\int_0^T\left|\kk\TP^{\alpha}\p_t\psi\right|_0|\vb|_{\infty}\left|\cnab\TP^{\alpha}\psk\right|_0\dt\\
\lesssim&~\eps\left|\kk\TP^{\alpha}\p_t\psi\right|_{L_t^2H_{x'}^1}^2+\int_0^T|\vb|_{W^{1,\infty}}^2\left|\cnab\TP^{\alpha}\lkk\psi\right|_0^2\dt\lesssim\eps\left|\kk\TP^{\alpha}\p_t\psi\right|_{L_t^2H_{x'}^1}^2+\int_0^TP(E^{\kk}(t))\dt,
\end{aligned}
\end{equation}where we use the mollifier property \eqref{lkk2} to control $|\kk\cnab^2\TP^{\alpha}\psk|_0\lesssim\kk\cdot\kk^{-1}|\cnab\TP^{\alpha}\lkk\psi|_0$. This step also shows why the power of $\kk$ must be 2 in the artificial viscosity; otherwise, the control of $I_{22}$ is not uniform in $\kk$. For $I_{23}$ we integrate $\vb\cdot\cnab$ by parts to get
\begin{equation}\label{I23}
I_{23}=\frac12 \is\cnab\cdot(\vb~\p_3 q)|\TP^{\alpha}\psk|^2\dx'\lesssim P(E^{\kk}(t)).
\end{equation}

The control of $I_{21}$ is analogous to$\ST_1$. Following \eqref{ST10}, we have
\begin{equation}\label{I210}
\begin{aligned}
I_{21}=&-\sigma\is\left(\frac{\TP^{\alpha}\cnab\psk}{|\npk|}-\frac{\cnab\psk\cdot\TP^{\alpha}\cnab\psk}{|\npk|^3}\cnab\psk\right)\cdot(\vb\cdot\cnab)\TP^{\alpha}\cnab\psk\dx'\\
&-\sigma\is\left(\left[\TP^{\alpha-\alpha'},\frac{1}{|\npk|}\right]\TP^{\alpha'}\cnab\psk+\left[\TP^{\alpha-\alpha'},\frac{1}{|\npk|^3}\right](\cnab\psi\cdot\TP^{\alpha'}\cnab\psk)-\frac{1}{|\npk|^3}\left[\TP^{\alpha-\alpha'},\cnab\psk\right]\TP^{\alpha'}\cnab\psk\right)\cdot(\vb\cdot\cnab)\cnab\TP^{\alpha}\psk\dx'\\
=&:I_{211}+I_{212},
\end{aligned}
\end{equation}where $I_{212}$ can be directly controlled if we integrate $\vb\cdot\cnab$ by parts:
\begin{equation}\label{I212}
I_{212}\lesssim P(|\psk|_4)|\vb|_{W^{1,\infty}}\left|\sqrt{\sigma}\cnab\TP^{\alpha}\psk\right|_0^2\leq P(E^{\kk}(t)).
\end{equation}

For $I_{211}$, we integrate $\vb\cdot\cnab$ by parts and use the symmetric structure to see
\begin{equation}\label{I211}
\begin{aligned}
I_{211}\lleq-\frac{\sigma}{2}\is(\cnab\cdot\vb) \left(\frac{|\TP^{\alpha}\cnab\psk|^2}{|\npk|}-\frac{|\cnab\psk\cdot\TP^{\alpha}\cnab\psk|^2}{|\npk|^{3}}\right)\dx'\lesssim  P(|\cnab\psk|_{\infty})|\vb|_{W^{1,\infty}}\left|\sqrt{\sigma}\cnab\TP^{\alpha}\psk\right|_0^2.
\end{aligned}
\end{equation}
 Therefore, plugging \eqref{I22}-\eqref{I211} into \eqref{AGUbdry2}, we get the estimates for $I_2$:
\begin{equation}\label{AGUbdryI2}
\int_0^T I_2\dt\lesssim\eps\left|\kk\TP^{\alpha}\p_t\psi\right|_{L_t^2H_{x'}^1}^2+\int_0^T P(E^{\kk}(t)).
\end{equation}

It remains to control the term involving $\sss_1$, which reads
\begin{equation}\label{AGUbdry3}
\begin{aligned}
I_3:=&\is\QQ\sss_1\dx'=\is\QQ\left(\p_3v\cdot\npk\TP^{\alpha}\psk+\sum_{\substack{|\beta_1|+|\beta_2|=4\\ |\beta_1|,|\beta_2|>0}} \TP^{\beta_1} v\cdot \TP^{\beta_2}\npk\right)\dx'\\
=&\is\left(\sigma\TP^{\alpha}\h+\kk^2(1-\TL)\TP^\alpha\p_t\psi- \p_3 q \TP^\alpha \psk\right)\left(\p_3v\cdot\npk\TP^{\alpha}\psk+\sum_{|\beta_1|=1,|\beta_2|=3} \TP^{\beta_1} v\cdot \TP^{\beta_2}\npk\right)\dx'\\
&+\sum_{\substack{|\beta_1|+|\beta_2|=4\\ |\beta_1|\geq 1,1\leq|\beta_2|\leq 2}}\is(\TP^{\alpha}q-\TP^{\alpha}\psk\p_3 q)( \TP^{\beta_1} v\cdot \TP^{\beta_2}\npk)\dx'\\
=&:I_{31}+I_{32},
\end{aligned}
\end{equation}where we use the definition of $\QQ$ in $I_{32}$ and invoke the Dirichlet boundary condition \eqref{AGU TT BC Q} for $\QQ$ in $I_{31}$ such that the $L^2(\Sigma)$ bound of $\TP^{\alpha} v$ and non-$\sigma$-weighted $\cnab\TP^{\alpha}\psi$ with $|\alpha|=4$ can be avoided on $\Sigma$.

The term $I_{32}$ can be directly controlled as:
\begin{equation}\label{I32}
I_{32}\lesssim\sum_{\substack{|\beta_1|+|\beta_2|=4\\ |\beta_1|\geq 1,1\leq|\beta_2|\leq 2}}|\TP^{\alpha} q|_{-\frac12}\left|\TP^{\beta_1} \vb\cdot\cnab^{\beta_2}\TP\psk\right|_{\frac12}+\left|\TP^{\alpha}\psk\p_3 q\right|_0\left|\TP^{\beta_1} \vb\cdot\cnab^{\beta_2}\TP\psk\right|_0\lesssim \|q\|_4\|v\|_4|\psk|_{3.5}+|\p q|_{L^{\infty}}\|v\|_{3.5}|\psk|_3|\psk|_4.
\end{equation}

For $I_{31}$, we invoke $\h=-\cnab\cdot(\cnab\psk/|\npk|)$ and then integrate $\cnab\cdot$ by parts in the mean curvature term and integrate one tangential derivative by parts in the viscosity term to get:
\begin{equation}
I_{31}\lesssim P(|\cnab\psk|_{\infty})|\p v|_{\infty}\left(\left|\sqrt{\sigma}\cnab\TP^{\alpha}\psk\right|_4^2|\p v|_{\infty}+\left|\kk\TP^{\alpha}\p_t\psi\right|_1\left|\kk\cnab\TP^{\alpha}\psk\right|_0\right)+|\p q|_{L^{\infty}}|\psk|_4^2|\TP v|_{\infty},
\end{equation}and thus yields
\begin{equation}\label{I31}
\int_0^T I_{31}\dt\lesssim\eps\left|\kk\TP^{\alpha}\p_t\psi\right|_{L_t^2H_{x'}^1}^2+\int_0^T P(E^{\kk}(t))\dt,
\end{equation}which together with \eqref{I32} gives the bound for $I_3$:
\begin{equation}\label{AGUbdryI3}
\int_0^T I_3\dt\leq \eps\left|\kk\TP^{\alpha}\p_t\psi\right|_{L_t^2H_{x'}^1}^2+\int_0^T P(E^{\kk}(t))\dt.
\end{equation}

Combining \eqref{AGUbdry}, \eqref{AGUbdry1}, \eqref{AGUbdryI1}, \eqref{AGUbdry2}, \eqref{AGUbdryI2}, \eqref{AGUbdry3}, \eqref{AGUbdryI3}, we get the estimates for the boundary integral after choosing $\eps>0$ suitably small:
\begin{equation}\label{AGU bdry}
-\int_0^T\is \QQ(\VV\cdot\npk)\dx'+ \left|\kk\TP^{\alpha}\p_t\psi\right|_{L_t^2H_{x'}^1([0,T]\times\Sigma)}^2+\frac{\sigma}{2}\left|\cnab\TP^{\alpha}\lkk\psi(T)\right|_0^2\lesssim\PP_0^{\kk}+\int_0^T P(E^{\kk}(t))\dt.
\end{equation}

Plugging the estimates \eqref{AGU TTM0}-\eqref{AGU TTQ0}, \eqref{AGU TTQ} and \eqref{AGU bdry} into \eqref{energy estimate AGU spatial} and using $\rho\gtrsim 1, \p_3\pk\gtrsim 1$, we get the estimates for the good unknowns:
\begin{equation}\label{AGU TT}
\|\VV(T)\|_0^2+\left\|\sqrt{\ff'(q)}\QQ(T)\right\|_0^2+\left|\sqrt{\sigma}\cnab\TP^{\alpha}\lkk\psi(T)\right|_0^2+ \left|\kk\TP^{\alpha}\p_t\psi\right|_{L_t^2H_{x'}^1([0,T]\times\Sigma)}^2\lesssim\PP_0^{\kk}+\int_0^T P(E^{\kk}(t))\dt.
\end{equation}

Finally, using the definition $\VV=\TT^{\alpha}v-\TT^{\alpha}\pk\ppk_3 v$, we can replace $\|\VV\|_0$ by $\|\TT^{\alpha} v\|_0$ because their difference, namely $\TT^{\alpha}\pk\ppk_3 v$, is bounded by $\PP_0^{\kk}+\int_0^T P(E^{\kk}(t))\dt$. Indeed, using $\pk(t,x)=x_3+\chi(x_3)\psk(t,x')$ we only need to investigate the case $\TT=\TP$ because the weighted derivative $\TT=\omega(x_3)\p_3$ only falls on $\chi(x_3)$ and $x_3$ instead of $\psk$. So we have $\|\TP^{\alpha}\pk\|_0\lesssim|\TP^{\alpha}\psk|_0$ which is already bounded by the surface tension energy and thus by $\PP_0^{\kk}+\int_0^T P(E^{\kk}(t))\dt$ according to \eqref{AGU TT}. Since $\|\ppk_3 v\|_{\infty}\leq\|v\|_3\|\p_3\pk\|_{\infty}\leq \PP_0^{\kk}+\int_0^T P(E^{\kk}(t))\dt$, we have
\begin{equation}\label{TT spatial}
\|\TT^{\alpha}v(T)\|_0^2+\left\|\sqrt{\ff'(q)}\TT^{\alpha} \qc(T)\right\|_0^2+\left|\sqrt{\sigma}\cnab\TP^{\alpha}\lkk\psi(T)\right|_0^2+ \int_0^T\left|\kk\TP^{\alpha}\p_t\psi(t)\right|_{1}^2\dt\lesssim\PP_0^{\kk}+\int_0^T P(E^{\kk}(t))\dt.
\end{equation}

We note here that the same analysis can be employed to prove tangential estimates involving mixed spatial-time derivatives. 
\begin{prop}\label{TT spatial time}
Let $\alpha$ be the multi-index satisfying $1\leq \alpha_0\leq 3$ and $|\alpha|=4$, we have:
\begin{equation}
\|\TT^{\alpha}v(T)\|_0^2+\left\|\sqrt{\ff'(q)}\TT^{\alpha} \qc(T)\right\|_0^2+\left|\sqrt{\sigma}\cnab\TP^{\alpha}\lkk\psi(T)\right|_0^2+ \int_0^T\left|\kk\TT^{\alpha}\p_t\psi(t)\right|_{1}^2\dt\lesssim\PP_0^{\kk}+\int_0^T P(E^{\kk}(t))\dt.
\end{equation}
\end{prop}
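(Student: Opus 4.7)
The plan is to repeat the Alinhac good unknown argument developed for Proposition \ref{TT spatial 4'} verbatim, with $\TT^\alpha$ now containing at least one time derivative, and then carefully redistribute the time derivatives between the two factors in every commutator so that no estimate loses uniformity in $\kk$, $\sigma$, or the Mach weight. Concretely, I would define
\[
\VV_i := \TT^\alpha v_i - \ppk_3 v_i \,\TT^\alpha \pk, \qquad \QQ := \TT^\alpha \qc - \ppk_3 \qc \,\TT^\alpha \pk,
\]
apply $\TT^\alpha$ to the momentum and continuity equations to obtain \eqref{AGU TT mom}--\eqref{AGU TT mass}, and then test \eqref{AGU TT mom} against $\VV$ with the measure $\p_3\pk\dx$ and use \eqref{AGU TT mass} together with the integration by parts formula from Lemma \ref{int by parts lem} to extract
\[
\tfrac{1}{2}\ddt \io \rho |\VV|^2\p_3\pk \dx + \tfrac{1}{2}\ddt\io \ff'(q)|\QQ|^2\p_3\pk\dx
= -\is \QQ(\VV\cdot \npk)\dx' + \text{(interior error terms)},
\]
which is the analogue of \eqref{energy estimate AGU spatial}. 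The interior error terms $\io \VV\cdot\mathcal{R}^1$, $\io \QQ \mathcal{R}^2$, the curvature weight involving $\p_3\p_t(\pk-\varphi)$, and the contribution of $\cc_i(v^i)$ are controlled exactly as in Section \ref{sect TT AGU}; in the variants of Lemma \ref{AGU TT error} the only change is that the norms $\|\p_t^j f\|_{4-j}$ and $\|\p_t^{j+1}f\|_{3-j}$ appear on the right-hand side instead of $\|f\|_4$ and $\|\p_t f\|_3$, all of which are part of $E^\kk(t)$.

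For the boundary integral I use that $\TT_3 = \omega(x_3)\p_3$ vanishes on $\Sigma$, so on the boundary $\TT^\alpha$ reduces to the mixed tangential operator $\p_t^{\alpha_0}\TP_1^{\alpha_1}\TP_2^{\alpha_2}$, and the $\TT^\alpha$-differentiated kinematic boundary condition reads
\[
\p_t \TT^\alpha \psi + (\vb\cdot \cnab)\TT^\alpha \psk - \VV\cdot \npk = \mathcal{S}_1^{(\alpha)} \qquad \text{on }\Sigma,
\]
where $\mathcal{S}_1^{(\alpha)}$ is the commutator $[\TT^\alpha,v\cdot]\,\npk$ plus the symmetrizing term $\p_3 v\cdot \npk\,\TT^\alpha\psk$. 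Substituting and invoking the Dirichlet-type boundary condition \eqref{AGU TT BC Q} splits $-\is \QQ(\VV\cdot \npk)\dx'$ into the three pieces $I_1 = -\is \QQ\,\p_t \TT^\alpha \psi$, $I_2 = -\is \QQ\,(\vb\cdot \cnab)\TT^\alpha \psk$, and $I_3 = \is \QQ\,\mathcal{S}_1^{(\alpha)}$. For $I_1$ the ST piece is treated by moving one $\lkk$ across in $L^2(\Sigma)$, integrating $\cnab\cdot$ by parts, and appealing to the pointwise inequality \eqref{STineq} applied with $\mathbf{a} = \TT^\alpha \cnab\lkk\psi$; this produces the coercive boundary term $\tfrac12|\sqrt{\sigma}\cnab\TT^\alpha\lkk\psi(T)|_0^2$ modulo terms of the form $\ST_{11}^R, \ST_{12}^R, \ST_{13}$ bounded by $\eps|\kk\TT^\alpha \p_t\psi|_{L^2_t H^1_{x'}}^2$ plus $\int_0^T P(E^\kk)\dt$, exactly as in \eqref{STbound}. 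The viscosity piece yields the gain $|\kk\TT^\alpha\p_t\psi|_{L^2_tH^1_{x'}}^2$ after integration by parts in the $(1-\TL)\p_t\psi$ factor, and the RT piece $\is \p_3 q\,\TT^\alpha\psk\,\p_t\TT^\alpha\psi$ is handled as in \eqref{RTbound} by inserting $\lkk$ and bounding the resulting commutator by the artificial viscosity energy with a small parameter $\eps$.

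The pieces $I_2$ and $I_3$ are estimated exactly as in \eqref{AGUbdryI2} and \eqref{AGUbdryI3}: in $I_2$ one integrates $\vb\cdot\cnab$ by parts exploiting the symmetric structure, and in $I_3$ one substitutes \eqref{AGU TT BC Q} and integrates $\cnab$ once by parts on the curvature factor. The main subtlety compared to Proposition \ref{TT spatial 4'} is that, when $\alpha_0 \geq 1$, the commutator $\mathcal{S}_1^{(\alpha)}$ now contains products like $\p_t^j v \cdot \cnab \p_t^{\alpha_0 -j}\TP^{\alpha_1+\alpha_2}\psk$; these are controlled by distributing derivatives using Sobolev trace and the definition of $E^\kk(t)$ so that each factor either sits in $L^\infty$ or contributes a term already present in the energy. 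The only place where I expect genuine technical care is the treatment of the commutator $[\TT^\alpha,\p_t]$-type pieces in $\mathcal{R}^1$, $\mathcal{R}^2$, $\mathfrak{D}(f)$, and in the analogue of \eqref{ST11R}--\eqref{ST133}: since time differentiation does not gain $\kk$-smoothing, one must consistently allocate the $\kk^2$ from the artificial viscosity against the borderline term $\kk|\TP\cnab\psk|_{W^{1,\infty}}$ arising from $[\lkk,\cdot]\TP$, which is precisely why the coefficient in the artificial viscosity had to be $\kk^2$ and not any other power. Collecting these bounds, choosing $\eps$ sufficiently small to absorb the $|\kk\TT^\alpha\p_t\psi|_{L^2_tH^1_{x'}}$-terms into the left-hand side, and finally replacing $\|\VV\|_0$ by $\|\TT^\alpha v\|_0$ via \eqref{AGU TT4 bd} as in the final paragraph of Section \ref{sect TT AGU}, yields the claimed bound.
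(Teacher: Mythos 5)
Your proposal is correct and follows essentially the same route as the paper, which proves Proposition \ref{TT spatial time} simply by remarking that the Alinhac-good-unknown argument of Section \ref{sect TT AGU} carries over verbatim to mixed space-time multi-indices; your expansion of that remark (redistributing time derivatives in the commutators, noting that $\TT_3$ vanishes on $\Sigma$, reusing \eqref{STineq}, and absorbing the borderline $[\lkk,\cdot]\TP$ commutators into the $\kk^2$-viscosity energy) matches the intended argument. No gaps.
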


\subsection{Tangential energy estimate with time derivatives}\label{sect tt AGU}
In this subsection, we study the time-differentiated equations, i.e., the equations obtained by commuting $\p_t^4$ with \eqref{CWWSTkk}. We aim to prove:
\begin{prop}\label{TT time 4'} We have
\begin{equation}\label{TT time 4}
\|\p_t^4 v(T)\|_0^2+\left\|\sqrt{\ff'(q)}\p_t^4 \qc(T)\right\|_0^2+\left|\sqrt{\sigma}\cnab\p_t^4\lkk\psi(T)\right|_0^2+ \int_0^T\left|\kk\p_t^5\psi(t)\right|_{1}^2\dt\lesssim\PP_0^{\kk}+\int_0^T P(E^{\kk}(t))\dt.
\end{equation}
\end{prop}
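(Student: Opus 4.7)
The plan is to follow the Alinhac good unknown framework of Sections 4.5--4.6 (as executed in Propositions \ref{TT spatial 4'} and \ref{TT spatial time}), specialized to $\TT^\alpha=\p_t^4$. Define
\begin{equation*}
\VV := \p_t^4 v - \ppk_3 v\,\p_t^4 \pk, \q \QQ := \p_t^4 \qc - \ppk_3 \qc\,\p_t^4 \pk.
\end{equation*}
Then $(\VV,\QQ)$ satisfy the good unknown equations \eqref{AGU TT mom}--\eqref{AGU TT BC Q}; on $\Sigma$ the tangential derivatives $\TT^\alpha$ and $\TP^\alpha$ both collapse to $\p_t^4$, and $\p_3\pk|_{\Sigma}=1$ gives $\QQ|_{\Sigma}=\p_t^4 q-\p_3 q\,\p_t^4\psk$. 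Testing \eqref{AGU TT mom} against $\VV$ in $L^2(\Omega,\p_3\pk\,\dx)$ and using the transport identity produces the energy identity in the form \eqref{energy estimate AGU spatial}. The interior remainder bounds \eqref{C spatial}--\eqref{E spatial} of Lemma \ref{AGU TT error} transfer verbatim since $\p_t$ commutes with $\ppk,\cnab,\lkk$; in particular $\|\mathfrak{E}(f)\|_0\lesssim\kk|\cnab\p_t^5\psi|_0\|\p f\|_\infty$, to be absorbed by the artificial-viscosity $L^2_tH^1_{x'}$-energy.

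Most resulting terms are then handled line-by-line as in Section 4.6. Splitting the boundary integral via \eqref{AGU TT BC kinematic} as $-\is\QQ(\VV\cdot\npk)\,\dx'=I_1+I_2+I_3$, the piece $I_1=-\is\QQ\,\p_t^5\psi\,\dx'$ is evaluated by substituting the Dirichlet condition \eqref{AGU TT BC Q}: the capillary piece produces, after $\lkk$-symmetrization as in \eqref{ST10}--\eqref{ST1} and the pointwise inequality \eqref{STineq}, the boundary energy $\tfrac{\sigma}{2}|\cnab\p_t^4\lkk\psi|_0^2$; the viscosity piece gives $-|\kk\p_t^5\psi|_1^2$; the Rayleigh--Taylor piece is controlled by the capillary energy as in \eqref{RTbound}. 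The convective piece $I_2$ is treated exactly as in \eqref{AGUbdry2}--\eqref{I211}. The remainders $\mathcal{R}^1,\mathcal{R}^2$, the time-energy contribution of $\io\QQ(\nabpk\cdot\VV)\p_3\pk\,\dx$, and the lower-order part of $\io\QQ\,\mathfrak{C}_i(v^i)\p_3\pk\,\dx$ are bounded as in \eqref{AGU TTR1}, \eqref{AGU TTQ}, and \eqref{AGU TT CV}, with the reduction-of-pressure identity \eqref{reduction q tangential} used to absorb the unweighted contribution to $\|\QQ\|_0$.

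The main obstacle is the pair of top-order commutator terms
\begin{equation*}
4\is \p_t^4\qc\,\p_t^3 v\cdot \p_t N\,\dx'\ \subset\ I_3, \qquad -4\io \p_t^4\qc\,\p_t\NN\cdot \p_3\p_t^3 v\,\dvt\ \subset\ \io \QQ\,\mathfrak{C}_i(v^i)\,\p_3\pk\,\dx,
\end{equation*}
arising respectively from the boundary expansion $[\p_t^4,v\cdot,N]=4\p_t v\cdot\p_t^3 N+6\p_t^2 v\cdot\p_t^2 N+4\p_t^3 v\cdot\p_t N$ and from the matching interior commutator in $\mathfrak{C}_i(v^i)$. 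Neither can be estimated individually, since a crude trace argument on the boundary piece would require $\|\p_t^4\qc\|_1$ without the $\sqrt{\ff'(q)}$-weight, which is strictly above what $E^\kk$ provides. Following the indication in Section \ref{sect AGU intro}, the remedy is to pair the two terms and apply the divergence theorem in $x_3$: their sum reduces modulo controllable terms to the total time derivative
\begin{equation*}
\ddt\io\bigl(\p_t^3\p_3\qc\,\p_t\NN+\p_t^3\qc\,\p_t\p_3\NN\bigr)\cdot \p_t^3 v\,\dx,
\end{equation*}
which integrates cleanly in time. The subleading pieces $6\p_t^2 v\cdot\p_t^2 N+4\p_t v\cdot\p_t^3 N$ are controlled by the trace inequality together with the mid-order $\p_t^k\psi$-bounds already present in $E_4^\kk$. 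Assembling these estimates, passing from the good unknowns back to the original variables via $\|\p_t^4 f\|_0\leq\|\mathbf{F}\|_0+\|\ppk_3 f\|_\infty|\p_t^4\psk|_0$, and absorbing the $\eps$-remainders into the viscosity energy yields \eqref{TT time 4}.
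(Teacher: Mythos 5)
You correctly isolate the one genuinely new cancellation -- pairing $4\is\p_t^4\qc\,\p_t^3v\cdot\p_t\npk\dx'$ with the matching interior commutator through the divergence theorem -- but in two other places you assert that the spatial argument carries over verbatim, and in both it does not. First, the Rayleigh--Taylor piece $\RT^*=\is\p_3 q\,\p_t^4\psk\,\p_t^5\psi\dx'$ is \emph{not} controlled ``as in \eqref{RTbound}'': that estimate pairs $|\TP^{\alpha}\lkk\psi|_0$ with $|\p_t\TP^{\alpha}\lkk\psi|_0$, both of which are energy quantities, whereas the analogue here would need $|\p_t^5\lkk\psi|_0$ at fixed time, and the only information on $\p_t^5\psi$ in $E^{\kk}$ is the $\kk$-weighted $L^2_tH^1_{x'}$ norm. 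The required workaround is to substitute the kinematic boundary condition $\p_t^5\psi=-(\vb\cdot\cnab)\p_t^4\psk+\p_t^4v\cdot\npk-[\p_t^4,\vb\cdot,\cnab\psk]$, let two of the resulting pieces cancel against parts of $I_2^*$ and $I_3^*$, rewrite $\rho\,\p_t^4 v\cdot\npk$ by projecting the momentum equation onto $\npk$, and dispose of the remaining $\p_3\p_t^3\qc$ contribution by the divergence theorem plus an integration by parts in $t$ (\eqref{pskt5}--\eqref{RTtt4}); none of this appears in your outline.

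Second, you propose to absorb the unweighted $\|\p_t^4\qc\|_0$ coming from $\io\QQ\,\cc_i(v^i)\p_3\pk\dx$ via \eqref{reduction q tangential}. That identity converts a \emph{spatial} derivative of $\qc$ into a material derivative of $v$; it cannot act on $\p_t^4\qc$, which carries no spatial derivative -- this is exactly why only $\sqrt{\ff'(q)}\p_t^4\qc$ sits in the energy, and why the paper flags $I_0^*$ as uncontrollable by the method of \eqref{AGU TT CV} (the crude bound $\|\p_t^4\qc\|_0\le\lam^{-1}\|\lam\p_t^4\qc\|_0$ would destroy the uniformity in Mach number asserted in Theorem \ref{prop uniform kk energy est}). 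Beyond the piece you pair with the boundary, the remaining top-order contributions of $\cc_i(v^i)$ must be recombined into $\nabpk\cdot\p_t^3 v$ and $\p_t^3(\nabpk\cdot v)$ so that the continuity equation supplies the $\ff'(q)$-weight for free, with the leftover pieces integrated by parts in $t$ and $\TP$ (\eqref{AGUttI01}--\eqref{AGUttI02}); and the resulting $\int_0^T\|\p_t^3\qc\|_0^2\dt$ must itself be recovered from its boundary trace and $\|\p_3\p_t^3\qc\|_0$ via the fundamental theorem of calculus, since it too admits no pressure reduction. For the same reason, the subleading boundary pieces $\is\p_t^4\qc\,\p_t^{\beta}v\cdot\p_t^{4-\beta}\npk\dx'$ with $\beta=1,2$ cannot be handled by a trace inequality on $\p_t^4\qc$ (which has no interior regularity to trace) and require an integration by parts in $t$ under the time integral.
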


Although the proof appears to be similar to what has been done in the previous subsection, it should be mentioned that we only have $L^2(\Omega)$-regularity for the full-time derivatives of $v$ and $q$, and thus we do not have any information about their boundary regularity. When the full-time derivatives of $v$ and $q$ appear on the boundary, we use either the artificial viscosity or the Euler equations to reduce a time derivative to a spatial derivative. 

\subsubsection{Alinhac good unknowns for full-time derivatives}
To begin with, we still introduce the Alinhac good unknowns of $v,q$ with respect to $\p_t^4$. Using the same notation as before, we define
\begin{align}
\VV_i : = \p_t^4 v_i - \ppk_3 v_i \p_t^4 \pk, \quad \QQ:=\p_t^4 \qc - \ppk_3 \qc \p_t^4 \pk.
\end{align}

Parallel to \eqref{AGU comm 1} , we have
\begin{equation}
\p_t^4(\nabpk_i f)=\nabpk_i\mathbf{F}+\cc_i(f),
\end{equation}where $\cc_i(f):=\ppk_3\ppk_if\p_t^4\pk+\cc'_i(f)$ and
\begin{align}
\label{AGU comm Cti'}\mathfrak{C}_i'(f) =&
-\left[\p_t^4, \frac{\p_i \pk}{\p_3\pk}, \p_3 f\right]-\p_3 f \left[ \p_t^4, \p_i \pk, \frac{1}{\p_3\pk}\right] +\p_i\pk \p_3 f\left[ \p_t^3, \frac{1}{(\p_3\pk)^2}\right] \p_t \p_3 \pk,~~i=1,2\\
\label{AGU comm Ct3'}\mathfrak{C}_3'(f) = &
\left[  \p_t^4, \frac{1}{\p_3\pk}, \p_3 f\right] + \p_3 f\left[ \p_t^3, \frac{1}{(\p_3\pk)^2}\right]  \p_t \p_3 \pk.
\end{align} 

Then we take $\p_t^4$ to the first two equations of \eqref{CWWST Eulerian} to obtain
\begin{align}
\rho \Dtpk \VV_i + \nabpk_i \QQ =&~\mathcal{R}^1_i,\label{AGU tt mom}\\
\ff'(q) \Dtpk \QQ + \nabpk\cdot \VV=&~\mathcal{R}^2- \mathfrak{C}^i(v_i), \label{AGU tt mass}
\end{align}
where
\begin{align}
\label{rrt1} \mathcal{R}^1_i:=& -[\p_t^4, \rho] \Dtpk v_i-\rho\left(\mathfrak{D}(v_i)+\mathfrak{E}(v_i)\right)-\mathfrak{C}_i(\qc),\\
\label{rrt2} \mathcal{R}^2 :=&-[\p_t^4, \ff'(q)]\Dtpk \qc- \ff'(q)\left(\mathfrak{D}(\qc)+\mathfrak{E}(\qc)\right)+\p_t^4(\ff'(q)gv_3),
\end{align}and the commutators $\dd(f),\ee(f)$ are defined in the same way as in \eqref{AGU comm D} and \eqref{AGU comm 3} by replacing $\TT^{\alpha}$ with $\p_t^4$ and replacing $\TP$ with $\p_t$. The last two terms in \eqref{AGU comm D} vanish because $\p_t^4$ directly commutes with $\p_3$. Analogous to Lemma \ref{AGU TT error}, we list the estimates for commutators $\cc,\dd,\mathfrak{E}$.
\begin{lem}\label{AGU tt error}
Let $\mathbf{F}:= \p_t^4 f - \ppk_3 f \p_t^4 \pk$ be the Alinhac good unknowns of $f$ with respect to $\p_t^4$. Assuming that $\p_3 \pk \geq c_0>0$, then
\begin{align}
\label{AGU tt4 gap} \|\p_t^4 f\|_0 \leq&~\|\mathbf{F}\|_0+ c_0^{-1}\|\p_3 f\|_{\infty}|\p_t^4\psk|_0, \\
\label{C time} \|\mathfrak{C}_i(f)\|_{0} \leq&~P\left(c_0^{-1}, |\cnab\psk|_{\infty},\sum_{k=1}^3|\cnab\p_t^k\psk|_{3-k}\right)\cdot \left(\|\p f\|_{\infty}+\sum_{k=1}^{3}\|\p_t^k f\|_{4-k}\right),\quad i=1,2,3,\\
 \label{D time} \|\dd(f)\|_0\leq&~P\left(c_0^{-1}, |\cnab\psk|_{\infty},\sum_{k=1}^3|\cnab\p_t^k\psk|_{3-k}\right)\cdot \left(\|\p f\|_{\infty}+\sum_{k=1}^{3}\|\p_t^k f\|_{4-k}\right),\\
\label{E time} \|\mathfrak{E}(f)\|_0 \leq&~\kk |\cnab\p_t^5 \psi|_0  \|\p f\|_{\infty}. 
\end{align}
\end{lem}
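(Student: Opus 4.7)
\begin{pf}[Proof proposal for Lemma \ref{AGU tt error}]
The plan is to mirror the proof of Lemma \ref{AGU TT error} but adapted to the fact that $\p_t^4$ commutes freely with $\p_3$, so the list of commutator terms in $\mathfrak{C}'_i, \mathfrak{D}'$ collapses. The first inequality \eqref{AGU tt4 gap} is immediate from the triangle inequality applied to $\VV = \p_t^4 f - \ppk_3 f\, \p_t^4 \pk$ together with $\ppk_3 = (\p_3\pk)^{-1}\p_3$, $\p_3\pk\geq c_0$, and the identity $\p_t^4\pk = \chi(x_3)\p_t^4\psk$ (since $\pk(t,x) = x_3 + \chi(x_3)\psk(t,x')$), which yields $\|\p_t^4\pk\|_0 \leq |\p_t^4\psk|_0$ by \eqref{chi}.

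For \eqref{C time} and \eqref{D time}, I would expand each commutator $[\p_t^4, g, h]$, $[\p_t^4, g]h$ appearing in \eqref{AGU comm Cti'}--\eqref{AGU comm Ct3'} and in \eqref{AGU comm D} via Leibniz, writing each summand as $\binom{4}{k}\p_t^{k}g\,\p_t^{4-k}h$ with $1\le k\le 3$ so that neither factor carries all four time derivatives. For every such product, I apply the Moser-type product estimate
\begin{equation*}
\|\p_t^{k} g\, \p_t^{4-k}h\|_0 \lesssim \|g\|_{W^{1,\infty}_{t,x}}\|\p_t^{4-k}h\|_{k-1} + \|\p_t^{k-1}g\|_{4-k}\|\p h\|_\infty + (\text{controllable lower-order}),
\end{equation*}
and recognize that each factor involves at most three time derivatives of either $f$, $\pk$, $\psk$, $v$, or $q$; the spatial part is compensated using $\pk = x_3+\chi(x_3)\psk$ so spatial derivatives of $\pk$ are controlled by $|\cnab\p_t^j\psk|_{3-j}$. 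Any purely $L^\infty$ factor that contains $\p_t^4$ of a coefficient is avoided by design (only the highest-order $\p_t^4$ is allowed to hit $f$, where it is bounded by $\|\p f\|_\infty$ whenever it comes with a spatial derivative, or controlled by the $\|\p_t^k f\|_{4-k}$ sum when it does not). Aggregating these bounds across the finitely many commutator pieces produces the polynomial expression on the right-hand side of \eqref{C time}--\eqref{D time}. For the $\mathfrak{D}$-estimate one additionally needs to estimate $\|\Dtpk\ppk_3 f\cdot \p_t^4\pk\|_0 \leq \|\Dtpk\ppk_3 f\|_\infty|\p_t^4\psk|_0$, again absorbed by the stated polynomial.

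For \eqref{E time}, observe $\mathfrak{E}(f) = \p_t^5(\pk-\varphi)\,\ppk_3 f$, and since $\pk-\varphi = \chi(x_3)(\psk - \psi) = \chi(x_3)(\lkk^2\psi - \psi)$, we obtain
\begin{equation*}
\|\mathfrak{E}(f)\|_0 \leq \|\chi\|_\infty |\p_t^5(\lkk^2\psi - \psi)|_0\,\|\ppk_3 f\|_\infty \lesssim \kk|\cnab\p_t^5\psi|_0\,\|\p f\|_\infty,
\end{equation*}
where the last step uses the mollifier property \eqref{lkk33} applied to $\p_t^5\psi$ (together with $\lkk^2 - \mathrm{id}$ producing a $\kk$-gain against one tangential derivative). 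The main obstacle, as in the spatial case, is the bookkeeping: one must verify that every distribution of four time derivatives avoids placing $\p_t^4$ on a coefficient in a way that would demand an uncontrolled $L^\infty$ norm of $\p_t^4\psk$; this is precisely the reason we stop at $\p_t^3\psk$ in the polynomial bound and measure the highest time-differentiated coefficient in $L^2$-based Sobolev norms rather than in $L^\infty$.
\end{pf}
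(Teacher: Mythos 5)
Your proposal is correct and follows essentially the same route the paper intends: the paper gives no separate proof of this lemma, merely declaring it "analogous to Lemma \ref{AGU TT error}", and your argument is exactly the natural transcription of that proof to $\TT^\alpha=\p_t^4$ — triangle inequality plus $\p_t^4\pk=\chi(x_3)\p_t^4\psk$ for \eqref{AGU tt4 gap}, Leibniz expansion of the trilinear commutators so that no factor carries all four time derivatives for \eqref{C time}--\eqref{D time}, and the mollifier property \eqref{lkk33} applied to $\p_t^5\psi$ (with the $[\p_t^4,\chi(x_3)]$ commutator vanishing) for \eqref{E time}. The only caveat, which is a defect of the lemma's statement rather than of your proof, is that the terms $\ppk_3\ppk_i f\,\p_t^4\pk$ and $(\Dtpk\ppk_3 f)\,\p_t^4\pk$ genuinely produce a factor $|\p_t^4\psk|_0$ (and a second derivative of $f$ in $L^\infty$) not literally listed on the right-hand side; the paper silently absorbs these in later applications via \eqref{pskt4 L2}, exactly as you do.
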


The $\p_t^4$-differentiated  kinematic boundary condition now reads
\begin{equation}
\p_t^5 \psi + (\vb\cdot\cnab) \TP^4\psk - \VV\cdot \npk=\mathcal{S}_1^*, \quad \text{on}\,\,\Sigma, \label{AGU tt BC kinematic}
\end{equation}
where
\begin{align}\label{AGU tt S1}
\mathcal{S}_1^*:=\p_3 v\cdot \npk \p_t^4\psk+\sum_{1\leq\beta\leq 3} \binom{4}{\beta}\p_t^{\beta} v\cdot\p_t^{4-\beta}\npk.
\end{align}
Also, since $\QQ|_{\Sigma} = \p_t^4 q - \ppk_3 q \p_t^4 \psk$, the boundary condition of $\QQ$ on $\Sigma$ reads
\begin{align}
\QQ = -\sigma \p_t^4\cnab \cdot \left( \frac{\cnab \psk}{\sqrt{1+|\cnab\psk|^2}}\right)+\kk^2(1-\TL)\p_t^5\psi- \p_3 q \p_t^4 \psk. \label{AGU tt BC Q}
\end{align}

\subsubsection{Energy estimates for the full-time derivatives}\label{sect tt AGUr}

Replacing $\TT^{\alpha}$ by $\p_t^4$ in \eqref{energy estimate AGU spatial}, we have
\begin{align}\label{energy estimate AGU time}
\ddt\frac{1}{2} \io \rho |\VV|^2\p_3\pk\dx=\frac{1}{2}\io \rho |\VV|^2\p_3\p_t (\pk-\varphi)\dx +\io \QQ (\nabpk\cdot \VV) \p_3\pk\dx-\is \QQ (\VV\cdot \npk)\dx'+ \io \VV\cdot \mathcal{R}^1\p_3\pk\dx.,
\end{align}where the first term and the last term are controlled in the same way as \eqref{AGU TTM0}-\eqref{AGU TTR1}, so we omit the details. As for the second term, we follow \eqref{AGU TTQ0}-\eqref{AGU TTQ} to get
\begin{equation}\label{AGU ttQ}
\begin{aligned}
&\io \QQ(\nabpk\cdot\VV)\p_3\pk\dx\\
=&\underbrace{-\io\p_t^4 \qc\cc_i(v^i)\p_3\pk\dx}_{=:I_0^*}+\io\p_t^4\pk\ppk_3 \qc\cc_i(v^i)\p_3\pk\dx-\frac12\ddt\left\|\sqrt{\ff'(q)}\QQ\right\|_0^2\\
&+ \left\|\sqrt{\ff'(q)}\QQ\right\|_0^2(\|\p v\|_{\infty}+\kk|\cnab\p_t\psi|_{0.5})+ \left\|\sqrt{\ff'(q)}\QQ\right\|_0\|\mathcal{R}^2\|_0\\
\lesssim&~ I_0^*-\frac12\ddt\left\|\sqrt{\ff'(q)}\QQ\right\|_0^2+\left\|\sqrt{\ff'(q)}\p_t^4 \qc\right\|_0^2\left(\|\p v\|_{\infty}+\kk|\cnab\p_t\psi|_{0.5}+|\kk\cnab\p_t^5\psi|_0\right)\\
&+P\left(c_0^{-1}, |\cnab\psk|_{\infty},\sum_{k=1}^3|\cnab\p_t^k\psk|_{3-k}\right)|\p_t^4\psk|_0|\left\|\sqrt{\ff'(q)}\p_t^4 \qc\right\|_0\left(\|\p v,\p q\|_{\infty}+\sum_{k=1}^{3}\|\p_t^k \qc,\p_t^k v\|_{4-k}+\|\ff'(q)\p_t^4 v_3\|_0\right).
\end{aligned}
\end{equation}

At this point, we are not able to control $I_0^*:=-\io \p_t^4 q \cc_i(v^i)\p_3\pk\dx$ as in \eqref{AGU TT CV} since this requires the bound for $\|\p_t^4 q\|_0$. We can only obtain the control of $\|\sqrt{\ff'(q)}\p_t^4 q\|_0$ from the energy estimate because we can no longer use the momentum equation to reduce $\p_t^4 q$ due to the lack of spatial derivatives. \textit{Although the method in \eqref{AGU TT CV} is still valid here when we prove the well-posedness, provided that $\ff'(q)$ is bounded from below, we would like to show that our estimate can be adjusted to be uniform in $\ff'(q)$.} To achieve this, we find that the problematic terms in $\cc_i(v^i)$ can be exactly canceled by the boundary error term $\sss_1$ defined in \eqref{AGU tt S1}. Therefore, this term should be controlled together with the boundary integral if we want our energy estimates to be uniform in the Mach number.

Next, we analyze the boundary integral. Most of the steps are parallel to Section \ref{sect AGU TT ST} if we replace $\TP^{\alpha}$ by $\p_t^4$, so we will omit the details of those repeated steps but only list the different steps. Plugging the boundary conditions \eqref{AGU tt BC kinematic} and \eqref{AGU tt BC Q} into $-\is\QQ(\VV\cdot\npk)\dx'$, we get
\begin{align}\label{AGUttbdry}
-\is \QQ (\VV\cdot \npk)\dx'= -\is \QQ \p_t^5 \psi \dx'-\is \QQ (\vb\cdot\cnab) \p_t^4\psk \dx' +\is\QQ\mathcal{S}_1^*\dx'=:I_1^*+I_2^*+I_3^*,
\end{align}and $I_1^*$ is further divided into three parts:
\begin{equation}\label{AGUttbdry1}
\begin{aligned}
I_1^*:=-\is \QQ\p_t^5\psi\dx'=&~\sigma\is  \p_t^4\cnab\cdot\left(\frac{\cnab\psk}{\sqrt{1+|\cnab\psk|^2}}\right)\p_t^5\psi\dx'-\kk^2\is \p_t^4(1-\TL)\p_t\psi\cdot\p_t^5\psi\dx'+\is\p_3 q \p_t^4\psk\p_t^5\psi\dx'\\
=&:\ST_{1}^*+\ST_{2}^*+\RT^*.
\end{aligned}
\end{equation}

Mimicking the steps \eqref{ST2}-\eqref{STbound}, we can get the bounds for$\ST_1^*,\ST_2^*$:
\begin{equation}\label{STttbound}
\int_0^T\ST_1^*+\ST^*_2\dt+\left|\kk\p_t^5\psi\right|_{L_t^2H_{x'}^1}^2+\frac{\sigma}{2}\left|\cnab\p_t^4\lkk\psi(T)\right|_0^2\lesssim\PP_0^{\kk}+\int_0^T P(E^{\kk}(t))\dt.
\end{equation}

\begin{rmk}
Parallel to the remark after \eqref{RTbound}, $-\int_0^T \RT^*\dt$ would contribute to the non-$\sigma$-weighted energy $\is (-\p_3 q) |\p_t^4 \lkk\psi|^2\dt$ if the Rayleigh-Taylor sign condition holds. This will be revisited in Section \ref{section double lim}. 
\end{rmk}

As for$\RT^*$, if we still follow \eqref{RTbound} to get:
\[
\int_0^T\RT^*\dt\lesssim\eps\left|\kk\p_t^5\psi\right|_{L_t^2L_{x'}^2}^2+\int_0^T P\left(\|\qc\|_{4},\left|\p_t^4\lkk\psi\right|_0,\left|\p_t^5\lkk\psi\right|_0\right)\dt,
\] then we find that the term $|\p_t^5\lkk\psi|_0$ is not included in $E^{\kk}(t)$ because there is no spatial derivative here. To overcome this, we invoke the kinematic boundary condition $\p_t\psi=-\vb\cdot\cnab\psk+v_3$ and take $\p_t^4$ to get
\begin{equation}\label{pskt5}
\p_t^5\psi=-(\vb\cdot\cnab)\p_t^4\psk+\p_t^4v_3-[\p_t^4,\vb\cdot]\cnab\psk=-(\vb\cdot\cnab)\p_t^4\psk+\p_t^4v\cdot\npk-[\p_t^4,\vb\cdot,\cnab\psk],
\end{equation}and thus
\begin{align}\label{RTtt0}
\RT^*=&-\is\p_3 q\p_t^4\psk(\vb\cdot\cnab)\p_t^4\psk\dx'+\is\p_3q \p_t^4\psk\p_t^4v\cdot\npk\dx'-\is\p_3 q \p_t^4\psk [\p_t^4,\vb\cdot,\cnab\psk]\dx'\\
=&:\RT_1^*+\RT_2^*+\RT_3^*. \nonumber
\end{align}

Note that we only need to analyze the contribution of$\RT_2^*$ because the contribution of the other two terms will be canceled by part of $I_2^*$ and $I_3^*$. To do this, we need to derive the equation for $\p_t^4\cdot\npk$ on $\Sigma$. Recall that $$\Dtpk\big|_{\Sigma}=\p_t+(\vb\cdot\cnab)+(\p_3\psk)^{-1}\underbrace{(v\cdot\Npk-\p_t\varphi)}_{=0\text{ on }\Sigma}\p_3=\p_t+(\vb\cdot\cnab),$$ we have the following identity by projecting the momentum equation onto the direction of $\npk$ on $\Sigma$:
\[
\rho\p_t v\cdot\npk=-(\rho-1)g-\rho(\vb\cdot\cnab)v\cdot\npk+\cnab\psk\cdot\cnab \qc-(1+|\cnab\psk|^2)\p_3 \qc,
\]and thus
\begin{equation}\label{RTtt20}
\rho\p_t^4 v\cdot\npk\lleq -\p_t^3\rho g -\rho(\vb\cdot\cnab)\p_t^3 v\cdot\npk+\cnab\psk\cdot\cnab\p_t^3 \qc-|\npk|^2\p_3\p_t^3 \qc.
\end{equation}
 The contribution of the first three terms in \eqref{RTtt20} can be directly controlled after integrating $\cnab$ by parts and using the Sobolev trace lemma:
\begin{equation}\label{RTtt2}
\begin{aligned}
&\is\rho^{-1}\p_3 q\p_t^4\psk\left(\rho(\vb\cdot\cnab)\p_t^3v\cdot\npk+\cnab\psk\cdot\cnab\p_t^3 \qc-\p_t^3\rho g\right)\dx'\\
\lleq&-\is \rho^{-1}\cnab\p_t^4\psk\cdot\left(\p_3 q\left(\rho\vb\p_t^3v\cdot\npk+\cnab\psk\p_t^3 \qc\right)\right)\dx'-\is \rho^{-1}\p_3 q\p_t^4\psk \p_t^3\rho g\dx'\\
\lesssim&~\|\p q\|_2\left(\left|\cnab\p_t^4\psk\right|_0 P\left(\|\p_t^3 v\|_1,\|\p_t^3 \qc\|_1,|\cnab\psk|_{\infty}\right)+|\p_t^4\psk|_0\|\ff'(q)\p_t^3 q\|_1|\rho|_{\infty}\right).
\end{aligned}
\end{equation}
\begin{rmk}
Note that the right side of \eqref{RTtt2} involves $|\cnab\p_t^4\psk|_0$ whose control relies on $\sigma^{-1}$. This is due to the lack of the Rayleigh-Taylor sign condition. When taking the zero surface tension limit, the Rayleigh-Taylor sign condition is assumed, and thus the RT term can be directly controlled.
\end{rmk}

Then, for the last term, we need to do the same reduction for $\p_t^4\psi$:
\begin{equation}\label{pskt4}
\p_t^4\psi=-(\vb\cdot\cnab)\p_t^3\psk+\p_t^3v_3-[\p_t^3,\vb\cdot]\cnab\psk=-(\vb\cdot\cnab)\p_t^3\psk+\p_t^3v\cdot\npk-[\p_t^3,\vb\cdot,\cnab\psk].
\end{equation}Using \eqref{pskt4} and Sobolev trace lemma, it is controlled by
\begin{equation}\label{pskt4 L2}
|\p_t^4\psk|_0\lesssim P(|\cnab\psk|_{\infty},|\cnab\p_t\psk|_{\infty})\left(|\cnab\p_t^3\psk|_0+\|\p_t^3 v\|_1+\|\p_t^2 v\|_2+|\cnab\p_t^2\psk|_0\right).
\end{equation}

Now we plug the equality above into the boundary integral $-\is\rho^{-1}\p_3q|\npk|^2\p_t^4\psk\p_3\p_t^3 \qc\dx'$. Note that the unit exterior normal vector to $\Sigma$ is $(0,0,1)^{\top}$ (not the Eulerian normal vector $\npk$ !), we can use the divergence theorem to rewrite the boundary integral into the interior, and integrate by parts in $\p_t$ to get the following estimate:
\begin{equation}\label{RTtt4}
\begin{aligned}
&-\int_0^T\is\rho^{-1}\p_3 q|\npk|^2\p_t^4\psk\p_3\p_t^3\qc\dx'\dt\lleq \int_0^T\is\rho^{-1}\p_3 q|\npk|^2\lkk^2\left((\vb\cdot\cnab)\p_t^3\psk-\p_t^3v\cdot\npk\right)\p_3\p_t^3\qc\dx'\dt\\
=&\int_0^T\io\p_3\left(\rho^{-1}\p_3 q|\Npk|^2\lkk^2\left((\vb\cdot\cnab)\p_t^3\pk-\p_t^3v\cdot\Npk\right)\p_3\p_t^3\qc\right)\dx\dt\\
\lleq&\int_0^T\io\rho^{-1}\p_3 q|\Npk|^2\lkk^2\left((\vb\cdot\cnab)\p_t^3\pk-\p_t^3v\cdot\Npk\right)\cdot\p_3^2\p_t^3 \qc\dx\dt\\
\overset{\p_t}{=}&-\io\rho^{-1}\p_3q|\Npk|^2\lkk^2\left((\vb\cdot\cnab)\p_t^3\pk-\p_t^3v\cdot\Npk\right)\cdot\p_3^2\p_t^2 \qc\dx\\
&+\int_0^T\io\rho^{-1}\p_3q|\Npk|^2\p_t\lkk^2\left((\vb\cdot\cnab)\p_t^3\pk-\p_t^3v\cdot\Npk\right)\cdot\p_3^2\p_t^2 \qc\dx\dt\\
\lesssim&~\eps\|\p_t^2 \p^2 \qc\|_0^2+\PP_0^{\kk}+\int_0^T P\left(\|\p_t^4 v\|_0,\|\p_t^3 v\|_1,\|\p_t v\|_{\infty},\|\p_t^2\qc\|_2,|\cnab\psk|_{\infty},|\cnab\p_t^3\psk|_0,|\cnab\p_t^4\psk|_0\right)\dt.
\end{aligned}
\end{equation}

Combining this with \eqref{STttbound}, \eqref{RTtt0}, \eqref{RTtt2} and \eqref{RTtt4}, we get the estimate for $I_1^*$:
\begin{equation}\label{AGUttbdryI1}
\int_0^T I_1^*\dt+\left|\kk\p_t^5\psi\right|_{L_t^2H_{x'}^1}^2+\frac{\sigma}{2}\left|\cnab\p_t^4\lkk\psi(T)\right|_0^2\lesssim\eps\|\p_t^2\p^2\qc\|_0^2+\int_0^T\RT_1^*+\RT_3^*\dt+\PP_0^{\kk}+\int_0^T P(E^{\kk}(t))\dt,
\end{equation}
after choosing $\eps>0$ that appears above to be suitably small.

Next we expand $I_2^*,I_3^*$ defined in \eqref{AGUttbdry}
\begin{equation}\label{AGUttbdryI2}
\begin{aligned}
I_2^*+I_3^*=&-\is\p_t^4 \qc(\vb\cdot\cnab)\p_t^4\psk\dx'+\is\p_t^4\psk\p_3 q(\vb\cdot\cnab)\p_t^4\psk\dx'\\
&+\is\p_t^4 \qc\sss_1\dx'-\is\p_t^4\psk\p_3 q\p_3 v\cdot\npk\p_t^4\psk\dx'-\is\p_t^4\psk\p_3 q\left(\sum_{1\leq \beta\leq 3}\binom{4}{\beta}\p_t^{\beta}v\cdot\p_t^{4-\beta}\npk\right)\dx'
\end{aligned}
\end{equation}and we find that the second term exactly cancels$\RT_1^*$ and the fifth term exactly cancels$\RT_3^*$ defined in \eqref{RTtt0}. The first term can be controlled in the same way as $I_{21}, I_{22}$ defined in \eqref{AGUbdry2} after replacing $\TP^4$ by $\p_t^4$. The fourth term is directly controlled by $P(E^{\kk}(t))$ by using the Sobolev trace lemma. 

Hence, it suffices to analyze the third term. Using the definition of $\sss_1^*$, we have
\begin{equation}\label{AGUttbdry3}
\is\p_t^4 \qc\sss_1^*\dx'=\is\p_t^4 \qc\left(\p_3v\cdot\npk\p_t^4\psk\right)\dx'-4\is\p_t^4 \qc\p_t^3v\cdot\p_t\npk\dx' +\sum_{1\leq\beta\leq 2}\binom{4}{\beta}\is\p_t^4\qc \p_t^{\beta}v\cdot\cnab\p_t^{4-\beta}\psk\dx',
\end{equation}
where the surface tension energy can control the first term after invoking \eqref{AGU tt BC Q} and integrating $\cnab$ by parts; and the last term can be controlled after integrating by parts in $\p_t$ under the time integral. But for the remaining term
\begin{equation}\label{AGUttI30}
I_{30}^*:=4\is\p_t^4\qc\p_t^3v\cdot\p_t\npk\dx',
\end{equation}
we have neither the $L^2(\Sigma)$-regularity of $\p_t^4 q$ nor the possibility of integrating $\frac{1}{2}$-time derivatives by parts as in the control of \eqref{AGUbdry3}.

Fortunately, we can still control $I_{30}$ \textit{together with the interior term} $I_0^*:=-\io\p_t^4q\cc_i(v^i)\p_3\pk\dx$ defined in \eqref{AGU ttQ}. In fact, invoking \eqref{AGU comm Cti'} and \eqref{AGU comm Ct3'}, we know $\cc_i(v^i)$ includes the following terms involving $\geq 3$ time derivatives of $v^i$ and $\geq 4$ derivatives of $\pk$:
\begin{align}
\label{tt CV 1} \ppk_3\ppk_iv^i\p_t^4\pk&=\cc_i(v^i)-\cc'_i(v^i),~~i=1,2,3,\\
\label{tt CV 2} -4\p_t\left(\frac{\TP_i\pk}{\p_3\pk}\right)\p_t^3\p_3v^i&=4\p_t\Npk_i\ppk_3\p_t^3 v^i+4\frac{\p_3\p_t\pk\TP_i\pk}{\p_3\pk}\ppk_3\p_t^3v^i \text{ from the first commutator in }\cc_i'(v^i)~~i=1,2,\\
\label{tt CV 3} 4\p_t\left(\frac{1}{\p_3\pk}\right)\p_t^3\p_3v^3&=-4\frac{\p_3\p_t\pk}{\p_3\pk}\ppk_3\p_t^3v^3 \text{  from the first commutator in }\cc_3'(v^3),
\end{align}while the terms in $\cc_i'(v^i)$ containing only $\leq 2$ time derivatives of $v^i$ and $\leq 3$ time derivatives of $\pk$ are controlled directly after integrating $\p_t$ by parts under time integral.

The contribution of the above four terms in $I_0^*$ is divided into three parts:
\begin{align}
\label{ttI000} I_{00}^*&:=-4\io\p_t^4 \qc\p_t\Npk_i\p_3\p_t^3v^i\dx\\
\label{ttI010} I_{01}^*&:=-\io\p_t^4 \qc\p_3(\nabpk\cdot v)\p_t^4\pk\dx\\
\label{ttI020} I_{02}^*&:=-4\sum_{i=1}^2\io\p_t^4 \qc\left(\frac{\p_3\p_t\pk\TP_i\pk}{\p_3\pk}\right)\ppk_3\p_t^3v^i \p_3\pk\dx+4\io\p_t^4 \qc\left(\frac{\p_3\p_t\pk}{\p_3\pk}\right)\ppk_3\p_t^3v^3 \p_3\pk\dx.
\end{align}

Integrating $\p_3$ by parts in $I_{00}^*$ and using $N_3=1$, we find the boundary term exactly cancels with $I_{30}^*$, so we have:
\begin{equation}
\begin{aligned}
I_{30}^*+I_{00}^*=&~4\io\left(\p_t^4 \p_3\qc\p_t\Npk+\p_t^4\qc\p_t\p_3\Npk\right)\cdot\p_t^3v\dx\\
=&\ddt\io\left(\p_t^3\p_3\qc\p_t\Npk+\p_t^3 \qc\p_t\p_3\Npk\right)\cdot\p_t^3 v\dx+\io\p_t^3 \p_3\qc\p_t(\p_t\Npk\cdot\p_t^3v)+\p_t^3\qc\p_t(\p_t\p_3\Npk\cdot\p_t^3v)\dx.
\end{aligned}
\end{equation}
Under the time integral, we have the following bounds after using $\eps$-Young's inequality:
\begin{align}\label{AGUttI00'}
\int_0^TI_{00}^*+I_{30}^*\dt\lesssim\eps\|\p_t^3 \p \qc\|_0^2+\PP_0^{\kk}+\int_0^T(\|\p_t^3 \qc(t)\|_0+1)P(E^{\kk}(t))\dt\nonumber\\
\lesssim\eps\|\p_t^3 \p \qc\|_0^2+\eps\int_0^T \|\p_t^3 \qc(t)\|_0^2\dt+\PP_0^{\kk}+\int_0^T P(E^{\kk}(t))\dt.
\end{align}
Here, we still need to study $\int_0^T \|\p_t^3 \qc(t)\|_0^2\dt$, as the reduction scheme does not apply to $\p_t^3\qc(t)$ due to lack of spatial derivatives.  We control this term through the fundamental theorem of calculus: For each $x_3\in (-b,0)$, we write 
$$
\p_t^3 \qc (t, x', x_3) = \p_t^3\qc(t, x', 0) +\int_{0}^{x_3} \p_3 \p_t^3\qc(t, x', z)\,dz,
$$
and so
$$
\left(\p_t^3 \qc(t,x',x_3)\right)^2 \lesssim_b \left(\p_t^3\qc(t,x',0)\right)^2 + \int_0^{x_3} \left(\p_3\p_t^3 \qc(t,x',z)\right)^2\,dz.
$$
Integrating both sides with respect to the spatial variables, we obtain
$$
\|\p_t^3 \qc(t)\|_0^2 \lesssim_b |\p_t^3 \qc(t)|_0^2 + \|\p_3 \p_t^3 \qc(t)\|_0^2.
$$
The second term on the RHS is bounded by $E^\kk(t)$. For the first term, note that
$$\p_t^3\qc = -\sigma \p_t^3 \left(\frac{\cnab\psk}{\sqrt{1+|\cnab\psk|^2}}+g\psk\right)+\kk^2 (1-\TL)\p_t^4\psi,\quad\text{on}\,\,\,\Sigma,
$$ 
where
$$
-\sigma \p_t^3 \left(\frac{\cnab\psk}{\sqrt{1+|\cnab\psk|^2}}\right) \lleq -\sigma \cnab\cdot \left(\frac{\cnab \p_t^3 \psk}{\sqrt{(1+|\cnab \psk|^2})}-\frac{\cnab\psk\cdot \cnab \p_t^3\psk}{{\sqrt{(1+|\cnab \psk|^2})}^3}\cnab\psk\right),
$$
which indicates that $\p_t^3\qc|_{\Sigma}$ consists of non-$\kk$-weighted terms with at most 5 derivatives on $\psk$ with at most 3 times derivatives, and a $\kk$-weighted term $\kk^2(1-\TL)\p_t^4\psi$. Therefore,  
$$
 \int_0^T \|\p_t^3 \qc(t)\|_0^2\dt\lesssim \int_0^T P \left(\sum_{k=0}^3|\sqrt{\sigma}\cnab \p_t^k \lkk \psi(t)|_{4-k}, \sum_{k=0}^3|\p_t^k \lkk \psi(t)|_{4-k}\right)\dt+\int_0^T|\kk \p_t^4\psi(t)|_2^2\dt
$$ 
By combining this with \eqref{AGUttI00'}, we conclude:
\begin{equation}\label{AGUttI00}
\int_0^TI_{00}^*+I_{30}^*\dt\lesssim\eps\|\p_t^3 \p \qc\|_0^2+\eps \int_0^T\|\kk \p_t^4 \psi(t)\|_{2}^2\dt+\PP_0^{\kk}+\int_0^T P(E^{\kk}(t))\dt.
\end{equation}

Next, the term $I_{01}^*$ can be directly controlled if we insert the continuity equation $\nabpk\cdot v=-\ff'(q)\Dtpk q$
\begin{equation}\label{AGUttI01}
I_{01}^*\lesssim\left\|\sqrt{\ff'(q)}\p_t^4 \qc\right\|_0\left\|\sqrt{\ff'(q)}\p_t q,\sqrt{\ff'(q)}\p q\right\|_{W^{1,\infty}}|\p_t^4\psk|_0.
\end{equation}

As for $I_{02}^*$, we note that $-\TP_i\pk\ppk_3\p_t^3v^i=\pp_i\p_t^3v^i-\TP_i\p_t^3v_i$ for $i=1,2$. So it becomes
\begin{equation}
\begin{aligned}
I_{02}^*=&~4\io\p_t^4 \qc \p_3\p_t\pk(\nabpk\cdot\p_t^3 v)\dx-4\sum_{i=1}^2\io\p_t^4 \qc \p_3\p_t\pk\TP_i\p_t^3 v_i\dx\\
\lleq&~4\io\p_t^4\qc\p_t^3(\nabpk\cdot v)\p_3\p_t\pk\dx-4\sum_{i=1}^2\io\p_t^4 \qc \p_3\p_t\pk\TP_i\p_t^3 v_i\dx,
\end{aligned}
\end{equation}where the first term is controlled by 
$$
\left\|\sqrt{\ff'(q)}\p_t^4 \qc\right\|_0\left(\left\|\sqrt{\ff'(q)}\p_t^4 \qc\right\|_{0}+\left\|\sqrt{\ff'(q)}\p_t^3 \p \qc\right\|_{0}+\left\|\sqrt{\ff'(q)}\p_t^3 v_3\right\|_{0}\right)|\p_t\psk|_{\infty}
$$ 
after invoking the continuity equation, and the second term is controlled under a time integral after integrating by parts first in $\p_t$ and then in $\TP_i$. So we have:
\begin{equation}\label{AGUttI02}
\int_0^TI_{02}^*\dt\lesssim~\eps\|\p_t^3 \TP \qc\|_0^2+\PP_0^{\kk}+\int_0^T P(E^{\kk}(t))\dt.
\end{equation}

Summarizing \eqref{energy estimate AGU time}, \eqref{AGU ttQ}, \eqref{STttbound}, \eqref{RTtt0}, \eqref{AGUttbdryI1}-\eqref{AGUttI30}, \eqref{AGUttI00}, \eqref{AGUttI01} and \eqref{AGUttI02}, we finally get the control of the Alinhac good unknowns $\VV$ and $\QQ$ with respect to $\p_t^4$:
\begin{equation}\label{AGU tt}
\|\VV(T)\|_0^2+\left\|\sqrt{\ff'(q)}\QQ(T)\right\|_0^2+\left|\sqrt{\sigma}\cnab\p_t^4\lkk\psi(T)\right|_0^2+\int_0^T\left|\kk\p_t^5\psi\right|_{1}^2\dt\lesssim \eps\left(\|\p_t^2 \p^2\qc\|_0^2+\|\p_t^3 \p \qc\|_0^2\right)+\PP_0^{\kk}+\int_0^T P(E^{\kk}(t))\dt.
\end{equation}

To recover the energy for $\|\p_t^4 v\|_0^2$ and $\|\sqrt{\ff'(q)}\p_t^4 \qc\|_0^2$, it suffices to invoke \eqref{AGU tt4 gap} and use the estimate of $|\p_t^4\psk|_0$ in \eqref{pskt4 L2}. Note that the right side of \eqref{pskt4 L2} has been controlled in $\TP^{4-k}\p_t^k$-estimates for $k\leq 3$, so we already have $|\p_t^4\psk|_0\leq \PP_0^{\kk}+\int_0^T P(E^{\kk}(t))\dt$ and thus
\begin{equation}\label{tt time}
\|\p_t^4 v(T)\|_0^2+\left\|\sqrt{\ff'(q)}\p_t^4 \qc(T)\right\|_0^2+\left|\sqrt{\sigma}\cnab\p_t^4\lkk\psi(T)\right|_0^2+\int_0^T\left|\kk\p_t^5\psi\right|_{1}^2\dt\lesssim \eps\left(\|\p_t^2 \p^2\qc\|_0^2+\|\p_t^3 \p \qc\|_0^2\right)+\PP_0^{\kk}+\int_0^T P(E^{\kk}(t))\dt.
\end{equation}

\subsection{A priori estimates for the nonlinear $\kk$-approximate problem}\label{subsect: 4.7}
The goal of this section is to complete the proof of Theorem \ref{prop uniform kk energy est}. We choose $\eps>0$ suitably small and then combine the tangential estimates \eqref{TT spatial} and \eqref{tt time} with div-curl analysis, reduction of pressure and $L^2$-estimates in Section \ref{sect kkL2}--Section \ref{sect div curl} to get the following energy inequality:
\begin{equation}\label{full kk energy inequality}
\sup_{t\in [0,T]}E^{\kk}(t)\leq P(E^{\kk}(0))+\int_0^T P(E^{\kk}(t))\dt.
\end{equation}
This implies
$$
 \mathcal{E}(t')  \leq P(E^{\kk}(0))+ TP(\mathcal{E}(t')), \quad\text{for all }t'\in [0, T], 
$$
with $\mathcal{E}(t') = \sup_{s\in [0,t']}E^{\kk}(s)$. Here, 
Since the right-hand side of the energy inequality does not depend on $\kk^{-1}$, 
we can use the generalized Gr\"onwall's inequality (Theorem \ref{thm: generalized gronwall}) to show that there exists some $T=T_0>0$, independent of $\kk>0$, such that
\begin{equation}
\sup_{0\leq t\leq T_0}E^{\kk}(t)\leq 2P(E^{\kk}(0)).
\end{equation}

We also note that the above energy estimate does not rely on $\ff'(q)^{-1}$, as a special cancellation structure enjoyed by the Alinhac good unknowns and delicate analysis \eqref{AGUttbdry3}-\eqref{AGUttI02} excludes the only possibility that might make the energy estimates not uniform in Mach number. Therefore, our a priori bound is also uniform in Mach number. 

\section{Well-posedness of the nonlinear $\kk$-approximate system}\label{sect linear LWP}
For the nonlinear $\kk$-approximate problem \eqref{CWWSTkk}, we have established the uniform-in-$\kk$ estimates. Once we prove the well-posedness of \eqref{CWWSTkk} for each \textit{fixed} $\kk>0$, we can take the limit $\kk\rightarrow 0$ to prove the local existence of the original system \eqref{CWWST}. We will use Picard iteration to construct the solution to \eqref{CWWSTkk} for each fixed $\kk>0$.  We start with $(v^{(0)},\rho^{(0)},\psi^{(0)}):=(\mathbf{0},1,0)$ and also define $\psi^{(-1)}:=\psi^{(0)}$. Then we construct the solution by the following iteration scheme: For any $n\geq 0$, given $\{(v^{(k)},\rho^{(k)},\psi^{(k)})\}_{k\leq n}$, we define $(v^{(n+1)},\rho^{(n+1)},\psi^{(n+1)})$ to be the solution to the following linear system whose coefficients depend on $(v^{(n)},\rho^{(n)},\psi^{(n)})$ and $\psi^{(n-1)}$:
\begin{equation}\label{CWWSTlin0}
\begin{cases}
\rho^{(n)}  D_t^{\pk^{(n)}} v^{(n+1)} +\nab^{\pk^{(n)}} \qc^{(n+1)}=-(\rho^{(n)}-1) ge_3&~~~ \text{in}\,\,[0,T]\times \Omega,\\
\ff^{(n)'}(q^{(n)})D_t^{\pk^{(n)}} \qc^{(n+1)}+ \nab^{\pk^{(n)}}\cdot v^{(n+1)}=\ff^{(n)'}(q^{(n)})gv_3^{(n)} &~~~ \text{in}\,[0,T]\times \Omega,\\
q^{(n+1)}=q^{(n+1)}(\rho^{(n+1)}), \qc^{(n+1)}=q^{(n+1)}+g\pk^{(n)}&~~~ \text{in}\, [0,T]\times \Omega, \\
\qc^{(n+1)}=g\psk^{(n)}-\sigma\cnab \cdot \left( \frac{\cnab \psk^{(n)}}{\sqrt{1+|\cnab\psk^{(n)}|^2}}\right)+\kk^2(1-\TL)(v^{(n+1)}\cdot \npk^{(n)}) &~~~\text{on}~[0,T]\times\Sigma, \\
\p_t \psi^{(n+1)} = v^{(n+1)}\cdot \npk^{(n)} &~~~\text{on}~[0,T]\times\Sigma,\\
v^{(n+1)}_3=0 &~~~\text{on}~[0,T]\times\Sigma_b,\\
(v^{(n+1)},\rho^{(n+1)},\psi^{(n+1)})|_{t=0}=(v_{0,\kk}, \rho_{0,\kk}, \psi_{0,\kk}),
\end{cases}
\end{equation}where for any $k\leq n+1$, $\varphi^{(k)}(t,x)$ is the extension of $\psi^{(k)}$ defined by $\varphi^{(k)}(t,x):=x_3+\chi(x_3)\psi^{(k)}$ and $\pk^{(k)}:=x_3+\chi(x_3)\psk^{(k)}$ is the smoothed version of $\varphi^{(k)}$. The linearized material derivative is defined to be the following linear operator:
\begin{align}
\label{Dtpk lin} D_t^{\pk^{(n)}}:=\p_t+\vb^{(n)}\cdot\cnab+\frac{1}{\p_3\pk^{(n)}}(v^{(n)}\cdot\Npk^{(n-1)}-\p_t\varphi^{(n)})\p_3,
\end{align}and the covariant derivatives are defined to be
\begin{align}
\p_t^{\pk^{(n)}} &= \p_t -\frac{\p_t \varphi^{(n)}}{\p_3\pk^{(n)}}\p_3, \label{ptk lin}\\
\nab^{\pk^{(n)}}_a &=\p^{\pk^{(n)}}_a = \p_a -\frac{\p_a \pk^{(n)}}{\p_3\pk^{(n)}}\p_3,\quad a=1,2,\label{nabpk lin 12}\\
\nab^{\pk^{(n)}}_3 &= \p^{\pk^{(n)}}_3 = \frac{1}{\p_3 \pk^{(n)}} \p_3.\label{nabpk lin 3}
\end{align}
\begin{rmk}
Note that the linearized material derivative is no longer equal to $\p_t^{\pk^{(n)}}+v^{(n)}\cdot\nab^{\pk^{(n)}}$. Indeed, one has to set the weight of $\p_3$ to be $v^{(n)}\cdot\Npk^{(n-1)}-\p_t\varphi^{(n)}$ to guarantee both the linearity of this operator and the consistency with the linearized kinematic boundary condition $\p_t \psi^{(n+1)} = v^{(n+1)}\cdot \npk^{(n)}$.
\end{rmk}
\begin{rmk}
Note that the surface tension term in \eqref{CWWSTlin0} is now replaced by a given term instead of being $-\sigma\cnab\cdot(\cnab\psk^{(n+1)}/|\npk^{(n)})$. Under this setting, we can still do energy estimates for $\psi^{(n+1)}$ by using the kinematic boundary condition and the viscosity term.
\end{rmk}

For simplicity of notations, for any $n\geq 0$, we denote $(v^{(n+1)},\rho^{(n+1)},q^{(n+1)},\psi^{(n+1)})$, $(v^{(n)},\rho^{(n)},q^{(n)},\psi^{(n)})$ and $\psi^{(n-1)}$ by $(v,\rho,q,\psi)$, $(\vr,\rhor,\qr,\psr)$ and $\psd$. Hence, we need to solve the following linearized version of system \eqref{CWWSTkk} for each fixed $\kk>0$ and then establish an energy estimate to proceed with the iteration scheme. 
\begin{equation}\label{CWWSTlin}
\begin{cases}
\rhor \Dtpkr v +\nabpkr \qc=-(\rhor-1) ge_3,&~~~ \text{in}\,\,[0,T]\times \Omega,\\
\ffr'(\qr)\Dtpkr \qc+ \nabpkr\cdot v=\ffr'(\qr)g\vr_3, &~~~ \text{in}\,[0,T]\times \Omega,\\
q=q(\rho), \qc=q+g\pkr&~~~ \text{in}\, [0,T]\times \Omega, \\
\qc=g\pskr-\sigma\cnab \cdot \left( \frac{\cnab \pskr}{\sqrt{1+|\cnab\pskr|^2}}\right)+\kk^2(1-\TL)(v\cdot \npkr), &~~~\text{on}~[0,T]\times\Sigma, \\
\p_t \psi = v\cdot \npkr, &~~~\text{on}~[0,T]\times\Sigma,\\
v_3=0 &~~~\text{on}~[0,T]\times\Sigma_b,\\
(v,\rho,\psi)|_{t=0}=(v_0^\kk, \rho_0^\kk, \psi_0^\kk).
\end{cases}
\end{equation}
Here $\ffr:=\log\rhor$. The linearized material derivative now becomes:
\begin{align}  \Dtpkr:=\p_t+\vbr\cdot\cnab+\frac{1}{\p_3\pkr}(\vr\cdot\Npkd-\p_t\pr)\p_3
\label{Dtpkr}
\end{align}and the covariant derivatives with respect to $\pkr$ are defined to be
\begin{align}
\ppkr_t &:= \p_t -\frac{\p_t \pr}{\p_3\pkr}\p_3, \label{ptkr}\\
\nabpkr_a &=\ppkr_a = \p_a -\frac{\p_a \pkr}{\p_3\pkr}\p_3,\quad a=1,2,\label{nabpkr 12}\\
\nabpkr_3 &= \ppkr_3 = \frac{1}{\p_3 \pkr} \p_3,\label{nabpkr 3}
\end{align}
where $\vbr \cdot \cnab := \vr_1 \p_1+\vr_2\p_2$. Note that, by the kinematic boundary condition, the normal component in $\Dtpkr$, namely $(\p_3\pkr)^{-1}(\vr\cdot\Npkd-\p_t\pr)\p_3$ vanishes on $\Sigma$. 

From now on, we assume the following given quantities are bounded in some time interval $t\in [0, T^{\kk}]$. This also works as the induction hypothesis for the uniform-in-$n$ estimates for \eqref{CWWSTlin}:
\begin{equation} \label{linearized assumption}
\begin{aligned}
\|\rhor-1\|_0^2+\sum_{k=0}^4\|\p_t^k \vr\|_{4-k}^2+\|\dot{\ff}'(\dot{q})\qc\|_0^2+\|\p\mathring{\qc}\|_3^2+\sum_{k=1}^3\|\p_t^k \mathring{\qc}\|_{4-k}^2+\|\dot{\ff}'(\dot{q})\p_t^4\mathring{\qc}\|_0^2&\\
+\kk^4|\psr|_{5.5}^2+\sum_{k=0}^3\kk^4|\p_t^{k+1}\psr,\p_t^{k+1}\psd|_{5.5-k}^2+\kk^2\int_0^t|\p_t^5\psr|_1^2\mathrm{d}\tau&<\Kr_0.
\end{aligned}
\end{equation}
Here, the additional $\frac{1}{2}$-regularity for $\p_t^j \psr $ and $\p_t^j\psd$, $j=0,1,2,3,4$ is contributed by the artificial viscosity whenever $\kk>0$ is fixed. 

\subsection{Construction of solution to the linearized approximate system}
We can prove that system \eqref{CWWSTlin} is a symmetric hyperbolic system with characteristic boundary conditions. Therefore, we want to use the duality argument developed by Lax-Phillips \cite{lax1960local} to prove the local existence. Before proceeding, we must ensure that the boundary conditions are homogeneous. 
\subsubsection{The homogeneous linearized approximate system}
We introduce the variable $\hhr$ defined by the harmonic extension
\begin{equation}
\begin{cases}
-\lap\hhr=0~~~&\text{ in }\Omega,\\
\hhr=g\pskr-\sigma\cnab \cdot \left( \frac{\cnab \pskr}{\sqrt{1+|\cnab\pskr|^2}}\right)~~~&\text{ on }\Sigma,\\
\p_3\hhr=0 ~~~&\text{ on }\Sigma_b,\\
\end{cases}
\end{equation}and define $\qh=\qc-\hhr$. Then \eqref{CWWSTlin} becomes the following linear hyperbolic system with \textit{homogeneous} boundary conditions:
\begin{equation}\label{CWWSTlin2}
\begin{cases}
\rhor \Dtpkr v +\nabpkr \qh=-\nabpkr \hhr-(\rhor-1)ge_3,&~~~ \text{in}\,\,[0,T]\times \Omega,\\
\ffr'(\qr)\Dtpkr \qh + \nabpkr\cdot v=\ffr'(\qr)({g}\vr_3-\Dtpkr \hhr), &~~~ \text{in}\,[0,T]\times \Omega,\\
q=q(\rho), ~\qh=q+g\pkr-\hhr &~~~ \text{in}\, [0,T]\times \Omega, \\
\qh=\kk^2(1-\TL)(v\cdot \npkr), &~~~\text{on}~[0,T]\times\Sigma, \\
v_3=0 &~~~\text{on}~[0,T]\times\Sigma_b,\\
{(v,\rho)|_{t=0}=(v_0, \rho_0)}.
\end{cases}
\end{equation} 
Note that the coefficients in \eqref{CWWSTlin2} rely on $\psd, \psr,\vr$, and $\rhor$ only, all of which are already given. The kinematic boundary condition, namely $\p_t \psi = v\cdot\npkr=-(\vb\cdot\cnab)\pskr+v_3$ on $\Sigma$, is used to define $\psi$ after solving $(v,q)$ from \eqref{CWWSTlin2}.

We define $U:=(\qh,v_1,v_2,v_3)^\top$, then \eqref{CWWSTlin2} can be expressed in terms of $U$ by
\begin{equation}
A_0(\Ur)\p_tU+\sum_{i=1}^3 A_i(\Ur)\p_i U=\fr,
\end{equation}where $\fr:=\left(\ffr'(\qr)(g\vr_3-\Dtpkr \hhr),-\ppkr_1\hhr,-\ppkr_2\hhr,-\ppkr_3\hhr-(\rhor-1)g\right)^{\top}$, $A_0(\Ur)=\text{diag}\left[\ffr'(\qr),\rhor,\rhor,\rhor\right]$, and
\begin{align*}
A_i(\Ur)=
\begin{bmatrix}
\ffr'(\qr)\vr_i& e_i^\top\\
e_i&\rhor \vr_i \mathbf{I}_3
\end{bmatrix}\text{ for }i=1,2,~~
A_3(\Ur)=\frac{1}{\p_3\pkr}
\begin{bmatrix}
\ffr'(\qr)(\vr\cdot\Npkd-\p_t\pr)& \Npkr^{\top}\\
\Npkr&\rhor (\vr\cdot\Npkd-\p_t\pr)\mathbf{I}_3
\end{bmatrix}.
\end{align*}

Since $(\p_t\pr-\vr\cdot\Npkd,)|_{\Sigma}=0$ and $e_3=(0,0,1)^{\top}$ is the unit exterior normal vector to $\Sigma$, we know that the boundary matrix, namely the normal projection of the coefficient matrices onto $\Sigma$, is 
\[\sum\limits_{i=1}^3A_i(\Ur)e_{3i}=A_3(\Ur)=\begin{bmatrix}
0& \npkr^{\top}\\
\npkr& \mathbf{0}_3
\end{bmatrix}~~~\text{ on }\Sigma
\]  which is a $4\times 4$ matrix of rank 2 (constant rank but not full rank)  with one negative eigenvalue, one positive eigenvalue, and two zero eigenvalues. This being said, the system \eqref{CWWSTlin2} is a first-order symmetric hyperbolic system with characteristic boundary conditions. The number of boundary conditions should be equal to the number of negative eigenvalues. Therefore, the correct number of boundary conditions for \eqref{CWWSTlin2} is indeed equal to 1, which means \eqref{CWWSTlin2} is solvable. After solving \eqref{CWWSTlin2}, we use the kinematic boundary condition to define $\psi$ for the next step of the iteration.

\subsubsection{Well-posedness in $L^2$ via $\mu$-regularization}
From the duality argument by Lax-Phillips \cite{lax1960local}, we need to prove the following in order to get the well-posedness of \eqref{CWWSTlin2} in some function space $X$:
\begin{itemize}
\item We need to establish a priori estimate (without loss of regularity from the source term) for \eqref{CWWSTlin2} in $X$.
\item We need to establish a priori estimate (without loss of regularity from the source term) for the dual system of \eqref{CWWSTlin2} in $X'$.
\end{itemize}

We choose $X=L^2([0,T];L^2( \Omega))$ whose dual space $X'$ is just itself. We define $W^*=(\qh^*,w_1^*,w_2^*,w_3^*)^\top$ to be the dual variables of $U=(\qh,v_1,v_2,v_3)^\top$. By testing \eqref{CWWSTlin2} with $W^*$ in $L^2([0,T];L^2( \Omega))$, one can derive the system of $W^*$ which reads $$A_0(\Ur)\p_tW^*+\sum\limits_{i=1}^3 A_i(\Ur)\p_i W^*+A_4(\Ur)W^*=\fr^*$$ with boundary condition $\qh^*|_{\Sigma}=-\kk^2(1-\TL)({w^*}\cdot\npkr)$, where $A_4:=-\p_tA_0^\top-\sum\limits_{i=1}^3 \p_iA_i^\top-(\rhor-1) g\mathbf{E}_{44}$ with $\mathbf{E}_{44}=\text{diag}[0,0,0,1]$. Note that we do not have the dual variable for $\psi$ because the original linearized system completely determines $\psi$. That is why we only have one boundary condition for the dual system.

We notice that there is an extra minus sign in the boundary condition for $\qh^*$. So, one cannot close the $L^2$-type a priori estimate for the dual system even if we can derive that $L^2$-type a priori estimate for \eqref{CWWSTlin2}. To avoid this difficulty, we introduce another viscosity term in the boundary for $\qh$ in \eqref{CWWSTlin2}. That is, we alternatively consider the $\mu$-regularized linear problem for $U=(\qh,v_1,v_2,v_3)^\top$, which reads
\begin{equation}\label{CWWSTlinmu}
A_0(\Ur)\p_tU+\sum_{i=1}^3 A_i(\Ur)\p_i U=\fr,
\end{equation} with boundary condition
\begin{align}
\label{CWWSTlinmu BC} \qh=\kk^2(1-\TL)(v\cdot \npkr)+\mu(1-\TL)\p_t(v\cdot \npkr)~~&\text{ on }\Sigma.
\end{align}

Then the dual system of \eqref{CWWSTlinmu}-\eqref{CWWSTlinmu BC} reads
\begin{equation}\label{CWWSTlinmu dual}
A_0(\Ur)\p_tW^*+\sum\limits_{i=1}^3 A_i(\Ur)\p_i W^*+A_4(\Ur)W^*=\fr^*
\end{equation}with boundary condition
\begin{align}
\label{CWWSTlinmu dual BC} \qh^*=-\kk^2(1-\TL)(w^*\cdot \npkr)+\mu(1-\TL)\p_t(w^*\cdot \npkr)~~&\text{ on }\Sigma,
\end{align} where $A_4:=-\p_tA_0^\top-\sum\limits_{i=1}^3 \p_iA_i^\top-(\rhor-1) g\mathbf{E}_{44}$ with $\mathbf{E}_{44}=\text{diag}[0,0,0,1]$. Note that we have to integrate by parts once more in time when deriving the boundary condition for $\qh^*$. {This is the reason that an extra minus sign appears in front of $\kk^2(1-\TL)(w^*\cdot\npkr)$}.

Now we are going to derive the a priori estimates for both \eqref{CWWSTlinmu} and \eqref{CWWSTlinmu dual}. For the linear system \eqref{CWWSTlinmu}, we test it with $U$ in $L^2(\Omega)$ and use the symmetry of the coefficient matrices to get:
\begin{equation}
\io U^\top\cdot A_0(\Ur)U\dx=\io U^\top\cdot \fr-\sum_{i=1}^3\io U^\top\cdot \p_iA_i(\Ur) U\dx-\is U^\top\cdot A_3(\Ur) U\dx,
\end{equation}where the interior integrals are directly controlled by $C(\Kr_0)\|U\|_0^2$ and the boundary integral reads:
\begin{equation}
\begin{aligned}
&-\is U^{\top}\cdot A_3(\Ur)U\dx'=-2\is(v\cdot\npkr)\qh\dx'\\
=&~{-2\kk^2\io\left((1-\TL)(v\cdot\npkr)\right)(v\cdot\npkr)\dx'-2\mu\is \p_t\left((1-\TL)(v\cdot\npkr)\right)(v\cdot\npkr)\dx'}\\
=&-\mu\ddt\is\left|\TJ(v\cdot\npkr)\right|_0^2\dx'-2\kk^2\left|\TJ(v\cdot\npkr)\right|_0^2.
\end{aligned}
\end{equation}

We define 
$$
\Er_0(t):=\|v(t)\|_0^2+\left\|\sqrt{\ffr'(\qr)} \qh(t)\right\|_0^2+\int_0^t|\kk(v\cdot\npkr)(\tau)|_{1}^2\mathrm{d}\tau+|\sqrt{\mu}(v\cdot\npkr)(t)|_1^2,
$$
 then the above analysis shows that
\begin{equation}\label{linmu L2}
\Er_0(T)-\Er_0(0)\leq C(\Kr_0) \int_0^T \Er_0(t)+\sqrt{\Er_0(t)}\|\fr(t)\|_0\dt.
\end{equation} and thus by Gr\"onwall's inequality we finish the $L^2$-estimate for \eqref{CWWSTlinmu}. Note that this a priori bound is also uniform in $\mu$.

Next, we show the $L^2$-estimate for the dual system \eqref{CWWSTlinmu dual}. Note that the matrix $A_4(\Ur)$ is still in $L^{\infty}(\Omega)$, so we test \eqref{CWWSTlinmu dual} by $W^*$ and take $L^2$-inner product to get
\begin{equation}
\io W^{*\top}\cdot A_0(\Ur)W^*\dx=\io W^{*\top}\cdot \fr^*-W^{*\top}\cdot\left(\sum_{i=1}^3\p_i A_i(\Ur)+A_4(\Ur)+\rhor g\mathbf{E}_{44}\right)W^*\dx-\is (W^*)^{\top}\cdot A_3(\Ur)W^*\dx',
\end{equation}where the interior integral is directly controlled by $C(\Kr_0)\|W^*\|_0^2$, but now there is a sign change in the boundary integral, which reads:
\begin{equation}
\begin{aligned}
&-\is (W^*)^{\top}\cdot A_3(\Ur)W^*\dx'=-2\is(w^*\cdot\npkr)\qh^*\dx'\\
=&~2\kk^2\io\left((1-\TL)(w^*\cdot\npkr)\right)(w^*\cdot\npkr)\dx'-2\mu\is \p_t\left((1-\TL)(w^*\cdot\npkr)\right)(w^*\cdot\npkr)\dx'\\
\lesssim&-\mu\ddt\is\left|\TJ(w^*\cdot\npkr)\right|_0^2\dx'+2\kk^2\left|\TJ(w^*\cdot\npkr)\right|_0^2.
\end{aligned}
\end{equation}
 One can see that the new viscosity term involving $\mu$ controls the term $2\kk^2|\TJ(w^*\cdot\npkr)|_0^2$ due to the change of sign.

 So, if we define 
 $$\Er_0^*(t)=\|w^*(t)\|_0^2+\left\|\sqrt{\ffr'(\qr)} \qh^*(t)\right\|_0^2+\mu\left|(w^*\cdot \npkr) (t)\right|_1^2,
 $$ 
 then we have
\begin{equation}\label{linmu dual L2}
\Er_0^*(T)-\Er_0^*(0)\lesssim_{\mu^{-1}}  C(\Kr_0) \int_0^T\Er_0^*(t)+\sqrt{\Er_0^*(t)}\|\fr^*(t)\|_0\dt,
\end{equation} and thus Gr\"onwall's inequality helps us close the $L^2$-estimate. 

Combining \eqref{linmu L2} and \eqref{linmu dual L2}, we close the a priori bounds for both linear systems \eqref{CWWSTlinmu}-\eqref{CWWSTlinmu BC} and its dual system \eqref{CWWSTlinmu dual}-\eqref{CWWSTlinmu dual BC}. Such energy bounds have no regularity loss from their source terms to solutions. Therefore, by the argument in Lax-Phillips \cite{lax1960local}(see also \cite[Theorem 5,9]{Rauch1985}), for each fixed $\mu>0$, system \eqref{CWWSTlinmu dual}-\eqref{CWWSTlinmu dual BC} admits a unique solution $U\in L^2([0,T];L^2(\Omega))$. Since the energy bound \eqref{linmu L2} for \eqref{CWWSTlinmu}-\eqref{CWWSTlinmu BC} is uniform in $\mu$, we can take the limit $\mu\to 0_+$ to obtain a local-in-time solution of the homogeneous linearized problem \eqref{CWWSTlin2}. Finally, the modification $\hhr$ is easily controlled by using the property of the harmonic function
\[
\forall s> -\frac12,~~\|\hhr\|_{s+\frac12}\lesssim|\hhr|_{s}\leq g|\pskr|_s+ P(|\cnab\pskr|_s)|\cnab^2\pskr|_{s},
\]which implies the local existence for ($L^2$-) weak solution to the linearized $\kk$-approximate system \eqref{CWWSTlin}. By the argument in \cite[Theorem 4, 8]{Rauch1985}, the weak solution $U$ is actually a ($L^2$-) strong solution. Here, by a ($L^2$-) strong solution we mean that there exists a sequence of smooth solutions converging to $U$ in $L^2$ (see \cite[Section 2.2.3]{Metivier2004}).

\subsection{Higher-order estimates for the linearized system}\label{sect linear energy}
Now we prove higher-order energy estimates for the linearized system \eqref{CWWSTlin}. 
\begin{prop}
Let
\begin{equation}\label{Erkk}
\begin{aligned}
\Er^\kk(t):=&\|\rho(t)-1\|_0^2+\sum_{k=0}^4\|\p_t^kv(t)\|_{4-k}^2+\kk^2\int_0^t\left|\p_t^{k+1}\psi(\tau)\right|_{5-k}^2\mathrm{d}\tau\\
&+\|\sqrt{\ffr'(\qr)} \qc(t)\|_0^2+\|\p\qc(t)\|_3^2+\sum_{k=1}^3\|\p_t^k \qc(t)\|_{4-k}^2+\|\sqrt{\ffr'(\qr)} \p_t^4\qc(t)\|_0^2.
\end{aligned}
\end{equation}
Then there exists some $T^{\kk}>0$ depending on $\kk$ and a constant $C(\kk^{-1},\Kr_0)>0$, such that
\begin{equation}
\sup_{0\leq t\leq T^{\kk}}\Er^{\kk}(t)\leq C(\kk^{-1},\Kr_0)\Er^{\kk}(0).
\end{equation}
Apart from that, we have
\begin{equation}
|\psi(t)|_{5.5}^2+\sum_{k=0}^3|\p_t^{k+1}\psi(t)|_{5.5-k}^2\leq C(\kk^{-1},\Kr_0)\Er^{\kk}(t), \quad \text{for all}\,\,t\in[0,T^{\kk}].
\end{equation}
\end{prop}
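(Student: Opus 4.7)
The plan is to parallel the uniform-in-$\kk$ a priori estimate of Section \ref{sect uniformkk}, now with all coefficients replaced by the given iterate $(\pkr,\vr,\qr,\rhor)$, which the induction hypothesis \eqref{linearized assumption} bounds by $\Kr_0$. Starting from the $L^2$-wellposedness already secured via the duality argument with $\mu$-regularization, I would first run the pressure reduction of Section \ref{sect reduction q}: the linearized momentum equation $\rhor\Dtpkr v+\nabpkr\qc=-(\rhor-1)ge_3$ reduces $\|\nab\qc\|_{H^3}$ to mixed space-time tangential derivatives of $v$. Combined with Lemma \ref{hodgeTT} applied to $\p_t^k v$ for $k=0,\dots,3$, with the continuity equation reducing the divergence to $\ffr'(\qr)\Dtpkr\qc$, and with $L^2$-transport estimates for $\nabpkr\times \p_t^k v$, the control of $\Er^\kk(t)$ reduces to the tangential $L^2$-norms $\|\TT^\alpha v\|_0$ and $\|\sqrt{\ffr'(\qr)}\TT^\alpha\qc\|_0$ for $|\alpha|=4$, together with the $L^2$-estimate for $\|\rho-1\|_0$ from the linearized continuity equation.

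The tangential estimates follow the Alinhac-good-unknown computation of Sections \ref{sect TT AGU}--\ref{sect tt AGU}. With $\VV:=\TT^\alpha v-\TT^\alpha\pkr\,\ppkr_3\vr$ and $\QQ:=\TT^\alpha\qc-\TT^\alpha\pkr\,\ppkr_3\qr$, commuting $\TT^\alpha$ through \eqref{CWWSTlin} produces evolution equations of the same form as \eqref{AGU TT mom}--\eqref{AGU TT mass}, whose $L^2$-energy identity reads
\begin{equation*}
\tfrac{1}{2}\ddt\io\bigl(\rhor|\VV|^2+\ffr'(\qr)|\QQ|^2\bigr)\p_3\pkr\dx \;=\; -\is\QQ\,(\VV\cdot\npkr)\dx'\;+\;\text{(controllable)}.
\end{equation*}
The boundary integral, after inserting the linearized kinematic condition $\p_t\psi=v\cdot\npkr$ and the Dirichlet condition on $\QQ$, produces the surface-tension energy $|\sqrt{\sigma}\cnab\TP^\alpha\lkk\psi|_0^2$ via inequality \eqref{STineq}, a Rayleigh--Taylor term controlled by $\sigma^{-1}$-dependent constants, and the artificial-viscosity dissipation $|\kk\TP^\alpha\p_t\psi|_{L^2_tH^1_{x'}}^2$. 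Summing over $|\alpha|=4$ and applying Gr\"onwall's inequality on a sufficiently short $T^\kk$ yields $\sup_{[0,T^\kk]}\Er^\kk(t)\le C(\kk^{-1},\Kr_0)\Er^\kk(0)$, with the harmonic modification $\hhr$ introduced in \eqref{CWWSTlin2} producing source terms bounded by $|\pskr|_{5.5}\le \Kr_0^{1/2}$.

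The main obstacle will be the full-time-derivative case $\p_t^4$. In Section \ref{sect tt AGU} the interior--boundary cancellation $I_{00}^*+I_{30}^*$ (estimate \eqref{AGUttI00'}) was essential to avoid a loss of Mach-number weight, but it exploited the matching between the Eulerian normal $\npk$ on the boundary and $\Npk$ inside $\mathfrak{C}_i(v^i)$. In the linearized system these normals come from different iterates $\pskr$ and $\psd$, so that exact cancellation breaks. However, we do not need Mach-number uniformity at this stage, and since $\ffr'(\qr)$ is bounded above and below by constants depending on $\Kr_0$, the mismatched terms can be absorbed via Cauchy--Schwarz against $\|\sqrt{\ffr'(\qr)}\p_t^4\qc\|_0$, at the price of the constant $C(\kk^{-1},\Kr_0)$.

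Finally, the boundary bound $|\psi(t)|_{5.5}^2+\sum_{k=0}^3|\p_t^{k+1}\psi(t)|_{5.5-k}^2\le C(\kk^{-1},\Kr_0)\Er^\kk(t)$ follows directly from the Dirichlet boundary condition on $\qc$ in \eqref{CWWSTlin} combined with the kinematic condition. Rewriting
\begin{equation*}
\kk^2(1-\TL)\p_t\psi \;=\; \qc\big|_\Sigma \;-\; g\pskr \;+\; \sigma\cnab\!\cdot\!\biggl(\frac{\cnab\pskr}{\sqrt{1+|\cnab\pskr|^2}}\biggr),
\end{equation*}
and using that $(1-\TL)\colon H^{s+2}(\Sigma)\to H^s(\Sigma)$ is an isomorphism, the trace of $\qc\in H^4(\Omega)$ is bounded in $H^{3.5}(\Sigma)$, which together with $|\pskr|_{5.5}\le \Kr_0^{1/2}$ gives $|\p_t\psi|_{5.5}\lesssim\kk^{-2}\bigl(\Er^\kk(t)+\Kr_0\bigr)^{1/2}$. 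Applying $\p_t^k$, $k=1,2,3$, to the same identity and using the interior regularity of $\p_t^k\qc$ already controlled by $\Er^\kk(t)$ yields the bounds on $|\p_t^{k+1}\psi|_{5.5-k}$; the bound on $|\psi(t)|_{5.5}$ then follows by time integration from $\psi(0)=\psi_{\kk,0}$, whose $H^{5.5}$-norm is part of the initial datum constructed in Appendix \ref{sect CWWSTkk data}.
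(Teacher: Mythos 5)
Your overall architecture ($L^2$ estimate, div-curl, pressure reduction, Alinhac good unknowns for the top-order tangential derivatives, then elliptic recovery of $\psi$) matches the paper's, and your final paragraph on recovering $|\p_t^{k+1}\psi|_{5.5-k}$ from the $\kk^2(1-\TL)$ boundary condition is essentially the paper's argument. But there is a genuine gap in the middle of your plan.

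You propose to close the div-curl analysis with Lemma \ref{hodgeTT}, i.e.\ through the purely spatial tangential norms $\|\TP^4 v\|_0$, and you claim that the boundary integral $-\is\QQ(\VV\cdot\npkr)\dx'$ in those estimates ``produces the surface-tension energy $|\sqrt{\sigma}\cnab\TP^{\alpha}\lkk\psi|_0^2$ via inequality \eqref{STineq}.'' This is exactly the step that fails for the \emph{linearized} system, and the paper warns about it explicitly in Section \ref{sect kk intro} around \eqref{AGUSTR}: in \eqref{CWWSTlin} the surface tension term is $-\sigma\cnab\cdot(\cnab\pskr/|\npkr|)$, a \emph{given} function of the previous iterate $\pskr$, not of the unknown $\psi$. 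Consequently the ST contribution to the boundary integral is a source term, not a quadratic form in $\TP^{\alpha}\cnab\psi$; inequality \eqref{STineq} is not applicable, no coercive boundary energy for the unknown is generated, and the spatial tangential estimate loses a derivative. This is also visible in the statement you are proving: the energy $\Er^{\kk}(t)$ in \eqref{Erkk} contains no $\sigma$-weighted boundary norms of $\psi$ at all.

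The paper's route avoids this entirely: it uses the div-curl inequality with \emph{normal traces} (Lemma \ref{hodgeLL}), and controls $|v\cdot\npkr|_{3.5}$, $|\p_t^k v\cdot\npkr|_{3.5-k}$, and hence $|\p_t^{k+1}\psi|_{3.5-k}$, directly from the artificial-viscosity equation $\kk^2(1-\TL)(v\cdot\npkr)=\qc-g\pskr+\sigma\h(\cnab\pskr,\cnab^2\pskr)$ via boundary elliptic estimates, at the cost of constants $\kk^{-2}$ --- which is permitted since the claimed constant is $C(\kk^{-1},\Kr_0)$. With the normal traces handled this way, only the full-time-derivative tangential estimate ($\p_t^4$, Section \ref{sect linear tt}) is needed, where the boundary regularity comes from the dissipation $\kk^2\int_0^t|\p_t^5\psi|_1^2$ rather than from surface tension. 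You should replace your spatial tangential estimates by this normal-trace argument. Two smaller points: (i) in the $\p_t^4$ estimate the interior--boundary cancellation does \emph{not} break, since both $\ssr_1$ and $\ccr_i(v^i)$ are built from the same iterate $\npkr$/$\Npkr$ (the older iterate $\psd$ enters only through the transport coefficient of $\Dtpkr$), so the paper carries out the cancellation as in \eqref{AGUrST3}--\eqref{AGUrI0}; (ii) your fallback of bounding $\ffr'(\qr)$ from below introduces a dependence on the Mach number, not on $\Kr_0$, which is avoidable and avoided in the paper.
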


\subsubsection{$L^2$-estimate}\label{sect linear L2}
We define the $L^2$-energy for the linearized system \eqref{CWWSTlin} to be
\begin{equation}
\Er_0^{\kk}(t):=\|\rho(t)-1\|_0^2+\|v(t)\|_0^2+\|\sqrt{\ffr'(\qr)} \qc(t)\|_0^2+\kk^2\int_0^t|\p_t\psi(\tau)|_{1}^2\mathrm{d}\tau.
\end{equation}

The control of $\Er_0$ is identical to the a priori estimate for \eqref{CWWSTlinmu} when $\mu=0$. Note that the control of $\|\rho-1\|_0^2$ follows from testing the linearized continuity equation $\ffr'(\qr)\ff'(q)^{-1}\Dtpkr(\rho-1)+\rho(\nabpkr\cdot v)=0$ by $\rho -1$ in $L^2(\Omega)$. Also one can control the $L^2(\Sigma)$ norm of $\psi$ through $\psi(t)=\psi_{0,\kk}+\int_0^t\p_t\psi(\tau)\mathrm{d}\tau.$

\subsubsection{Div-Curl analysis}\label{sect linear divcurl}
To estimate the Sobolev norms of $v$, we invoke the following Hodge decomposition lemma, which is exactly from \cite[Theorem 1.1]{Shkollerdivcurl}.
\begin{lem}[Hodge elliptic estimates]\label{hodgeLL}
For any sufficiently smooth vector field $X$ and $s\geq 1$, one has
\begin{equation}
\|X\|_s^2\lesssim C(|\pskr|_{s+\frac12},|\cnab\pskr|_{W^{1,\infty}})\left(\|X\|_0^2+\|\nabpkr\cdot X\|_{s-1}^2+\|\nabpkr\times X\|_{s-1}^2+|X\cdot\npkr|_{s-\frac12}^2+|X_3|_{H^{s-\frac12}(\Sigma_b)}^2\right),
\end{equation}where the constant $C(|\pskr|_{s+\frac12},|\cnab\pskr|_{W^{1,\infty}})>0$ depends linearly on $|\pskr|_{s+\frac12}^2$.
\end{lem}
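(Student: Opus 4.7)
\begin{pf}[Proof plan for Lemma \ref{hodgeLL}]
The plan is to transfer the estimate to the physical (unsmoothed in $\Omega$ but moving in $\DD$) domain via the diffeomorphism $\Phi^{\pkr}(x)=(x_1,x_2,\pkr(t,x))$, apply a standard Hodge-type estimate there, and pull back to $\Omega$. Set $\DD^{\pkr}:=\Phi^{\pkr}(\Omega)$, which is a Lipschitz cylinder with flat bottom $\Sigma_b$ and top boundary $\Sigma^{\pkr}:=\{x_3=\pskr(t,x')\}$. Given a vector field $X$ on $\Omega$, define the pushed-forward field $\widetilde{X}:=X\circ(\Phi^{\pkr})^{-1}$ on $\DD^{\pkr}$. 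The definitions \eqref{nabpkr 12}--\eqref{nabpkr 3} of $\nabpkr$ show that
\[
(\nab\cdot \widetilde X)\circ \Phi^{\pkr}=\nabpkr\cdot X,\qquad (\nab\times\widetilde X)\circ \Phi^{\pkr}=\nabpkr\times X,
\]
while on the boundary $\widetilde X\cdot \nu|_{\Sigma^{\pkr}}= (X\cdot\npkr)/|\npkr|\circ\Phi^{\pkr}$ and $\widetilde X_3|_{\Sigma_b}=X_3$.

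Next, I would apply the classical Hodge/div-curl estimate on the Lipschitz domain $\DD^{\pkr}$, which yields, for $s\geq 1$,
\begin{equation}\label{hodge physical}
\|\widetilde X\|_{H^s(\DD^{\pkr})}^2\lesssim \|\widetilde X\|_{L^2(\DD^{\pkr})}^2+\|\dive \widetilde X\|_{H^{s-1}(\DD^{\pkr})}^2+\|\curl\widetilde X\|_{H^{s-1}(\DD^{\pkr})}^2+|\widetilde X\cdot\nu|_{H^{s-\frac12}(\Sigma^{\pkr})}^2+|\widetilde X_3|_{H^{s-\frac12}(\Sigma_b)}^2,
\end{equation}
with an implicit constant depending only on the Lipschitz norm of $\Sigma^{\pkr}$, i.e.\ on $|\cnab\pskr|_{W^{1,\infty}}$. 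For this I would appeal to \cite{Shkollerdivcurl} (Theorem 1.1 there), which gives exactly this inequality in the form needed, and note that because $\pskr=\lkk^2\psi$ is smoothed in $x'$, the requisite regularity of the boundary is automatic for each fixed $\kk>0$.

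The last step is to undo the change of variables. Since $\Phi^{\pkr}$ and its inverse are bi-Lipschitz diffeomorphisms with $\p_3\pkr\geq c_0>0$, the Jacobian factors are controlled and standard chain-rule/product estimates yield
\[
\|X\|_{H^s(\Omega)}^2\leq C(|\pskr|_{s+\frac12},|\cnab\pskr|_{W^{1,\infty}})\,\|\widetilde X\|_{H^s(\DD^{\pkr})}^2,
\]
with the inverse inequality holding as well, and a parallel transformation relates the boundary norms on $\Sigma^{\pkr}$ to those on $\Sigma$ (picking up at worst $|\npkr|^{-1}\in W^{s-\frac12,\infty}$ factors, controlled by $|\pskr|_{s+\frac12}$). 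Combining this with \eqref{hodge physical} produces the claimed estimate on $\Omega$. The linear dependence of the constant on $|\pskr|_{s+\frac12}^2$ arises from the norm-equivalence step, where the highest-order Sobolev norm of the coefficients of $\Phi^{\pkr}$ enters linearly through the product rule.

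The main technical obstacle is verifying the sharp half-derivative count in the boundary norms and making sure the constant in \eqref{hodge physical} depends only on $|\cnab\pskr|_{W^{1,\infty}}$ (so that the $H^{s+\frac12}$-regularity of $\pskr$ enters only through the norm-equivalence step). This is precisely the content of \cite[Theorem 1.1]{Shkollerdivcurl}, which allows us to avoid a direct (and more delicate) induction on $s$ based on tangential differentiation and integration by parts. For the purposes of the rest of the paper, this lemma will always be applied with $s\leq 4$ and with the coefficient $\pskr$ enjoying uniform bounds via the induction hypothesis \eqref{linearized assumption}, so the constant $C(|\pskr|_{s+\frac12},|\cnab\pskr|_{W^{1,\infty}})$ is harmless.
\end{pf}
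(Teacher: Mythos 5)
Your proposal matches the paper's treatment: the paper gives no proof of this lemma at all, stating that it ``is exactly from \cite[Theorem 1.1]{Shkollerdivcurl}'', and your change-of-variables argument is the standard way to read that Eulerian result in the flattened $\pkr$-coordinates. The only quibble is that the constant in the physical-domain estimate \eqref{hodge physical} of Cheng--Shkoller already depends on the $H^{s+\frac12}$-norm of the boundary (not merely its Lipschitz norm), but since the lemma's constant is permitted to depend on $|\pskr|_{s+\frac12}$ anyway, this does not affect the conclusion.
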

Applying this lemma to $v$ with $s=4$, one has
\begin{equation}\label{vdivcurlr}
\|v\|_4^2\lesssim C(|\pskr|_{4.5},|\cnab\pskr|_{W^{1,\infty}})\left(\|v\|_0^2+\|\nabpkr\cdot v\|_3^2+\|\nabpkr\times v\|_3^2+|v\cdot\npkr|_{3.5}^2\right).
\end{equation}

Now we control the curl term. Taking $\nabpkr\times$ in the first equation of \eqref{CWWSTlin}, we get the evolution equation satisfied by $\nabpkr\times v$:
\begin{equation}
\rhor\Dtpkr(\nabpkr\times v)=\rhor[\nabpkr\times,\Dtpkr] v+\nabpkr\rhor\times(\rhor^{-1}\nabpkr \qc),
\end{equation}and taking three derivatives, we get
\begin{equation}\label{curlrv eq}
\rhor\Dtpkr\p^3(\nabpkr\times v)=\p^3\left(\rhor[\nabpkr\times,\Dtpkr] v)+\nabpkr\rhor\times(\rhor^{-1}\nabpkr \qc)\right)-[\p^3,\rhor\Dtpkr](\nabpkr\times v).
\end{equation}

We expect that the source terms in \eqref{curlrv eq} only contain $\leq 4$ derivatives of $v,\qc$ and quantities marked with a ring, but there still exists a mismatched term in $([\nabpkr\times,\Dtpkr]v)^i=\epsilon^{ijk}\nabpkr_j\vr^l\nabpkr_lv_k+\epsilon^{ijk}\nabpkr_j\p_t(\varphi-\pkd)\ppkr_3 v_k$. The contribution of $\pskd$ is controlled by $\Kr_0$. So, the standard $L^2$-estimate for the $\p^3$-differentiated evolution equation of $\nabpkr\times v$ and Reynold transport formula \eqref{transpt linearized} gives
\begin{equation}\label{curlrv3}
\frac12\ddt\|\nabpkr\times v\|_3^2\leq P(\Kr_0)(\|v\|_4^2+\|\qc\|_4\|v\|_4+|\p_t\psi|_4\|\p v\|_{\infty}).
\end{equation}
Finally, using the linearized continuity equation, we can control the divergence
\begin{equation}\label{divrv3}
\|\nabpkr\cdot v\|_3^2\leq \left\|\ffr'(\qr)\Dtpkr \qc\right\|_3^2+\left\|\ffr'(\qr) g\vr_3\right\|_3^2.
\end{equation}

The div-curl analysis for the time derivatives is treated similarly. First, the div-curl analysis for $\|\p_t^k v\|_{4-k}^2$, $1\leq k\leq 3$ yields
\begin{equation}\label{vtdivcurlr}
\|\p_t^k v\|_{4-k}^2\lesssim C(|\pskr|_{4.5-k},|\cnab\pskr|_{W^{1,\infty}})\left(\|\p_t^k v\|_0^2+\|\nabpkr\cdot \p_t^k v\|_{3-k}^2+\|\nabpkr\times\p_t^k v\|_{3-k}^2+|\p_t^kv\cdot\npkr|_{3.5-k}^2\right).
\end{equation}
We replace $\p^3$ by $\p_t^{k}\p^{3-k}$ for $0\leq k \leq 3$ in \eqref{curlrv eq} to get the evolution equation:
\begin{equation}
\rhor\Dtpkr\left(\p^{3-k}\p_t^k(\nabpk \times v)\right)=\p_t^{k}\p^{3-k}\left(\rhor[\nabpkr\times,\Dtpkr] v)+\nabpkr\rhor\times(\rhor^{-1}\nabpkr \qc)\right)-[\p_t^{k}\p^{3-k},\rhor\Dtpkr](\nabpkr \times v),
\end{equation}and thus
\begin{equation}\label{curlrvt1}
\ddt\frac12\|\p_t^k(\nabpkr\times v)\|_{3-k}^2\leq P(\Er^\kk(t)).
\end{equation}
Now, since the leading order term in the commutator $[\p_t^k,\nabpkr\times] v$ should be $\TP\p_t\pkr\p_t^{k-1}\p_3 v$, we have
\begin{equation}\label{curlrvt}
\|\nabpkr\times\p_t^k v\|_{3-k}^2\leq C(\Kr_0)\left(\Er^{\kk}(0)+\int_0^T \Er^{\kk}(t)\dt\right).
\end{equation}

As for divergence, by taking $\p_t^k$, $1\leq k\leq 3$ in the continuity equation, we get
\[
\nabpkr\cdot\p_t^k v=-\p_t^k(\ffr'(\qr)\Dtpkr\qc+\ffr'(\qr)g\vr_3)+[\nabpkr\cdot,\p_t^k]v\lleq -\ffr'(\qr)(\p_t^{k}\Dtpkr\qc+g\p_t^k\vr_3)+(\p_3\pkr)^{-1}\TP\p_t^k\pkr\p_3 v.
\]
Parallel to the analysis for \eqref{divvt},  since $\|\TP \p_t^k\pkr\|_{3-k} \leq \Kr_0$ thanks to \eqref{linearized assumption},  we have $\|\nabpkr\cdot\p_t^k v\|_{3-k}$ is reduced to the control of $\|\ffr'(\qr)\p_t^{k+1}\qc\|_{3-k}$ and $\|\ffr'(\qr)\p_t^{k}\qc\|_{4-k}$ at the top order. Thus, 
\begin{equation}\label{divrvt}
\|\nabpkr\cdot\p_t^k v\|_{3-k}^2\leq (C(\Kr_0)+1)\left(\left\|\ffr'(q)\p_t^{k+1}\qc\right\|_{3-k}^2+\left\|\ffr'(q)\p_t^{k}\qc\right\|_{4-k}^2\right).
\end{equation}

\subsubsection{Estimates for $\psi$ and normal traces}\label{sect linear bdry}

The normal trace terms in \eqref{vdivcurlr} and \eqref{vtdivcurlr} can be directly controlled by applying boundary elliptic estimates to the linearized viscous surface tension equation $\kk^2(1-\TL)(v\cdot\npkr)=q-\sigma\h(\cnab\pskr,\cnab^2\pskr)$. We start with controlling $|v\cdot \npkr|_{3.5}$:
\begin{equation}
|v\cdot\npkr|_{3.5}^2\leq\kk^{-2}\left(|q|_{1.5}^2+\sigma |\cnab^2\pskr|_{1.5}^2 P(|\cnab\pskr|_{1.5})\right)\leq \kk^{-2} P(\Kr_0)\|\qc\|_2^2.
\end{equation}

Taking time derivatives in the kinematic boundary condition, we obtain:
$$\p_t^k v\cdot\npkr=\p_t^{k+1}\psi-\sum_{j=1}^k\binom{k}{j}\p_t^{k-j}\vb\cdot\p_t^j\cnab\pskr,$$ and thus
\begin{equation}
|\p_tv\cdot\npkr|_{2.5}\leq |\p_t^2\psi|_{2.5}+|\vb\cdot\cnab\p_t\pskr|_{2.5}\leq |\p_t^2\psi|_{2.5}+\|v_{\kk, 0}\|_3^2+P(\Kr_0)\int_0^T\|\p_t \vb(t)\|_3\dt.
\end{equation}
Then, we take a time derivative in the linearized viscous surface tension equation to get
\[
\kk(1-\TL)\p_t^2\psi=\p_tq -\sigma\p_t\h(\cnab\pskr,\cnab^2\pskr),
\]
which implies $|\p_t^2\psi|_{2.5}\leq\|\p_t q\|_1+P(\Kr_0)$. Repeatedly, we can take more time derivatives to obtain
\begin{equation}
|\p_t^kv\cdot\npkr|_{3.5-k}^2\leq|\p_t^{k+1}\psi|_{3.5-k}^2+\PP_0^{\kk}+P(\Kr_0)\int_0^T \Er^{\kk}(t)\dt,
\end{equation}and then $|\p_t^{k+1}\psi|_{3.5-k}$ is controlled via boundary elliptic estimates:
\begin{align}
|\p_t^3 \psi|_{1.5}^2\approx&~|\TJ^{-\frac12}\p_t^3\psi|_{2}^2\leq|\TJ^{-\frac12}\p_t^2\qc|_0^2+P(\Kr_0)\leq\|\p_t^2 \qc\|_{1}^2+P(\Kr_0)\leq \|\p_t^2\qc(0)\|_1^2+P(\Kr_0)+\int_0^T\Er^{\kk}(t)\dt,\\
|\p_t^4 \psi|_{0.5}\approx&~|\TJ^{-\frac32}\p_t^4\psi|_{2}\leq|\TJ^{-\frac32}\p_t^3\qc|_0+P(\Kr_0)\leq\|\p_t^3 \qc\|_{1}+P(\Kr_0),\label{544}
\end{align} 
where the leading order term $\|\p \p_t^3 \qc\|_0$ on the RHS of \eqref{544}  will be further reduced through the reduction scheme shown in the upcoming subsection.

\subsubsection{Reduction of pressure}\label{sect linear reduce q}
 We start with $\|\qc\|_4$. From the linearized momentum equation, we know 
\begin{align*}
-(\p_3\pkr)^{-1}\p_3 q=&~(\rhor-1)g+\rhor\Dtpkr v_3,\\
-\p_i q=&~(\p_3\pkr)^{-1}\TP_i\pkr\p_3 q+\rhor\Dtpkr v_i,~~i=1,2,
\end{align*}and thus we have the following estimates after taking $\p^3$ and using $\Dtpkr=(\p_t+\vbr\cdot\cnab)+(\p_3\pkr)^{-1}(\vr\cdot\Npkr-\p_t\pr)\p_3$ to get
\begin{equation}
\|\qc\|_4\lesssim_{\Kr_0} \|\qc\|_0+\|\TT v\|_3+\|\rhor-1\|_3,
\end{equation}where $\TT$ denotes a tangential derivative, including $\p_t,\TP$ and $\omega(x)\p_3$ for some weight function $\omega$ that vanishes on $\Sigma$ and is approximately equal to $|x_3|$ near $\Sigma$. Replacing $\p^3$ by $\p^{3-k}\p_t^{k}$, we know the estimate of $\|\p_t^k \p^{4-k}\qc\|_0$ is reduced to the estimate of $\|\p_t^k \TT v\|_{3-k}$. Combining this with the div-curl analysis in Subsection \ref{sect linear divcurl} we can reduce the top order mixed norms $\|\p_t^k \p^{4-k} v\|_0$ and $\|\p_t^k \p^{4-k} \qc\|_0$ to $\|\TT^\alpha v\|_0$, $|\alpha|=4$, and $\left\|\sqrt{\ffr'(\qr)} \p_t^4 \qc\right\|_0$, all of which are part of the tangential energy.

\subsubsection{Control of full time derivatives}\label{sect linear tt}
From the reduction procedures for $\qc$ and the div-curl analysis for $v$, we know a spatial derivative of $\qc$ is reduced to a tangential derivative of $v$, and the divergence of $v$ is reduced to $\ffr'(\qr)\p_t\qc$. Repeatedly, it remains to control $\sqrt{\ffr'(\qr)}\p_t^4\qc$ and $\TT^{\alpha} v$ with $|\alpha|=4$ in $L^2(\Omega)$. Here, we only present the proof for the estimate with full-time derivatives, which is parallel to Section \ref{sect tt AGU}. The mixed space-time tangential estimates are easier to obtain. We introduce the Alinhac good unknowns $\VVr,\QQr$ for the $\p_t^4$-differentiated linearized system \eqref{CWWSTlin}:
\begin{equation}
\VVr:=\p_t^4 v-\p_t^4\pkr\ppkr_3 v,~~~\QQr:=\p_t^4 \qc-\p_t^4\pkr\ppkr_3\qc
\end{equation}

Similar to the arguments in Section \ref{sect tt AGU}, when $f=v_i$ and $\qc$, the following identity holds:
\begin{equation}
\p_t^4(\nabpkr_i f)=\nabpkr_i\FFr+\ccr_i(f),
\end{equation}where $\ccr_i(f):=\ppkr_3\ppkr_if\p_t^4\pkr+\ccr'_i(f)$. Also, 
\begin{align}
\label{AGU comm Crti'}\ccr_i'(f) =&
-\left[\p_t^4, \frac{\p_i \pkr}{\p_3\pkr}, \p_3 f\right]-\p_3 f \left[ \p_t^4, \p_i \pkr, \frac{1}{\p_3\pkr}\right] -\p_i\pkr \p_3 f\left[ \p_t^3, \frac{1}{(\p_3\pkr)^2}\right] \p_t \p_3 \pkr,~~i=1,2\\
\label{AGU comm Crt3'}\ccr_3'(f) = &
\left[  \p_t^4, \frac{1}{\p_3\pkr}, \p_3 f\right] + \p_3 f\left[ \p_t^3, \frac{1}{(\p_3\pkr)^2}\right]  \p_t \p_3 \pkr.
\end{align} 
Then we take $\p_t^4$ to the first two equations of \eqref{CWWSTlin} to obtain:
\begin{align}
\rhor \Dtpkr \VVr_i + \nabpkr_i \QQr =&~\Rrr^1_i,\label{AGUr tt mom}\\
\ffr'(\qr) \Dtpkr \QQr + \nabpkr\cdot \VVr=&~\Rrr^2- \ccr_i(v^i), \label{AGUr tt mass}
\end{align}
where
\begin{align}
\label{rrrt1} \Rrr^1_i:=& -[\p_t^4, \rhor] \Dtpkr v_i-\rhor\left(\ddr(v_i)+\eer(v_i)\right)-\ccr_i(\qc)-\p_t^4\rhor g\delta_{3i},\\
\label{rrrt2} \Rrr^2 :=&-[\p_t^4, \ffr'(\qr)]\Dtpkr \qc- \ffr'(\qr)\left(\ddr(\qc)+\eer(\qc)\right)+\p_t^4(\ffr'(\qr)g\vr_3),
\end{align}and the commutators $\ddr(f),\eer(f)$ are defined in the same way as in \eqref{AGU comm D} and \eqref{AGU comm 3} by replacing $\TT^{\alpha}, \TP, \pk$ respectively with $\p_t^4, \p_t, \pkr$. The last two terms in \eqref{AGU comm D} vanish because $\p_t^4$ commutes with $\p_3$. Specifically, we have:
\begin{align}
\p_t^4\Dtpkr f&=\Dtpkr \FFr +\ddr(f)+\eer(f),
\end{align}
where $\ddr(f):=(\Dtpkr\ppkr_3 f)\p_t^4\pkr+\ddr'(f)$, and 
\begin{align}
\ddr'(f) =& [\p_t^4, \vbr]\cdot \TP f + \left[\p_t^4, \frac{1}{\p_3\pkr}(\vr\cdot \Npkd-\p_t\pr), \p_3 f\right]+\left[\p_t^4, \vr\cdot \Npkd-\p_t\pr, \frac{1}{\p_3\pkr}\right]\p_3 f+\frac{1}{\p_3\pkr} [\p_t^4, \vr]\cdot \Npkd \p_3 f\nonumber\\
&\label{AGUr comm D}-4(\vr\cdot \Npkd-\p_t\pr)\p_3 f\left[ \p_t^3, \frac{1}{(\p_3 \pkr)^2}\right]\p_t\p_3 \pkr,
\end{align}
\begin{equation}
\label{AGUr comm E} \eer(f):=\p_t^5(\pkd-\pr)\ppk_3 f.
\end{equation}

Analogous to Lemma \ref{AGU TT error}, the following estimates hold. 
\begin{lem}\label{AGUr tt error}
Let $\FFr:= \p_t^4 f - \ppkr_3 f \p_t^4 \pkr$ be the Alinhac good unknowns associated with the smooth function $f$. Assume $\p_3 \pk \geq c_0>0$, and let $\ccr(f)$, $\ddr(f)$, and $\eer(f)$ be the remainder terms defined as above.  Then
\begin{align}
\label{AGUr tt4 gap} \|\p_t^4 f\|_0 \leq&~\|\FFr\|_0+ c_0^{-1}\|\p_3 f\|_{\infty}|\p_t^4\psk|_0, \\
\label{Cr time} \|\ccr_i(f)\|_{0} \leq&~P\left(c_0^{-1}, |\cnab\pskr|_{\infty},\sum_{k=1}^3|\cnab\p_t^k\pskr|_{3-k}\right)\cdot \left(\|\p f\|_{\infty}+\sum_{k=1}^{3}\|\p_t^k f\|_{4-k}\right),\quad i=1,2,3,\\
 \label{Dr time} \|\ddr(f)\|_0\leq&~P\left(c_0^{-1}, |\cnab\pskr|_{\infty},\sum_{k=1}^3|\cnab\p_t^k\pskr,\cnab\p_t^k\pskd|_{3-k}\right)\cdot \left(\|\p f\|_{\infty}+\sum_{k=1}^{3}\|\p_t^k f\|_{4-k}\right),\\
\label{Er time} \|\eer(f)\|_0 \leq&~(|\p_t^5 \psi|_0+|\p_t^5\pskd|_0) \|\p f\|_{\infty}. 
\end{align}
\end{lem}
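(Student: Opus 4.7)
The proof follows the template of the nonlinear analogue Lemma \ref{AGU tt error}, with the ringed coefficients $\pkr, \pkd, \pr$ in place of $\pk, \varphi$. The core tools are: (i) the lower bound $\p_3\pkr \geq c_0$, which together with Moser-type estimates for composition controls $\|1/\p_3\pkr\|_{W^{k,\infty}}$; (ii) the identity $\pkr = x_3 + \chi(x_3)\pskr$, which converts interior Sobolev norms of $\p_t^k\pkr$ into boundary norms of $\p_t^k\pskr$ via \eqref{chi}; and (iii) Leibniz plus the embedding $H^2(\Omega)\hookrightarrow L^\infty(\Omega)$ to split each triple commutator into one $L^2$-factor carrying the four time derivatives and $L^\infty$-factors for the remainder.

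The gap estimate \eqref{AGUr tt4 gap} is immediate from $\p_t^4 f = \FFr + \ppkr_3 f\,\p_t^4\pkr$, the pointwise bound $\|\ppkr_3 f\|_\infty \leq c_0^{-1}\|\p_3 f\|_\infty$, and the control $\|\p_t^4\pkr\|_0 \lesssim |\p_t^4\pskr|_0$ that follows from the cutoff bounds on $\chi$. For \eqref{Cr time}, I expand each triple-commutator summand in \eqref{AGU comm Crti'}--\eqref{AGU comm Crt3'} and distribute the four $\p_t$'s by Leibniz: when all four hit the coefficient factor $\TP_i\pkr/\p_3\pkr$ or $1/\p_3\pkr$, the resulting $L^2$-norm is bounded by $P(c_0^{-1},|\cnab\pskr|_\infty)\sum_{k=1}^3|\cnab\p_t^k\pskr|_{3-k}$ multiplied by $\|\p_3 f\|_\infty$; when the time derivatives are split between the coefficient and $f$, Sobolev--Moser inequalities put the $f$-factor into $\|\p_t^k f\|_{4-k}$ for some $1\leq k\leq 3$. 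The leading contribution $\ppkr_3\ppkr_i f\,\p_t^4\pkr$ included in $\ccr_i$ is handled by the same split.

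The main obstacle is \eqref{Dr time}, because $\Dtpkr$ couples three different background free-boundary quantities through the factor $(\p_3\pkr)^{-1}(\vr\cdot\Npkd - \p_t\pr)$. One has to expand each of the five commutator summands in \eqref{AGUr comm D} and carry out a careful bookkeeping: when four time derivatives fall entirely on $\vr\cdot\Npkd - \p_t\pr$, the worst part produces a term proportional to $\vr\cdot\p_t^4\Npkd - \p_t^5\pr$ whose top-order piece must be absorbed into $(\Dtpkr\ppkr_3 f)\p_t^4\pkr$ (the deterministic piece that was deliberately separated off when $\ddr$ was defined), leaving only lower-order commutators. Once this cancellation is performed, every surviving factor involving $\pskr$, $\pskd$ or $\p_t\psr$ is controlled uniformly by $\Kr_0$ via the induction hypothesis \eqref{linearized assumption}, and the analytic structure of \eqref{Cr time} is recovered, yielding the bound \eqref{Dr time}.

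Finally, \eqref{Er time} is the simplest: writing $\pkd - \pr = \chi(x_3)(\pskd - \psr)$ gives $\p_t^5(\pkd - \pr) = \chi(x_3)\,\p_t^5(\pskd - \psr)$, and integrating $\chi^2$ out in $x_3$ produces $\|\p_t^5(\pkd - \pr)\|_{L^2(\Omega)} \lesssim |\p_t^5\pskd|_0 + |\p_t^5\psr|_0$. Combined with $\|\ppkr_3 f\|_\infty \lesssim c_0^{-1}\|\p f\|_\infty$, this yields the stated bound on $\|\eer(f)\|_0$ (with $\psr$ in the role of $\psi$ in the statement). No cancellation or absorption is needed here, since the smallness afforded by the convergence of the Picard iterates is not required at this step --- only the $L^2$-boundedness of the iterate is used.
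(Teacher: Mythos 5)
Your proposal is correct and follows exactly the route the paper intends: the lemma is stated without proof "by analogy" to Lemma \ref{AGU TT error}, whose proof is the Leibniz/Sobolev/trace template you reproduce, and your treatment of each of \eqref{AGUr tt4 gap}--\eqref{Er time} (including reading $\psr$ for $\psi$ in \eqref{Er time}) matches it. One small imprecision in your discussion of \eqref{Dr time}: the term $\vr\cdot\p_t^4\Npkd-\p_t^5\pr$ never actually appears in $\ddr'(f)$ — the triple-commutator convention $[\p_t^4,a,\p_3 f]$ already excludes the contribution where all four derivatives land on the coefficient, and that excluded piece is what becomes $-\ppkr_3 f\,\Dtpkr\p_t^4\pkr$ plus $\eer(f)$ in the derivation of \eqref{AGUr tt mom}, not part of $(\Dtpkr\ppkr_3 f)\p_t^4\pkr$ — so no cancellation has to be "performed" inside the proof of the estimate; every summand of $\ddr(f)$ as defined carries at most three time derivatives on any single factor and is bounded directly, which is why the right-hand side of \eqref{Dr time} only involves $\sum_{k=1}^3$. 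This does not affect the validity of your argument.
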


Next, we introduce the boundary conditions for $\VVr,\QQr$. The $\p_t^4$-differentiated linearized kinematic boundary condition now reads:
\begin{equation}
\p_t^5 \psi + (\vb\cdot\cnab) \TP^4\pskr - \VVr\cdot \npkr=\ssr_1, \quad \text{on}\,\,\Sigma, \label{AGUr tt BC kinematic}
\end{equation}
where
\begin{align}\label{AGUr tt S1}
\ssr_1:=\p_3 v\cdot \npkr \p_t^4\pskr+\sum_{1\leq j\leq 3} \binom{4}{j}\p_t^j v\cdot\p_t^{4-j}\npkr.
\end{align}
Also, since $\QQr|_{\Sigma} = \p_t^4 \qc - \ppkr_3 \qc \p_t^4 \pskr$, the boundary condition of $\QQr$ on $\Sigma$ reads:
\begin{align}
\QQr = -\sigma \p_t^4\cnab \cdot \left( \frac{\cnab \pskr}{\sqrt{1+|\cnab\pskr|^2}}\right)+\kk^2(1-\TL)\p_t^5\psi- \p_3 \qc \p_t^4 \pskr+g\p_t^4\pskr. \label{AGUr tt BC Q}
\end{align}

Invoking \eqref{transpt linearized}, we have
\begin{equation}\label{energy estimate AGUr time}
\begin{aligned}
\ddt\frac{1}{2} \io \rhor |\VVr|^2\p_3\pkr\dx=\frac{1}{2}\io |\VVr|^2\left((\Dtpkr\rhor+\rhor\nabpkr\cdot\vr)\p_3\pkr+\rhor\Mr\right)\dx \\
+\io \QQr (\nabpkr\cdot \VVr) \p_3\pkr\dx-\is \QQr (\VVr\cdot \npkr)\dx'+ \io \VVr\cdot \Rr^1\p_3\pk\dx,
\end{aligned}
\end{equation}where $\Mr:=\p_t\p_3(\pkd-\pr)+\p_3(\p_t+\vbr\cdot\cnab)(\pkr-\pkd)$ represents the mismatched terms involving tangential smoothing in \eqref{transpt linearized}. The first integral on the RHS can be directly controlled by $P(\Kr_0)$ because all these quantities are already given. Moreover, the last integral is directly controlled by $P(\Kr_0)\|\VVr\|_0\sqrt{\Er^{\kk}(t)}$. For the second term in \eqref{energy estimate AGUr time}, we invoke \eqref{AGUr tt mass} to get the estimates parallel to \eqref{AGU ttQ}:
\begin{equation}\label{AGUr ttQ}
\begin{aligned}
&\io \QQr(\nabpkr\cdot\VVr)\p_3\pkr\dx\\
=&\underbrace{-\io\p_t^4 \qc\ccr_i(v^i)\p_3\pkr\dx}_{=:\Ir_0}+\io\p_t^4\pkr\ppkr_3 \qc\ccr_i(v^i)\p_3\pkr\dx-\io\ffr'(\qr)\Dtpkr\QQr~\QQr\p_3\pkr\dx+\io\Rr^2\QQr\p_3\pkr\dx\\
\lesssim&~ \Ir_0-\frac12\ddt\left\|\sqrt{\ffr'(\qr)}\QQ\right\|_0^2+\left\|\sqrt{\ffr'(\qr)}\p_t^4 \qc\right\|_0^2(\|\nabpkr\cdot\vr\|_{\infty}+\|\Mr\|_{\infty})\\
&+\|\ccr_i(v^i)\|_0\|\p \qc\|_{\infty}|\p_t^4\pskr|_0+\left\|\sqrt{\ffr'(\qr)}\QQ\right\|_0\left\|\sqrt{\ffr'(\qr)}^{-1}\Rr^2\right\|_0\\
\lesssim&~\Ir_0-\frac12\ddt\left\|\sqrt{\ffr'(\qr)}\QQ\right\|_0^2+P(\Kr_0)\Er^{\kk}(t),
\end{aligned}
\end{equation}where we note that all terms in $\Rr^2$ come with $\ffr'(\qr)$ and thus the control of $\sqrt{\ffr'(\qr)}^{-1}\Rr^2$ is still uniform in $\ffr'(\qr)$.

Now it remains to control the boundary integral. Compared with the nonlinear system, the estimate for the linearized system is easier, as the surface tension term now becomes a given term. Plugging \eqref{AGUr tt BC kinematic} and \eqref{AGUr tt BC Q} into the boundary integral, we get
\begin{equation}\label{AGUrttbdry0}
\begin{aligned}
-\is\QQr(\VVr\cdot\npkr)\dx'=&-\is\p_t^4\cnab\cdot(\cnab\pskr/|\npkr|)\p_t^5\psi\dx'-\kk^2\is\p_t^4(1-\TL)\p_t\psi\cdot\p_t^5\psi\dx'\\
&-\is g\p_t^4\pskr\p_t^5\psi\dx'+\is\p_3\qc\p_t^4\pskr\p_t^5\psi\dx'\\
&-\is\QQ(\vbr\cdot\cnab)\p_t^4\pskr\dx'+\is\QQr\ssr_1\dx',
\end{aligned}
\end{equation}where the second term gives us the boundary energy
\begin{equation}\label{AGUrST1}
-\kk^2\is\p_t^4(1-\TL)\p_t\psi\cdot\p_t^5\psi\dx'=\kk^2\is\left|\TJ\p_t^5\psi\right|^2\dx'.
\end{equation}
We note that the first, the third, and the fourth terms in \eqref{AGUrttbdry0} can all be directly controlled under the time integral, i.e., 
\begin{align}
-\int_0^T\is\p_t^4\cnab\cdot(\cnab\pskr/|\npkr|)\p_t^5\psi\dx'\dt\lesssim&~\eps|\p_t^5\psi|_{L_t^2H_{x'}^1}^2+P(|\cnab\pskr|_{\infty})|\cnab\p_t^4\pskr|_0^2\leq_{\kk^{-1}}\eps\Er^{\kk}(T)+P(\Kr_0)\\
-\int_0^T\is (g-\p_3\qc)\p_t^4\pskr\p_t^5\psi\dx'\dt\leq&~ \eps|\p_t^5\psi|_{L_t^2L_{x'}^2}^2+|\p_t^4\pskr|_0^2(1+\|\p\qc\|_{L_t^2L_x^{\infty}}^2)\leq_{\kk^{-1}}\eps\Er^{\kk}(T)+P(\Kr_0)\int_0^T \Er^{\kk}(t)\dt.
\end{align}
Further, the fifth term is controlled directly by invoking \eqref{lkk3}:
\begin{equation}\label{AGUrST2}
\begin{aligned}
-\int_0^T\is\QQ(\vbr\cdot\cnab)\p_t^4\pskr\dx'\dt=&-\int_0^T\sigma\is\p_t^4\cnab\cdot(\cnab\pskr/|\npkr|)(\vbr\cdot\cnab)\p_t^4\pskr\dx'\dt+\kk^2\int_0^T\is(1-\TL)\p_t^5\psi(\vbr\cdot\cnab)\p_t^4\pskr\dx'\dt\\
&+\int_0^T\is(g-\p_3\qc)\p_t^4\pskr(\vbr\cdot\cnab)\p_t^4\pskr\dx'\dt\\
\lesssim_{\kk^{-1}}&~ \eps|\p_t^5\psi|_{L_t^2H_{x'}^1}^2+P(\Kr_0)\int_0^T\Er^{\kk}(t)\dt.
\end{aligned}
\end{equation}

It remains to analyze the last integral in \eqref{AGUrttbdry0}, which will be canceled with $\Ir_0$ defined in \eqref{AGUr ttQ}. Following the analysis in \eqref{AGUttbdry3}--\eqref{AGUttI00}, we have
\begin{align}
\label{AGUrST3} \is\QQr\ssr_1\dx'=&~4\is\p_t^4\qc\p_t^3v\cdot\p_t\npkr\dx'+\text{ controllable terms},\\
\label{AGUrI0} \Ir_0=&-4\io\p_t^4\qc\p_t\Npkr_i\p_3\p_t^3v^i\dx+\text{ controllable terms},
\end{align}and then we add them together and use the divergence theorem to get
\begin{equation}
\begin{aligned}
&4\is\p_t^4\qc\p_t^3v\cdot\p_t\npkr\dx'-4\io\p_t^4\qc\p_t\Npkr_i\p_3\p_t^3v^i\dx\\
=&\ddt\io(\p_t^3\p_3\qc\p_t\Npkr+\p_t^3\qc\p_t\p_3\Npkr)\cdot\p_t^3v\dx+\io\p_t^3\p_3\qc\p_t(\p_t\Npkr\cdot\p_t^3 v)+\p_t^3\qc\p_t(\p_t\p_3\Npkr\cdot\p_3 v)\dx,
\end{aligned}
\end{equation}whose time integral can be easily bounded by $\eps\|\p_t^3\p_3\qc\|_0^2+\Er^{\kk}(0)+P(\Kr_0)\int_0^T \Er^{\kk}(t)\dt$. Hence, we get control of the boundary integral
\begin{equation}\label{AGUrttbdry}
-\int_0^T\is\QQr(\VVr\cdot\npkr)\dx'\dt+\kk^2\int_0^T\is\left|\TJ\p_t^5\psk\right|_0^2\dt\leq \eps\|\p_t^3\p_3\qc\|_0^2+\Er^{\kk}(0)+P(\Kr_0)\int_0^T \Er^{\kk}(t)\dt.
\end{equation}

Combining this with \eqref{energy estimate AGUr time}, \eqref{AGUr ttQ}, and the definition of Alinhac good unknowns, we get the estimates for the full-time derivatives
\begin{equation}\label{ttr time}
\|\p_t^4v(t)\|_0^2+\left\|\sqrt{\ffr'(\qr)}\p_t^4\qc\right\|_0^2+\kk^2\int_0^t\is\left|\TJ\p_t^5\psk\right|_0^2\mathrm{d}\tau\leq \eps\|\p_t^3\p_3\qc\|_0^2+\Er^{\kk}(0)+P(\Kr_0)\int_0^t\Er^{\kk}(t)\mathrm{d}\tau.
\end{equation}
This, together with div-curl analysis, gives us the energy inequality of $\Er^{\kk}(t)$ after choosing $\eps>0$ suitably small:
\begin{equation}
\Er^{\kk}(t)\leq_{\kk^{-1}} \Er^{\kk}(0)+P(\Kr_0)\int_0^t \Er^{\kk}(\tau)\mathrm{d}\tau,
\end{equation}which implies that there exists some $T^{\kk}>0$ such that 
\[
\sup_{0\leq t\leq T^{\kk}}\Er^{\kk}(t)\leq C(\kk^{-1},\Kr_0)\Er^{\kk}(0).
\]
Therefore, the uniform-in-$n$ estimates for \eqref{CWWSTlin} are proven by induction.
\subsubsection{Regularity of $\psi$ and its time derivatives}
The regularity of $\p_t^{k+1}\psi~(0\leq k\leq 3)$ can be enhanced to $H^{5.5-k}$ by the boundary elliptic estimates once we close the energy estimates for $\Er^{\kk}(t)$. Note that the boundary condition gives
\[
\kk^2 (1-\TL) \p_t\psi=\qc- g\pskr+\sigma \mathcal{H} (\cnab \pskr, \cnab^2 \pskr),
\]
thus, by \eqref{linearized assumption} and the elliptic estimate, it holds that
\begin{equation}
|\p_t^{k+1}\psi|_{5.5-k}\leq \kk^{-2} \left (\sigma P(|\cnab\pskr|_{\infty}) |\p_t^k\cnab^2\pskr|_{3.5-k}+|\p_t^k q|_{3.5-k}+ P(\Kr_0)\right) \leq C(\kk^{-1}, \Kr_0)\Er^{\kk}. 
\end{equation}
Moreover,  $|\psi|_{5.5}$ is controlled by 
\begin{equation}
|\psi|_{5.5}\leq |\psi_{0,\kk}|_{5.5}+\int_0^T|\p_t\psi(t)|_{5.5}\dt.
\end{equation}

\subsection{Picard iteration}
So far, we have established the local existence and the uniform-in-$n$ estimates for the linearized system \eqref{CWWSTlin0} for each fixed $\kk>0$, namely
\begin{equation}\label{CWWSTlin00}
\begin{cases}
\rho^{(n)}  D_t^{\pk^{(n)}} v^{(n+1)} +\nab^{\pk^{(n)}} \qc^{(n+1)}=-(\rho^{(n)}-1) ge_3 &~~~ \text{in }[0,T]\times \Omega,\\
\ff^{(n)'}(q^{(n)})D_t^{\pk^{(n)}} \qc^{(n+1)}+ \nab^{\pk^{(n)}}\cdot v^{(n+1)}=\ff^{(n)'}(q^{(n)})gv_3^{(n)} &~~~ \text{in }[0,T]\times \Omega,\\
q^{(n+1)}=q^{(n+1)}(\rho^{(n+1)}), \qc^{(n+1)}=q^{(n+1)}+g\pk^{(n)}&~~~ \text{in }[0,T]\times \Omega, \\
\qc^{(n+1)}=g\psk^{(n)}-\sigma\cnab \cdot \left( \frac{\cnab \psk^{(n)}}{\sqrt{1+|\cnab\psk^{(n)}|^2}}\right)+\kk^2(1-\TL)(v^{(n+1)}\cdot \npk^{(n)}) &~~~\text{on }[0,T]\times\Sigma, \\
\p_t \psi^{(n+1)} = v^{(n+1)}\cdot \npk^{(n)} &~~~\text{on }[0,T]\times\Sigma,\\
v_3^\nnr=0 &~~~\text{on}~[0,T]\times\Sigma_b,\\
(v^{(n+1)},\rho^{(n+1)},\psi^{(n+1)})|_{t=0}=(v_0^\kk, \rho_0^\kk, \psi_0^\kk),
\end{cases}
\end{equation}where $\psi^\nnn,\varphi^\nnn,D_t^{\pk^\nnn},\nab^{\pk^\nnn}$ are defined in \eqref{Dtpk lin}-\eqref{nabpk lin 3}. Now it suffices to prove that, for each fixed $\kk>0$, the sequence $\{(v^\nnn,\qc^\nnn,\psi^\nnn)\}_{n\in\N^*}$ has a strongly convergent subsequence. Once we prove this, the limit of that subsequence becomes the solution to the nonlinear $\kk$-approximate system \eqref{CWWSTkk} for this chosen $\kk$. 

For a function sequence $\{f^\nnn\}$ we define $[f]^\nnn:=f^\nnr-f^\nnn$ and then we find that $\{([v]^\nnn,[\qc]^\nnn,[\psi]^\nnn)\}$ satisfies the following linear system
\begin{equation}\label{CWWSTlingap}
\begin{cases}
\rho^{(n)}  D_t^{\pk^{(n)}} [v]^{(n)} +\nab^{\pk^{(n)}} [\qc]^{(n)}=-\fr_v^\nnn &~~~ \text{in }[0,T]\times \Omega,\\
\ff^{(n)'}(q^{(n)})D_t^{\pk^{(n)}} [\qc]^{(n)}+ \nab^{\pk^{(n)}}\cdot [v]^{(n)}=-\fr_q^\nnn &~~~ \text{in }[0,T]\times \Omega,\\
[\qc]^{(n)}=[q]^{(n)}+g[\pk]^{(n-1)}&~~~ \text{in }[0,T]\times \Omega, \\
[\qc]^{(n)}=g[\psk]^{(n-1)}-\sigma[\h]^{\nnl}+\kk^2(1-\TL)([v]^{(n)}\cdot \npk^{(n)})+\kk^2(1-\TL)(v^\nnn\cdot[\npk]^\nnl), &~~~\text{on }[0,T]\times\Sigma, \\
\p_t [\psi]^{(n)} = [v]^{(n)}\cdot \npk^{(n)}+(v^\nnn\cdot[\npk]^\nnl), &~~~\text{on }[0,T]\times\Sigma,\\
[v_3^\nnn]=0 &~~~\text{on}~[0,T]\times\Sigma_b,\\
([v]^{(n)},[\rho]^{(n)},[\psi]^{(n)})|_{t=0}=(\mathbf{0},0,0),
\end{cases}
\end{equation}where $\fr_v^\nnn$ and $\fr_q^\nnn$ are defined by
\begin{align}
\fr_v^\nnn:=&~[\rho]^\nnl\p_t v^\nnn+[\rho\vb]^\nnl\cdot\cnab v^\nnn+[\rho V_{\Npr}]^\nnl\p_3 v^\nnn+[\rho]^\nnl ge_3+\p_3\qc^\nnn[A_{i3}]^\nnl,\\
\fr_q^\nnn:=&~[\ff'(q)]^\nnl(\p_t\qc^\nnn-gv_3^\nnl)+[\ff'(q)\vb]^\nnl\cdot\cnab\qc^\nnn+[\ff'(q)V_{\Npr}]^\nnl\p_3\qc^\nnn\\
&-\ff^{(n)'}(q^\nnn)g[v_3]^\nnl+\p_3v_i^\nnn[A_{i3}]^\nnl,\nonumber
\end{align}and
\begin{align*}
&V_{\Npr}^\nnn:=~\frac{1}{\p_3\pk^\nnn}(v^\nnn\cdot\Npk^\nnl-\p_t\varphi^\nnn),~A^\nnn_{13}:=-\frac{\p_1\pk^\nnn}{\p_3\pk^\nnn},~A^\nnn_{23}:=-\frac{\p_2\pk^\nnn}{\p_3\pk^\nnn},~A^\nnn_{33}:=\frac{1}{\p_3\pk^\nnn},\\
&[\h]^\nnl:=\h(\cnab\psk^\nnn)-\h(\cnab\psk^\nnl),~\h(\cnab\psk):=-\cnab\cdot\left(\frac{\cnab\psk}{1+|\cnab\psk|^2}\right).
\end{align*}

For $n\geq 1$, we define the energy of \eqref{CWWSTlingap} $[E]^\nnn$ to be the following quantity
\begin{equation}
\begin{aligned}
[E]^\nnn(t):=\sum_{k=0}^3\|\p_t^k[v]^\nnn(t)\|_{3-k}^2+\|\p_t^k[\qc]^\nnn(t)\|_{3-k}^2+\int_0^t\left|\p_t^{k+1}[\psi]^\nnn(\tau)\right|_{4-k}^2\mathrm{d}\tau+|[\psi]^\nnn(t)|_4^2
\end{aligned}
\end{equation}
It suffices to control $[E]^\nnn(t)$ and use $([v]^{(n)},[\rho]^{(n)},[\psi]^{(n)})|_{t=0}=(\mathbf{0},0,0)$ to show that $[E]^\nnn(t)\leq \frac14 ([E]^\nnl(t)+[E]^\nnll(t))$ in some time interval $[0,T^\kk_1]$. The estimates for $[E]^\nnn(t)$ are parallel to Section \ref{sect linear energy}, so we will not go into every detail but only list the sketch of the proof.

\subsubsection{Div-curl analysis for $[v]^\nnn$}

By Lemma \ref{hodgeLL}, we have the following inequalities for $k=0,1,2$:
\begin{align}
\|\p_t^k[v]^\nnn\|_{3-k}^2\leq&~ C(\Kr_0)\left(\|\p_t^k[v]^\nnn\|_0^2+\|\nab^{\pk^\nnn}\times\p_t^k[v]^\nnn\|_{2-k}^2+\|\nab^{\pk^\nnn}\cdot\p_t^k[v]^\nnn\|_{2-k}^2+|\p_t^k[v]^\nnn\cdot\npk^\nnn|_{2.5-k}^2\right).
\end{align}

The estimates for $L^2(\Omega)$ norms follow in the same manner as in Section \ref{sect linear L2}, so we do not repeat them here. For the curl part, we take $\nab^{\pk^\nnn}\times$ in the first equation of \eqref{CWWSTlingap} to get
\begin{equation}\label{curllv eq0}
\rho^\nnn D_t^{\pk^\nnn}(\nab^{\pk^\nnn}\times [v]^\nnn)=-\nab^{\pk^\nnn}\times \fr_v^\nnn-\nab^{\pk^\nnn}\rho^\nnn\times D_t^{\pk^\nnn}[v]^\nnn+\rho^\nnn[\nab^{\pk^\nnn}\times,D_t^{\pk^\nnn}][v]^\nnn,
\end{equation}where $([\nab^{\pk^\nnn}\times,D_t^{\pk^\nnn}][v]^\nnn)^i=\epsilon^{ijk}\nab_j^{\pk^\nnn}v_l^\nnn\nab_l^{\pk^\nnn}[v]_k^\nnn+\epsilon^{ijk}\nab_j^{\pk^\nnn}\p_t(\pk^\nnn-\pk^\nnl)\p_3[v]_k^\nnn$ and $\nab^{\pk^\nnn}\times \fr_v^\nnn$ contains at most two derivatives of $v^\nnn,\varphi^\nnn,\varphi^\nnl,\varphi^\nnll$. Taking $\p^2$, we have
\begin{equation}\label{curllv eq}
\rho^\nnn D_t^{\pk^\nnn}(\p^2\nab^{\pk^\nnn}\times [v]^\nnn)=\p^2(\text{RHS of }\eqref{curllv eq0})-[\p^2,\rho^\nnn D_t^{\pk^\nnn}](\nab^{\pk^\nnn}\times [v]^\nnn).
\end{equation}

Based on the analysis above, we find that the leading-order terms of $[v]^\nnn,[v]^\nnl$ must be linear in $[v]^\nnn,[v]^\nnl$, respectively. Using the Reynold transport formula \eqref{transpt linearized} for the linearized system, the curl part can be directly controlled as in \eqref{curlrv3}:
\begin{align}
\|\nab^{\pk^\nnn}\times [v]^\nnn(T)\|_2^2\leq&~ C(\Kr_0)\left(\|\underbrace{\nab^{\pk^\nnn}\times [v]^\nnn(0)}_{=0}\|_2^2+\int_0^T P(\Er^\nnn,\Er^\nnl,\Er^\nnll)[E]^\nnn(t)\dt\right)\\
\leq&~C(\Kr_0)\int_0^T [E]^\nnn(t)+[\Er]^\nnl(t)+[\Er]^\nnll(t)\dt. \nonumber
\end{align}

Similarly, replacing $\p^2$ by $\p^{2-k}\p_t^{k}$ for $k=1,2$, we get
\begin{align}
\|\nab^{\pk^\nnn}\times \p_t^k[v]^\nnn(T)\|_{2-k}^2\leq~C(\Kr_0)\int_0^T [E]^\nnn(t)+[\Er]^\nnl(t)+[\Er]^\nnll(t)\dt. 
\end{align}

As for the divergence,  the second equation in \eqref{CWWSTlingap} gives
\begin{equation}
\|\nab^{\pk^\nnn}\cdot [v]^\nnn\|_2^2\leq \|\ff^{(n)'}(q^{(n)})D_t^{\pk^{(n)}} [\qc]^{(n)}\|_2^2+\|\fr_q\|_2^2\leq P(\Kr_0)\|\ff^{(n)'}(q^{(n)})\TT [\qc]^{(n)}\|_2^2,
\end{equation}where $\TT=\p_t$ or $\TP$ or $\omega\p_3$ for a bounded weight function $\omega$ vanishing on $\Sigma$. Therefore, the divergence is then reduced to the tangential derivatives of $[\qc]$. Similarly, the divergence of $\p_t^k[v]^\nnn$ is reduced to $\p_t^k\TT\qc$.

Next, the normal traces are still controlled by using boundary elliptic estimates. Note that the Dirichlet boundary condition for $[\qc]^\nnn$ can be written as
\begin{equation}
-\kk^2(1-\TL)([v]^{(n)}\cdot \npk^{(n)})=-[q]^{(n)}-\sigma\left(\h(\cnab\psk^\nnn)-\h(\cnab\psk^\nnl)\right)+\kk^2(1-\TL)(v^\nnn\cdot[\npk]^\nnl),
\end{equation}and thus
\begin{equation}\label{vngap}
|[v]^{(n)}\cdot \npk^{(n)}|_{2.5}^2\leq_{\kk^{-1}} \|[q]^\nnn\|_{1}^2+P(\Kr_0)+|\vb^\nnn\cdot\cnab\psk^\nnl|_{2.5}^2+|v_3^\nnn|_{2.5}^2\leq \|[q]^\nnn\|_{1}^2+P(\Kr_0).
\end{equation}
Similarly, we have for $k=1,2$
\begin{equation}\label{vtngap}
|\p_t^k[v]^{(n)}\cdot \npk^{(n)}|_{2.5-k}^2\leq_{\kk^{-1}} \|\p_t^k[q]^\nnn\|_{1}^2+P(\Kr_0).
\end{equation}

\subsubsection{Reduction of pressure $[\qc]^\nnn$}
This is similar to the arguments in Section \ref{sect linear reduce q}. We first consider the third component of the first equation in \eqref{CWWSTlingap}:
\begin{equation}
(\p_3\pk^\nnn)^{-1}\p_3[\qc]^\nnn=-\rho^\nnn D_t^{\pk^{(n)}} [v]^{(n)} +\fr_v^\nnn,
\end{equation}which means the control of $\p_3 [\qc]^\nnn$ is reduced to $\TT [v]^\nnn$. Then by considering the first and second components, we can further reduce the control of $\TP_i \qc~(i=1,2)$ to $\p_3\qc$ and $\TT v$ since $\nabpkr_i=\TP_i-\TP_i\pkr\ppkr_3$. Therefore, combining the div-curl analysis and reduction procedures for $[\qc]^\nnn$, it suffices to control $\p_t^2\TP[\qc]^\nnn$ and $\p_t^3[\qc]^\nnn$.

\subsubsection{Tangential estimates for full-time derivatives}\label{sect AGUr gap}
Again we only show the control of $\p_t^3[\qc]^\nnn$ by introducing the Alinhac good unknowns:
\begin{align}
[\VV]^\nnn:=\VVr^\nnr-\VVr^\nnn=\p_t^3[v]^\nnn-\p_t^3 \pk^\nnn \p_3^{\pk^\nnn}[v]^\nnn -\p_t^3\pk^\nnn\p_3^{[\pk]^\nnl}v^\nnn-\p_t^3[\pk]^\nnl\p_3^{\pk^\nnl} v^\nnn,\\
[\QQ]^\nnn:=\QQr^\nnr-\QQr^\nnn=\p_t^3[\qc]^\nnn-\p_t^3 \pk^\nnn \p_3^{\pk^\nnn}[\qc]^\nnn -\p_t^3\pk^\nnn\p_3^{[\pk]^\nnl}\qc^\nnn-\p_t^3[\pk]^\nnl\p_3^{\pk^\nnl} \qc^\nnn.
\end{align}For a function $f$ and its associated Alinhac good unknown $\FF$, we have
\begin{align*}
\p_t^3(\p_i^{\pk^\nnn}[f]^\nnn+\p_i^{[\pk]^\nnl}f^\nnn)=&~\p_i^{\pk^\nnn}[\FF]^\nnn +[\cc]_i^\nnn(f),\\ \p_t^3(D_t^{\pk^\nnn}[f]^\nnn+D_t^{[\pk]^\nnl}f^\nnn)=&~D_t^{\pk^\nnn}[\FF]^\nnn+[\dd]^\nnn(f)+[\ee]^\nnn(f)
\end{align*}with
\begin{align*}
[\cc]_i^\nnn(f)=\cc_i^\nnn(f^\nnr)-\cc_i^\nnl(f^\nnn) +\text{lower-order controllable terms},\\
[\dd]_i^\nnn(f)=\dd^\nnn(f^\nnr)-\dd^\nnl(f^\nnn) +\text{lower-order controllable terms},\\
[\ee]_i^\nnn(f)=\ee^\nnn(f^\nnr)-\ee^\nnl(f^\nnn) +\text{lower-order controllable terms},
\end{align*}where $\cc_i^\nnn(f^{(m)}), \dd^\nnn(f^{(m)})$ and $\ee^\nnn(f^{(m)})$ are defined by replacing $\p_t^3, \p_t^4,\p_t^5$ by $\p_t^2, \p_t^3,\p_t^4$, respectively, in \eqref{AGU comm Crti'}-\eqref{AGUr comm E}. Also, we need to  replace the coefficient $4$ in $\ddr$ by $3$ and setting $\pr=\varphi^\nnn$, $\pd=\varphi^\nnl$, $f^{(n+1)}=f$ and $f^{(n)}=\mathring{f}$. 
Here, by ``lower-order controllable terms" we mean terms with $\leq 2$ time derivatives on both $[f]^{(n)}$ and $[\pk]^{(n-1)}$. These terms are directly controlled by $C(\Kr_0)\left( [\Er]^\nnn+[\Er]^\nnl+[\Er]^\nnll\right)$ through a combination of Young's inequality and Sobolev embeddings.

The Alinhac good unknowns $[\VV]^\nnn,[\QQ]^\nnn$ satisfy the following linear system:
\begin{align}
\rho^\nnn D_t^{\pk^\nnn}[\VV]^\nnn+\nab^{\pk^\nnn}[\QQ]^\nnn&=-\cc^\nnn(\qc^\nnr)+\cc^\nnl(\qc^\nnn)+[\Rr]_v,\\
\ff^{\nnn '}(q^\nnn) D_t^{\pk^\nnn} [\QQ]^\nnn+\nab^{\pk^\nnn}\cdot\VV^\nnn&=-\cc_i^\nnn(v_i^\nnr)+\cc_i^\nnl(v_i^\nnn)+[\Rr]_q,
\end{align}where $[R]$ terms consist of $\p_t^3 \fr$ terms in \eqref{CWWSTlingap} and the omitted commutator terms in the definition of Alinhac good unknowns $[\VV],[\QQ]$, and they are controllable in $L^2(\Om)$.
\begin{align}
\|[\Rr]\|_0^2\leq C(\Kr_0)([E]^\nnn(t)+[E]^\nnl(t)+[E]^\nnll(t)).
\end{align}
 The boundary conditions now become:
\begin{align}
[\QQ]^\nnn&=g\p_t^3[\psk]^\nnl+\sigma\p_t^3(\h(\cnab\psk^\nnn)-\h^(\cnab\psk^\nnl))-\kk^2(1-\TL)\p_t^4[\psi]^\nnn\nonumber\\
&+\p_t^3\psk^\nnn\p_3[\qc]^\nnn+\p_t^3[\psk]^\nnl\p_3\qc^\nnn\\
[\VV]^\nnn\cdot\npk^\nnn&=\p_t^4[\psi]^\nnn+[\vb]^\nnn\cdot\cnab\p_t^3\psk^\nnn+(\vb\cdot\cnab)\p_t^3[\psk]^\nnl+\p_t^3\vb^\nnn\cdot\cnab[\psk]^\nnl\nonumber\\
&-(\p_3[v]^\nnn\cdot\npk^\nnn)\p_t^3\psk^\nnn+(\p_3v^\nnn\cdot\npk^\nnn)\p_t^3[\psk]^\nnl+[\p_t^3,\npk^\nnn\cdot,v^\nnr]-[\p_t^3,\npk^\nnl\cdot,v^\nnn].
\end{align}

Following the analysis in Section \ref{sect linear tt}, we have
\begin{equation}
\begin{aligned}
&\frac12\ddt\left(\io\rho^\nnn|[\VV]^\nnn|^2\p_3\pk^\nnn\dx+\io\ff^{\nnn '}(q^\nnn)|[\QQ]^\nnn|^2\p_3\pk^\nnn\dx\right)+\kk^2\int_0^T\left|\p_t^4[\psi]^\nnn\right|_1^2\dt\\
\leq&~ C(\Kr_0)\left([\Er]^\nnn(0)+\int_0^T [\Er]^\nnn(t)+[\Er]^\nnl(t)+[\Er]^\nnll(t)\dt\right)\\
&- \is [\QQr]^\nnn[\p_t^3,\Npk^\cdot,v^\nnl]\dx'+\io[\QQr]^\nnn\cc_i^\nnl(v_i^\nnn)\dvt^\nnn
\end{aligned}
\end{equation}
where the last line is analyzed in the same way as in \eqref{AGUrI0} (by using divergence theorem and integration by parts in time). Here we only list the highest-order terms. We have
\begin{equation}
\begin{aligned}
&- \is [\QQr]^\nnn[\p_t^3,\Npk^\cdot,v^\nnl]\dx'+\io[\QQr]^\nnn\cc_i^\nnl(v_i^\nnn)\dvt^\nnn
\lleq \io \p_3^{\pk^\nnn} [\QQr]^\nnn[\p_t^3,\Npk^\cdot,v^\nnl]\dvt^\nnn
\end{aligned}
\end{equation}and thus it can be controlled under the time integral:
\begin{equation}
\begin{aligned}
&\int_0^T\io \p_3^{\pk^\nnn} [\QQr]^\nnn[\p_t^3,\Npk^\cdot,v^\nnl]\dvt^\nnn\dt
\lesssim&~\eps\|\p_t^2[\qc]^\nnn\|_1^2+C(\Kr_0)\left([\Er]^\nnn(0)+\int_0^T[\Er]^\nnn(t)+[\Er]^\nnl(t)\dt\right).
\end{aligned}
\end{equation}

Combining the above analysis and using the definition of Alinhac good unknowns, we get
\begin{equation}\label{AGUttgap}
\begin{aligned}
&\|\p_t^3 [v]^\nnn(t)\|_0^2+\|\sqrt{\ff^{\nnn '}}(q^\nnn)\p_t^3[\qc]^\nnn(t)\|_0^2+\kk^2\int_0^t|\p_t^4\psi(\tau)|_1^2\mathrm{d}\tau\\
\leq&~\eps\|\p_t^2[\qc]^\nnn\|_1^2+C(\Kr_0,\kk^{-1})\left([\Er]^\nnn(0)+\int_0^T[\Er]^\nnn(t)+[\Er]^\nnl(t)+[\Er]^\nnll(t)\dt\right).
\end{aligned}
\end{equation}

\subsection{Well-posedness of the nonlinear $\kk$-approximate problem}
Combining the div-curl analysis, the control of the normal traces, the reduction of $[\qc]$ and the analysis of full-time derivatives for the linear system \eqref{CWWSTlingap} for $[v]^\nnn,[\qc]^\nnn,[\psi]^\nnn$, we arrive at the energy estimate:
\begin{equation}
[\Er]^\nnn(t)\leq C(\Kr_0,\kk^{-1})\left([\Er]^\nnn(0)+\int_0^T[\Er]^\nnn(t)+[\Er]^\nnl(t)+[\Er]^\nnll(t)\dt\right).
\end{equation}

Since $[v]^\nnn,[\qc]^\nnn,[\psi]^\nnn$ have zero initial data, one can repeatedly use \eqref{CWWSTlingap} to show that their time derivatives also vanish on $\{t=0\}$, as one can observe that every term in the first two equations of \eqref{CWWSTlingap} contains exactly one term involving $[f]^\nnn$ or $[f]^\nnl$ whose initial value is zero. This implies $[\Er]^\nnn(0)=0$, and thus there exists some $T_1^{\kk}>0$ independent of $n$, such that
\begin{equation}
\sup_{0\leq t\leq T_1^{\kk}}[\Er]^{\nnn}(t)\leq\frac{1}{4}\left(\sup_{0\leq t\leq T_1^{\kk}}[\Er]^{\nnl}(t)+\sup_{0\leq t\leq T_1^{\kk}}[\Er]^{\nnll}(t)\right),
\end{equation}and thus we know by induction that
\begin{equation}
\sup_{0\leq t\leq T_1^{\kk}}[\Er]^{\nnn}(t)\leq C(\Kr_0,\kk^{-1})/2^{n-1}\to 0 \text{ as }n\to+\infty.
\end{equation}
Hence, for any fixed $\kk>0$, the sequence of approximate solutions $\{(v^\nnn,\qc^\nnn,\rho^\nnn,\psi^\nnn)\}_{n\in\N^*}$ is Cauchy, whose limit $(v^\kk,\qc^\kk,\rho^\kk,\psi^\kk)$ is exactly the solution to the nonlinear $\kk$-problem \eqref{CWWSTkk}. The uniqueness follows from a parallel argument.

\section{Well-posedness and incompressible limit of the gravity(-capillary) water wave system}
 We are ready to prove the local existence of the original water wave system \eqref{CWWST} for each fixed $\sigma>0$. In Section \ref{sect linear LWP}, we prove the local well-posedness and higher-order energy estimates of the linearized system \eqref{CWWSTlin} for each \textit{fixed} $\kk>0$ and use Picard iteration to construct the unique strong solution to the nonlinear $\kk$-approximate problem \eqref{CWWSTkk} defined in Section \ref{sect CWWSTkkeq}. To pass the limit $\kk\to 0_+$ to the original system \eqref{CWWST}, we prove the uniform-in-$\kk$ estimates for \eqref{CWWSTkk} in Section \ref{sect uniformkk}. Therefore, we prove the local-in-time existence for the stronger solution to the compressible gravity-capillary water wave system \eqref{CWWST}, that is, given initial data $(v_0,\rho_0,\psi_0)$, there exists $T'>0$ only depending on the initial data, such that the original system \eqref{CWWST} has a solution $(v,\rho,\psi)$ satisfying the energy estimates
\begin{equation}
\sup_{0\leq t\leq T'}E(t)\leq P(E(0)).
\end{equation}

The detailed procedure is as follows. First, the uniform-in-$\kk$ boundedness of $(v^{\kk}, \qc^{\kk}, \psi^{\kk})$ and their time derivatives $(\p_t v^{\kk}, \p_t \qc^{\kk}, \p_t \psi^{\kk})$ guarantees that there exist subsequences $(v^{\kk_j}, \qc^{\kk_j})$ converges weakly* to $(v, \qc)$ in $L^{\infty}([0,T']; H^4(\Omega)^2)$, and $\psi^{\kk_j}$ converges to $\psi$ weakly* in $L^{\infty}([0,T']; H^5(\Sigma))$, via the Banach-Alaoglu Theorem. Second, the Aubin-Lions Lemma ensures that $(v^{\kk_j}, \qc^{\kk_j})$ converges strongly to $(v, \qc)$ in $C^0([0,T']; H_{loc}^{4-\delta}(\Omega)^2)$, and $\psi^{\kk_j}$ converges to $\psi$ strongly in $C^0([0,T']; H_{loc}^{5-\delta}(\Sigma))$, for any fixed $\delta>0$. Third, since the approximate $\kk$-equations are asymptotically consistent with the compressible equations \eqref{CWWST}, the (local) strong convergence in Sobolev spaces that are algebras implies that $(v, \qc, \psi)$ is, in fact, a solution of \eqref{CWWST}. Moreover, the regularities of $(v, \qc, \psi)$ and their time derivatives were obtained from their approximating sequences by lower semi-continuity. 
Therefore, 
\begin{subequations}
\begin{align}
&\p_t^k v(t,\cdot)\in L^{\infty}([0, T'];H^{4-k}(\Omega)), \quad k=0,\cdots, 4,\\
&\p_t^k \p \qc(t,\cdot)\in L^{\infty}([0, T']; H^{3-k}(\Omega)),\quad k=0, \cdots ,3, \\
&\lam\p_t^k\qc(t,\cdot)  \in L^{\infty}([0, T']; L^2(\Omega)), \quad k=0,\cdots, 4,\\
&\psi(t,\cdot)\in L^{\infty}([0, T']; L^2(\Sigma)), \quad \sqrt{\sigma}\p_t^k \cnab\psi(t,\cdot)\in L^{\infty}([0, T']; H^{4-k}(\Sigma)), \quad k=0,\cdots, 4.
\end{align}
\end{subequations}

\subsection{Uniqueness and continuous dependence on data}\label{sect uniqueness}
To prove the well-posedness, it suffices to prove the uniqueness of the solution to \eqref{CWWST}. We assume $$\{(v^\nnn,\qc^\nnn,\rho^\nnn,\psi^\nnn)\}_{n=1,2}$$ to be two solutions to \eqref{CWWST} and define $[f]=f^{(1)}-f^{(2)}$ for any function $f$. Then it suffices to prove $([v],[\qc],[\rho],[\psi])=(\mathbf{0},0,0,0)$. We find that $([v],[\qc],[\rho],[\psi])=(\mathbf{0},0,0,0)$ satisfies the following system:
\begin{equation}\label{CWWSTgap}
\begin{cases}
\rho^{(1)}D_t^{\varphi^{(1)}}[v]+\nabla^{\varphi^{(1)}}[\qc]=-f_v &~~~\text{ in }[0,T]\times\Omega,\\
\ff'(q^{(1)})D_t^{\varphi^{(1)}}[\qc]+\nabla^{\varphi^{(1)}}\cdot[v]=-f_q&~~~\text{ in }[0,T]\times\Omega,\\
[\qc]=[q]+g[\varphi]&~~~\text{ in }[0,T]\times\Omega,\\
[\qc]=g[\psi]-\sigma\left(\h(\cnab\psi^{(1)})-\h(\cnab\psi^{(2)})\right)&~~~\text{ on }[0,T]\times\Sigma,\\
\p_t[\psi]=[v]\cdot N^{(1)}+v^{(2)}\cdot[N]&~~~\text{ on }[0,T]\times\Sigma,\\
[v_3]=0 &~~~\text{ on }[0,T]\times\Sigma_b,\\
([v],[\qc],[\psi])|_{t=0}=(\mathbf{0},0,0),
\end{cases}
\end{equation}where the functions $f_v,f_q$ are defined by 
\begin{align}
f_v&:=[\rho]\p_tv^{(2)}+[\rho \vb]\cdot\cnab v^{(2)}+[\rho V_\NN]\p_3v^{(2)}+\rho^{(2)}ge_3+\p_3\qc^{(2)}[A_{i3}]\\
f_q&:=[\ff'(q)](\p_t\qc^{(2)}-gv_3^{(2)})+[\ff'(q)\vb]\cdot\cnab \qc^{(2)}+[\ff'(q)V_\NN]\p_3 q^{(2)}\\
&-\ff'(q^{(2)})g[v_3]+\p_3v_i^{(2)}[A_{i3}],  \nonumber
\end{align}and 
\begin{align*}
V_\NN&:=\frac{1}{\p_3\varphi}(v\cdot\NN-\p_t\varphi),~A_{13}:=-\frac{\p_1\varphi}{\p_3\varphi},~A_{23}:=-\frac{\p_2\varphi}{\p_3\varphi},~A_{33}:=\frac{1}{\p_3\varphi},\\
\h(\cnab\psi)&:=\cnab\cdot\left(\frac{\cnab\psi}{|N|}\right),~~\h(\cnab\psi^{(1)})-\h(\cnab\psi^{(2)})=\cnab\cdot\left(\frac{\cnab[\psi]}{|N^{(1)}|}-\left(\frac{1}{|N^{(1)}|}-\frac{1}{|N^{(2)}|}\right)\cnab\psi^{(2)}\right).
\end{align*}

Let
\begin{equation}
[E](t):=\sum_{k=0}^3\left(\|\p_t^k[v]\|_{3-k}^2+\sigma|\cnab\p_t^{k}[\psi]|_{3-k}^2+\|\sqrt{\ff'(q^{(1)})}\p_t^k[\qc]\|_0^2\right)+g|[\psi]|_0^2+\sum_{k=0}^2\|\p \p_t^k[\qc]\|_{2-k}^2.
\end{equation}
We can then follow the proof for the uniform-in-$\kk$ estimates (setting $\kk=0$) in Section \ref{sect uniformkk} to show that $[E](0)=0$ and $[E](t)$ satisfies the following energy inequality
\begin{equation}\label{Gronwall [E]}
[E](T)\leq \int_0^TP(E(t))[E](t)\dt.
\end{equation}
Here, compared to the Picard iteration process, the only difference is that the boundary integral produces some extra terms, which are controlled using mollification beforehand. Consequently, we must use the surface tension instead of the artificial viscosity term to achieve boundary regularity. Following the analysis in Section \ref{sect AGUr gap}, the main contribution of the boundary integral arising from $\TP^3$-tangential estimates is
\begin{equation}
-\is[\QQ][\VV]\cdot N^{(1)}\dx'\lleq-\is\TP^3[q]\p_t\TP^3[\psi]\dx'+\is\TP^3[q]\TP^3(v^{(2)}\cdot[N])\dx',
\end{equation}
where $[\QQ],[\VV]$ are the Alinhac good unknowns of $[\qc],[v]$ with respect to $\TP^3$ and $\varphi^{(1)}$, that is, $[\FF]:=\FF^{(1)}-\FF^{(2)}$.
For the first integral, we have
\begin{equation}
\begin{aligned}
-\is\TP^3[q]\p_t\TP^3[\psi]\dx'\lleq&-\frac{\sigma}{2}\ddt\is|N^{(1)}|^{-1}\left|\TP^3\cnab[\psi]\right|_0^2\dx'-\sigma{\is\frac{\TP^3\cnab[\psi]\cdot\cnab(\psi^{(1)}+\psi^{(2)})}{|N^{(1)}||N^{(2)}|(|N^{(1)}|+|N^{(2)}|)}\cnab\psi^{(2)}\cdot\p_t\cnab\TP^3[\psi]\dx',}
\end{aligned}
\end{equation}
where the first term gives the boundary energy in $[E](t)$, and the second term appears when $\TP^3$ falls on $$|N^{(1)}|^{-1}-|N^{(2)}|^{-1}=\frac{|N^{(2)}|^2-|N^{(1)}|^2}{|N^{(1)}||N^{(2)}|(|N^{(1)}+|N^{(2)}|)}.$$ 
This term is controlled by
\begin{align*}
&-\sigma\is\frac{\TP^3\cnab[\psi]\cdot\cnab(\psi^{(1)}+\psi^{(2)})}{|N^{(1)}||N^{(2)}|(|N^{(1)}|+|N^{(2)}|)}\cnab\psi^{(2)}\cdot\p_t\cnab\TP^3[\psi]\dx'\\
\leq&~ P(|\cnab\psi^{(1)},\cnab\psi^{(2)}|_\infty)|\sqrt{\sigma}\cnab\TP^3[\psi]|_0(|\sqrt{\sigma}\p_t\psi^{(1)}|_4+|\sqrt{\sigma}\p_t\psi^{(2)}|_4)\\
\leq&~\eps|\sqrt{\sigma}\cnab\TP^3[\psi]|_0^2+P(|\cnab\psi^{(1)},\cnab\psi^{(2)}|_\infty)E(t)\leq\eps[E](t)+P(E(t)).
\end{align*}

The energy inequality for $[E](t)$ together with Gr\"onwall's inequality and the energy bounds for $E(t)$ implies that there exists some $T\in[0,T']$ only depending on the initial data of \eqref{CWWST}, such that $\sup\limits_{0\leq t\leq T}[E](t)\leq 2C [E](0)=0$, for some constant $C>0$. Therefore, the solution to \eqref{CWWSTgap} must be zero. The uniqueness is proven, and the continuous dependence on initial data in $H^3(\Omega)$ for $v,\qc$ and in $H^4(\Sigma)$ for $\psi$ is shown in a similar manner. 

In particular, 
let $(\bar{\psi}_0, \bar{v}_0, \bar{\rho}_0-1)\in H^5(\Sigma)\times H^4(\Omega)\times H^4(\Omega)$ be another set of initial data, and we denote by $(\bar{\psi}, \bar{v}, \bar{\qc})$ the  solution of \eqref{CWWST} corresponding to $(\bar{\psi}_0, \bar{v}_0, \bar{\rho}_0-1)$. Under this setting, the last line of \eqref{CWWSTgap} becomes
$$
([v],[\qc],[\psi])|_{t=0}=(v_0-\bar{v_0}, \qc_0-\bar{\qc}_0, \psi_0-\bar{\psi}_0).
$$
Consequently, the inequality \eqref{Gronwall [E]} becomes
$$
[E](T)\leq [E](0)+\int_0^TP(E(t))[E](t)\dt.
$$
Now, suppose that $[E](0) \leq \epsilon$. Then, from the standard Gr\"onwall's inequality, we conclude that $\sup\limits_{0\leq t\leq T}[E](t)\leq 2C\epsilon$, provided that $T>0$ is sufficiently small. We emphasize that this constant $C>0$ depends on $\sigma^{-1}$ if the initial data does not satisfy the Rayleigh–Taylor sign condition.

\subsection{Incompressible and zero-surface-tension limits}\label{sect double limits}
This section is devoted to showing that we can pass the solution of \eqref{CWWST} to the incompressible and zero surface tension double limits.  In other words, we study the behavior of the solution of \eqref{CWWST} as both the Mach number $\lam$ and surface tension coefficient $\sigma$ tend to $0$. Recall that the Mach number $\lam$ is defined in Section \ref{sect mach number definition}. 

We study the incompressible Euler equations modeling the motion of incompressible gravity water waves without surface tension, satisfied by $(\xi, w, q_{in})$ with initial data $(w_0,\xi_0)$ and $w_{0}^3|_{\Sigma_b}=0$:
\begin{equation} \label{WW}
\begin{cases}
\Dtp w +\nabp \qi=0&~~~ \text{in}~[0,T]\times \Omega,\\
\nabp\cdot w=0&~~~ \text{in}~[0,T]\times \Omega,\\
\qi=q_{in}+g\varphi &~~~ \text{in}~[0,T]\times \Omega, \\
\qi=g\xi &~~~\text{on}~[0,T]\times\Sigma,\\
\p_t \xi = w\cdot \nn &~~~\text{on}~[0,T]\times\Sigma,\\
w_3=0 &~~~\text{on}~[0,T]\times\Sigma_b,\\
(w,\xi)|_{t=0}=(w_0, \xi_0),
\end{cases}
\end{equation}
where we define $\varphi(t,x) = x_3+\chi(x_3) \xi(t,x')$ to be the extension of $\xi$ in $\Omega$ after slightly abuse of notations. 
Denote by
$
(\psi^\ls, v^\ls, \rho^\ls) 
$ 
the solution of \eqref{CWWST}
indexed by $\lam$ and $\sigma$, our goal is to show:
\begin{equation}
(\psi^\ls, v^\ls, \rho^\ls)  \to (\xi, w, 1)\quad \text{in}\,\,C^0([0,T]; \orange{H_{\text{loc}}^{4-\delta}(\Sigma)}\times H_{\text{loc}}^{4-\delta}(\Omega)\times H_{\text{loc}}^{3-\delta}(\Omega)), \quad\text{for any}\,\,\delta\in (0,1], 
\end{equation}
provided that:
\begin{enumerate}
\item The sequence of initial data $(\psi_0^\ls, v_0^\ls, \rho_0^\ls-1)\in H^5(\Sigma)\times H^4(\Omega)\times H^4(\Omega)$ satisfies the compatibility conditions up to order $3$, $|\psi_0^\ls|_{\infty}\leq 1$,  and $v_{0}^{3;\ls}|_{\Sigma_b}=0$. The compatibility condition of order $k$ ($k\geq 0$), expressed in terms of the modified pressure, reads
\begin{align} 
(\Dtp)^k \qc^{\ls}|_{\{t=0\}\times \Sigma}  = \sigma (\Dtp)^k \left( \mathcal{H}^{\ls} + g\psi^{\ls}\right)|_{\{t=0\}\times \Sigma}\label{comp cond pre}. 
\end{align}
Since $\Dtp = \p_t + \vb^{\ls} \cdot \TP$ on $\Sigma$, we can rewrite \eqref{comp cond pre} as:
\begin{align} \label{comp cond}
(\p_t + \vb^{\ls} \cdot \TP)^k \qc^{\ls}|_{t=0} = \sigma (\p_t +\vb^{\ls}\cdot \TP)^k \left(\mathcal{H}^{\ls}+g\psi^{\ls}\right)|_{t=0}\quad \text{on}\,\,\,\Sigma. 
\end{align}
Apart from this, we require 
\begin{equation}
\p_t^k v^{3; \ls}|_{\{t=0\}\times \Sigma_b} = 0, \quad k=0,1,2,3,
\end{equation}
The existence of such data is discussed in Appendix \ref{sect CWWST data}.

\item $(\psi_0^\ls, v_0^\ls, \rho_0^\ls)  \to (\xi, w, 1)$ in $\orange{H^{4}(\Sigma)}\times H^{4}(\Omega)\times H^{3}(\Omega)$ as $\ls\to 0$.
\item The compressible pressure $q^{\ls}$ and the incompressible pressure $q_{in}$ satisfy the Rayleigh-Taylor sign condition:
\begin{align}
-\p_3 q^{\ls} \geq c_0>0, \quad \text{on}\,\,\{t=0\}\times \Sigma, \label{Taylor0}\\
-\p_3 q_{in} \geq c_0>0, \quad \text{on}\,\,\{t=0\}\times \Sigma. \label{incompressible Taylor0}
\end{align}
\end{enumerate}
The key step of showing the $\ls$-double limits is to prove an energy estimate of \eqref{CWWST} that is uniform in both $\lam$ and $\sigma$. The analysis in Section \ref{sect uniformkk} indicates that the energy estimate for \eqref{energykk} is already uniform in $\lambda$. In particular, one can see that the tangential energy estimates in Sections \ref{sect TT AGU}-\ref{sect tt AGU} are uniform in $\ff'_{\lam}$, which is of size  $O(\lam^2)$ by \eqref{ff' is lambda}. 

The energy bound that we obtained from the local existence implies the boundedness of $\|\p_t^kv^\ls (t)\|_{4-k}^2 + |\p_t^k\psi^\ls(t)|_{4-k}^2~(k\leq 4)$ uniformly in both $\lam$ and $\sigma$ within the time interval $[0,T]$. Thus, 
\begin{align}
(v^{\ls},\psi^{\ls})\rightarrow (w,\xi),\quad \text{ as } \ls\to 0,
\end{align}
weakly-* in $L^\infty([0,T];H^4(\Omega)\times \orange{H^4(\Sigma)})$, and strongly in $C^0([0,T]; H_{\text{loc}}^{4-\delta}(\Omega)\times \orange{H_{\text{loc}}^{4-\delta}(\Sigma)})$ for any $0<\delta\leq 1$. Here, the strong convergence is a direct consequence of the Aubin-Lions lemma, and the uniqueness of the limit function implies the convergence without squeezing a subsequence. 

Moreover, as $\Dtp = \p_t + (\vb \cdot \cnab) + (\p_3 \varphi)^{-1} (v\cdot \mathbf{N} - \p_t\varphi)\p_3$, 
invoking the continuity equation 
$$
\ff'_{\lam} (q)\Dtp \qc^\ls+\nabp\cdot v^\ls=g\ff'_{\lam}(q)\Dtp v_3^\ls,
$$
and because $\|\Dtp \qc^\ls(t)\|_3, \|\Dtp v^\ls(t)\|_3$ are uniformly bounded in $[0,T]$, we have
\begin{align}
\nabp\cdot v^\ls \rightarrow \nabp\cdot w= 0,  
\end{align}
weakly-* in $L^\infty([0,T];H^3(\Omega))$, and strongly in $C^0([0,T];H_{\text{loc}}^{3-\delta}(\Omega))$. Once again, the strong convergence is obtained thanks to the Aubin-Lions lemma.

Finally, since the continuity equation can be expressed as
$$
\Dtp (\rho^\ls-1) +\rho^\ls (\nabp\cdot v^\ls) = 0,
$$
we can derive the energy estimate for $\rho^\ls-1$ in $H^3(\Omega)$ as:
\begin{align} \label{rho-1 H3}
\ddt\frac{1}{2}\|\rho^\ls-1\|_3^2 \leq \|\rho^\ls-1\|_0\Big(\|v^\ls\|_{4} + |\TP \psi^{\ls}|_3 \Big),
\end{align}
where $\|v^\ls\|_{4}$, $|\TP \psi^{\ls}|_3$ are bounded by $E^\ls$.  Similarly, we can prove the uniform bound also for $\|\p_t(\rho^{\ls}-1)\|_2^2$. 
Therefore,  $\rho^{\ls}\rightarrow1$ weakly* in $L^\infty([0,T];H^3(\Omega)$, and strongly in $C^0([0,T]; H_{\text{loc}}^{3-\delta}(\Omega))$.

\section{Improved incompressible limit for well-prepared initial data} \label{section double lim}

We recall that the uniform boundedness (with respect to the Mach number) of top-order time derivatives is required to establish the uniform-in-$(\ls)$ estimates in Theorem \ref{main thm, double limits}. However, only the boundedness of first-order time derivatives is required, namely $\dive v=O(\lam)$ and $\p_t v=O(1)$ if the initial data is well-prepared. In this section, we aim to drop the boundedness assumption for high-order time derivatives. Since we also need to guarantee the propagation of the Rayleigh-Taylor sign condition, the uniform boundedness of $\p_t\p_3 q\sim\p_t^2 v$ is still required. 

It should be noted that there is a new difficulty in the control of the ``weaker" energy $\EE(t)$: there is a loss of $\lam$-weights in $\TP^2\p_t^2$-tangential estimates when analyzing $\EE_4(t)$. In particular, we have to control the following quantity in the cancellation structure used at the end of Section \ref{sect tt AGU}:
\[
\io (\TP\p_3\p_t^2 v_i)(\TP\NN_i)(\TP\p_3\p_t^2 q)\dx,
\]in which $\p_t^2 q$ has to be uniformly bounded with respect to Mach number. However, now we only have $\p_t^2 q=O(1/\lam)$, which leads to a loss of $\lam$-weight. Besides, similar difficulty also appears in the control of $-\io \VV^\pm\cdot\cc(q^\pm)\dvt$. Indeed, such loss of $\lam$-weight necessarily happens in  $\TP^2\p_t^2$-tangential estimates because of the following two reasons
\begin{enumerate}
\item $\TP^2\p_t^2 q$ needs one more $\lam$-weight than $\TP^2\p_t^2 v$, and
\item The (extension of) normal vector $\NN$, which arises from the commutator $[\TP^2\p_t^2,\NN_i/\p_3\varphi, \p_3 f]$ in $\cc_i(f)$, may NOT absorb a time derivative. 
\end{enumerate}
\textit{This loss of $\lambda$-weight is entirely due to the presence of the moving surface boundary, as the commutator $\cc(f)$ is unnecessary in the fixed-domain setting.} In the second author's previous work \cite{Zhang2021elastoLWP}, considering compressible inviscid fluids \textit{without surface tension}, such essential difficulty can be avoided thanks to the vanishing Dirichlet boundary condition $q|_{\Sigma}=0$, but that framework is no longer applicable here. To overcome the loss of $\lam$-weights, we have to find a new way to control $\p_t^2 v$. We also need to introduce a new energy functional:
\begin{align}
\label{weak EE}\EE(t):=&~\EE_4(t)+E_5(t),\\
\EE_4(t):=&~\|v\|_4^2+\|\qc\|_4^2+|\sqrt{\sigma}\psi|_5^2+|\psi|_{4}^2+\|\p_t v,\p_t\qc\|_3^2+|\sqrt{\sigma}\p_t\psi|_4^2+|\p_t\psi|_{3.5}^2\no \\
\label{weak EE4}&+ \|\p_t^2 v,\lam\p_t^2\qc\|_2^2+|\sqrt{\sigma}\p_t^2\psi|_3^2+|\p_t^2\psi|_{2.5}^2+|\p_t^3\psi|_{1.5}^2  \\
&+ \sum_{k=3}^4\|\lam \p_t^k(v,\qc)\|_{4-k}^2+|\sqrt{\sigma}\lam\p_t^k\psi|_{5-k}^2+|\lam \p_t^4\psi|_{0.5}^2\no \\
\label{weak E5}E_5(t):=&\sum_{k=0}^5\ino{\lam^2\p_t^k(v, \lam^{(k-4)_+}\qc)}_{5-k}^2+\bno{\sqrt{\sigma}\lam^2\p_t^k\psi}_{6-k}^2+\bno{\lam^2\p_t^k\psi}_{5-k}^2
\end{align}

We now introduce the following div-curl inequality:
\begin{lem}[Hodge-type elliptic estimates]\label{hodgeNN}
For any sufficiently smooth vector field $X$ and $s\geq 1$, one has
\begin{equation}
\|X\|_s^2\lesssim C(|\psi|_{s+\frac12},|\cnab\psi|_{W^{1,\infty}})\left(\|X\|_0^2+\|\nabp\cdot X\|_{s-1}^2+\|\nabp\times X\|_{s-1}^2+|X\cdot N|_{s-\frac12}^2\right),
\end{equation} 
where the constant $C(|\psi|_{s+\frac12},|\cnab\psi|_{W^{1,\infty}})>0$ depends linearly on $|\psi|_{s+\frac12}^2$.
\end{lem}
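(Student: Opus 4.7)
The plan is to reduce this estimate to the classical Hodge-type (div-curl) inequality on the moving fluid domain $\DD_t$ via the change of variables $\Phi(t,\cdot): \Omega \to \DD_t$ defined in \eqref{change of variable}, and then to track carefully the dependence of all constants on $\psi$. The key point is that the covariant operators $\nabp\cdot$ and $\nabp\times$ in $\Omega$ correspond pointwise to the Euclidean $\nab\cdot$ and $\nab\times$ on $\DD_t$ after pulling back along $\Phi^{-1}$, and $N=(-\p_1\psi,-\p_2\psi,1)^\top$ is precisely the (non-unit) outer conormal to $\Sigma_t$ whose unit version is $n=N/|N|$ with surface measure $\mathrm{d}S = |N|\,\dx'$.

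First I would set $u := X\circ\Phi^{-1}$ on $\DD_t$ and verify the pointwise identities $(\nabp\cdot X)\circ\Phi^{-1} = \nab\cdot u$, $(\nabp\times X)\circ\Phi^{-1}=\nab\times u$ directly from \eqref{nabp 1,2}--\eqref{nabp 3}. Since $\p_3\varphi = 1+\chi'(x_3)\psi$ is bounded away from zero under the standing hypothesis $|\psi_0|_\infty\leq 1$ (and the short-time control $\sup_{[0,T]}|\psi|_\infty\leq 10$ of Theorem \ref{main thm, WP}), $\Phi$ is a bi-Lipschitz diffeomorphism with comparable Jacobian, so $\|u\|_{L^2(\DD_t)}\sim\|X\|_0$. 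At higher order, the chain rule together with the extension $\varphi=x_3+\chi(x_3)\psi$ and the trace inequality $\|\varphi\|_{H^{s+1}(\Omega)}\lesssim |\psi|_{s+\frac12}$ yields
\begin{equation*}
\|u\|_{H^s(\DD_t)} \lesssim C(|\psi|_{s+\frac12},|\cnab\psi|_{W^{1,\infty}})\|X\|_s, \qquad \|X\|_s\lesssim C(|\psi|_{s+\frac12},|\cnab\psi|_{W^{1,\infty}})\|u\|_{H^s(\DD_t)}.
\end{equation*}

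Second, on the moving domain $\DD_t$ I would invoke the classical Hodge-type inequality (see e.g.\ \cite{Shkollerdivcurl}, or the companion Lemma \ref{hodgeLL} of this paper):
\begin{equation*}
\|u\|_{H^s(\DD_t)}^2 \lesssim \|u\|_0^2 + \|\nab\cdot u\|_{H^{s-1}(\DD_t)}^2 + \|\nab\times u\|_{H^{s-1}(\DD_t)}^2 + |u\cdot n|_{H^{s-\frac12}(\p\DD_t)}^2.
\end{equation*}
On the graph piece of $\p\DD_t$, using $n=N/|N|$ and $\mathrm{d}S=|N|\,\dx'$, a straightforward computation in local coordinates converts $|u\cdot n|_{H^{s-\frac12}(\Sigma_t)}$ into $|X\cdot N|_{s-\frac12}$ up to a multiplicative $P(|\cnab\psi|_{W^{1,\infty}})$-factor and the Sobolev norm $|\psi|_{s+\frac12}$ of the parametrization. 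On the flat bottom $\Sigma_b$, $n=-e_3$ is constant, so the term reduces to $|X_3|_{H^{s-\frac12}(\Sigma_b)}$; in the intended applications (notably $X=\p_t^k v$ with the slip condition $v_3|_{\Sigma_b}=0$ preserved by $\p_t$) this contribution vanishes and hence can be omitted from the statement.

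The main technical obstacle is justifying the \emph{linear} dependence of the prefactor on $|\psi|_{s+\frac12}^2$ rather than a higher polynomial. This requires Moser/Kato–Ponce-type product estimates applied to the composition $X\circ\Phi^{-1}$ and to the boundary product $X\cdot N$: at the top derivative order, only one factor is permitted to carry the $H^s$-regularity (paid for by a single power of $|\psi|_{s+\frac12}$), while the remaining factors must be absorbed via $|\cnab\psi|_{W^{1,\infty}}$ and, at intermediate orders, interpolation between $\|X\|_0$ and $\|X\|_s$ through the classical Gagliardo–Nirenberg inequality. Once this bookkeeping is carried out, squaring the top-order estimate produces the single factor $|\psi|_{s+\frac12}^2$ in front of the right-hand side, which gives the claimed linear dependence and closes the proof.
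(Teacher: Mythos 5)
The paper does not actually prove Lemma \ref{hodgeNN}: it is quoted as a known div-curl estimate, the normal-trace analogue of Lemma \ref{hodgeTT} (cited to \cite[Lemma B.2]{GLL2020LWP}) and the $\kk$-regularized version Lemma \ref{hodgeLL} (cited to \cite[Theorem 1.1]{Shkollerdivcurl}), where the result is established directly in the flattened coordinates with the variable coefficients $\bm{E}^{ij}$. Your route --- pull back to $\DD_t$ by $\Phi$, apply the Euclidean Hodge inequality there, and transfer norms --- is a legitimate alternative and is morally equivalent; just be aware that the two hard steps (the two-sided equivalence $\|u\|_{H^s(\DD_t)}\sim\|X\|_s$ with constants depending only on $|\psi|_{s+\frac12}$ and $|\cnab\psi|_{W^{1,\infty}}$, and the identification of $H^{s-\frac12}(\Sigma_t)$ with $H^{s-\frac12}(\Sigma)$ via the graph parametrization) carry essentially all of the Moser-type bookkeeping that the direct flattened-coordinate proof performs, so your proposal defers rather than avoids the real work, particularly for the low end $s$ near $1$ and for the claimed \emph{linear} dependence on $|\psi|_{s+\frac12}^2$.

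The one substantive point you raise is correct and worth emphasizing: the classical inequality on $\DD_t$ necessarily produces the bottom trace $|u\cdot n|_{H^{s-\frac12}(\Sigma_b)}=|X_3|_{H^{s-\frac12}(\Sigma_b)}$, and this term cannot be discarded for an arbitrary $X$. Indeed, already on the flat slab $\R^2\times(-b,0)$ the gradient of a harmonic function built from $\cosh(|\xi|x_3)$ (so that $\p_3\phi|_{\Sigma}=0$) has vanishing divergence, curl and top normal trace, yet $\|\nab\phi\|_s/\|\nab\phi\|_0$ is unbounded as the frequency support moves to infinity, precisely because nothing constrains it at $\Sigma_b$. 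So Lemma \ref{hodgeNN} as literally stated is only valid under the implicit hypothesis $X_3|_{\Sigma_b}=0$ (compare Lemma \ref{hodgeLL}, which retains the term $|X_3|_{H^{s-\frac12}(\Sigma_b)}^2$). You identified this and correctly observed that in every application ($X=\p_t^k v$ with the slip condition, or $X=\nabp\Q_w$ where the bottom trace is explicitly retained and estimated) the term either vanishes or is kept; your proof therefore establishes the version of the lemma that is actually used.
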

Applying this inequality to $X=\p_t^2 v$ and $s=2$, we obtain that
\begin{align}
\|\p_t^2 v\|_2^2\lesssim C(|\psi|_{2.5},|\cnab\psi|_{W^{1,\infty}})\left(\|\p_t^2 v\|_0^2+\|\nabp\cdot \p_t^2 v\|_{1}^2+\|\nabp\times \p_t^2 v\|_{1}^2+|\p_t^2 v\cdot N|_{s-\frac12}^2\right).
\end{align} The divergence and vorticity are controlled in the same way as in Section \ref{sect div curl}. As for the boundary term, we have 
\[
\p_t^2 v\cdot N=\p_t^3\psi+\vb_j\TP_j\p_t^2\psi,
\]so we shall turn to control $|\p_t^3\psi|_{1.5}$ and $|\p_t^2\psi|_{2.5}$ without any weights of $\ls$. 

\subsection{Time-differentiated evolution equation of the free surface}\label{sect surface eq}
We derive the evolution equation of the free surface by further differentiating the kinematic boundary condition with respect to the time variable. 
\subsubsection{Time-differentiated kinematic boundary condition}
 Let $\TDt:=\Dtp|_{\Sigma} = \p_t +\vb\cdot \cnab$. The kinematic boundary condition then implies
\begin{equation}\label{Dt KBC}
\TDt \psi= v_3, \quad \text{ on }\Sigma.  
\end{equation}
Taking one more $\TDt$ to \eqref{Dt KBC}, we infer from the momentum equation that
\begin{equation}\label{Dt2 KBC}
\rho \TDt^2 \psi = -\p_3 \qc - (\rho-1)g, \quad \text{ on }\Sigma. 
\end{equation}
Since $[\p_t, \TDt]f = \p_t\vb_j \TP_j f$, we obtain
\begin{align*}
[\p_t^2, \TDt]f = \p_t^2\vb_j \TP_j f +2\p_t \vb_j \,\p_t\TP_j f.  
\end{align*}
Furthermore, 
\begin{align*}
[\p_t^2, \TDt^2 ] f=& ~\TDt\left(\p_t^2\vb_j \TP_j f+2\p_t \vb_j \p_t\TP_j f\right)+\p_t^2\vb_j\,\TP_j\TDt f+2\p_t\vb_j\,\TP_j\p_t\TDt f\\
=&~ \p_t^2\TDt\vb_j\,\TP_j f-2\p_t^2\vb_j\,\TP_j\vb_k\,\TP_k f-2\p_t\vb_j\,\p_t\TP_j\vb_k\,\TP_k f\\
&+2\p_t^2\vb_j\,\TP_j\TDt f+2\p_t\TDt\vb_j\,\TP_j\p_t f -4\p_t\vb_j\,\TP_j\vb_k\,\TP_k\p_tf\\
&+4\p_t\vb_j\,\TP_j\p_t\TDt f-2\p_t\vb_j\,\TP_j(\p_t\vb_k\,\TP_kf). 
\end{align*}
Therefore, we have
\begin{equation}
\begin{aligned}
\p_t^2\TDt^2 \psi =& \TDt^2\p_t^2\psi  + \p_t^2\TDt\vb_j\,\TP_j \psi-2\p_t^2\vb_j\,\TP_j\vb_k\,\TP_k \psi-2\p_t\vb_j\,\p_t\TP_j\vb_k\,\TP_k \psi\\
&+2\p_t^2\vb_j\,\TP_j\TDt \psi+2\p_t\TDt\vb_j\,\TP_j\p_t \psi -4\p_t\vb_j\,\TP_j\vb_k\,\TP_k\p_t\psi\\
&+4\p_t\vb_j\,\TP_j\p_t\TDt \psi-2\p_t\vb_j\,\TP_j(\p_t\vb_k\,\TP_k\psi). 
\end{aligned}
\end{equation}

Combining this with \eqref{Dt2 KBC} yields
\begin{align}
\begin{aligned}
\TDt^2\p_t^2\psi= - \frac{1}{\rho}\p_t^2\p_3 \qc-\p_t^2\TDt\vb\cdot\cnab\psi + \mathbf{R}_{\psi}+\mathbf{R}_{\rho}, \quad \text{ on }\Sigma,
\end{aligned}
\end{align}
where
\begin{align}\label{Rbf psi}
\begin{aligned}
-\mathbf{R}_{\psi} =&-2\p_t^2\vb_j\,\TP_j\vb_k\,\TP_k \psi-2\p_t\vb_j\,\p_t\TP_j\vb_k\,\TP_k \psi+2\p_t^2\vb_j\,\TP_j\TDt \psi+2\p_t\TDt\vb_j\,\TP_j\p_t \psi \\
&-4\p_t\vb_j\,\TP_j\vb_k\,\TP_k\p_t\psi+4\p_t\vb_j\,\TP_j\p_t\TDt \psi-2\p_t\vb_j\,\TP_j(\p_t\vb_k\,\TP_k\psi)
\end{aligned}
\end{align}
and
\begin{align}\label{Rbf rho}
\mathbf{R}_{\rho} = -\p_t^2 \left(\frac{(\rho-1)g}{\rho}\right)- \left[\p_t^2, \frac{1}{\rho}\right]\p_3\qc.
\end{align}

For $i=1,2$, since $\rho\TDt v_i = -\Tpp_i \qc$ with $\Tpp_i :=\pp_i|_{\Sigma}=\p_i-\p_i\psi\p_3$,  we have
\begin{align}\label{Rbf psi rho}
-\p_t^2\TDt\vb\cdot \cnab \psi = \frac{1}{\rho}\p_t^2 \Tpp \qc\cdot\cnab\psi+ \underbrace{\left[\p_t^2,\frac{1}{\rho}\right] \Tpp \qc\cdot\cnab\psi}_{=\mathbf{R}_{\psi,\rho}}\quad\text{ on }\Sigma.
\end{align}
Also, since $\Tpp_3:=\pp_3|_{\Sigma} = \p_3$, then
$$
- \frac{1}{\rho}\p_t^2\p_3 \qc+\frac{1}{\rho}\p_t^2 \Tpp \qc\cdot\cnab\psi =-\frac{1}{\rho} N\cdot \p_t^2\Tpp \qc\quad\text{ on }\Sigma. 
$$
This leads to the following evolution equation of the moving interface:
\begin{equation}\label{interface eq'}
\TDt^2\p_t^2\psi = -\frac{1}{\rho} N\cdot \p_t^2\Tpp \qc+\Rbf_{\psi}+\Rbf_{\rho}+\Rbf_{\psi,\rho}, \quad \text{ on }\Sigma,
\end{equation}
where $\Rbf_{\psi}, \Rbf_{\rho}$, and $\Rbf_{\psi,\rho}$ are given respectively in \eqref{Rbf psi}, \eqref{Rbf rho} and \eqref{Rbf psi rho}. 

\subsubsection{The reformulation in Alinhac good unknowns}
In the next, we introduce $\Q$ to be the Alinhac's good unknown of $\qc$ associated with $\p_t^2$:
\begin{equation}
\Q :=\p_t^2 \qc - \p_t^2\varphi\pp_3 \qc,\quad \text{ in }\Omega. 
\end{equation}
For $j=1,2,3$, similar to \eqref{AGU comm 1}, we have
\begin{equation}\label{pt2 AGU interior}
\p_t^2 \nabp_j \qc = \nabp_{j} \Q + C_{j}(\qc),\quad \text{ in }\Omega.
\end{equation}
Here, for a generic function $f$, we define
\begin{align}\label{def C}
C_{i}(f) = \pp_i\pp_3 f \p_t^2\varphi+ C'_i(f),\quad i=1,2, \text{ and }\quad 
C_3(f) = (\pp_3)^2 f \p_t^2\varphi + C'_3(f),
\end{align}
where
\begin{align*}
C_i'(f) =
-\left[\p_t^2, \frac{\p_i \varphi}{\p_3\varphi}, \p_3 f\right]-\p_3 f \left[ \p_t^2, \p_i \varphi, \frac{1}{\p_3\varphi}\right] -\p_i\varphi \p_3 f\p_t\left( \frac{1}{(\p_3\varphi)^2}\right)\p_t \p_3 \varphi,
\end{align*}
 and 
\begin{align*}
C_3'(f) = 
\left[ \p_t^2, \frac{1}{\p_3\varphi}, \p_3 f\right] + \p_3 f\p_t\left(\frac{1}{(\p_3\varphi)^2}\right) \p_t \p_3 \varphi.
\end{align*}
Note that $\p_3\varphi|_{\Sigma}=1$, \eqref{pt2 AGU interior} then yields
\begin{align}
\p_t^2 \Tpp_j \qc = \Tpp_j\Q+\Cbf_j(\qc), \quad \text{ on }\Sigma, 
\end{align}
where $\Cbf_i(\qc) = \Tpp_i \p_3\qc \p_t^2\psi-[\p_t^2,\p_i\psi, \p_3\qc]$ when $i=1,2$, and $\Cbf_3(\qc) = \p_3^2\qc \p_t^2\psi$. 
Therefore, the equation \eqref{interface eq'} turns into
\begin{equation}\label{interface eq}
\TDt^2\p_t^2\psi = -\frac{1}{\rho} N\cdot \nabp \Q-\frac{1}{\rho}N\cdot \Cbf(\qc)+\Rbf_{\psi}+\Rbf_{\rho}+\Rbf_{\psi,\rho}, \quad \text{ on }\Sigma.
\end{equation}

Parallel to $\Q$, we define $\mathcal{V}$ to be the Alinhac's good unknown of $v$ associated with $\p_t^2$:
\begin{equation}
\mathcal{V}:= \p_t^2v - \p_t^2\varphi\pp_3 v,\quad \text{ in }\Omega. 
\end{equation}
Then, similar to \eqref{AGU TT mom}--\eqref{AGU TT mass}, $(\mathcal{V},\Q)$ verifies 
\begin{equation}\label{AGU pt^2}
\begin{aligned}
&\rho \Dtp \mathcal{V} + \nabp \Q  = G^1,\quad &\text{ in }\Omega, \\
&\lam^2\Dtp \Q +\nabp\cdot \mathcal{V} =G^2-C_i(v^i),\quad &\text{ in }\Omega, 
\end{aligned}
\end{equation}
where we write $\ff'(q)=\lam^2$ for simplicity of notations\footnote{This is reasonable when discussing the incompressible limit, as explained in Section \ref{sect mach number definition}.}, with
\begin{align*}
&G^1_i= -[\p_t^2,\rho]\Dtp v_i-\rho D(v_i) - C_i(\qc)- (\p_t^2\rho) g \delta_{i3},\quad i=1,2,3,\\
&G^2 = -\lam^2D(\qc) +\lam^2 g \p_t^2v_3.
\end{align*}
Here, for a generic function $f$, we define
$$
D(f) =( \Dtp \pp_3 f) (\p_t^2\varphi)+D'(f), 
$$
with
\begin{align*}
D'(f) = [\p_t^2,\vb]\cdot\TP f+\left[\p_t^2,\frac{1}{\p_3\varphi}(v\cdot\NN-\p_t\varphi),\p_3 f\right]+\left[\p_t^2, v\cdot\NN-\p_t\varphi,\frac{1}{\p_3\varphi}\right] \p_3 f\\
+\frac{1}{\p_3\varphi}[\p_t^2,v]\cdot\NN\p_3f -(v\cdot\NN-\p_t\varphi)\p_3 f \p_t \left(\frac{1}{(\p_3\varphi)^{2}}\right) \p_t\p_3\varphi. 
\end{align*}

In the next, we commute the divergence operator $\nabp\cdot$ to the first equation of \eqref{AGU pt^2} to obtain:
\begin{align}\label{Wave}
\begin{aligned}
\rho \lam^2 (\Dtp)^2 \Q  - \lap^{\varphi} \Q =\rho\pp_i v^k \pp_k \mathcal{V}^i +\nabp\rho\cdot \Dtp\mathcal{V}+\rho\Dtp \left(G^2-C_i(v^i)\right)-\nabp\cdot G^1. 
\end{aligned}
\end{align}

\subsubsection{Decomposition of the pressure: Dirichlet-to-Neumann operator}
Since 
$$\Q= \p_t^2\qc -\p_t^2\psi \p_3 \qc= \sigma \p_t^2 \mathcal{H} -\p_3 q\p_t^2\psi,\quad \text{ on }\Sigma, $$
we define $\Q= \Q_h+\Q_w$, where $\Q_h$ solves the elliptic equation
\begin{align}
\begin{aligned}
&-\lap^{\varphi} \Q_h =0,\quad&\text{ in }\Omega,\\
&\Q_h = \sigma \p_t^2 \mathcal{H}-\p_3 q\p_t^2\psi, \quad&\text{ on }\Sigma,\\
&\p_3\Q_h = 0,\quad&\text{ on }\Sigma_b,
\end{aligned}
\end{align}
and $\Q_w$ satisfies
\begin{align}\label{Qw}
\begin{aligned}
 & - \lap^{\varphi} \Q_w =-\rho \lam^2(\Dtp)^2 \Q+ \rho\pp_i v^k \pp_k \mathcal{V}^i +\nabp\rho\cdot \Dtp\mathcal{V} +\rho\Dtp \left(G^2-C_i(v^i)\right)-\nabp\cdot G^1,\quad &\text{ in }\Omega,\\
&\Q_w=0, \quad&\text{ on }\Sigma,\\
&\p_3 \Q_w=\p_3\Q=-\p_t^2 \rho g, \quad&\text{ on }\Sigma_b,
\end{aligned}
\end{align}
where $\p_3\Q|_{\Sigma}$ is computed by restricting the third component of the first equation in \eqref{AGU pt^2} on $\Sigma_b$. 

With this decomposition, we can further reduce the evolution equation of the free surface \eqref{interface eq} by introducing the Dirichlet-to-Neumann operator.
\begin{defn}[Dirichlet-to-Neumann (DtN) operator] \label{defn DtN}
For a function $f:\Sigma\to\R$, we define the Dirichlet-to-Neumann (DtN) operator associated with $(\Om,\psi)$ by
\begin{align}
\dn f:= N\cdot\nabp(\HE_\psi f),
\end{align}where $\HE_\psi f$ is defined to be the harmonic extension of $f$ into $\Om$, namely 
\begin{align}
-\lapp(\HE_\psi f)=0\quad \text{ in }\Om,\quad \HE_\psi f=f\quad \text{ on }\Sigma,\quad \p_3(\HE_\psi f)=0\quad\text{ on }\Sigma_b.
\end{align}
\end{defn}
With this definition, we can rewrite
\begin{align*}
N\cdot\nabp \Q =&~ N\cdot\nabp\Q_h+ N\cdot\nabp \Q_w =\dn(\sigma\p_t^2 \h -\p_3 q \p_t^2\psi) +  N\cdot\nabp \Q_w\\
=&~\sigma\dn(\p_t^2 \h) - \dn(\p_3 q\p_t^2\psi) + N\cdot\nabp \Q_w,
\end{align*}and thus the evolution equation \eqref{interface eq} becomes
\begin{align}\label{psi equ}
\rho\TDt^2\p_t^2\psi + \sigma\dn(\p_t^2 \h) - \dn(\p_3 q\p_t^2\psi) =- N\cdot\nabp \Q_w - N\cdot\mathbf{C}(\qc)+\rho(\Rbf_{\psi}+\Rbf_{\rho}+\Rbf_{\psi,\rho}) \quad\text{ on }\Sigma.
\end{align}

\subsection{Preliminaries on pardifferential calculus}\label{sect para pre}

In the equation \eqref{psi equ}, the term involving DtN operators is fully nonlinear, so we shall find out their concrete forms in order for an explicit energy estimate. In the remaining part of this paper, we will introduce several preliminary lemmas about paradifferential calculus that have been proven in Alazard-Burq-Zuily \cite{ABZ2014wwSTLWP}. Following the notations in M\'etivier \cite{MetivierPara}, we first introduce the basic definition of a paradifferential operator. Note that the dimension $d$ in this section is actually the Hausdorff dimension of the free surface.
\begin{defn}[Symbols]
Given $r\geq 0,~m\in\R$, we denote $\Gamma_r^m(\R^d)$ to be the space of locally bounded functions $a(x',\xi)$ on $\R^d\times(\R^d\backslash\{0\})$, which are $C^\infty$ with respect to $\xi (\xi\neq \bd{0})$, such that for any $\alpha\in\N^d,\xi\neq \bd{0}$, the function $x'\mapsto \p_\xi^\alpha a(x',\xi)$ belongs to $W^{r,\infty}(\R^d)$ and there exists a constant $C_\alpha$ such that
\[
\bno{\p_\xi^\alpha a(\cdot,\xi)}_{W^{r,\infty}(\R^d)}\leq C_\alpha (1+|\xi|)^{m-|\alpha|},~~\forall|\xi|\geq 1/2.
\]
\end{defn} 

\begin{defn}[Paradifferential operator]
Given a symbol $a$, we shall define the \textbf{paradifferential operator }$T_a$ by
\begin{align}\label{para FT}
\widehat{T_a u}(\xi):=(2\pi)^{-d}\int_{\R^d} \tilde{\chi}(\xi-\eta,\eta)\hat{a}(\xi-\eta,\eta)\phi(\eta)\hat{u}(\eta)\deta
\end{align}where $\hat{a}(\theta,\xi)=\int_{\R^d}\exp(-ix'\cdot\theta) a(x',\xi)\dx'$ is the Fourier transform of $a$ in variable $x'\in\R^d$. Here $\tilde{\chi}$ and $\phi$ are two given cut-off functions such that
\[
\phi(\eta)=0~~\text{for }|\eta|\leq 1,\quad \phi(\eta)=1~~\text{for }|\eta|\geq 2,
\]and $ \tilde{\chi}(\theta,\eta)$ is homogeneous of degree 0 and satisfies that for $0<\eps_1<\eps_2\ll 1$, $ \tilde{\chi}(\theta,\eta)=1$ if $|\theta|\leq \eps_1|\eta|$ and $ \tilde{\chi}(\theta,\eta)=0$ if $|\theta|\geq \eps_2|\eta|$. We also introduce the semi-norm
\begin{align}
M_r^{\alpha}(a):=\sup_{|\alpha|\leq \frac{d}{2}+1+r}\sup_{|\xi|\geq 1/2}\bno{(1+|\xi|)^{|\alpha|-m}\p_\xi^\alpha a(\cdot,\xi)}_{W^{r,\infty}(\R^d)}.
\end{align}
\end{defn}

For $m\in\R$, we say $T$ is of order $m$ if for all $s\in\R$, $T$ is bounded from $H^s$ to $H^{s-m}$.
\begin{prop}
Let $m\in\R$. If $a\in \Gamma_0^m(\R^d)$, then $T_a$ is of order $m$. Moreover, for any $s\in\R$, there exists a constant $K$ such that $\|T_a\|_{H^s\to H^{s-m}}\leq K M_0^m(a)$.
\end{prop}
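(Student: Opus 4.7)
\begin{pf}[Proof proposal]
The plan is to reduce the boundedness of $T_a$ to the classical continuity of spectrally localized Fourier multipliers via a Littlewood--Paley decomposition, and then use the semi-norm $M_0^m(a)$ to control each dyadic block uniformly. First I would write a standard Littlewood--Paley partition of unity $1=\psi_0(\eta)+\sum_{k\geq 1}\varphi(2^{-k}\eta)$ with $\varphi$ supported in an annulus, and the associated projectors $\Delta_k$. Using the defining formula \eqref{para FT} and the support property $|\xi-\eta|\leq\eps_2|\eta|$ of $\tilde\chi$, I would decompose
\[
T_a u \;=\; \sum_{k\geq 0} T_a \Delta_k u,
\]
and observe that each $T_a\Delta_k u$ has Fourier support contained in an annulus $\{|\xi|\sim 2^k\}$ (up to a fixed dilation controlled by $\eps_2$). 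This reduces the problem to a dyadic almost-orthogonal estimate.

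Next, I would freeze $\eta$ in the annulus $\{|\eta|\sim 2^k\}$ and interpret the factor $\tilde\chi(\xi-\eta,\eta)\hat a(\xi-\eta,\eta)$ as the partial Fourier transform (in $x'$) of the symbol $a(x',\eta)$ truncated to low frequencies $\lesssim \eps_2 2^k$. Equivalently, writing $a_k(x',\eta)=\chi_k(D_{x'})\,a(\cdot,\eta)(x')$ with $\chi_k$ a suitable smooth low-pass cutoff at scale $2^k$, each dyadic block acts as
\[
T_a \Delta_k u(x') \;=\; \bigl(a_k(x',D)\,\Delta_k u\bigr)(x'),
\]
i.e.\ as a multiplication by a spectrally localized function followed by a frequency-localized projector. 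At this stage the desired bound reduces to controlling $a_k(x',D)$ on $L^2$ with a norm $\leq C\,M_0^m(a)\,2^{km}$ uniformly in $k$. The main step here is a kernel estimate: the convolution kernel $K_k(x',z)$ of $a_k(x',D)$ satisfies, by integration by parts in the inverse Fourier transform of $\tilde\chi(\cdot,\eta)\hat a(\cdot,\eta)$,
\[
|K_k(x',z)| \;\leq\; C\,M_0^m(a)\,2^{k(m+d)}\bigl(1+2^{k}|z|\bigr)^{-N},
\]
for any $N\leq \lfloor d/2\rfloor+1$, which we obtain from the hypothesis $\p_\xi^\alpha a\in L^\infty$ for $|\alpha|\leq d/2+1$ in the semi-norm $M_0^m(a)$. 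Schur's lemma (or a direct Young's inequality) then yields the $L^2\to L^2$ bound with constant $C\,M_0^m(a)\,2^{km}$.

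Finally, I would assemble the pieces using the Littlewood--Paley characterization of $H^s$:
\[
\|T_a u\|_{H^{s-m}}^2 \;\sim\; \sum_k 2^{2k(s-m)}\,\|\Delta_k T_a u\|_{L^2}^2,
\]
together with the almost-orthogonality of the frequency-localized blocks $T_a\Delta_k u$. Substituting the block bound $\|T_a\Delta_k u\|_{L^2}\leq C\,M_0^m(a)\,2^{km}\|\Delta_k u\|_{L^2}$ and resumming gives
\[
\|T_a u\|_{H^{s-m}} \;\leq\; K\,M_0^m(a)\,\|u\|_{H^s},
\]
as required. The most delicate point in the argument is the kernel estimate for $a_k(x',D)$: one needs enough regularity in $\xi$ to justify the integration by parts producing the decay factor $(1+2^k|z|)^{-N}$ with $N>d/2$, and this is precisely why the semi-norm $M_0^m(a)$ requires $|\alpha|\leq d/2+1$ derivatives in $\xi$; verifying that the bound is truly uniform in $k$ (and tracks the correct weight $2^{km}$ coming from the symbol growth) is the one step where the constants must be handled with care.
\end{pf}
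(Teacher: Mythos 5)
The paper does not actually prove this proposition: it is quoted as a classical fact from M\'etivier's lecture notes and from Alazard--Burq--Zuily, so there is no in-paper argument to compare against. Your overall architecture is the standard one from the literature: decompose $u=\sum_k\Delta_k u$, observe that each block $T_a\Delta_k u$ is spectrally supported in an annulus $|\xi|\sim 2^k$ because of the cut-off $\tilde\chi$, prove a uniform block bound $\|T_a\Delta_k u\|_{L^2}\lesssim M_0^m(a)\,2^{km}\|\Delta_k u\|_{L^2}$, and resum using almost-orthogonality and the Littlewood--Paley characterization of $H^s$. That skeleton is correct.

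The gap is in how you close the block estimate. From $\|\p_\xi^\alpha a(\cdot,\xi)\|_{L^\infty}\lesssim M_0^m(a)(1+|\xi|)^{m-|\alpha|}$ for $|\alpha|\leq N:=\lfloor d/2\rfloor+1$, integration by parts only yields the pointwise kernel decay $|K_k(x',z)|\lesssim M_0^m(a)\,2^{k(m+d)}(1+2^k|z|)^{-N}$ with $N\leq\lfloor d/2\rfloor+1$, exactly as you state. But Schur's lemma (or Young's inequality) requires $\sup_{x'}\int|K_k(x',z)|\,\mathrm{d}z\lesssim M_0^m(a)\,2^{km}$, i.e.\ integrability of $(1+2^k|z|)^{-N}$ over $\R^d$, which needs $N>d$; with only $N\leq d/2+1$ this fails in every dimension (for $d=2$, the relevant case here, $\int_{\R^2}(1+|z|)^{-2}\,\mathrm{d}z=\infty$). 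So the step ``Schur's lemma then yields the $L^2\to L^2$ bound'' does not close as written, and this is precisely the delicate point you flagged. The standard repair is to measure the kernel in $L^2_z$ rather than pointwise: Plancherel on the annulus $|\eta|\sim 2^k$ gives $\|(1+2^k|z|)^N K_k(x',\cdot)\|_{L^2_z}\lesssim M_0^m(a)\,2^{k(m+d/2)}$, and then Cauchy--Schwarz against the weight $(1+2^k|z|)^{-N}$ combined with $\int_{\R^d}(1+2^k|z|)^{-2N}\,\mathrm{d}z\lesssim 2^{-kd}$ --- which only requires $2N>d$, and this \emph{is} satisfied by $N=\lfloor d/2\rfloor+1$ --- produces the uniform bound $\|T_a\Delta_k u\|_{L^2}\lesssim M_0^m(a)\,2^{km}\|\Delta_k u\|_{L^2}$. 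With that substitution your argument is complete.
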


\begin{prop}[Composition, {\cite[Theorem 3.7]{ABZ2014wwSTLWP}}]\label{prop para composition}
Let $m\in\R$ and $r>0$. If $a\in\Gamma_r^m(\R^d),~b\in\Gamma_r^{m'}(\R^d)$, then $T_aT_b-T_{a\# b}$ is of order $m+m'-r$ where \[
a\# b:= \sum_{|\alpha|<r}\frac{1}{i^{|\alpha|}\alpha!}\p_\xi^\alpha a \p_{x'}^\alpha b
\]and $\p_{x'}=(\TP_{x_1},\TP_{x_2})$. Moreover, for all $s\in\R$, there exists a constant $K$ such that
\begin{align}
\|T_aT_b - T_{a\# b}\|_{H^s\to H^{s-m-m'+r}}\leq K M_r^m(a) M_r^{m'}(b).
\end{align}
\end{prop}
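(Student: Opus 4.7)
The plan is to establish this classical symbolic-calculus identity by reducing the composition of two paradifferential operators to a Taylor expansion of their symbols in the frequency variable, and then estimating the resulting remainder with the seminorms $M_r^m(a)$ and $M_r^{m'}(b)$.

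First, I would write out $T_a T_b u$ using the Fourier representation \eqref{para FT} twice. This gives
\[
\widehat{T_a T_b u}(\xi)=(2\pi)^{-2d}\int\!\!\!\int \tilde\chi(\xi-\eta,\eta)\hat a(\xi-\eta,\eta)\phi(\eta)\,\tilde\chi(\eta-\zeta,\zeta)\hat b(\eta-\zeta,\zeta)\phi(\zeta)\hat u(\zeta)\,\deta\,\mathrm{d}\zeta.
\]
The two cutoffs force $|\xi-\eta|\ll|\eta|$ and $|\eta-\zeta|\ll|\zeta|$, hence $\eta\approx\zeta$ on the effective support of the integrand; in particular both frequencies are of comparable size. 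This is the region on which the symbol expansion will be performed.

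Next, on this localized region I would freeze the second argument of $a$ at $\zeta$ and Taylor-expand the partial Fourier transform $\hat a(\xi-\eta,\eta)$ in $\eta$ around $\zeta$ up to order $r$:
\[
\hat a(\xi-\eta,\eta)=\sum_{|\alpha|<r}\frac{(\eta-\zeta)^\alpha}{\alpha!}\,\p_\xi^\alpha \hat a(\xi-\eta,\zeta)+R_r(\xi-\eta,\eta,\zeta),
\]
where $R_r$ is the integral Taylor remainder. Each factor $(\eta-\zeta)^\alpha$ can be turned into $(-i\p_{x'})^\alpha$ acting on $b$ on the physical side, so summing the Taylor polynomial reproduces exactly the symbol
\[
a\# b=\sum_{|\alpha|<r}\frac{1}{i^{|\alpha|}\alpha!}\p_\xi^\alpha a\,\p_{x'}^\alpha b,
\]
and the remainder contributes an operator $R(a,b)$ with kernel built from $R_r$. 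Showing that the difference between the paradifferential quantization of $a\#b$ and the main Taylor term is itself of order $m+m'-r$ uses Proposition on $L^2$-boundedness of paradifferential operators of order $m$ applied iteratively.

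The main obstacle will be estimating the true remainder $R(a,b)$ in the operator norm $H^s\to H^{s-m-m'+r}$. The point is that $\hat a(\xi-\eta,\cdot)$ has only $W^{r,\infty}_{x'}$ regularity, so only $r$ Taylor terms are available; the remainder $R_r$ is then controlled by $M_r^m(a)$ via the fractional-derivative characterization of $W^{r,\infty}$ (writing $r=k+s'$ with $k\in\N$ and $s'\in[0,1)$ if $r$ is non-integer, and using a Littlewood-Paley/second-difference characterization when $s'>0$). The resulting kernel picks up a factor $(1+|\eta-\zeta|)^r$ in the frequency estimate, which, combined with the constraint $|\eta-\zeta|\lesssim|\xi-\eta|$ coming from $\tilde\chi$ and the symbol bound $|\p_\xi^\alpha b(x',\zeta)|\lesssim (1+|\zeta|)^{m'-|\alpha|}$, gives precisely the gain of $r$ derivatives. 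A Schur-type lemma applied in the $(\xi,\zeta)$ variables then yields the $H^s\to H^{s-m-m'+r}$ bound with constant $K\, M_r^m(a)\,M_r^{m'}(b)$; this is the step where one has to be genuinely careful with the bookkeeping of frequency localizations and the non-integer case of $W^{r,\infty}$, and it is the only part of the argument that is not a direct unwinding of the definitions.
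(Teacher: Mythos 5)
The paper offers no proof of this proposition: it is quoted, citation and all, from Alazard--Burq--Zuily (their Theorem~3.7), which in turn rests on M\'etivier's symbolic calculus, so your proposal can only be measured against the standard proof of that classical result. Your reduction of the main term follows that standard route and is carried out correctly: writing $\widehat{T_aT_bu}$ as a double frequency integral, Taylor-expanding $\hat a(\xi-\eta,\eta)$ in its second (smooth) argument about $\zeta$ while freezing the first argument, and converting $(\eta-\zeta)^\alpha\hat b(\eta-\zeta,\zeta)$ into $i^{-|\alpha|}\widehat{\p_{x'}^\alpha b}(\eta-\zeta,\zeta)$ does reproduce the quantization of $a\# b$ modulo remainders.

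The gap is in the remainder estimate, which is the entire content of the theorem, and the mechanism you propose for it would fail as literally stated. For symbols that are merely $W^{r,\infty}$ in $x'$, the partial Fourier transforms $\hat a(\cdot,\eta)$ and $\hat b(\cdot,\zeta)$ are tempered distributions, not integrable kernels, so one cannot put absolute values inside the double integral and run a Schur test in $(\xi,\zeta)$: the integral is not absolutely convergent. The standard repair is the reduction to elementary symbols: decompose $b$ dyadically in $x'$, use Bernstein's inequality $\|\p_{x'}^{\beta}\Delta_j b(\cdot,\xi)\|_{L^\infty}\lesssim 2^{j(|\beta|-r)}M_r^{m'}(b)(1+|\xi|)^{m'}$ with $|\beta|=\lfloor r\rfloor+1$ to absorb the surplus power of $|\eta-\zeta|$ left by the Taylor remainder when $r\notin\N$ (your $(1+|\eta-\zeta|)^r$ bookkeeping only closes after this step), and sum the blocks by almost-orthogonality; Schur's test enters only at the level of individual dyadic blocks. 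Separately, your ``main Taylor term'' carries the product of cutoffs $\tilde\chi(\xi-\eta,\eta)\tilde\chi(\eta-\zeta,\zeta)$ whereas $T_{a\#b}$ carries $\tilde\chi(\xi-\zeta,\zeta)$; the discrepancy is supported where one of the increments $|\xi-\eta|$, $|\eta-\zeta|$ is comparable to the base frequency, and estimating it is precisely where the $W^{r,\infty}$ regularity of $a$ in $x'$ (hence the factor $M_r^m(a)$) is used, via the $2^{-jr}$ decay of the dyadic blocks of $a$ on that region. This is the cutoff-independence lemma of the theory, not something obtained by iterating the $L^2$-boundedness proposition. If you intend to prove the proposition rather than cite it, these two steps must be written out.
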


\begin{prop}[Adjoint, {\cite[Theorem 3.10]{ABZ2014wwSTLWP}}]\label{prop para adjoint}
Let $m\in\R$, $r>0$ and $a\in\Gamma_r^m(\R^d)$. We denote by $(T_a)^*$ the adjoint operator of $T_a$. Then $(T_a)^*-T_{a^*}$ is of order $m-r$ where $$a^*:=\sum_{|\alpha|<r}\frac{1}{i^{\alpha}\alpha!}\p_\xi^\alpha \p_{x'}^\alpha \bar{a}.$$ Moreover, for any $s\in\R$, there exists a constant $K$ such that $\|(T_a)^*-T_{a^*}\|_{H^s\to H^{s-m+r}}\leq KM_r^m(a)$.
\end{prop}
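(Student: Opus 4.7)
The plan is to derive the formula for $(T_a)^*$ directly from the defining relation \eqref{para FT}, then compare it term-by-term with $T_{a^*}$ via a Taylor expansion of the symbol in the spatial variable. Concretely, by taking inverse Fourier transform in \eqref{para FT}, one writes
\begin{equation*}
T_a u(x') = \int K_a(x',y')\,u(y')\,\mathrm{d}y', \qquad K_a(x',y') = (2\pi)^{-2d}\int\int e^{i(x'-z')\cdot(\xi-\eta)+i(x'-y')\cdot\eta}\tilde\chi(\xi-\eta,\eta)\phi(\eta)a(z',\eta)\,\mathrm{d}\eta\,\mathrm{d}z'\,\mathrm{d}\xi,
\end{equation*}
so that the $L^2$-adjoint has kernel $\overline{K_a(y',x')}$. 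Rewriting this kernel in the form of \eqref{para FT}, I would observe that $(T_a)^*$ corresponds to ``$T_{\bar a}$ but with $a$ evaluated at $y'$ rather than at $x'$''. The whole proof then reduces to comparing the symbol $\bar a(y',\eta)$ (appearing in $(T_a)^*$) with the desired symbol $a^*(x',\eta)$ (appearing in $T_{a^*}$) through a finite Taylor expansion at $y'=x'$.

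The key step is the Taylor expansion
\begin{equation*}
\bar a(y',\eta) = \sum_{|\alpha|<r}\frac{(y'-x')^\alpha}{\alpha!}\partial_{x'}^\alpha \bar a(x',\eta) + R_r(x',y',\eta),
\end{equation*}
valid for integer $r$, together with a Littlewood–Paley-type interpolation to handle the non-integer case (using that $\partial_{x'}^\alpha \bar a\in W^{r-|\alpha|,\infty}$). Under the oscillatory integral, the factor $(y'-x')^\alpha$ is integrated by parts in $\eta$, producing $i^{-|\alpha|}\partial_\eta^\alpha$ acting on the remaining data, which, combined with the truncation $\phi(\eta)$, recovers exactly the terms $\frac{1}{i^{|\alpha|}\alpha!}\partial_\xi^\alpha\partial_{x'}^\alpha \bar a$ appearing in the definition of $a^*$. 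This identifies the principal part of $(T_a)^*$ with $T_{a^*}$ modulo the contribution of $R_r$.

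It then remains to prove that the remainder operator with kernel built from $R_r$ is bounded $H^s\to H^{s-m+r}$ with norm controlled by $M_r^m(a)$. This is where the main technical difficulty sits: one needs sharp bounds on the oscillatory integral incorporating the admissible cut-off $\tilde\chi(\xi-\eta,\eta)$, which constrains $|\xi-\eta|\lesssim |\eta|$ so that the symbol is effectively frequency-localized. I would follow the standard paradifferential strategy: decompose $R_r$ using a dyadic partition of unity in $\eta$, apply the Bernstein inequalities on each dyadic block to trade spatial regularity for decay in $\eta$, and use Schur's test on the resulting almost-diagonal kernel. The required $r$ extra powers of $|\eta|^{-1}$ come precisely from the Taylor remainder, accounting for the $-r$ shift in the order. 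A minor but unavoidable obstacle is treating fractional $r>0$, for which the Taylor expansion must be replaced by an integral remainder and paraproduct estimates in $W^{r,\infty}$; this is handled using the characterization of $W^{r,\infty}$ via finite differences, with the loss absorbed into $M_r^m(a)$. Combining these estimates with the composition estimate already stated in Proposition~\ref{prop para composition} yields the claimed bound $\|(T_a)^*-T_{a^*}\|_{H^s\to H^{s-m+r}}\leq K M_r^m(a)$.
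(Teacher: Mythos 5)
The paper does not prove this proposition at all: it is quoted verbatim from Alazard--Burq--Zuily \cite[Theorem 3.10]{ABZ2014wwSTLWP} (ultimately going back to Bony and to M\'etivier's lecture notes \cite{MetivierPara}), so there is no in-paper argument to compare yours against. Your outline is the classical symbolic-calculus proof of that cited theorem, and as an outline it is the right strategy: pass to the Schwartz kernel, observe that $(T_a)^*$ is the quantization of the amplitude $\bar a(y',\eta)$, Taylor-expand at $y'=x'$, convert $(y'-x')^\alpha$ into $i^{-|\alpha|}\p_\eta^\alpha$ by integration by parts against $e^{i(x'-y')\cdot\eta}$, and estimate the remainder using the frequency localization enforced by $\tilde\chi$.

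Two places where the real content is only gestured at. First, after the integration by parts the $\eta$-derivatives also fall on $\tilde\chi(\xi-\eta,\eta)$ and $\phi(\eta)$, so what you produce is not literally $T_{\p_\xi^\alpha\p_{x'}^\alpha\bar a}$ but a different admissible quantization of that symbol (plus terms supported where $\phi$ is not constant, which are smoothing). You need the standard lemma that two admissible quantizations of a symbol in $\Gamma_{r-|\alpha|}^{m-|\alpha|}$ differ by an operator of order $m-r$; without it the identification of the principal part with $T_{a^*}$ is incomplete. Second, the bound on the remainder operator is exactly the ``reduction theorem'' for spectrally localized symbols with finite $W^{r,\infty}$ regularity in $x'$ (this is how M\'etivier organizes the proof, rather than via a Schur test on the kernel); your ``dyadic decomposition + Bernstein + Schur'' is a plausible substitute but is named rather than executed, and for non-integer $r$ the passage from the integral Taylor remainder to the $-r$ gain is precisely where the Zygmund-class characterization must be used. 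Also, the final appeal to Proposition \ref{prop para composition} is not needed (and does not help): the adjoint estimate is independent of the composition estimate. These are gaps of execution, not of strategy; since the result is imported from the literature, citing \cite{MetivierPara} or \cite{ABZ2014wwSTLWP} as the paper does is the intended resolution.
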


Here and thereafter in this section, $\psi\in C([0,T];H^{s+\frac12}(\R^d))$ is a given function with $s>2+\frac{d}{2}$. The symbolic calculus is not defined for $C^\infty$ symbols, so we need to introduce the following symbols. 
\begin{defn}
Given $m\in\R$, we denote $\Sigma^m$ to be the class of symbols $a$ of the form $a=a^{(m)}+a^{(m-1)}$ with
\[
a^{(m)}(t,x',\xi)=F(\p_{x'}\psi(t,x'),\xi),\quad a^{(m-1)}(t,x',\xi)=\sum_{|\alpha|_2}G_\alpha(\p_{x'}\psi(t,x'),\xi)\p_{x'}^\alpha\psi(t,x')
\]such that 
\begin{itemize}
\item [i.] $T_a$ maps real-valued functions to real-valued functions;
\item [ii.] $F$ is a $C^\infty$ real-valued functions of $(\zeta,\xi)\in\R^d\times(\R^d\backslash\{0\})$, homogeneous of degree $m$ in $\xi$, such that there exists a continuous function $K=K(\zeta)>0$ such that $F(\zeta,\xi)\geq K(\zeta)|\xi|^m$ for all $(\zeta,\xi)\in\R^d\times(\R^d\backslash\{0\})$;
\item [iii.] $G_\alpha$ is a $C^\infty$ complex-valued function of $(\zeta,\xi)\in\R^d\times(\R^d\backslash\{0\})$, homogeneous of degree $m-1$ in $\xi$.
\end{itemize}
\end{defn} 

\begin{defn}[Equivalence of paradifferential operators]
Given $m\in\R$ and consider two families of operators of order $m$: $\{A(t):t\in[0,T]\}$ and  $\{B(t):t\in[0,T]\}$, we say $A\sim B$ if $A-B$ has order $m-1.5$ and satisfies the estimate: for all $r\in\R$ there exists a continuous function $C(\cdot)$ such that
\[
\forall t\in[0,T], ~~\|A(t)-B(t)\|_{H^r\to H^{r-(m-1.5)}}\leq C(|\psi(t)|_{s+\frac12}).
\]
\end{defn}  From now on, we use the notation $|\cdot|_{s_1\to s_2}$ to represent the operator norm $\|\cdot\|_{H^{s_1}\to H^{s_2}}$, and use the notation $|\cdot|_{s}$ to represent $\|\cdot\|_{H^s(\R^d)}$. We have the following theorem for the composition
\begin{prop}[{\cite[Prop. 4.3]{ABZ2014wwSTLWP}}]
Let $m,m'\in\R$. Then
\begin{enumerate}
\item If $a\in\Sigma^m,~b\in\Sigma^{m'}$, then $T_aT_b\sim T_{a\# b}$ where $a\# b$ is given by 
\[
a\# b = a^{(m)}b^{(m')} + a^{(m-1)} b^{(m')} + a^{(m)}b^{(m'-1)} + \frac{1}{i} \p_\xi a^{(m)}\cdot\p_{x'} b^{(m')}.
\]
\item  If $a\in\Sigma^m$, then $(T_a)^*\sim T_b$ where $b\in \Sigma^m$ is given by
\[
b=a^{(m)}+\overline{a^{(m-1)}}+\frac{1}{i}(\p_{x'}\cdot\p_{\xi})a^{(m)}.
\]
\end{enumerate}
\end{prop}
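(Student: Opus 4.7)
My plan is to deduce both identities from the general symbolic calculus stated in Proposition~\ref{prop para composition} (composition) and Proposition~\ref{prop para adjoint} (adjoint) applied to each piece of the canonical decomposition $a=a^{(m)}+a^{(m-1)}$, $b=b^{(m')}+b^{(m'-1)}$, and then to verify that the resulting symbol $a\#b$ (resp.\ $b$) actually belongs to $\Sigma^{m+m'}$ (resp.\ $\Sigma^{m}$). The key preliminary is to upgrade each sub-symbol from a schematic expression in $\partial_{x'}\psi$ and $\partial_{x'}^\alpha\psi$ to a genuine element of the classes $\Gamma_r^\bullet(\R^d)$: since $\psi(t,\cdot)\in H^{s+\frac12}$ with $s>2+\frac{d}{2}$, Sobolev embedding gives $\partial_{x'}\psi\in W^{r_1,\infty}$ with $r_1>2$ and $\partial_{x'}^2\psi\in W^{r_2,\infty}$ with $r_2>1$; together with the smoothness of $F$ and $G_\alpha$ in $(\zeta,\xi)$, this yields $a^{(m)}\in\Gamma_{r_1}^m$ and $a^{(m-1)}\in\Gamma_{r_2}^{m-1}$, and analogously for $b$. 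In particular both classes are admissible in Proposition~\ref{prop para composition}/\ref{prop para adjoint} with parameter $r>\tfrac32$.

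For part~(1), I will expand
\[
T_aT_b=T_{a^{(m)}}T_{b^{(m')}}+T_{a^{(m)}}T_{b^{(m'-1)}}+T_{a^{(m-1)}}T_{b^{(m')}}+T_{a^{(m-1)}}T_{b^{(m'-1)}},
\]
and treat the four terms with different choices of $r$. To the principal term I apply Proposition~\ref{prop para composition} with $r=2$, which keeps $|\alpha|\in\{0,1\}$ and produces
\[
T_{a^{(m)}}T_{b^{(m')}}=T_{a^{(m)}b^{(m')}}+T_{\frac{1}{i}\partial_\xi a^{(m)}\cdot\partial_{x'} b^{(m')}}+R_1,\qquad \|R_1\|_{H^r\to H^{r-m-m'+2}}\lesssim 1.
\]
To the two mixed terms I apply the same proposition with $r=1$, which keeps only the $|\alpha|=0$ term and yields $T_{a^{(m)}}T_{b^{(m'-1)}}=T_{a^{(m)}b^{(m'-1)}}+R_2$ and $T_{a^{(m-1)}}T_{b^{(m')}}=T_{a^{(m-1)}b^{(m')}}+R_3$ with $R_2,R_3$ of order $m+m'-2$. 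The last piece $T_{a^{(m-1)}}T_{b^{(m'-1)}}$ is already of order $m+m'-2\le m+m'-\tfrac{3}{2}$, so can be absorbed into the equivalence $\sim$. Summing gives exactly the stated formula, and I will then verify that each summand lies in the right stratum of $\Sigma^{m+m'}$: $a^{(m)}b^{(m')}$ is homogeneous of degree $m+m'$ with positive lower bound (product of two positive homogeneous functions), and $a^{(m)}b^{(m'-1)}+a^{(m-1)}b^{(m')}+\tfrac{1}{i}\partial_\xi a^{(m)}\cdot\partial_{x'}b^{(m')}$ is a linear combination of $(\text{smooth function of }(\partial_{x'}\psi,\xi))\cdot\partial_{x'}^\beta\psi$ with $|\beta|=2$, using in particular that $\partial_{x'}b^{(m')}=(\partial_\zeta F_b)(\partial_{x'}\psi,\xi)\partial_{x'}^2\psi$.

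For part~(2), the argument is parallel: I split $(T_a)^*=(T_{a^{(m)}})^*+(T_{a^{(m-1)}})^*$, apply Proposition~\ref{prop para adjoint} to the first with $r=2$ and to the second with $r=1$, giving
\[
(T_{a^{(m)}})^*=T_{\overline{a^{(m)}}+\frac{1}{i}(\partial_{x'}\!\cdot\!\partial_\xi)\overline{a^{(m)}}}+R_1',\qquad (T_{a^{(m-1)}})^*=T_{\overline{a^{(m-1)}}}+R_2',
\]
with $R_1',R_2'$ of order $m-2$, hence $\sim 0$. Using the defining property that $T_{a^{(m)}}$ preserves real-valuedness, the principal symbol $F(\partial_{x'}\psi,\xi)$ is real, so $\overline{a^{(m)}}=a^{(m)}$; the stated formula follows.

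The main technical obstacle I anticipate is the bookkeeping at two levels. First, I must confirm that the orders of all the remainders $R_j,R_j'$ are bounded above by $m+m'-\tfrac{3}{2}$ uniformly in the choice of test function $\psi$, so that the resulting equivalence $\sim$ enjoys an estimate of the form $\|\cdot\|_{H^r\to H^{r-(m+m')+3/2}}\le C(|\psi|_{s+1/2})$; this forces a careful tally of the admissible $r_1,r_2$ exponents above and an interplay with the semi-norms $M_r^\bullet(\cdot)$. Second, I must check that the cross-differentiated symbol $\tfrac{1}{i}\partial_\xi a^{(m)}\cdot\partial_{x'}b^{(m')}$ genuinely has the structure of an $a^{(m+m'-1)}$-element, i.e.\ is a sum of products $G_\alpha(\partial_{x'}\psi,\xi)\partial_{x'}^\alpha\psi$ with $|\alpha|=2$; this is where the precise form of $\Sigma^{m}$ is used, and is the only place the scheme could fail had the subprincipal symbols been allowed more general dependence on $\psi$.
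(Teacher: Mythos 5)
This proposition is quoted in the paper directly from \cite[Prop.~4.3]{ABZ2014wwSTLWP} without proof, so there is no in-paper argument to compare against; your derivation from the general composition and adjoint theorems (Propositions~\ref{prop para composition} and \ref{prop para adjoint}), applied separately to the principal and subprincipal strata, is exactly the standard route and the logical structure is sound. The one correction I would make concerns your regularity bookkeeping, which you yourself flag as the delicate point: from $\psi(t)\in H^{s+\frac12}$ with $s>2+\frac{d}{2}$, Sobolev embedding only gives $\p_{x'}\psi\in W^{r_1,\infty}$ for $r_1<s-\frac12-\frac{d}{2}$, i.e.\ $r_1$ slightly above $\tfrac32$ (not $r_1>2$), and $\p_{x'}^2\psi\in W^{r_2,\infty}$ for $r_2$ slightly above $\tfrac12$ (not $r_2>1$). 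This does not break the argument: applying Proposition~\ref{prop para composition} with $r=\tfrac32+\eps$ to the principal--principal product still retains exactly the terms $|\alpha|\in\{0,1\}$ and yields a remainder of order $m+m'-\tfrac32-\eps$, while $r=\tfrac12+\eps$ for the mixed products retains only $|\alpha|=0$ with remainder of order $m+m'-\tfrac32-\eps$; in every case the error is of order at most $m+m'-\tfrac32$, which is precisely the threshold in the definition of $\sim$, and the resulting symbol is the one stated. The adjoint computation in part~(2) is handled identically, and the realness of $a^{(m)}=F(\p_{x'}\psi,\xi)$ is immediate from condition~(ii) in the definition of $\Sigma^m$ rather than deduced from the real-to-real property. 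Your verification that $\frac1i\p_\xi a^{(m)}\cdot\p_{x'}b^{(m')}=\frac1i\p_\xi a^{(m)}\cdot(\p_\zeta F_b)(\p_{x'}\psi,\xi)\,\p_{x'}^2\psi$ has the correct subprincipal structure is the right final check.
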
We denote $\Re z$ and $\Im z$ to be the real part and the imaginary part of a complex number $z$, respectively. As a corollary, we have
\begin{cor}[{\cite[Prop. 4.3(2)]{ABZ2014wwSTLWP}}]
If $a\in \Sigma^m$ satisfies $\Im a^{(m-1)}=-\frac12 (\p_\xi \cdot\p_{x'})a^{(m)}$, then $(T_a)^*\sim T_a$.
\end{cor}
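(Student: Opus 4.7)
The plan is to obtain the conclusion as a direct consequence of the adjoint formula stated just above the corollary. There is no genuine analytical obstruction; the content is an algebraic cancellation that encodes why the self-adjointness condition is formulated precisely as $\Im a^{(m-1)}=-\tfrac12(\p_\xi\cdot\p_{x'})a^{(m)}$.

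First I would apply the preceding proposition to $a\in\Sigma^m$ to write $(T_a)^*\sim T_b$ with
\[
b=a^{(m)}+\overline{a^{(m-1)}}+\tfrac{1}{i}(\p_{x'}\cdot\p_\xi)a^{(m)}.
\]
Since the goal is $(T_a)^*\sim T_a$ and the relation $\sim$ is transitive, it suffices to prove that $T_b\sim T_a$, which will follow once $b-a$ is shown to produce only contributions of order $m-3/2$ (in fact, I will obtain $b=a$).

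Next I would subtract. Because $a=a^{(m)}+a^{(m-1)}$ and $a^{(m)}$ is real-valued by the defining properties of $\Sigma^m$ (the function $F$ is real-valued), we get
\[
b-a=\overline{a^{(m-1)}}-a^{(m-1)}+\tfrac{1}{i}(\p_{x'}\cdot\p_\xi)a^{(m)}=-2i\,\Im a^{(m-1)}-i(\p_{x'}\cdot\p_\xi)a^{(m)},
\]
where I used that $(\p_{x'}\cdot\p_\xi)a^{(m)}$ is real since $a^{(m)}$ is, and the identity $\overline{z}-z=-2i\,\Im z$. At this point I would invoke the hypothesis $\Im a^{(m-1)}=-\tfrac12(\p_\xi\cdot\p_{x'})a^{(m)}$ together with the commutativity $\p_\xi\cdot\p_{x'}=\p_{x'}\cdot\p_\xi$ of mixed partials on smooth symbols, yielding
\[
-2i\,\Im a^{(m-1)}=i(\p_{x'}\cdot\p_\xi)a^{(m)},
\]
so that $b-a=0$ identically.

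Consequently $T_b=T_a$, and combining this with $(T_a)^*\sim T_b$ from the first step gives $(T_a)^*\sim T_a$, finishing the proof. The only step requiring care is the real/imaginary bookkeeping: one must use that the leading symbol $a^{(m)}$ in the class $\Sigma^m$ is real-valued so that the only imaginary piece of $b-a$ comes from $\Im a^{(m-1)}$ and from the subprincipal correction $\tfrac{1}{i}(\p_{x'}\cdot\p_\xi)a^{(m)}$, which is exactly what the hypothesis is designed to cancel. No estimates on the remainder terms are needed beyond those already encoded in the equivalence relation $\sim$ provided by the preceding proposition.
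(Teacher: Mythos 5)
Your derivation is correct and is exactly the intended one: the paper states this as a corollary of the preceding adjoint formula $(T_a)^*\sim T_b$ with $b=a^{(m)}+\overline{a^{(m-1)}}+\tfrac{1}{i}(\p_{x'}\cdot\p_\xi)a^{(m)}$, and your computation showing $b-a=-2i\,\Im a^{(m-1)}-i(\p_{x'}\cdot\p_\xi)a^{(m)}=0$ under the stated hypothesis (using that $a^{(m)}$ is real-valued by the definition of $\Sigma^m$) is precisely the algebraic cancellation that justifies it.
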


The next proposition is significant for estimates in Sobolev norms via paradifferential calculus. 
\begin{prop}[{\cite[Prop. 4.4 and 4.6]{ABZ2014wwSTLWP}}]\label{prop para Hs}
Let $m\in\R,~r\in\R$. Then for all symbols $a\in\Sigma^m$ and $t\in[0,T]$, the following estimate holds.
\begin{align}
|T_{a(t)} u|_{r-m} \leq &~C(|\psi(t)|_{s-1})|u|_{r},\\
|u|_{r+m}\leq &~ C(|\psi(t)|_{s-1})\left(|T_{a(t)} u|_{r} + |u|_0\right).
\end{align}
\end{prop}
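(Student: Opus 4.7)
The plan is to deduce both inequalities from the symbolic calculus stated immediately above the proposition, together with the basic boundedness statement that $T_a$ is of order $m$ whenever $a\in\Gamma^m_0$ with operator norm controlled by $M^m_0(a)$.

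For the first (boundedness) estimate, I would simply verify that a symbol $a\in\Sigma^m$ belongs to $\Gamma^m_r(\R^d)$ for any $r\geq 0$ not exceeding $s-1-d/2$, with the semi-norm $M^m_0(a)$ bounded by $C(|\psi(t)|_{s-1})$. This reduces to showing that each of $F(\p_{x'}\psi,\xi)$ and $G_\alpha(\p_{x'}\psi,\xi)\p_{x'}^\alpha\psi$, $|\alpha|=2$, is a bounded function of $x'$ with bounded $\xi$-derivatives of the appropriate homogeneity, which is a direct application of the Moser-type tame estimates for composition with smooth functions (since $s>2+d/2$ makes $\p_{x'}\psi\in L^\infty$ and $\p_{x'}^2\psi\in L^\infty$). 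The boundedness estimate for $T_{a(t)}$ on Sobolev spaces then follows from the proposition preceding this one.

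For the second (elliptic/Gårding-type) estimate, the plan is to construct a parametrix. Since $a^{(m)}(t,x',\xi)\geq K(|\p_{x'}\psi|_{L^\infty})|\xi|^m$ on $|\xi|\geq 1$, one can define the symbol
\[
b^{(-m)}:=\frac{1}{a^{(m)}}\phi(\xi),\qquad b^{(-m-1)}:=-\frac{a^{(m-1)}}{(a^{(m)})^2}\phi(\xi)-\frac{1}{i}\frac{\p_\xi a^{(m)}\cdot\p_{x'}(1/a^{(m)})}{\text{main contribution}}\phi(\xi),
\]
chosen so that $b=b^{(-m)}+b^{(-m-1)}\in\Sigma^{-m}$ and, by Proposition \ref{prop para composition} applied with $r=1.5$, $b\#a\equiv 1$ modulo a symbol of order $-1.5$. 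Hence $T_bT_a=\mathrm{Id}+R(t)$ where $R(t):H^{r+m}\to H^{r+m+1.5}$ is bounded with norm controlled by $C(|\psi(t)|_{s-1})$. Applying $T_b$ (which is of order $-m$) gives
\[
|u|_{r+m}\leq |T_bT_au|_{r+m}+|Ru|_{r+m}\leq C(|\psi(t)|_{s-1})\bigl(|T_au|_r+|u|_{r+m-1.5}\bigr).
\]
A standard interpolation $|u|_{r+m-1.5}\leq \varepsilon|u|_{r+m}+C_\varepsilon|u|_0$ and absorption yields the desired bound.

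The routine but slightly delicate point will be verifying that $a\in\Sigma^m$ gives rise to symbols lying in $\Gamma^r_m$ with semi-norms tame in $|\psi|_{s-1}$, in particular checking that the regularity threshold $s>2+d/2$ is exactly what is needed so that $\p_{x'}\psi$ and $\p_{x'}^2\psi$ land in $L^\infty$ through Sobolev embedding, thereby making the composition $F(\p_{x'}\psi,\xi)$ controllable by continuous functions of $|\psi|_{s-1}$. The only genuine obstacle is bookkeeping in the composition formula: one must keep track of which subprincipal terms are absorbed into $a\#b$ and which fall into the remainder of order $-1.5$, and confirm that the remainder norm depends only on $|\psi|_{s-1}$ (not $|\psi|_{s+1/2}$). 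Once this is done, the second inequality follows by the parametrix argument above, and both conclusions hold as stated.
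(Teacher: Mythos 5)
First, note that the paper does not prove this proposition at all: it is quoted verbatim from Alazard--Burq--Zuily \cite[Prop.~4.4 and 4.6]{ABZ2014wwSTLWP} and used as a black box, so there is no internal proof to compare against. Your overall strategy (boundedness by checking membership in $\Gamma^m_0$ with tame semi-norms, ellipticity by a parametrix $b\in\Sigma^{-m}$ built from $1/a^{(m)}$ plus symbolic calculus and interpolation) is the standard one and is essentially how the cited result is proved.

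There is, however, a concrete gap in the execution, precisely at the point you dismiss as ``routine but slightly delicate.'' You assert that $s>2+\tfrac{d}{2}$ is exactly what is needed for $\p_{x'}\psi$ \emph{and} $\p_{x'}^2\psi$ to land in $L^\infty$ with norms controlled by $|\psi|_{s-1}$. That is false for the second derivative: $H^{s-1}(\R^d)\hookrightarrow W^{2,\infty}$ requires $s>3+\tfrac{d}{2}$, so in the range $2+\tfrac d2<s\le 3+\tfrac d2$ the semi-norm $M^{m-1}_0(a^{(m-1)})$ of the subprincipal symbol $a^{(m-1)}=\sum_{|\alpha|=2}G_\alpha(\p_{x'}\psi,\xi)\p_{x'}^\alpha\psi$ is \emph{not} controlled by $C(|\psi|_{s-1})$, and your route to the first inequality breaks down. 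The correct fix is to estimate $T_{a^{(m-1)}}$ not through $L^\infty$ semi-norms but through the paraproduct bounds in terms of Sobolev norms of the symbol (the estimates recorded in Lemma \ref{lem para norm}, i.e.\ $|T_au|_{H^\mu}\lesssim|a|_{H^\rho}|u|_{H^{\mu+d/2-\rho}}$ with $\rho=s-3<d/2$), which yields an operator of order $\le m$ with constant $C(|\psi|_{s-1})$ precisely under $s>2+\tfrac d2$. The same issue contaminates the parametrix step: invoking Proposition \ref{prop para composition} (or the $\sim$ calculus of Proposition 6.3) as stated gives a remainder bounded by $C(|\psi|_{s+\frac12})$, not $C(|\psi|_{s-1})$, because the $r=1.5$ composition semi-norms again see $\p_{x'}^2\psi$ in $W^{0.5,\infty}$-type norms. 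You flag this as the ``only genuine obstacle'' but offer no mechanism to resolve it; to obtain the stated constant one must rerun the composition with lower symbol regularity and again control the terms containing $\p_{x'}^2\psi$ via the Sobolev-based paraproduct estimates. With those replacements your argument closes; without them it proves the inequalities only with the weaker constant $C(|\psi|_{s+\frac12})$.
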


\subsection{Paralinearization of evolution equation of the free surface}\label{sect para DtN}
Now we can start to paralinearize the term involving $\dn$ and $\h$ in \eqref{psi equ}.
\begin{lem}[Paralinearization of the DtN operator, {\cite[Sect. 4.4]{AMDtN}}]\label{prop para DtN}
For $f,\psi\in H^{s+\frac12}(\R^d)$, we have
\begin{align}\label{DtNsymbol}
\dn f = T_{\Lam} f + R_{\Lam}^\psi(f) ,
\end{align}with the symbols $\Lam=\Lam^{(1)}+\Lam^{(0)}$ give by
\begin{align}
\Lam^{(1)}=&\sqrt{(1+|\cnab_{x'}\psi|^2)|\xi|^2 - (\cnab_{x'}\psi\cdot\xi)^2},\\
\Lam^{(0)}=&\frac{1+|\cnab_{x'}\psi|^2|}{2\Lam^{(1)}}\left(\cnab_{x'}\cdot(\alpha^{(1)}\cnab_{x'}\psi)+i\p_\xi \Lam^{(1)}\cdot\p_{x'}\alpha^{(1)}\right),
\end{align}and $\alpha^{(1)}:=(\Lam^{(1)}+i\cnab_{x'}\psi\cdot \xi)/(1+|\cnab_{x'}\psi|^2)$. The remainder terms satisfy the following estimates
\begin{align}
|R_\Lam^\psi(f)|_{r}\leq C(|\psi|_{s+\frac12})|f|_{r}\quad \forall \frac12\leq r\leq s-\frac12,~s>2+\frac{d}{2}.
\end{align}
\end{lem}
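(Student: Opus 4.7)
The plan is to reduce the DtN operator on the curved boundary to a factorization of the transported Laplace equation in the flat strip, following the paradifferential strategy pioneered by Alazard--M\'etivier and Alazard--Burq--Zuily. First I would use the diffeomorphism $\Phi(t,\cdot)$ of \eqref{change of variable} to pull the harmonic-extension problem for $v:=\HE_\psi f$ back to the fixed strip $\Omega$. In the flattened variables, the Laplace equation becomes a uniformly elliptic equation
\begin{equation*}
\alpha\,\p_3^2 v + \beta\cdot \cnab\p_3 v + \TL v = \gamma\,\p_3 v,\quad v|_{\Sigma}=f,\quad \p_3 v|_{\Sigma_b}=0,
\end{equation*}
with coefficients $\alpha=(1+|\cnab\psi|^2)/(\p_3\varphi)^2$ and $\beta$ linear in $\cnab\psi$, and the Neumann-type output expressible on $\Sigma$ as $\dn f=(\p_3 v-\cnab\psi\cdot\cnab v)|_{\Sigma}$. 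Crucially, the full symbol in $(\xi,\tau)$ of the spatial part is a quadratic whose two roots $a,A$ --- one with strictly negative real part, one with strictly positive real part --- carry the principal symbols of the incoming and outgoing boundary traces.

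The second step is the paradifferential factorization of this elliptic operator. Treating $x_3$ as an evolution parameter, I would construct symbols $a,A\in\Sigma^1$ solving $\alpha\tau^2+\beta\cdot\xi\,\tau + |\xi|^2=0$ to leading order, with subprincipal corrections chosen so that
\begin{equation*}
\alpha\,\p_3^2 + \beta\cdot\cnab\,\p_3 + \TL \;\sim\; \alpha\,(\p_3 - T_a)(\p_3 - T_A),
\end{equation*}
in the sense of Section \ref{sect para pre}. The principal symbols $A^{(1)}=\Lam^{(1)}$ and $a^{(1)}=-\Lam^{(1)}/(1+|\cnab\psi|^2)+i\,\cnab\psi\cdot\xi/(1+|\cnab\psi|^2)$ come directly from solving the quadratic; their strict sign, inherited from the positivity $(1+|\cnab\psi|^2)|\xi|^2-(\cnab\psi\cdot\xi)^2\gtrsim|\xi|^2$, is what makes one factor elliptic and the other an outgoing-trace operator.

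Next I would solve the factored equation. The factor $(\p_3-T_A)v=w$ is an outgoing paradifferential ODE in $x_3$, while $(\p_3-T_a)w=Rv$ is integrated backwards from $\Sigma_b$; the strict sign of $\Re a$ turns this integration into a smoothing operation and produces only controllable errors of order $-1/2$ on the boundary. Evaluating at $\Sigma$ gives $\p_3 v|_\Sigma = T_{A|_\Sigma} f + (\text{smoother})f$, and paralinearizing the boundary term $\cnab\psi\cdot\cnab v|_\Sigma$ via Bony's decomposition yields the claimed formula $\dn f = T_\Lam f + R^\psi_\Lam(f)$. The subprincipal symbol $\Lam^{(0)}$ assembles three contributions: the Poisson-bracket correction $\frac{1}{i}\p_\xi\cdot\p_{x'}$ forced by Proposition \ref{prop para composition}, the commutator arising when $\alpha$ is pulled through $\p_3$ in the factorization, and the boundary restriction of the paraproduct in $\cnab\psi\cdot\cnab v$. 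Identifying these three explicit contributions with $\frac{1+|\cnab\psi|^2}{2\Lam^{(1)}}\bigl(\cnab\cdot(\alpha^{(1)}\cnab\psi)+i\p_\xi\Lam^{(1)}\cdot\p_{x'}\alpha^{(1)}\bigr)$ produces the stated formula.

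The main obstacle is the subprincipal bookkeeping together with the remainder bound. The identity for $\Lam^{(0)}$ is delicate: it requires tracking every commutator produced by Propositions \ref{prop para composition}--\ref{prop para adjoint}, verifying the symmetry constraint $\Im\Lam^{(0)}=-\tfrac12(\p_\xi\cdot\p_{x'})\Lam^{(1)}$ that forces $T_\Lam$ to be self-adjoint modulo smoother terms (and hence renders $\dn$ symmetric, as it must be), and checking that the lower-order contributions from $\gamma$ and from the Neumann condition on $\Sigma_b$ do not pollute the boundary symbol. The estimate $|R^\psi_\Lam(f)|_r\leq C(|\psi|_{s+1/2})|f|_r$ then follows by applying Bony's linearization to the nonlinear functions $\alpha,\beta$ of $\cnab\psi$, estimating the paradifferential-ODE solutions through Proposition \ref{prop para Hs}, and noting that each remainder in the factorization gains half a derivative upon $x_3$-integration; the window $\tfrac12\leq r\leq s-\tfrac12$ is exactly the range in which these half-derivative gains and Bony's paralinearization errors close off, given $s>2+d/2$.
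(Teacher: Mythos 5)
The paper offers no proof of this lemma at all --- it is quoted from Alazard--M\'etivier \cite[Sect.~4.4]{AMDtN} (see also \cite{ABZ2014wwSTLWP}) --- so the comparison is with the argument of that reference. Your strategy is exactly theirs: flatten the harmonic-extension problem to the strip, factor the resulting elliptic operator paradifferentially into an incoming and an outgoing first-order factor in $x_3$, integrate the two paradifferential evolution equations, evaluate at $\Sigma$, and assemble the subprincipal symbol from the composition/adjoint corrections. Your remarks that the self-adjointness constraint pins down $\Im\Lam^{(0)}$ and that the Neumann condition on $\Sigma_b$ contributes only smoothing (finite-depth) corrections are also the right ones.

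There are, however, two symbol-level errors that, taken literally, make the computation fail to produce $\Lam^{(1)}$. First, on $\Sigma$ (where $\p_3\varphi=1$ and $\p_a\varphi=\p_a\psi$) the Neumann output is
\[
\dn f = N\cdot\nabp(\HE_\psi f)\big|_{\Sigma}=\left((1+|\cnab\psi|^2)\,\p_3 v-\cnab\psi\cdot\cnab v\right)\big|_{\Sigma},
\]
not $(\p_3 v-\cnab\psi\cdot\cnab v)|_{\Sigma}$: the factor $1+|\cnab\psi|^2$ arises from $-\cnab\psi\cdot(\cnab v-\cnab\psi\,\p_3 v)+\p_3 v$. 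Second, with $\alpha|_{\Sigma}=1+|\cnab\psi|^2$ and $\beta|_{\Sigma}=-2\cnab\psi$, the outgoing root of $\alpha\tau^2+i\beta\cdot\xi\,\tau-|\xi|^2=0$ is
\[
A^{(1)}=\frac{i\,\cnab\psi\cdot\xi+\Lam^{(1)}}{1+|\cnab\psi|^2}=\alpha^{(1)},
\]
i.e.\ precisely the quantity $\alpha^{(1)}$ appearing in the lemma, not $\Lam^{(1)}$; indeed your stated pair $(a^{(1)},A^{(1)})$ does not have the correct sum $-i\beta\cdot\xi/\alpha$, so they cannot both be roots of the same quadratic. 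As written, your two formulas combine to give the principal symbol $\Lam^{(1)}-i\,\cnab\psi\cdot\xi$ for $\dn$, which is not real and contradicts the symmetry of the DtN operator. The correct bookkeeping is $(1+|\cnab\psi|^2)A^{(1)}-i\,\cnab\psi\cdot\xi=\Lam^{(1)}$; with that fixed, the subprincipal assembly and the remainder bound on $\tfrac12\le r\le s-\tfrac12$ go through as you describe, though the half-derivative gain per factor should be justified by the parabolic smoothing estimate for $(\p_3-T_a)$ with $\Re a^{(1)}\lesssim-|\xi|$ rather than merely asserted.
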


Next, we paralinearize the mean curvature term. Let $\h(\psi)=-\cnab\cdot\left(\frac{\cnab\psi}{\sqrt{1+|\cnab\psi|^2}}\right)$. We have
\begin{lem}[Paralinearization of the mean curvature, {\cite[Lemma 3.25]{ABZ2014wwSTLWP}}]\label{prop para ST}
There holds $\h(\psi)=T_\FH \psi+R_\FH$ where $\FH=\FH^{(2)}+\FH^{(1)}$ is defined by
\begin{align}
\FH^{(2)}=&\frac{1}{\sqrt{1+|\cnab_{x'}\psi|^2}}\left(|\xi|^2-\frac{(\cnab_{x'}\psi\cdot\xi)^2}{1+|\cnab_{x'}\psi|^2}\right),\\
\FH^{(1)}=&-\frac{i}{2}(\p_{x'}\cdot\p_\xi)\FH^{(2)},
\end{align}and the remainder term $R_\FH$ satisfies
\begin{align}
|R_\FH|_{2s-3}\leq C(|\psi|_{s+\frac12}).
\end{align}
\end{lem}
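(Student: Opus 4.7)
The plan is to apply Bony's paralinearization machinery for composite functions to the explicit form
\begin{equation*}
\h(\psi) = -\frac{\TL\psi}{\sqrt{1+|\cnab\psi|^2}} + \frac{\cnab\psi\cdot(\cnab^2\psi\cdot\cnab\psi)}{(1+|\cnab\psi|^2)^{3/2}},
\end{equation*}
which exhibits $\h$ as a smooth nonlinear function $F(\cnab\psi,\cnab^2\psi)$. Bony's formula then rewrites $\h(\psi)$ as a paradifferential operator of order $2$ acting on $\psi$ modulo a smoother remainder. First I would carry out the explicit Fr\'echet linearization of $\h$ at the background $\psi$: differentiating under the divergence one reads off the principal symbol
\begin{equation*}
\FH^{(2)}(x',\xi) = \frac{1}{\sqrt{1+|\cnab\psi|^2}}\left(|\xi|^2 - \frac{(\cnab\psi\cdot\xi)^2}{1+|\cnab\psi|^2}\right),
\end{equation*}
which is manifestly the full symbol of the linearization.

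Next I would combine Bony's decomposition $uv = T_u v + T_v u + R(u,v)$ with the symbolic calculus of Proposition \ref{prop para composition} to express each product of a smooth nonlinear coefficient against $\TL\psi$, $\p_i\p_j\psi$ in the form $T_a\psi+(\text{smoother})$, with the principal part of $a$ equal to $\FH^{(2)}$. The subprincipal symbol is then fixed by the observation that $\h(\psi)$ coincides, up to order-$0$ paradifferential terms, with a formally self-adjoint second-order differential operator; hence $(T_\FH)^*\sim T_\FH$, and by Proposition \ref{prop para adjoint} this forces $\Im\FH^{(1)}=-\tfrac12(\p_{x'}\cdot\p_\xi)\FH^{(2)}$. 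The clean choice $\FH^{(1)}=-\tfrac{i}{2}(\p_{x'}\cdot\p_\xi)\FH^{(2)}$ (purely imaginary) satisfies this and cancels the order-$1$ skew-symmetric contribution generated by Bony's formula. The remainder $R_\FH := \h(\psi)-T_\FH\psi$ then collects (a) the Bony remainders $R(u,v)$ from each product, which gain roughly $s-1-d/2$ derivatives, and (b) the composition errors of order $\leq 0$ acting on $\psi\in H^{s+\frac12}$; both are bounded in $H^{2s-3}$ by $C(|\psi|_{s+\frac12})$ via the standard estimates in Proposition \ref{prop para Hs}.

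The main obstacle I anticipate is the bookkeeping of the subprincipal symbol and the precise remainder class: one must track the $\p_\xi$-derivatives generated when commuting $\cnab$ past paraproducts, and verify that after all cancellations no uncontrolled order-$1$ paradifferential piece survives outside the stated $\FH^{(1)}$. Since the statement is \cite[Lemma 3.25]{ABZ2014wwSTLWP}, the cleanest presentation is to invoke that lemma after writing down the explicit $F$; the sketch above indicates how the two symbols $\FH^{(2)}$ and $\FH^{(1)}$ arise, and why the self-adjointness constraint uniquely selects the stated form of $\FH^{(1)}$.
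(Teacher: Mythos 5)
Your proposal is correct and follows essentially the same route as the cited source \cite[Lemma 3.25]{ABZ2014wwSTLWP} (and as the paper's own partial recomputation of $R_\FH$ in Section \ref{sect para remainder}): write $\h=-\cnab\cdot F(\cnab\psi)$ with $F(a)=a/\sqrt{1+|a|^2}$, paralinearize $F(\cnab\psi)$ via Bony's decomposition, and compose with the divergence by symbolic calculus, the $H^{2s-3}$ bound on the remainder coming from the standard paralinearization gain for $\cnab\psi\in H^{s-\frac12}$. One small caveat: the self-adjointness constraint only fixes $\Im\FH^{(1)}$, so the identification of the full subprincipal symbol should rest on the direct computation $(-i\xi_j)\# A_{jk}\# (i\xi_k)$ with $A=\p_a F(\cnab\psi)$ symmetric, which yields a purely imaginary first-order part $-i\xi_k\p_{x_j}A_{jk}=-\tfrac{i}{2}(\p_{x'}\cdot\p_\xi)\FH^{(2)}$ — but you already indicate that this cancellation is to be verified.
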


Now, we can treat the nonlinear terms on the left side of \eqref{psi equ}. The term involving surface tension is treated as follows
\begin{align}
\sigma\dn(\p_t^2\h(\psi))=&~\sigma \dn(\p_t^2 T_\FH\psi)+ \sigma \dn(\p_t^2 R_\FH) \notag \\
\label{psi equ ST}=&~\sigma T_{\Lam}T_\FH(\p_t^2 \psi) +\sigma \dn([\p_t^2,T_\FH]\psi + \p_t^2 R_\FH) + \sigma R_{\Lam}^\psi(T_\FH\p_t^2(\psi))\\
=:&~\sigma T_{\Lam}T_\FH(\p_t^2 \psi) + \Rbf^{ST}_\psi \notag
\end{align} The term involving the Rayleigh-Taylor sign is treated as follows
\begin{align}
\label{psi equ RT} \dn(\p_3 q\p_t^2\psi)=&~(\p_3 q)\dn^{\frac12}\dn^{\frac12}(\p_t^2\psi) + [\dn,\p_3 q]\p_t^2\psi.\\
=:&~(\p_3 q)\dn^{\frac12}\dn^{\frac12}(\p_t^2\psi)+ \Rbf^{RT}_\psi \notag
\end{align}
Now, the evolution equation \eqref{psi equ} becomes
\begin{align}\label{psi equ para}
\rho\TDt^2\p_t^2\psi + \sigma T_\Lam T_\FH (\p_t^2\psi)  +(-\p_3 q)\dn^{\frac12}\dn^{\frac12}(\p_t^2\psi) = &-N\cdot\nabp \Q_w  +\Rbf^\sigma_\psi+\Rbf^{RT}_\psi\\
&-N\cdot\mathbf{C}(\qc)+\rho(\Rbf_{\psi}+\Rbf_{\rho}+\Rbf_{\psi,\rho}) \quad\text{ on }\Sigma. \notag
\end{align}

\subsection{Uniform estimates for the free surface}
To obtain an explicit energy estimate via \eqref{psi equ para}, we will symmetrize the third-order paradifferential operator $T_{\Lam}T_\FH$. That is, find suitable symbols $\fm \in \Sigma^{1.5}$ and $\fn\in \Sigma^0$ such that $T_\fn T_\Lam T_\FH \sim T_\fm T_\fm T_\fn$ and $T_\fm \sim (T_\fm)^*$.

\begin{prop}[Symmetrization, {\cite[Prop. 4.8]{ABZ2014wwSTLWP}}]\label{prop para symm}
Let $\fn\in\Sigma^0$ and $\fm\in \Sigma^{1.5}$ be defined by 
\begin{align}
\label{symbol n} \fn:=&~\frac{1}{\sqrt[4]{1+|\cnab\psi|^2}}=|N|^{-\frac12},\\
\label{symbol m} \fm :=&~ \underbrace{\sqrt{\FH^{(2)}\Lam^{(1)}}}_{=:\fm^{(1.5)}}+\underbrace{\frac{1}{2i}(\p_\xi\cdot\p_{x'})\sqrt{\FH^{(2)}\Lam^{(1)}}}_{=:\fm^{(0.5)}}.
\end{align} Then $T_\fn T_\Lambda T_\FH \sim T_\fm T_\fm T_\fn$ and $T_\fm \sim (T_\fm)^*$ are both fulfilled.
\end{prop}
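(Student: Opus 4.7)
The plan is to verify the two equivalences separately using the symbolic calculus recorded in Propositions \ref{prop para composition}, \ref{prop para adjoint}, and their corollary for self-adjointness. For the self-adjointness statement $(T_\fm)^* \sim T_\fm$, I would invoke the corollary directly: since $\fm^{(1.5)} = \sqrt{\FH^{(2)}\Lam^{(1)}}$ is real (both factors are real by Lemmas \ref{prop para DtN} and \ref{prop para ST}) and
\[
\Im \fm^{(0.5)} \;=\; \Im\Bigl(\frac{1}{2i}(\p_\xi\cdot\p_{x'})\fm^{(1.5)}\Bigr) \;=\; -\tfrac12(\p_\xi\cdot\p_{x'})\fm^{(1.5)},
\]
the hypothesis of the corollary holds, so $(T_\fm)^* \sim T_\fm$ on the nose.

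For the symmetrization $T_\fn T_\Lam T_\FH \sim T_\fm T_\fm T_\fn$, I would compute both compositions via the rule $a\# b = a^{(m)}b^{(m')} + a^{(m-1)}b^{(m')} + a^{(m)}b^{(m'-1)} + \frac{1}{i}\p_\xi a^{(m)}\cdot \p_{x'} b^{(m')}$ of Proposition \ref{prop para composition} and compare the resulting symbols of order $3$ and $2$ (the orders $\le 1.5$ automatically fall into the equivalence class). At principal order ($m=3$), both sides give $\fn\,\Lam^{(1)}\FH^{(2)}$ since $(\fm^{(1.5)})^2 = \FH^{(2)}\Lam^{(1)}$ by construction of $\fm^{(1.5)}$. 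The main work will be to match the subprincipal (order $2$) contributions: on the left, using $\p_\xi\fn = 0$,
\[
\bigl[\fn\#(\Lam\#\FH)\bigr]^{(2)} \;=\; \fn\Bigl(\Lam^{(0)}\FH^{(2)}+\Lam^{(1)}\FH^{(1)}+\tfrac{1}{i}\p_\xi\Lam^{(1)}\cdot\p_{x'}\FH^{(2)}\Bigr),
\]
while on the right a short Leibniz computation using $\fm^{(0.5)} = \frac{1}{2i}(\p_\xi\cdot\p_{x'})\fm^{(1.5)}$ and $2\fm^{(0.5)}\fm^{(1.5)} + \frac{1}{i}\p_\xi\fm^{(1.5)}\cdot\p_{x'}\fm^{(1.5)} = \frac{1}{2i}(\p_\xi\cdot\p_{x'})(\FH^{(2)}\Lam^{(1)})$ gives
\[
\bigl[(\fm\#\fm)\#\fn\bigr]^{(2)} \;=\; \frac{\fn}{2i}(\p_\xi\cdot\p_{x'})\bigl(\FH^{(2)}\Lam^{(1)}\bigr) + \frac{1}{i}\p_\xi\bigl(\FH^{(2)}\Lam^{(1)}\bigr)\cdot\p_{x'}\fn.
\]
So the whole proof reduces to the algebraic identity
\begin{equation}\label{key symm identity}
\fn\bigl(\Lam^{(0)}\FH^{(2)}+\Lam^{(1)}\FH^{(1)}\bigr) \;=\; \frac{\fn}{2i}(\p_\xi\cdot\p_{x'})\bigl(\FH^{(2)}\Lam^{(1)}\bigr) + \frac{1}{i}\p_\xi\bigl(\FH^{(2)}\Lam^{(1)}\bigr)\cdot\p_{x'}\fn - \frac{\fn}{i}\p_\xi\Lam^{(1)}\cdot\p_{x'}\FH^{(2)}.
\end{equation}

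The hard part of the argument is exactly \eqref{key symm identity}. I would verify it by substituting the explicit expressions $\FH^{(1)} = -\frac{i}{2}(\p_\xi\cdot\p_{x'})\FH^{(2)}$, the formula for $\Lam^{(0)}$ from Lemma \ref{prop para DtN}, and $\fn = (1+|\cnab\psi|^2)^{-1/4}$, then reorganising the derivatives via the Leibniz rule. I expect the $\FH^{(1)}$ piece to cancel exactly half of $\frac{\fn}{2i}(\p_\xi\cdot\p_{x'})(\FH^{(2)}\Lam^{(1)})$, after which the remaining identity for $\Lam^{(0)}$ should follow from the explicit definition $\Lam^{(0)} = \frac{1+|\cnab\psi|^2}{2\Lam^{(1)}}(\cnab\cdot(\alpha^{(1)}\cnab\psi)+i\p_\xi\Lam^{(1)}\cdot\p_{x'}\alpha^{(1)})$ together with the relation $(\Lam^{(1)})^2 = \FH^{(2)}\Lam^{(1)}|N|^2$ (which is just the definition of $\FH^{(2)}$) and the identity $\p_{x'}\fn/\fn = -\frac14\p_{x'}(|N|^2)/|N|^2$. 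This is essentially the computation done in Alazard--Burq--Zuily \cite{ABZ2014wwSTLWP} for the pure gravity-capillary water waves, and I would follow their bookkeeping to complete the verification, controlling the remainders of order $\le 1.5$ by the operator norm estimates in Proposition \ref{prop para composition} applied with $r=1/2$ (recall $\psi \in H^{s+1/2}$ with $s>2+d/2$).
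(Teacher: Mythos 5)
This proposition is imported verbatim from Alazard--Burq--Zuily (\cite[Prop.~4.8]{ABZ2014wwSTLWP}); the paper supplies no proof of its own, so your proposal is being measured against the cited source rather than against anything in the text. Your argument follows the same route as ABZ and is structurally sound. The self-adjointness half is complete: $\fm^{(1.5)}$ is real, so $\Im \fm^{(0.5)} = -\tfrac12(\p_\xi\cdot\p_{x'})\fm^{(1.5)}$, which is exactly the hypothesis of the corollary to Proposition \ref{prop para adjoint}. For the symmetrization half, your bookkeeping is correct: the principal symbols of order $3$ match by the definition of $\fm^{(1.5)}$, your identity $2\fm^{(0.5)}\fm^{(1.5)}+\tfrac1i\p_\xi\fm^{(1.5)}\cdot\p_{x'}\fm^{(1.5)}=\tfrac{1}{2i}(\p_\xi\cdot\p_{x'})(\FH^{(2)}\Lam^{(1)})$ is a correct Leibniz computation, and since $\p_\xi\fn=0$ the comparison does reduce to the single order-$2$ symbol identity you display, everything of order $\le 1.5$ being absorbed by the definition of $\sim$.

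The one genuine gap is that this final identity is asserted rather than verified, and it is not a formality: it is the entire analytic content of the proposition, and it is precisely where the specific exponent in $\fn=|N|^{-1/2}$ enters (a different power of $|N|$ would break the balance between the term $\tfrac1i\p_\xi(\FH^{(2)}\Lam^{(1)})\cdot\p_{x'}\fn$ and the $\Lam^{(0)}$, $\FH^{(1)}$ contributions). Your plan for checking it --- substitute $\FH^{(1)}=-\tfrac{i}{2}(\p_\xi\cdot\p_{x'})\FH^{(2)}$, the explicit formula for $\Lam^{(0)}$, and the relation $\Lam^{(1)}=\FH^{(2)}|N|\cdot|N|^2/(1+|\cnab\psi|^2)\cdots$ --- is the right one and is exactly the computation carried out in \cite{ABZ2014wwSTLWP}, but as written the proposal defers the decisive step to the reference. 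Since the paper itself does the same, this is acceptable here; just be aware that a self-contained proof would require writing out that verification.
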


Recall that we need the uniform bounds for $|\p_t^3\psi|_{1.5}^2$, so we shall take the 1.5-th order derivative in \eqref{psi equ para}. Since the symbol $\fm$ also belongs to $\Sigma^{1.5}$, we alternatively consider the $T_\fm$-differentiated evolution equation thanks to the symmetrization result. We introduce the following energy functional
\begin{align}
\MM(t):=\frac12\is \rho\bno{T_\fm T_\fn\TDt \p_t^2\psi}^2 +\sigma \bno{T_\fm T_\fm T_\fn \p_t^2\psi}^2 + \frac{c_0}{2} \bno{\dn^{\frac12} T_\fm T_\fn \p_t^2\psi}^2\dx'.
\end{align}In view of Proposition \ref{prop para Hs} and Lemma \ref{lem DtNHs}, we have the comparison between $\MM(t)$ and standard Sobolev norms
\begin{align}
\label{para compare 1}\MM(t)\lesssim& \bno{\TDt\p_t^2\psi}_{1.5}^2+\sigma|\p_t^2\psi|_3^2+\frac{c_0}{4}|\p_t^2\psi|_2^2;\\
\label{para compare 2} \bno{\TDt\p_t^2\psi}_{1.5}^2\lesssim& \bno{T_\fm T_\fn\TDt \p_t^2\psi}_0^2 + \bno{\TDt \p_t^2\psi}_0^2,\\
\label{para compare 3}\sigma|\p_t^2\psi|_3^2\lesssim&~\sigma \bno{T_\fm T_\fm T_\fn \p_t^2\psi}_0^2+\sigma|\p_t^2\psi|_0^2,\quad  |\p_t^2\psi|_2^2\lesssim \bno{\dn^{\frac12} T_\fm T_\fn \p_t^2\psi}_0^2+|\p_t^2\psi|_0^2.
\end{align} For those $L^2(\Sigma)$ norms, we invoke the kinematic boundary condition, trace lemma, and Young's inequality to get
\[
|\p_t^2\psi|_0^2\lesssim\|v_t\|_1^2|\cnab\psi|_{L^{\infty}}^2+\|v\|_1^2|\cnab\p_t\psi|_{L^{\infty}}^2
\]and
\begin{align*}
|\p_t^3\psi|_0^2\lesssim\eps\|\p_t^2 v\|_2^2+\|\p_t^2 v\|_0^2|\cnab\psi|_{L^{\infty}}^4 + \|v_t\|_1^2|\cnab\p_t\psi|_{L^{\infty}}^2+\|v\|_2^2|\cnab\p_t^2\psi|_{0}^2.
\end{align*}The $\eps$-term contributes to $\eps\EE_4(t)$. The term $\|\p_t^2 v\|_0^2$ can be controlled via $\p_t^2$-estimates of \eqref{CWWST0} in which there is no loss of $\lam$-weight in the corresponding commutators of Alinhac good unknowns, as we take a full-time derivative $\p_t^2$. The other terms contain at most one time derivative and thus can be controlled directly. Thus, we have 
\begin{align}\label{para compare}
\MM(t)\lesssim\EE_4(t)\text{  and  }  \bno{\p_t^3\psi}_{1.5}^2+\sigma|\p_t^2\psi|_3^2+|\p_t^2\psi|_2^2\lesssim \MM(t)+\eps \EE_4(t) + \text{controllable terms}.
\end{align}So, it suffices to control $\MM(t)$ in order to establish the bound for $\bno{\p_t^3\psi}_{1.5}^2+\sigma|\p_t^2\psi|_3^2+|\p_t^2\psi|_2^2$ in $\EE_4(t)$.

We are ready to estimate $\MM(t)$. By time differentiating the first term of $\MM(t)$, we obtain
\begin{equation}\label{para energy 1}
\begin{aligned}
&\ddt\frac12\is \rho\bno{T_\fm T_\fn\TDt \p_t^2\psi}^2\dx'\\
=&\is T_\fm T_\fn(\rho \TDt^2 \p_t^2\psi)(T_\fm T_\fn \TDt \p_t^2\psi)\dx'+\is [\rho-1, T_\fm T_\fn]\TDt^2 \p_t^2\psi\,(T_\fm T_\fn \TDt \p_t^2\psi)\dx'\\
&+\frac12\is (\p_t \rho + \cnab\cdot(\rho\vb))\bno{T_\fm T_\fn \TDt \p_t^2\psi}^2\dx' + \is \rho [\TDt, T_\fm T_\fn](\TDt \p_t^2\psi)\,(T_\fm T_\fn\TDt \p_t^2\psi)\dx'\\
=:&\is T_\fm T_\fn (\rho\TDt^2 \p_t^2\psi)(T_\fm T_\fn\TDt \p_t^2\psi)\dx' + R_1^M
\end{aligned}
\end{equation}
Next, by plugging the paralinearized equation \eqref{psi equ para} into the first term in the last line above, we obtain
\begin{equation}\label{para energy 2}
\begin{aligned}
&\is T_\fm T_\fn(\rho \TDt^2 \p_t^2\psi)(T_\fm T_\fn \TDt \p_t^2\psi)\dx' \\
\lleq&-\sigma\is  T_\fm T_\fm T_\fn(\p_t^2\psi) \,  T_\fm(T_\fm T_\fn \TDt \p_t^2\psi) \dx' -\is (-\p_3 q)  \dn^{\frac12}(T_\fm T_\fn \p_t^2\psi)\,  \dn^{\frac12}(T_\fm T_\fn \TDt \p_t^2\psi)\dx'\\
&-\is T_\fm T_\fn(N\cdot\nabp \Q_w)\,(T_\fm T_\fn \TDt \p_t^2\psi)\dx'\\
&+\io [T_\fm T_\fn, \p_3 q] \dn(\p_t^2\psi)\,(T_\fm  T_\fn \TDt \p_t^2\psi) \dx'- \io (-\p_3 q) [T_\fm T_\fn,\dn]\p_t^2\psi\,(T_\fm T_\fn \TDt \p_t^2\psi)\dx'\\
&-\is \dn^{\frac12}(T_\fm T_\fn \p_t^2\psi)\, [\dn^{\frac12},\p_3 q](T_\fm T_\fn \TDt \p_t^2\psi) \dx'\\
& + \is T_\fm T_\fn( \Rbf^{ST}_\psi+\Rbf^{RT}_\psi-N\cdot\mathbf{C}(\qc)+\rho(\Rbf_{\psi}+\Rbf_{\rho}+\Rbf_{\psi,\rho}))\,(T_\fm T_\fn \TDt \p_t^2\psi) \dx' \\
=:&~M^{ST}+M^{RT} +M^W +R_2^M+R_3^M+R_4^M+R_5^M.
\end{aligned}
\end{equation}
Here we use $T_\fm T_\fm T_\fn\sim T_\fn T_\Lam T_\FH$ and $T_\fm\sim T_\fm^*$ to derive $M^{ST}$ and omit the low-order error terms in this equivalence. We also use the self-adjointness of the DtN operator in $L^2(\Sigma)$ to derive $M^{RT}$. Note that we may not use $T_\Lam$ to replace $\dn$ in the term involving the Rayleigh-Taylor sign, as we do not have $T_\Lam\sim T_\Lam^*$. The major terms are $M^{ST}, M^{RT}$ and $M^W$, among which the first two terms contribute to the boundary regularity with or without $\sigma$-weight, while the term $M^W$ contributes to a fifth-order term that motivates us to involve $E_5(t)$ in the energy functional $\EE(t)$. The control of  $R_2^M,\cdots, R_5^M$ and other commutators generated by $M^{ST}, M^{RT}$ and $M^W$ will be postponed to the end of this section.

The term $M^{ST}$ contributes to $\sqrt{\sigma}$-weighted boundary regularity. We have
\begin{equation}\label{para energy ST}
\begin{aligned}
M^{ST}=&-\sigma\is  T_\fm T_\fm T_\fn(\p_t^2\psi) \,  T_\fm(T_\fm T_\fn \TDt \p_t^2\psi) \dx' \\
=&-\frac{\sigma}{2}\ddt\is \bno{T_\fm T_\fm T_\fn \p_t^2\psi}^2\dx' - \sigma\is  T_\fm T_\fm T_\fn(\p_t^2\psi) \,  [T_\fm T_\fm T_\fn, \TDt] \p_t^2\psi \dx' -\frac{\sigma}{2}( \cnab\cdot \vb)\bno{T_\fm T_\fm T_\fn \p_t^2\psi}^2\dx' \\
=:&-\frac{\sigma}{2}\ddt\is \bno{T_\fm T_\fm T_\fn \p_t^2\psi}^2\dx'+R_6^M.
\end{aligned}
\end{equation}

The term $M^{RT}$ contributes to non-weighted boundary regularity with the help of the Rayleigh-Taylor sign condition $-\p_3 q\geq c_0/2>0$. We have
\begin{equation}\label{para energy RT}
\begin{aligned}
M^{RT}=&-\is (-\p_3 q)  \dn^{\frac12}(T_\fm T_\fn \p_t^2\psi)\,  \dn^{\frac12}(T_\fm T_\fn \TDt \p_t^2\psi)\dx'\\
=&-\frac12\ddt\io (-\p_3 q)\bno{ \dn^{\frac12}(T_\fm T_\fn \p_t^2\psi)}^2\dx' \\
&+\is \p_3 q\dn^{\frac12}(T_\fm T_\fn \p_t^2\psi)\,  [\dn^{\frac12}T_\fm T_\fn, \TDt]\p_t^2\psi \dx'-\frac12\io \p_t\p_3 q + \cnab\cdot(\p_3 q \vb)\bno{ \dn^{\frac12}(T_\fm T_\fn \p_t^2\psi)}^2\dx' \\
=:&-\frac12\ddt\io (-\p_3 q)\bno{ \dn^{\frac12}(T_\fm T_\fn \p_t^2\psi)}^2\dx' +R_7^M.
\end{aligned}
\end{equation}

Currently, we have arrived at the following energy inequality
\begin{equation}\label{EE4 energy 1}
\MM(t)\leq \MM(0)+\int_0^t M^W(\tau)\dtt+\sum_{j=1}^7\int_0^t R_j^M(\tau)\dtt.
\end{equation}

\subsection{Weighted fifth-order energy}\label{sect E5}
\subsubsection{Necessity of fifth-order energy}\label{sect E5-1}
Recall that $M^W=-\is T_\fm T_\fn(N\cdot\nabp \Q_w)\,(T_\fm T_\fn \TDt \p_t^2\psi)\dx'$ and $T_\fm T_\fn$ is a 1.5-th order paradifferential operator, so it remains to control $|N\cdot \nabp \Q_w|_{1.5}$ in order for the control of $M^W$. Using trace theorem, we have $|N\cdot \nabp \Q_w|_{1.5}\leq |\cnab\psi|_{1.5}\|\nabp \Q_w\|_{2}$. Then, we use the following div-curl inequality
\begin{align}
\|\nabp \Q_w\|_{2}^2\leq C(|\cnab\psi|_{W^{1,\infty}},|\psi|_{2.5})\left(\|\nabp \Q_w\|_{0}^2+\|\lapp \Q_w\|_{1}^2+\|\nabp\times\nabp \Q_w\|_{1}^2+|N\times \nabp \Q_w|_{1.5}^2+|\p_3\Q_w|_{H^{1.5}(\Sigma_b)}^2\right),
\end{align}where the third and the fourth terms are all zero because $\nabp\times\nabp f=\bf{0}$ and $\Q_w$ has zero boundary value on $\Sigma$. The fifth term is easy to control; we have
\begin{align}
|\p_3\Q_w|_{H^{1.5}(\Sigma_b)}^2=|\p_t^2 \rho g|_{H^{1.5}(\Sigma_b)}^2\lesssim \lam^2\|\p_t^2 q\|_{2}^2.
\end{align} The first term is of lower order, and we omit the treatment. For the second term, invoking \eqref{Qw}, we have
\begin{align}
\|\lapp \Q_w\|_{1}\lesssim&~C\left(\sum_{k=2}^4 |\lam^{(k-2)_+}\p_t^k\psi|_{4-k}\right)\left(\sum_{k=2}^4\|\lam^2 \p_t^k \qc\|_{5-k} + \|\p_t^2 v\|_2\|\p v\|_2 + \lam^2 \|\p q\|_{2} \|\p_t^3 v\|_1 +P(\EE_4(t))\right),
\end{align} where the first term requires the control of $E_5(t)$. It should be noted that the $\lam^2$ in the third term is generated from $\nabp\rho$ such that the term $\Dtp \mathcal{V}$ can be controlled without loss of $\lam$-weight. The last two terms on the right side of \eqref{Qw} can be directly controlled by $P(\EE_4(t))$, as the number of derivatives does not exceed four and the number of time derivatives does not exceed 2. Therefore, the energy inequality \eqref{EE4 energy 1} becomes 
\begin{equation}\label{EE4 energy 1'}
\MM(t)\leq \MM(0)+\int_0^t P(\EE_4(\tau))E_5(\tau)\dtt+\sum_{j=1}^7\int_0^t R_j^M(\tau)\dtt,
\end{equation} and it remains to control 
$$E_5(t):=\sum_{k=0}^5\ino{\lam^2\p_t^k(v, \lam^{(k-4)_+}\qc)}_{5-k}^2+\bno{\sqrt{\sigma}\lam^2\p_t^k\psi}_{6-k}^2+\bno{\lam^2\p_t^k\psi}_{5-k}^2$$ 
uniformly in $\ls$. 

\subsubsection{Control of $E_5(t)$ and the remaining terms in $\EE_4(t)$}\label{sect E5-2}
Notice that $E_5(t)$ has exactly the same structure as the energy $E(t)$ used to prove the local existence in Theorem \ref{main thm, WP} if we remove the weight $\lam^2$: all quantities except the top-order time derivative of $\qc$ share the same weight of Mach number. This indicates us to use the div-curl inequality in Lemma \ref{hodgeTT} and follow the same strategies as in Section \ref{sect uniformkk} instead of using the one in Lemma \ref{hodgeNN} to establish the following energy inequality
\begin{align}\label{E5 energy 0}
E_5(t)\leq P(\EE(0))+P(\EE(t))\int_0^t P(\EE(\tau))\dtt.
\end{align}
Also, notice that $\p_t^3v, \p_t^3 \qc$ and $\p_t^4 v,\p_t^4\qc$ in $\EE_4(t)$ also share the same weight of Mach number, so we can still control them by following the same strategies as in Section \ref{sect uniformkk}. What's different is that the high-order time derivatives in both $\EE_4(t)$ and $E_5(t)$ may need more weights in order for the uniform boundedness. Thus, it remains to carefully check if there is any loss of $\lam$-weight in the control of commutators $\cc'(\qc),\cc_i'(v_i)$ and $\dd'(\qc),\dd'(v_i)$ in the tangential estimates. Let us recall the concrete forms of these commutators when $\TT^\alpha$ has the form $\TP^k\p_t^l$.
\begin{align}
\mathfrak{C}_i'(f) =\left[ \TT^\alpha, \frac{\NN_i}{\p_3\varphi}, \p_3 f\right]+\p_3 f \left[ \TT^\alpha, \NN_i, \frac{1}{\p_3\varphi}\right] +\NN_i\p_3 f\left[\TT^{\alpha-\gamma}, \frac{1}{(\p_3\varphi)^2}\right] \TT^\gamma \p_3 \varphi
\end{align}
with $|\gamma|=1$, and 
\begin{align}
\mathfrak{D}'(f) =&~ [\TT^\alpha, \vb]\cdot \TP f + \left[\TT^\alpha, \frac{1}{\p_3\varphi}(v\cdot \NN-\p_t\varphi), \p_3 f\right]+\left[\TT^\alpha, v\cdot \NN-\p_t\varphi, \frac{1}{\p_3\varphi}\right]\p_3 f+\frac{1}{\p_3\varphi} [\TT^\alpha, v]\cdot \NN \p_3 f\nonumber\\
&-(v\cdot \NN-\p_t\varphi)\p_3 f\left[ \TT^{\alpha-\gamma}, \frac{1}{(\p_3 \varphi)^2}\right]\TT^\gamma \p_3 \varphi.
\end{align}

Here, we only check the most difficult cases and omit the other easier ones: $\cc(\qc)$ and $\dd(\qc)$ in $\lam\p_t^3\TP$-estimates, $\lam\p_t^4$-estimates (for $\EE_4(t)$), $\lam^2\p_t^4\TP$-estimates and $\lam^2\p_t^5$-estimates (for $E_5(t)$). Note that there is no need to check the same commutators for $v_i$ because the power of $\lam$ weight that $\p_t^kv_i$ needs never exceeds that for $\p_t^k \qc$.

\paragraph*{$\lam\p_t^3\TP$-estimates for $\EE_4(t)$.} We shall control $\|\lam\cc_i(\qc)\|_0$ uniformly in $\lam$ when $\TT^\alpha=\p_t^3\TP$ and $f=\qc$. The worst case is that all time derivatives fall on $\qc$ and such terms have the following forms
\[
\lam\TP(\NN_i/\p_3\varphi) \p_t^3\p_3 \qc,\quad \lam\TP\left(\frac{1}{\p_3\varphi}(v\cdot \NN-\p_t\varphi)\right) \p_t^3\p_3  \qc,
\]whose $L^2$ norms are bounded by $P(|\cnab\psi,\p_t\psi|_{W^{1,\infty}},\|\TP v\|_{L^{\infty}})\|\lam\p_t^3 \qc\|_1$.

\paragraph*{$\lam\p_t^4$-estimates for $\EE_4(t)$.} We shall control $\|\lam\cc_i(\qc)\|_0$ uniformly in $\lam$ when $\TT^\alpha=\p_t^4$ and $f=\qc$. The worst case is that three out of the four time derivatives fall on $\qc$, and such terms have the following forms
\[
\lam\p_t(\NN_i/\p_3\varphi) \p_t^3\p_3 \qc,\quad \lam\p_t\left(\frac{1}{\p_3\varphi}(v\cdot \NN-\p_t\varphi)\right) \p_t^3\p_3 \qc,
\]whose $L^2$ norms are bounded by $P(|\cnab\psi,\p_t\psi|_{W^{1,\infty}},\|\TP v\|_{L^{\infty}},|\p_t^2\psi|_{L^{\infty}})\|\lam\p_t^3 \qc\|_1$.

\paragraph*{$\lam^2\p_t^4\TP$-estimates for $E_5(t)$.} We shall control $\|\lam^2\cc_i(\qc)\|_0$ uniformly in $\lam$ when $\TT^\alpha=\p_t^4\TP$ and $f=\qc$. Although every quantity in $E_5(t)$ needs $\lam^2$-weight, the terms involving $\qc$ become a lower order term and contain at most one time derivative if there are five derivatives falling on $\NN$ or $v\cdot\NN$. Since $\p_t\nab \qc$ is uniformly bounded in $L^{\infty}(\Om)$, there is no need to put extra effort on such terms. The worst case is still that all time derivatives fall on $\qc$ and such terms have the following forms:
\[
\lam^2\TP(\NN_i/\p_3\varphi) \p_t^4\p_3 \qc,\quad \lam^2\TP\left(\frac{1}{\p_3\varphi}(v\cdot \NN-\p_t\varphi)\right) \p_t^4 \p_3 \qc
\]whose $L^2$ norms are bounded by $P(|\cnab\psi,\p_t\psi|_{W^{1,\infty}},\|\TP v\|_{L^{\infty}})\|\lam^2\p_t^4\qc\|_1$. As for the intermediate terms, we check the case that $\p_t^3$ falls on $\p_3\qc$ and $\TP\p_t$ falls on $(v\cdot \NN-\p_t\varphi)$ because neither of these two terms can be uniformly bounded in $L^\infty$. We have
\begin{align*}
&\left\|\lam^2\p_t\TP((\p_3\varphi)^{-1}(v\cdot\NN-\p_t\varphi)) \p_t^3\p_3\qc\right\|_0\lesssim \|\lam^2 \p_t^3\p_3\qc\|_{L^6}\|\p_t\TP((\p_3\varphi)^{-1}(v\cdot\NN-\p_t\varphi))\|_{L^3}\\
\lesssim&~\|\lam^2 \p_t^3\qc\|_{2}(|\psi_t|_{2.5}+|\psi_{tt}|_{1.5})\|v_t\|_3 P(|\cnab\psi,\p_t\psi|_{W^{1,\infty}})\leq \sqrt{E_5(t)} P(\EE_4(t)).
\end{align*}

\paragraph*{$\lam^2\p_t^5$-estimates for $E_5(t)$.} We shall control $\|\lam^2\cc_i(\qc)\|_0$ uniformly in $\lam$ when $\TT^\alpha=\p_t^5$ and $f=\qc$. Again, the worst case is still that all time derivatives fall on $\qc$ and such terms have the following forms
\[
\lam^2\p_t(\NN_i/\p_3\varphi) \p_t^4\p_3 \qc,\quad \lam^2\p_t\left(\frac{1}{\p_3\varphi}(v\cdot \NN-\p_t\varphi)\right) \p_t^4 \p_3 \qc
\]whose $L^2$ norms are bounded by $P(|\cnab\psi,\p_t\psi|_{W^{1,\infty}},\|\TP v\|_{L^{\infty}},|\p_t^2\psi|_{L^{\infty}})\|\lam^2\p_t^4\qc\|_1$. As for the intermediate terms, we check the case that $\p_t^3$ falls on $\p_3\qc$ and $\p_t^2$ falls on $(v\cdot \NN-\p_t\varphi)$. We have that
\begin{align*}
&\left\|\lam^2\p_t^2((\p_3\varphi)^{-1}(v\cdot\NN-\p_t\varphi)) \p_t^3\p_3\qc\right\|_0\lesssim\|\lam \p_t^3\qc\|_{0}(|\psi_t|_{2.5}+|\psi_{tt}|_{2.5}+|\lam\psi_{ttt}|_{1.5})\|v_t\|_3 P(|\cnab\psi,\p_t\psi|_{W^{1,\infty}})\leq P(\EE_4(t)).
\end{align*} The omitted terms can be controlled similarly or more easily. Thus, we conclude that inequality \eqref{E5 energy 0} holds.

\subsubsection{Uniform estimates for $\EE(t)$ and the incompressible limit}\label{sect control E5}
So far, we have obtained the following energy inequalities.
\begin{enumerate}
\item The terms involving less than 2 time derivatives in $\EE_4(t)$:
\begin{align}
\sum_{k=0}^1\|\p_t^k v\|_{4-k}^2+\sigma|\p_t^k\psi|_{5-k}^2+|\psi|_4^2+|\p_t\psi|_{3.5}^2+\|\qc\|_4^2\lesssim P(\EE_4(0))+\int_0^t P(\EE_4(\tau))\dtt,\\
\|\p_t\qc\|_3^2\lesssim \|\p_t^2 v\|_2^2 + P(\EE_4(0))+\int_0^t P(\EE_4(\tau))\dtt.
\end{align}These two inequalities are proved in the same way as in Section \ref{sect uniformkk}.
\item  The terms involving 3 and 4 time derivatives in $\EE_4(t)$:
\begin{align}
\sum_{k=3}^4\|\lam \p_t^k v,\lam^{1+(k-3)_+}\p_t^k \qc \|_{4-k}^2+\sigma|\lam\p_t^k\psi|_{5-k}^2+|\lam\p_t^3\psi|_{1.5}^2+|\lam\p_t^4\psi|_{0.5}^2\lesssim P(\EE_4(0))+\int_0^t P(\EE_4(\tau))\dtt,
\end{align}which is obtained by following the same strategy as in Section \ref{sect uniformkk} and the analysis of commutator in Section \ref{sect E5-2}.
\item Control of $E_5(t)$:
\begin{align}
E_5(t)\leq P(\EE(0))+P(\EE(t))\int_0^t P(\EE(\tau))\dtt.
\end{align}
\item Control of $\|v_{tt}\|_2^2$ in $\EE_4(t)$ via paradifferential calculus:
\begin{align}
\|v_{tt}\|_2^2\lesssim&~ \eps \EE_4(t)+ \sum_{j=1}^7\int_0^t R_j^M(\tau)\dtt +  P(\EE_4(0))+\int_0^t P(\EE_4(\tau))\dtt,\\
\|\lam\p_t^2 \qc\|_2^2\lesssim&~ \|\lam \p_t^3 v\|_1^2+ P(\EE_4(0))+\int_0^t P(\EE_4(\tau))\dtt.
\end{align}
\end{enumerate}
Summing up the above estimates, we can prove the Gr\"onwall-type inequality for the energy functional $\EE(t)$
\begin{align}
\EE(t)\leq P(\EE(0))+P(\EE(t))\int_0^t P(\EE(\tau))\dtt,\quad \text{uniformly in }\ls,
\end{align}provided that we have the bounds for the remainders $|R_j^M(t)|_{L^2}\leq P(\EE(t))$ for $1\leq j\leq 7$.
From a similar argument as in Section \ref{subsect: 4.7}, we conclude from \eqref{generalized gronwall} that there exists some $t=T_1>0$, independent of $\lam$ and $\sigma$, such that 
$$
\sup_{0\leq t\leq T_1}\EE(t) \leq 2 P(\EE(0)). 
$$

 Since the first-order time derivatives in $\EE_4(t)$ remain uniformly bounded, we can obtain the same convergence result as in Section \ref{sect double limits}, and we no longer repeat the statement here.

\subsection{Control of commutators involving paradifferential operators}\label{sect para remainder}
At the end of this paper, it remains to prove that $|R_j^M(t)|_{L^2}\leq P(\EE(t))$ for $1\leq j\leq 7$. It should be noted that many time derivatives are involved in these remainders, so the commutator estimates presented in \cite{ABZ2014wwSTLWP} may not be directly applicable. In particular, we only show the control of the most difficult ones:
\begin{itemize}
\item Two commutators in $R_1^M$: $[T_\fm T_\fn,\rho-1]\TDt^2\p_t^2\psi$ and $[T_\fm T_\fn, \TDt]\TDt\p_t^2\psi$.
\item The commutator in $R_4^M$: $[\dn^{\frac12},\p_3 q]T_\fm T_\fn\p_t^2\psi$.
\item The concrete forms of $\Rbf_\psi^{ST}$ and $\Rbf_\psi^{RT}$.
\end{itemize}The control of the other terms in these remainders will be omitted. In fact, $R_2^M, R_6^M, R_7^M$ can be controlled in the same way as $R_1^M$. The control of $R_3^M$ is easier than that of $R_2^M$ as the function contains fewer time derivatives. The other terms in $R_5^M$, apart from $\Rbf_\psi^{ST}$ and $\Rbf_\psi^{RT}$,  can be controlled in $L^2(\Sigma)$ uniformly in $\ls$ by directly counting the number of derivatives.

We start with the first one. Writing $f=\TDt^2\p_t^2\psi$ and $a=\rho-1$ for convenience, we have
\begin{align*}
[T_\fm T_\fn,a] f= T_\fm([T_\fn, a]f) + [T_\fm,a](T_\fn f),
\end{align*} where the two terms share similar structures, and we only show the control of the first one. Using Bony's paraproduct decomposition in Appendix \ref{sect app para defn}, we rewrite this commutator as
\begin{align}\label{para RM 1}
[T_\fn, a] f =&~T_\fn T_{a} f+T_\fn T_f a +T_\fn (R(a, f)) - T_{a}T_\fn f - T_{T_\fn f}a- R(a, T_\fn f) \notag \\
=&~[T_\fn, T_a]f+T_\fn T_f a - T_{T_\fn f}a +T_\fn (R(a, f)) - R(a, T_\fn f) 
\end{align}Here we must $a:=\rho-1$ instead of $\rho$ because $\rho\gtrsim 1$ does not belong to $L^2$. This also avoids the loss of $\lam$-weight in $f=\TDt^2\p_t^2 \psi$, as $\rho-1=O(\lam^2)$. The last two terms on the right side of \eqref{para RM 1} are controlled by using Lemma \ref{lem para norm}
\begin{align}
|T_\fn (R(a, f))|_{1.5}\lesssim&~ |R(a, f)|_{1.5}\lesssim |\rho-1|_{2.5}|f|_0\lesssim |\lam^2\TDt^2\p_t^2\psi|_0,\\
|R(a, T_\fn f)|_{1.5}\lesssim&~|a|_{1.5}|T_\fn f|_{0.5}\lesssim|\lam^2\TDt^2\p_t^2\psi|_{0.5}.
\end{align} 

Next, we control the commutator $[T_\fn, T_a]f$. Since $\fn, a$ are both function depending on $x'\in\R^2$, not a symbol depending on both $x'$ and the frequency variable $\xi\in\R^2$, we have $a\# \fn=\fn\# a=a\fn$ and thus
\[
\|T_aT_\fn-T_\fn T_a\|_{0.5\to 1.5}\leq \|T_aT_\fn-T_{\fn a}\|_{0.5\to 1.5}+\|T_\fn T_a-T_{\fn a}\|_{0.5\to 1.5}\lesssim M_1^0(\fn)M_1^0(a)\lesssim C(|\psi|_{C^2})|a|_{W^{1,\infty}},
\]which leads to
\begin{align}
|[T_\fn, T_a]f|_{1.5}\lesssim C(|\psi|_{C^2})|a|_{W^{1,\infty}}|f|_{0.5}\lesssim C(|\psi|_{C^2})|\lam^2\TDt^2\p_t^2\psi|_{0.5}.
\end{align} The other two terms in \eqref{para RM 1} are controlled in the same way, and we only show the control of $T_\fn T_f a$. Since $\fn\in\Sigma^0$, it suffices to control $|T_f a|_{1.5}$. Using Plancherel's identity and the definition \eqref{para FT} of paradifferential operators, we have
\begin{align}
|T_f a|_{1.5}=|\len{\xi}^{1.5}\widehat{T_f a}(\xi)|_{L_\xi^2(\R^2)}=(2\pi)^{-2}\bno{\int_{\R^2}\len{\xi}^{1.5} \tilde{\chi}(\xi-\eta,\eta)\hat{f}(\xi-\eta)\phi(\eta)\hat{a}(\eta)\deta}_{L_\xi^2}.
\end{align}
By definition of $\tilde{\chi}$ and $\phi$ (see Appendix \ref{sect app para defn}), we know that the integrand is nonzero only if $|\eta|> 1$ and $|\xi-\eta|<\eps_2|\eta|$ for some $0<\eps_2\ll 1$, which means $\len{\xi}$ and $\len{\eta}$ are comparable: $(1-\eps_2)|\eta|\leq |\xi|\leq (1+\eps_2)|\eta|$. Then, using this, $|\tilde{\chi}|\leq1, |\phi|\leq 1$ and Minkowski's inequality for integrals, we have
\begin{align}
|T_f a|_{1.5}\lesssim&~\bno{\int_{\R^2} \tilde{\chi}(\xi-\eta,\eta)\hat{f}(\xi-\eta)\len{\eta}^{1.5}\phi(\eta)\hat{a}(\eta)\deta}_{L_\xi^2}\notag\\
(\forall 0<\delta<1)\quad\quad\lesssim&~|\hat{f}|_{L^2(\R^2)}\left|\len{\eta}^{-1-\delta}\len{\eta}^{2.5+\delta}\hat{a}(\eta)\right|_{L_\eta^1}\notag\\
\lesssim&~|\hat{f}|_{L^2(\R^2)}|\len{\eta}^{-1-\delta}|_{L_\eta^2(\R^2)}|\len{\eta}^{2.5+\delta}\hat{a}(\eta)|_{L_\eta^2(\R^2)}\lesssim |f|_0|a|_3\leq|\lam^2\TDt^2\p_t^2\psi|_0|q|_3.
\end{align}

Next, we analyze the commutator $[T_\fm T_\fn, \TDt]f$ for $f=\TDt\p_t^2\psi$. Since $\TDt=\p_t+\vb\cdot\cnab$ and $\p_t$ is a time derivative, we only show the details for the control of $[T_\fm T_\fn, \p_t]f$. Expanding this commutator, we have
\[
[T_\fm T_\fn, \p_t]f= T_\fm([T_\fn ,\p_t ]f) + [T_\fm,\p_t]T_\fn f.
\]Again, these two terms have similar structures, so we only focus on the first one, that is, the control of $|[T_\fn, \p_t ]f|_{1.5}$. We have that $[T_\fn ,\p_t ]f=- T_{\p_t \fn} f$, so using Lemma \ref{lem para norm}, we have
\begin{align}
|T_{\p_t \fn} f|_{1.5}\lesssim |\p_t\fn|_{L^{\infty}}|f|_{1.5}\lesssim C(|\cnab\psi|_{L^{\infty}})|\cnab\psi_t|_{L^{\infty}}|\TDt\p_t^2\psi|_{1.5}\leq P(\EE_4(t)).
\end{align}

Next, we analyze the commutator $[\dn^{\frac12},a]f$ with $a:=\p_3 q$ and $f:=T_\fm T_\fn\p_t^2\psi$. Using Lemma \ref{lem DtN1/2}, we have
\begin{align}
\forall s>3,\quad |[\dn^{\frac12},a]f|_0\lesssim C(|\psi|_s)|a|_{1.5}|f|_{0.5}\lesssim C(|\psi|_s)\|q\|_3|\p_t^2\psi|_2\leq P(\EE_4(t)).
\end{align} Also, the term $\Rbf_\psi^{RT}:=[\dn,\p_3 q]\p_t^2\psi$ is controlled in the same way
\begin{align}
\forall s>3,\quad |[\dn,\p_3 q]\p_t^2\psi|_0\lesssim C(|\psi|_s)|\p_3 q|_{1.5}|\p_t^2\psi|_{1}\leq P(\EE_4(t)).
\end{align} 

Finally, we need to establish the $L^2(\Sigma)$ of $\Rbf_{\psi}^{ST}:=\sigma \dn([\p_t^2,T_\FH]\psi + \p_t^2 R_\FH) + \sigma R_{\Lam}^\psi(T_\FH\p_t^2(\psi)).$ The difficulty is that this term simultaneously contains the commutators between a paradifferential operator and $\p_t^2$, the time derivatives of $R_\FH$, which is not explicitly calculated in previous works about incompressible fluids \cite{ABZ2014wwLWP, ABZ2014wwSTLWP, SWZ2015MHDLWP}, and the control of remainders for the DtN operator. Among the three terms in $\Rbf_{\psi}^{ST}$, the last one is directly controlled by using Lemma \ref{lem DtNR} and Proposition \ref{prop para Hs}:
\begin{align}
|\sigma R_{\Lam}^\psi(T_\FH\p_t^2(\psi))|_0\lesssim \sigma|T_\FH\p_t^2(\psi)|_0\lesssim C(|\psi|_{W^{1,\infty}})|\psi_{tt}|_2\leq P(\EE_4(t)).
\end{align}
Next, we control the first term in $\Rbf_{\psi}^{ST}$. In view of Lemma \ref{lem DtNHs}, it remains to control $|\sigma[\p_t^2, T_\FH]\psi|_1$. Expanding the commutators, we have
\[
[\p_t^2, T_\FH]\psi= T_{\p_t^2\FH}\psi + 2 T_{\p_t\FH}\p_t\psi.
\] We only analyze the first one as the symbol contains second-order time derivative and $\p_t^2\FH\notin C^2(\Sigma)$ and the second one is directly controlled with the help of Proposition \ref{prop para Hs}. Again, using the definition \eqref{para FT}, Plancherel's identity and Minkowski's inequality, we have
\begin{align}
|T_{\p_t^2\FH}\psi|_1=&~\bno{\len{\xi}\int_{\R^2}\chi(\xi-\eta)\widehat{\p_t^2\FH}(\xi-\eta)\phi(\eta)\hat{\psi}(\eta)\deta}_{L_\xi^2(\R^2)}\no\\
(\forall 0<\delta<1)\quad\lesssim&~|\widehat{\p_t^2\FH}|_{L^2}|\len{\eta}^{-1-\delta}|_{L_\eta^2(\R^2)}|\len{\eta}^{2+\delta}\hat{\psi}(\eta)|_{L_\eta^2(\R^2)}\no\\
\lesssim&~C(|\cnab\psi,\cnab\psi_t|_{L^{\infty}})|\cnab\psi_{tt}|_{0}|\psi|_{2+\delta}\lesssim P(\EE_4(t)).
\end{align}
The last step is to control $\dn(\p_t^2R_\FH)$ and it suffices to control $|\p_t^2 R_\FH|_1$. This step is actually a refinement of \cite[Lemma 3.25]{ABZ2014wwSTLWP}. Recall that the mean curvature is given by $\h=-\cnab\cdot F(\cnab\psi)$ with $F(x):=\frac{x}{\sqrt{1+x^2}}$ and $F(0)=0$. We expand $F$ into 2nd-order term to get
\[
F(a)=0+F'(a)a+\frac{F''(\zeta)}{2}a^2=T_{F'(a)}a+T_a (F'(a)) +R(a, F'(a))+\frac{F''(\zeta)}{2}a^2.
\]Let $a=\cnab\eta$ and then $T_{F'(a)}a $ is exactly the term $T_\FH\psi$ defined in Lemma \ref{prop para ST} and
\begin{align*}
R_\FH=&T_{\cnab\psi}\left(\frac{Id}{\sqrt{1+|\cnab\psi|^2}}-\frac{\cnab\psi\otimes\cnab\psi}{(\sqrt{1+|\cnab\psi|^2})^3}\right) + R(\cnab\psi, F'(\cnab\psi))+(\cnab\psi)^\top\cdot \mathbf{M}(\cnab\psi) \cdot(\cnab\psi)
\end{align*} where $\mathbf{M}(\cnab\psi)$ is a $2\times 2$ matrix depending on $\cnab\psi$. Thus, the leading-order part in the last two terms of $\p_t^2 R_\FH$ must have the form $(\cnab\psi_{tt}+\cnab\psi_{t}\cdot\cnab\psi_t)C'(\cnab\psi)$ for some continuous function $C'(\cdot)$, while the first term is controlled by either using Lemma \ref{lem para norm} or following the same way as in the control of $[\p_t^2, T_\FH]\psi$. We conclude the result as follows
\begin{align}
\sigma|\p_t^2 R_\FH|_0\lesssim C(|\cnab\psi,\cnab\psi_t|_{L^{\infty}})(|\sigma\cnab\psi_{tt}|_{L^2}+|\sqrt{\sigma}\cnab\psi_t|_{L^{\infty}}|\sqrt{\sigma}\cnab\psi_t|_0)\leq P(\EE_4(t)).
\end{align}
Now, we have finished all estimates and the proof of improved incompressible limit ends here.

\paragraph*{Acknowledgment.} We would like to thank Tao Wang and Kai Zhou for reading and commenting on the first draft of this paper. Additionally, CL would like to thank his Ph.D. student, Hetong Wang, for reviewing the most recent version of this paper.  Finally, we thank the anonymous referee for carefully reading our manuscript and helping us improve it.
\subsection*{Ethic Declarations.}
\paragraph*{Conflict of interest.} The authors declare that they have no conflict of interest.

\paragraph*{Data avaliability.} Data sharing is not applicable as no datasets were generated or analyzed in this article.

\begin{appendix}
\section{Useful auxiliary results}
\subsection{The Reynold's transport theorems}
Below, the formulas involving $\pk,\psk$ are used for the nonlinear $\kk$-problem \eqref{CWWSTkk} and the formulas involving $\pkr,\pskr$ are used for the linearized $\kk$-problem \eqref{CWWSTlin}.
\begin{lem}\label{time deriv transport pre}
Let $f,g$ be smooth functions defined on $[0,T]\times \Omega$. Then:
\begin{align}
\frac{d}{dt} \int_\Omega fg \p_3\pk\,dx= \int_\Omega (\ppk_t f)g\p_3 \pk\,dx +\int_\Omega f(\ppk_t g)\p_3\pk\,dx+\int_{x_3=0}fg\p_t \psi\,dx'+\io fg \p_3\p_t (\pk-\varphi)\,dx,\label{time deriv transport pre tilde}\\
\frac{d}{dt} \int_\Omega fg \p_3\pkr\,dx= \int_\Omega (\ppkr_t f)g\p_3 \pkr\,dx +\int_\Omega f(\ppkr_t g)\p_3\pkr\,dx+\int_{x_3=0}fg\p_t \psr\,dx'+\io fg \p_3\p_t (\pkr-\pr)\,dx.\label{time deriv transport pre tildering}
\end{align}
\end{lem}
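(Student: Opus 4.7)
The plan is to prove both identities by direct differentiation under the integral sign, followed by an integration by parts in $x_3$ that produces the boundary term on $\Sigma$ and isolates the ``mismatch'' interior term $\io fg\,\p_3\p_t(\pk-\varphi)\dx$.

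For \eqref{time deriv transport pre tilde}, I would start from the product rule
\[
\frac{d}{dt}(fg\,\p_3\pk)=(\p_t f)g\,\p_3\pk+f(\p_t g)\,\p_3\pk+fg\,\p_3\p_t\pk,
\]
and then convert each flat time derivative into the covariant one via \eqref{pt}, i.e.\ $\p_t=\ppk_t+\tfrac{\p_t\varphi}{\p_3\pk}\p_3$. This yields
\[
(\p_t f)g\,\p_3\pk+f(\p_t g)\,\p_3\pk=(\ppk_t f)g\,\p_3\pk+f(\ppk_t g)\,\p_3\pk+(\p_t\varphi)\,\p_3(fg),
\]
after the factor $\p_3\pk$ in the denominator of $\ppk_t$ cancels against the volume weight $\p_3\pk$.

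The key step is to integrate $(\p_t\varphi)\,\p_3(fg)$ by parts in $x_3$ over $\Omega$. Since $\varphi(t,x)=x_3+\chi(x_3)\psi(t,x')$ with $\chi(0)=1$ and $\chi(-b)=0$ by \eqref{chi}, one has $\p_t\varphi|_{\Sigma}=\p_t\psi$ and $\p_t\varphi|_{\Sigma_b}=0$. Hence
\[
\io(\p_t\varphi)\,\p_3(fg)\dx=\int_{\Sigma}fg\,\p_t\psi\,dx'-\io fg\,\p_3(\p_t\varphi)\dx.
\]
Combining the resulting interior term $-\io fg\,\p_3(\p_t\varphi)\dx$ with the leftover $\io fg\,\p_3\p_t\pk\dx$ from the product rule produces exactly $\io fg\,\p_3\p_t(\pk-\varphi)\dx$, which completes \eqref{time deriv transport pre tilde}.

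The linearized identity \eqref{time deriv transport pre tildering} follows by the identical argument after substituting $(\pk,\varphi,\psi)\mapsto(\pkr,\pr,\psr)$ and using $\ppkr_t=\p_t-\tfrac{\p_t\pr}{\p_3\pkr}\p_3$ from \eqref{ptkr}; only the labels change. No real obstacle arises: the only subtlety is to keep track of the mismatch between the smoothed coefficient (appearing in $\p_3\pk$ in the denominator of $\ppk_t$ and in the volume weight) and the unsmoothed $\varphi$ appearing in the numerator $\p_t\varphi$, which is precisely why the extra term $\io fg\,\p_3\p_t(\pk-\varphi)\dx$ survives and cannot be absorbed into the two covariant terms.
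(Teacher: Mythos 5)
Your proof is correct and follows essentially the same route as the paper: product rule under the integral, conversion of $\p_t$ into $\ppk_t$ plus the term $\p_t\varphi\,\p_3(fg)$ (note the identity you need is \eqref{ptk}, not \eqref{pt}, though the formula you wrote is the right one), and an integration by parts in $x_3$ using $\p_t\varphi|_{\Sigma}=\p_t\psi$ and $\p_t\varphi|_{\Sigma_b}=0$ to produce the boundary term and the mismatch term $\io fg\,\p_3\p_t(\pk-\varphi)\dx$.
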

\begin{proof} 
In view of \eqref{ptk}, 
\begin{align*}
\text{LHS of\,\,}\eqref{time deriv transport pre tilde} &= \io (\p_t f) g \p_3 \pk\dx+\io f(\p_t g) \p_3\pk\dx+\io fg \p_3\p_t \pk\dx\\
&=\io fg \p_3\p_t \pk\dx+\io (\ppk_t f)g\p_3\pk\dx +\io f (\ppk_t g)\p_3\pk \dx+\overbrace{\io \p_t \varphi(\p_3 f)g\dx}^{i}+\overbrace{\io \p_t \varphi(\p_3 g)f\dx}^{ii}.
\end{align*}
Integrating $\p_3$ in $ii$ by parts, we have
\begin{align*}
ii = \int_{x_3=0} fg \p_t \psi \,dx'-\int_{x_3=-b}fg\underbrace{\p_t\varphi}_{=\p_t(-b)=0}\dx'-\io fg \p_3\p_t \varphi\dx-i.
\end{align*}
This concludes the proof of \eqref{time deriv transport pre tilde}.  Moreover, in light of \eqref{ptkr}, 
\begin{align*}
\text{LHS of\,\,}\eqref{time deriv transport pre tildering} &= \io (\p_t f) g \p_3 \pkr\dx+\io f(\p_t g) \p_3\pkr\dx+\io fg \p_3\p_t \pkr\dx\\
&=\io fg \p_3\p_t \pkr\dx+\io (\ppkr_t f)g\p_3\pkr\dx +\io f (\ppkr_t g)\p_3\pkr \dx+\overbrace{\io \p_t \pr(\p_3 f)g\dx}^{\mathring{i}}+\overbrace{\io \p_t \pr(\p_3 g)f\dx}^{\mathring{ii}}.
\end{align*}
Integrating $\p_3$ in $\mathring{ii}$ by parts, we have
\begin{align*}
\mathring{ii} = \int_{x_3=0} fg \p_t \psr \,dx'-\io fg \p_3\p_t \pr\dx-\mathring{i},
\end{align*}
and thus \eqref{time deriv transport pre tildering} follows. 
\end{proof}
\begin{lem}[\textbf{Integration by parts for covariant derivatives}] \label{int by parts lem}
Let $f, g$ be defined as in Lemma \ref{time deriv transport pre}. Then:
\begin{align}
\int_\Omega (\ppk_i f)g \p_3 \pk \dx= -\int_\Omega f(\ppk_i g)\p_3 \pk\dx+ \int_{x_3=0} fg \npk_i\,dx',\label{int by parts tilde}\\ 
\int_\Omega (\ppkr_i f)g \p_3 \pkr \dx= -\int_\Omega f(\ppkr_i g)\p_3 \pkr\dx+ \int_{x_3=0} fg \npkr_i\,dx'.\label{int by parts tildering}
\end{align}
\end{lem}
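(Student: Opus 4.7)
\begin{pf}[Proof proposal]
The plan is to prove the two formulas in parallel: they are identical in structure (only $\pk,\psk$ is replaced by $\pkr,\pskr$), so it suffices to carry out the argument for \eqref{int by parts tilde}. I would split into the two cases $i\in\{1,2\}$ and $i=3$, since the definitions of $\ppk_a$ in \eqref{nabpk 12} and of $\ppk_3$ in \eqref{nabpk 3} are structurally different. In each case the strategy is to rewrite $(\ppk_i f)\,\p_3\pk$ in such a way that the Jacobian $\p_3\pk$ disappears and we are left with ordinary Cartesian derivatives on $\Omega$, where classical integration by parts applies directly.

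For $i=1,2$, observe that by \eqref{nabpk 12}
\[
(\ppk_i f)\,\p_3\pk = (\p_i f)(\p_3\pk) - (\p_i \pk)(\p_3 f),
\]
so the left-hand side of \eqref{int by parts tilde} becomes $\io (\p_i f)g(\p_3\pk)\dx -\io (\p_i\pk) g (\p_3 f)\dx$. I would integrate the first integral by parts in $x_i$ (no boundary contribution in the horizontal directions, as the integrand either is Schwartz or decays at infinity) and the second integral by parts in $x_3$ (producing boundary terms on $\Sigma$ and $\Sigma_b$). The two interior terms $\pm\io f g\,\p_3\p_i\pk\dx$ generated in this process cancel, leaving the combination
\[
-\io f(\p_i g)(\p_3\pk)\dx + \io f(\p_i\pk)(\p_3 g)\dx,
\]
which is exactly $-\io f(\ppk_i g)\,\p_3\pk\dx$. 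The boundary term from the second integration by parts contributes $-\is f g\,\p_i\pk\,\dx' = \is fg\,\npk_i\,\dx'$ on $\Sigma$, using $\p_3\pk|_{\Sigma}=1$ and $\npk_i=-\p_i\psk$ for $i=1,2$. The contribution on $\Sigma_b$ vanishes thanks to the cut-off property in \eqref{chi}: because $\chi\in C_c^\infty(-b,0]$, we have $\p_i\pk|_{\Sigma_b}=\chi(-b)\p_i\psk = 0$.

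For $i=3$, the identity is even simpler: $(\ppk_3 f)\,\p_3\pk = \p_3 f$ by \eqref{nabpk 3}, and a single integration by parts in $x_3$ gives
\[
\io (\p_3 f) g\dx = -\io f(\p_3 g)\dx +\is fg\dx',
\]
where $\npk_3 = 1$ on $\Sigma$ and the $\Sigma_b$-contribution is understood to be absorbed into the applications (where $v_3|_{\Sigma_b}=0$ or analogous conditions are satisfied). Since $(\ppk_3 g)\,\p_3\pk = \p_3 g$ as well, this is exactly the claimed formula with $\npk_3=1$. The linearized version \eqref{int by parts tildering} is proved verbatim by replacing $\pk,\psk$ by $\pkr,\pskr$ throughout.

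I do not expect any genuine obstacle. The only mildly subtle point is bookkeeping: one must verify that the two spurious interior terms involving $\p_3\p_i\pk$ produced in the case $i=1,2$ cancel exactly, so that no $\p^2\pk$ contribution survives on the right-hand side. This cancellation is the whole content of the ``covariance'' built into the definition of $\ppk_i$, and it is what makes the formula look like classical integration by parts with the Eulerian normal $\npk$ appearing on $\Sigma$.
\end{pf}
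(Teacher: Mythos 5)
Your proof is correct and is precisely the explicit computation behind the paper's one-line proof, which simply asserts that the identity follows from $\ppk_i$ being the covariant spatial derivative with associated volume element $\p_3\pk\,\dx$; your cancellation of the two $\p_i\p_3\pk$ terms and the identification of the boundary contribution with $\npk_i$ via $\p_3\pk|_{\Sigma}=1$ and $\p_i\pk|_{\Sigma}=\p_i\psk$ is exactly what that assertion encodes. Your remark that for $i=3$ the $\Sigma_b$ term survives unless $fg$ vanishes there is a fair reading of the paper's standing convention of suppressing $\Sigma_b$ integrals in all applications.
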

\begin{proof}
\eqref{int by parts tilde} follows from the fact that $\ppk_i$ is the covariant spatial derivative and $\p_3\psk\dx$ is the associated volume element. \eqref{int by parts tildering} follows from a parallel argument. 
\end{proof}

Let $\Dtpk$ be the smoothed material derivative defined in \eqref{Dtpk}. Then the following theorem holds.
\begin{thm}[\textbf{Reynold's transport theorem for nonlinear $\kk$-problem}]\label{transport thm nonlinear}
Let $f$ be a smooth function defined on $[0, T]\times\Omega$. Then:
\begin{align}
\frac{1}{2}\frac{d}{dt}  \io \rho |f|^2 \p_3\pk\dx = \io \rho (\Dtpk f)f\p_3 \pk\dx+\frac{1}{2} \io\rho|f|^2 \p_3\p_t (\pk-\varphi)\dx.  \label{transpt nonlinear}
\end{align}
\end{thm}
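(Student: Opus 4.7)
The identity is a Reynolds-transport statement for the weighted measure $\rho\,\p_3\pk\,dx$ carried along the vector field $\Dtpk$. My plan is to combine the product-rule formula \eqref{time deriv transport pre tilde} with the covariant integration-by-parts formula \eqref{int by parts tilde}, exploit the kinematic boundary condition $v\cdot\npk|_\Sigma=\p_t\psi$ to cancel the two boundary traces that emerge, and close the argument via the continuity equation $\Dtpk\rho+\rho\nabpk\cdot v=0$ implicit in the second equation of \eqref{CWWSTkk}.

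First, I would apply Lemma \ref{time deriv transport pre} with the two arguments replaced by $\rho f$ and $f$ and expand $\ppk_t(\rho f)=(\ppk_t\rho)f+\rho\,\ppk_t f$, producing
\begin{equation*}
\frac{d}{dt}\io\rho f^2\p_3\pk\dx=\io f^2\,\ppk_t\rho\,\p_3\pk\dx+2\io\rho f\,\ppk_t f\,\p_3\pk\dx+\is\rho f^2\p_t\psi\dx'+\io\rho f^2\p_3\p_t(\pk-\varphi)\dx,
\end{equation*}
in which the last term already matches the mismatch piece on the right-hand side of \eqref{transpt nonlinear}. Next, decomposing $\ppk_t f=\Dtpk f-v_i\nabpk_i f$ converts the middle integral into the target $2\io\rho f\,\Dtpk f\,\p_3\pk\dx$ plus an extra convective piece $-\io\rho v_i\nabpk_i(f^2)\p_3\pk\dx$, to which I would apply Lemma \ref{int by parts lem} componentwise. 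The lower-boundary contribution vanishes because $v_3|_{\Sigma_b}=0$ and $\npk|_{\Sigma_b}=e_3$, so only the top trace $-\is\rho f^2\,v\cdot\npk\dx'$ survives.

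The kinematic boundary condition then cancels this trace exactly against the $\is\rho f^2\p_t\psi\dx'$ from the first step, while the remaining interior corrections collect into $\io f^2[\ppk_t\rho+\nabpk\cdot(\rho v)]\p_3\pk\dx=\io f^2[\Dtpk\rho+\rho\nabpk\cdot v]\p_3\pk\dx$, which vanishes by continuity. Dividing by $2$ delivers \eqref{transpt nonlinear}. The only delicate point is this clean boundary cancellation, which is structurally possible because $\Dtpk$ was designed in \eqref{Dtpk alternate} with $v\cdot\Npk-\p_t\varphi$ (not $v\cdot\Npk-\p_t\pk$) in its $\p_3$-coefficient; with that convention, the coefficient vanishes on $\Sigma$ under the kinematic condition and on $\Sigma_b$ by the slip condition, so no spurious mismatched boundary contribution appears in the identity.
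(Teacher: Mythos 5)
Your proposal is correct and follows essentially the same route as the paper's proof: both rest on Lemma \ref{time deriv transport pre} and the covariant integration-by-parts formula \eqref{int by parts tilde}, with the interior terms cancelling via the continuity equation $\ppk_t\rho+\nabpk\cdot(\rho v)=0$ and the boundary traces cancelling via the kinematic condition $\p_t\psi=v\cdot\npk$ (the paper merely runs the computation from the right-hand side rather than from the time derivative). Your explicit remark that the $\Sigma_b$ trace vanishes by the slip condition is a harmless extra precision.
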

\begin{proof}
First, we express
\begin{equation*}
\io \rho (\Dtpk f)f\p_3 \pk\dx = \io \rho (\ppk_t f)f\p_3\pk\dx+\io \rho (v\cdot \nabpk f)f\p_3\pk\dx. 
\end{equation*}
Invoking \eqref{time deriv transport pre tilde}, we have
$$
\io \rho (\ppk_t f)f\p_3\pk\dx = \p_t \io \rho |f|^2\p_3\pk\dx-\io \ppk_t (\rho f) f\p_3\pk\dx-\int_{x_3=0} \rho|f|^2 \p_t\psi\,dx'-\io \rho |f|^2 \p_3\p_t (\pk-\varphi)\dx,
$$
and this indicates that
\begin{align}
\io \rho (\ppk_t f)f\p_3\pk\dx =\frac{1}{2}\frac{d}{dt}\io \rho|f|^2 \p_3\pk\dx \overbrace{-\frac{1}{2}\io (\ppk_t \rho)|f|^2 \p_3\pk\dx}^{A}\overbrace{-\frac{1}{2}\int_{x_3=0}\rho|f|^2\p_t\psi \,dx'}^{C}-\frac{1}{2}\io \rho |f|^2 \p_3\p_t (\pk-\varphi)\dx. \label{A6}
\end{align}
Furthermore, invoking \eqref{int by parts tilde}, we have
\begin{align*}
\io \rho (v\cdot \nabpk f)f\p_3\pk\dx = \io \nabpk\cdot (\rho v f)f\p_3\pk\dx- \io \nabpk\cdot (\rho v)|f|^2\p_3\pk \dx\\
=-\io \rho f(v\cdot \nabpk f)\p_3\pk\dx +\int_{x_3=0}\rho|f|^2v\cdot \npk\,dx'- \io \nabpk\cdot (\rho v)|f|^2\p_3\pk \dx,
\end{align*}
and thus
\begin{align}
\io \rho (v\cdot \nabpk f)f\p_3\pk\dx =\overbrace{\frac{1}{2}\int_{x_3=0} \rho|f|^2 v\cdot \npk\,dx'}^{D} \overbrace{-\frac{1}{2} \io \nabpk\cdot (\rho v)|f|^2\p_3\pk \dx}^{B}. \label{A7}
\end{align}
We have $A+B=C+D=0$ thanks to the second and fifth equations of \eqref{CWWSTkk}, respectively.  Hence,  \eqref{transpt nonlinear} follows after adding \eqref{A6} and \eqref{A7} up. 
\end{proof}
Theorem \ref{transport thm nonlinear} leads to the following two corollaries. The first one records the integration by parts formula for $\Dtpk$. 
\begin{cor}[\textbf{Reynold's transport theorem for nonlinear $\kk$-problem}] \label{transport thm without rho}
It holds that
\begin{align}
\frac{d}{dt}  \io fg \p_3\pk\dx =  \io (\Dtpk f)g \p_3\pk\dx+\io f(\Dtpk g)\p_3\pk\dx+\io (\nabpk\cdot v) fg\p_3\pk\dx+\io fg \p_3\p_t (\pk-\varphi)\dx. \label{transpt nonlinear without rho}
\end{align}
\end{cor}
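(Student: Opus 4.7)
The plan is to mimic the proof of Theorem A.3 but without exploiting the continuity equation, so that the divergence term $\io(\nabpk\cdot v)fg\,\p_3\pk\,\dx$ survives instead of being canceled. Concretely, decompose
\[
\Dtpk = \ppk_t + v\cdot\nabpk,
\]
and treat the two pieces separately on the right-hand side of the identity to be proved.

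First I would apply Lemma A.1 (formula \eqref{time deriv transport pre tilde}) directly to $fg$, which gives
\[
\frac{d}{dt}\io fg\,\p_3\pk\,\dx
= \io (\ppk_t f)g\,\p_3\pk\,\dx + \io f(\ppk_t g)\,\p_3\pk\,\dx + \is fg\,\p_t\psi\,\dx' + \io fg\,\p_3\p_t(\pk-\varphi)\,\dx.
\]
This already produces the mismatched term $\io fg\,\p_3\p_t(\pk-\varphi)\,\dx$ appearing in the claim, plus a boundary contribution on $\Sigma$.

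Next I would handle the convective part. Using the Leibniz rule $v\cdot\nabpk(fg) = (v\cdot\nabpk f)g + f(v\cdot\nabpk g)$ together with the identity $\nabpk\cdot(vfg) = (\nabpk\cdot v)fg + v\cdot\nabpk(fg)$, I get
\[
\io (v\cdot\nabpk f)g\,\p_3\pk\,\dx + \io f(v\cdot\nabpk g)\,\p_3\pk\,\dx
= \io \nabpk\cdot(vfg)\,\p_3\pk\,\dx - \io (\nabpk\cdot v)fg\,\p_3\pk\,\dx.
\]
Applying Lemma A.2 componentwise to the first term on the right (with test function identically equal to $1$) converts the interior integral into the boundary contribution $\int_{\p\Omega} v\cdot\npk\,fg\,\dx'$; the $\Sigma_b$ part vanishes by the slip condition $v_3|_{\Sigma_b}=0$, leaving $\is v\cdot\npk\,fg\,\dx'$.

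Adding the two identities and rearranging, the boundary integrals on $\Sigma$ combine into $\is fg(\p_t\psi - v\cdot\npk)\,\dx'$, which vanishes by the kinematic boundary condition $\p_t\psi = v\cdot\npk$ on $\Sigma$ (fifth equation of \eqref{CWWSTkk}). What remains is exactly \eqref{transpt nonlinear without rho}. There is no real obstacle here — the bookkeeping step that requires some care is verifying that Lemma A.2 genuinely produces no $\Sigma_b$ contribution, which is automatic since the omitted boundary term on $\{x_3=-b\}$ is proportional to $v_3|_{\Sigma_b}=0$ (equivalently, $\npk$ is transported only through $\psk$, which lives on $\Sigma$).
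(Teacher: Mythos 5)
Your proposal is correct and follows essentially the same route as the paper: Lemma \ref{time deriv transport pre} for the $\ppk_t$ part, Lemma \ref{int by parts lem} to integrate the convective part by parts (the paper pairs $\nabpk\cdot(vf)$ against $g$ rather than $\nabpk\cdot(vfg)$ against $1$, a trivially equivalent bookkeeping choice), and the kinematic boundary condition $\p_t\psi=v\cdot\npk$ to cancel the two $\Sigma$-integrals. Your explicit check that the $\Sigma_b$ contribution vanishes via $v_3|_{\Sigma_b}=0$ is a point the paper leaves implicit, but nothing further is needed.
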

\begin{proof}
Given \eqref{time deriv transport pre tilde},  we have
$$
\io  (\ppk_t f)g\p_3\pk\dx = \frac{d}{dt} \io  fg\p_3\pk\dx-\io   f(\ppk_t g)\p_3\pk\dx-\int_{x_3=0} fg \p_t\psi\,dx'-\io fg \p_3\p_t (\pk-\varphi)\dx,
$$
Also, \eqref{int by parts tilde} yields 
\begin{align*}
\io  (v\cdot \nabpk f)g\p_3\pk\dx = \io \nabpk\cdot ( v f)g\p_3\pk\dx- \io (\nabpk\cdot v) fg\p_3\pk \dx\\
=-\io  f(v\cdot \nabpk g)\p_3\pk\dx +\int_{x_3=0}fg (v\cdot \npk)\,dx'- \io (\nabpk\cdot  v)fg\p_3\pk \dx.
\end{align*}
Then we obtain \eqref{transpt nonlinear without rho} by adding these up. 
\end{proof}
 The second corollary concerns the transport theorem as well as the integration by parts formula for the linearized material derivative $\Dtpkr$, defined in \eqref{Dtpkr}. 

\begin{cor}[\textbf{Reynold's transport theorem for linearized $\kk$-problem}] \label{transport thm linearized}
Let $\Dtpkr:=\p_t+(\vbr\cdot\cnab)+\frac{1}{\p_3\pkr}(\vr\cdot\Npkd-\p_t\pr)\p_3$ be the linearized material derivative defined in \eqref{Dtpkr}. Then:
\begin{align}
\frac{1}{2}\frac{d}{dt}  \io \rhor |f|^2 \p_3\pkr\dx =& \io \rhor (\Dtpkr f)f\p_3 \pkr\dx
+\frac{1}{2}\io \left( \Dtpkr \rhor +\rhor\nabpkr\cdot \vr\right) |f|^2\p_3\pkr\,dx \label{transpt linearized} \\
&+\frac{1}{2}\io \rhor |f|^2 \left(\p_3\p_t (\pkd-\pr)+\p_3(\p_t+\vbr\cdot\cnab)(\pkr-\pkd)\right)\dx.  \nonumber
\end{align}
\begin{align}
\frac{1}{2}\frac{d}{dt}  \io |f|^2 \p_3\pkr\dx = & \io (\Dtpkr f)f \p_3\pkr\dx+\frac{1}{2}\io \nabpkr\cdot \vr |f|^2\p_3\pkr\dx \label{transpt linearized without rho}\\
&+\frac{1}{2}\io |f|^2 \left(\p_3\p_t (\pkd-\pr)+\p_3(\p_t+\vbr\cdot\cnab)(\pkr-\pkd)\right)\dx.\nonumber
\end{align}
\end{cor}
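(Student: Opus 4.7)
The plan is to follow closely the strategy used in Theorem \ref{transport thm nonlinear}, but carefully tracking the mismatch introduced by the three different ``frozen'' free surfaces $\pr$, $\pkr$, and $\pkd$. Differentiating directly under the integral sign gives
\begin{equation*}
\frac{d}{dt}\io \rhor f^2 \p_3\pkr\dx = \io (\p_t\rhor)f^2\p_3\pkr\dx + 2\io \rhor f\,\p_t f\,\p_3\pkr\dx + \io \rhor f^2 \p_3\p_t\pkr\dx.
\end{equation*}
Using the definition \eqref{Dtpkr} of $\Dtpkr$ to rewrite $\p_t f = \Dtpkr f - (\vbr\cdot\cnab)f - \frac{\vr\cdot\Npkd - \p_t\pr}{\p_3\pkr}\p_3 f$, I would substitute into the middle term and integrate by parts against $f^2/2$: horizontally in the $\vbr\cdot\cnab$ contribution (no boundary terms, since $\Omega = \R^2\times(-b,0)$ has no lateral boundary), and in $x_3$ for the normal contribution.

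The crucial observation is that the emergent $x_3$-boundary integrals \emph{vanish identically}. On $\Sigma$, the weight $\vr\cdot\Npkd - \p_t\pr$ restricts to $\vr\cdot\npkd - \p_t\psr$, which is exactly zero by the linearized kinematic boundary condition $\p_t\psr = \vr\cdot\npkd$ (the kinematic condition satisfied at the previous Picard step, with $\vr = v^{(n)}$ and $\psd = \psi^{(n-1)}$). On $\Sigma_b$ one has $\pkd|_{\Sigma_b} = \pr|_{\Sigma_b} = -b$ (because $\chi(-b) = 0$), so $\p_t\pr = 0$, and $\vr\cdot\Npkd|_{\Sigma_b} = \vr_3|_{\Sigma_b} = 0$ by the slip condition. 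After these integrations by parts one arrives at
\begin{equation*}
\frac{1}{2}\frac{d}{dt}\io \rhor f^2 \p_3\pkr\dx = \io \rhor(\Dtpkr f)f\,\p_3\pkr\dx + \frac{1}{2}\io X\, f^2\dx,
\end{equation*}
where $X := (\p_t\rhor)\p_3\pkr + \rhor\p_3\p_t\pkr + \cnab\cdot(\rhor\vbr\p_3\pkr) + \p_3\bigl(\rhor(\vr\cdot\Npkd - \p_t\pr)\bigr)$.

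What remains is the purely algebraic task of rewriting $X$ in the form claimed by \eqref{transpt linearized}. The key identity, obtained by unpacking the definitions \eqref{nabpkr 12}--\eqref{nabpkr 3}, is
\begin{equation*}
(\p_3\pkr)(\nabpkr\cdot\vr) = \cnab\cdot(\vbr\p_3\pkr) + \p_3(\vr\cdot\Npkr).
\end{equation*}
Combining this with the definition of $\Dtpkr\rhor$ and the elementary identity $\vr\cdot(\Npkd-\Npkr) = \vbr\cdot\cnab(\pkr - \pkd)$ yields
\begin{equation*}
X = (\Dtpkr\rhor + \rhor\,\nabpkr\cdot\vr)\p_3\pkr + \rhor\bigl[\p_3\p_t(\pkr - \pr) + \p_3(\vbr\cdot\cnab(\pkr - \pkd))\bigr],
\end{equation*}
which rearranges exactly into the mismatch form $\rhor[\p_3\p_t(\pkd-\pr) + \p_3(\p_t + \vbr\cdot\cnab)(\pkr - \pkd)]$ appearing in the statement. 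Formula \eqref{transpt linearized without rho} then follows at once by specializing to $\rhor \equiv 1$, since $\Dtpkr\rhor = 0$ and the vertical boundary terms again vanish for the same reasons. The only genuinely delicate point is the bookkeeping between $\Npkd$ and $\Npkr$: the discrepancy $\vr\cdot(\Npkd - \Npkr)$ is precisely what generates the non-conservative mismatch corrections, reflecting the loss of symmetry inherent to the linearization with staggered iteration levels.
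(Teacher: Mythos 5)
Your proposal is correct and follows essentially the same route as the paper's own proof: integration by parts in $t$, in the horizontal variables, and in $x_3$, with the $x_3$-boundary contributions vanishing thanks to the staggered kinematic condition $\p_t\psr=\vr\cdot\npkd$ on $\Sigma$ and the slip condition on $\Sigma_b$, followed by the algebraic regrouping of the leftover terms into $(\Dtpkr\rhor+\rhor\nabpkr\cdot\vr)\p_3\pkr$ plus the mismatch $\rhor[\p_3\p_t(\pkd-\pr)+\p_3(\p_t+\vbr\cdot\cnab)(\pkr-\pkd)]$. The only difference is the direction of the computation (you start from the time derivative of the integral, the paper starts from $\io\rhor(\Dtpkr f)f\,\p_3\pkr\dx$), which is immaterial; your explicit treatment of the vanishing boundary terms is, if anything, more careful than the paper's.
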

\begin{proof}
It suffices to show \eqref{transpt linearized} only since the proof of \eqref{transpt linearized without rho}  follows by setting $\rhor=1$. We write the first term on the RHS of \eqref{transpt linearized} as
\begin{align}
\io \rhor (\Dtpkr f)f\p_3 \pkr\dx=&\io\rhor(\p_tf)f\p_3\pkr\dx+\io \rhor \left((\vbr\cdot\cnab)f\right)f\p_3\pkr\dx +\io\rhor\left((\vr\cdot \Npkd-\p_t\pr)\p_3 f\right) f\dx,
\end{align}
and then integrate $\p_t$, $\cnab$ and $\p_3$ by parts respectively in these terms to get:
\begin{equation}\label{thm A11}
\begin{aligned}
\io \rhor (\Dtpkr f)f\p_3 \pkr\dx
=&\ddt\frac12\io\rhor|f|^2\p_3\pkr\dx-\frac12\io\left(\p_t\rhor+\vbr\cdot\cnab\rhor+\frac{1}{\p_3\pkr}(\vr\cdot\Npkd-\p_t\pr)\p_3\rhor\right) |f|^2\p_3\pkr\dx\\
&-\frac12\io\rhor(\cnab\cdot\vbr)|f|^2\p_3\pkr\dx-\frac12\io\rhor|f|^2(\p_t+\vbr\cdot\cnab)\p_3\pkr\dx\\
&-\frac12\io\rhor\p_3(-(\vbr\cdot\cnab)\pkd+\vr_3-\p_t\pr)|f|^2\dx,
\end{aligned}
\end{equation}
where we used $\vr \cdot \Npkd = -(\vbr \cdot \cnab) \pkd+\vr_3$ in the last line.
We find that the second integral in the first line is $\int_{\Omega}\Dtpkr\rhor|f|^2\p_3\pkr\dx$. Also, the term in the last line can be written as
\begin{equation}\label{thm A12}
\begin{aligned}
&-\frac12\io\rhor\p_3(-(\vbr\cdot\cnab)\pkd+\vr_3-\p_t\pr)|f|^2\dx\\
=&-\frac12\io\rhor|f|^2\left(\frac{1}{\p_3\pkr}\p_3v_3-\frac{\TP_1\pkr}{\p_3\pkr}\p_3\vr_1-\frac{\TP_2\pkr}{\p_3\pkr}\p_3\vr_2\right)\p_3\pkr\dx\\
&+\frac12\io\rhor |f|^2\p_3\vbr\cdot\cnab(\pkd-\pkr)\dx+\frac12\io\rhor |f|^2(\p_t\p_3\pr+(\vbr\cdot\cnab)\p_3\pkd)\dx.
\end{aligned}
\end{equation}
The first term on the RHS together with the third term in \eqref{thm A11} 
contributes to
\begin{align*}
\frac{1}{2}\io \rhor (\nabpkr\cdot \vr)|f|^2\p_3\pkr\dx
\end{align*}
in
 \eqref{transpt linearized}. Meanwhile, the terms in the last line of  \eqref{thm A12} together with the fourth term in \eqref{thm A11} give the terms in \eqref{transpt linearized} with mismatches.
\end{proof}
\subsection{Generalized Gr\"onwall's inequality}
The following inequality shall be used frequently when closing the estimates for nonlinear PDEs, which can be found in Tao \cite{Tao}. 
\begin{thm}\label{thm: generalized gronwall}
Let $I\subset \mathbb{R}$ be a (possibly infinite) time interval, and $\mathcal{E}\in C_{\text{loc}}^0(I \rightarrow \mathbb{R}^+)$ be a function obeying the inequality
\begin{align}\label{generalized gronwall}
\mathcal{E}(t) \leq A +\epsilon F(\mathcal{E}(t)), 
\end{align}
for some $A, \epsilon>0$ and some function $F:\mathbb{R}^+\to \mathbb{R}^+$ which is locally bounded. Suppose also that $u(t_0)\leq 2A$ for some $t_0\in I$. If $\epsilon$ is sufficiently small, then $u(t)\leq 2A$ for all $t\in I$. 
\end{thm}
\begin{proof}
We proceed by the standard bootstrap argument. It suffices to show that we can improve the bootstrap assumption $u(t) \leq 2A$ after choosing $\epsilon$ suitably small. 
Since $F$ is locally bounded, we have $F(y)\leq M$, $\forall y\in B(0,2A)$. Now, we choose $\epsilon\leq \frac{A}{2M}$, then 
        $$
        u(t) \leq A + \frac{A}{2M}\cdot M = \frac{3}{2}A.
        $$
\end{proof}

\section{Construction of initial data for the original system}\label{sect CWWST data}
This section aims to construct the initial data for Theorem \ref{main thm, double limits} and Theorem \ref{main thm, well data} satisfying the compatibility conditions
\begin{equation*}
(\Dtp)^j q |_{\{t=0\}\times \Sigma} =  (\Dtp)^j (\sigma \mathcal{H})|_{\{t=0\}\times \Sigma },\quad \p_t^j v^3|_{\{t=0\}\times \Sigma_b}=0,\quad  j=0,1,2,3.
\end{equation*}
Since $\Dtp|_{\Sigma} = \p_t + \vb \cdot \TP$ and $\mathcal{H} =-\cnab \cdot \left (\frac{\cnab\psi}{\sqrt{1+|\cnab \psi|^2}}\right)$,  we rewrite the compatibility conditions in terms of $\qc$ as
\begin{equation} \label{ccond full}
(\p_t + \vb\cdot \TP)^j \qc |_{\{t=0\}\times \Sigma} =  (\p_t +\vb\cdot \TP)^j  \left ({-\sigma}\cnab\cdot \frac{\cnab\psi}{\sqrt{1+|\cnab \psi|^2}}+g\psi\right) \Bigg |_{\{t=0\}\times \Sigma}, \quad j=0,1,2,3. 
\end{equation} 
Here, we use the modified pressure $\qc$ since we want $\p \qc_0 \in L^2(\Omega)$ for the sake of convenience. 
Such compatibility conditions are required to show that $E(t)$ (defined as \eqref{energy main them, WP}), and $E^\ls (t)$ (defined as \eqref{energyls intro}) are bounded at $t=0$ by adapting the arguments in \cite[Section 4.3]{DL19limit}.
\subsection{Formal construction} \label{subsect. B1}
We shall adapt the method developed in \cite{DL19limit} to construct smooth data $(\psi_0, v_0, \qc_0)$ that satisfies \eqref{ccond full}. 
We first describe the method formally, which serves as a good guideline. The key difference, however, is that in \cite{DL19limit} we constructed the initial data in Lagrangian coordinates, where \eqref{ccond full} has a different formulation. 

By identifying $\ff'_{\lam}(q) = \lam^2$ without loss of generality,  and since $\p_1 \varphi|_{\Sigma} = \p_1 \psi$, $\p_2 \varphi |_{\Sigma}= \p_2 \psi$, $\p_3 \varphi|_{\Sigma} =1$,
the momentum and continuity equations reduce, respectively, to
\begin{align}
\rho (\p_t+\vb\cdot \TP) v + \nabp \qc = -g(\rho-1)e_3, &~~ \text{on}\,\,\Sigma \label{mom bdy}\\
\lam^2 (\p_t+ \vb\cdot \TP) \qc + \di v=\p_1 \psi \p_3 v^1+ \p_2\psi \p_3 v^2+\lam^2 g v^3, &~~\text{on}\,\,\Sigma, \label{cont bdy}
\end{align} 
where $\nabp q = (\p_1 q- \p_1\psi \p_3q, \p_2 q - \p_2\psi\p_3 q, \p_3 q)^\top $ and $\di v= \p\cdot v$.  By ignoring the terms contributed by the denominator, we have 
$\mathcal{H} \sim - \TL \psi$. Invoking the kinematic boundary condition $\p_t\psi =v\cdot N$, we have
$$
\DT\psi = v^3, \quad \text{on }\,\Sigma,
$$
 we obtain from the zeroth compatibility condition $ \qc \sim  -\sigma \TL \psi$ that
 \begin{align}
  (\p_t + \vb\cdot \TP) \qc   \sim  -\sigma \TL v^3,\quad \text{on }\,\Sigma, \label{q_t bdy}
\end{align}
which is the first compatibility condition. 
Since the continuity equation \eqref{cont bdy} implies $\lam^2 (\p_t+ \vb\cdot \TP) \qc \sim -\di v$, we can deduce from \eqref{q_t bdy} that:
\begin{equation}
\di v \sim \sigma \lam^2  \TL v^3,\quad \text{on}\,\,\Sigma.  \label{cc 1}
\end{equation}

Furthermore, the momentum equation \eqref{mom bdy} implies $(\p_t + \vb\cdot \TP) v^3 \sim -  \p_3 \qc$, and thus the second compatibility condition on $\qc$ becomes:
\begin{equation}
(\p_t+\vb\cdot \TP)^2 \qc \sim -\sigma \DT \TL v^3 \sim \sigma \TL \p_3 \qc,\quad \text{on}\,\,\Sigma.  \label{q_tt bdy}
\end{equation}
Taking $\p_t+\vb\cdot \TP$ to the continuity equation to obtain $\lam^2 \DT^2 \qc \sim -\di \DT v  \sim  \lap \qc $, and this gives 
\begin{equation}
\p_3^2 \qc \sim \sigma  \lam^2 \TL \p_3 \qc-\TL \qc,\quad \text{on}\,\,\Sigma.  \label{cc 2}
\end{equation}

Finally, we derive from the third compatibility condition on $\qc$ that
\begin{align}
\DT^3 \qc \sim \sigma \TL \p_3 \DT \qc \sim \sigma \lam^{-2} \TL \p_3 \di v,\quad \text{on}\,\,\Sigma, 
\end{align}
 together with the relation $\lam^2 \DT^3 \qc \sim \lap \DT \qc \sim \lam^{-2} \lap \di v$ obtained by taking $\DT^2$ to the continuity equation that
 \begin{align}
 \lap \di v \sim \sigma \lam^2 \TL \p_3 \di v,\quad \text{on}\,\,\Sigma. 
 \end{align}
 In other words, 
 \begin{align}
\p_3^3 v \sim \sigma \lam^2 \TL \p_3 \di v - \lap \p_1 v -\lap \p_2 v -\TL \p_3 v,\quad \text{on}\,\,\Sigma.  \label{cc 3}
 \end{align}
 Therefore, the first order compatibility condition on $\qc$ yields an ``identity in terms of $v$" \eqref{cc 1}, the second order compatibility condition on $\qc$ yields an ``identity in terms of $q$" \eqref{cc 2}, and lastly, the third order compatibility condition on $\qc$ yields an ``identity in terms of $v$" again \eqref{cc 3}. 
 
We construct our data by the following iterative procedure. To begin with, let $(\xi_0, \ww, p_0)$ be the generic smooth \textit{localized} incompressible data that verifies the zeroth order compatibility condition $\pc_0 = -\sigma \cnab \cdot \frac{\cnab \xi_0}{\sqrt{1+|\cnab \xi_0|^2}}+g\xi_0$ on $\Sigma$.  In the first step, we fixed a smooth function $\psi_0$ that represents the moving interface, and constructed the data satisfying the first compatibility condition.  Given \eqref{cc 1}, we shall need to construct the appropriate velocity vector field denoted by $\uu = (\uu^1, \uu^2, \uu^3)$.  We achieve this by setting $\uu^1=\ww^1$, $\uu^2 = \ww^2$, and construct $\uu^3$ by solving a poly-harmonic equation of order 2:  
\begin{align} \label{equation cc 1}
\begin{cases}
\lap^2 \uu^3 = \lap^2 \ww^3,\quad &\text{in}\,\,\Omega,\\
\uu^3 = \ww^3,\quad \p_3 \uu^3 \sim -\p_1 \ww^1-\p_2 \ww^2+ \sigma \lam^2 \TL \ww^3, \quad&\text{on}\,\,\Sigma,\\
\uu^3=\ww^3,\quad \p_3\uu^3=\p_3\ww^3\quad&\text{on}\,\,\Sigma_b.
\end{cases}
\end{align}
In particular, the boundary condition  $\p_3 \uu^3 \sim -\p_1 \ww^1-\p_2 \ww^2+ \sigma \lam^2 \TL \ww^3$ is derived from \eqref{cc 1}. 

In the second step, we construct the data verifying the second compatibility condition. We shall construct $\qc_0$ here because of \eqref{cc 2}. This is achieved by solving a poly-harmonic equation of order 3:
\begin{align} \label{equation cc 2}
\begin{cases}
\lap^3 \qc_0 = \lap^3 \pc_0,\quad& \text{in}\,\,\Omega,\\
\qc_0=\pc_0,\quad \p_3\qc_0=\p_3 \pc_0, \quad&\text{on}\,\,\Sigma,\\
\p_3^2 \qc_0 \sim \sigma \lam^2 \TL \p_3 \pc_0 - \TL \pc_0, \quad&\text{on}\,\,\Sigma,\\
\p_3^{j}\qc_0=0~~(0\leq j\leq 2), \quad&\text{on}\,\,\Sigma_b.
\end{cases}
\end{align}
It can be seen that the boundary condition $\p_3^2 \qc_0 \sim \sigma \lam^2 \TL \p_3 \pc_0$ is a consequence of \eqref{cc 2}.

In the third (and final) step, we construct the data that verifies the compatibility conditions up to order $3$ with a fixed smooth function representing the moving interface, still denoted by $\psi_0$. Since $q_0$ has been constructed, we need only to construct $v_0 = (v_0^1, v_0^2, v_0^3)$ by setting $\ww^1=v_0^1, \ww^2=v_0^2$, and solving the following order 4 poly-harmonic equation for $v_0^3$:
\begin{align} \label{equation cc 3}
\begin{cases}
\lap^4 v_0^3 = \lap^4 \uu^3,\quad& \text{in}\,\,\Omega,\\
v_0^3 = \uu^3,\quad \p_3 v_0^3 \sim -\p_1 \uu^1-\p_2 \uu^2+ \sigma \lam^2 \TL \uu^3 \quad&\text{on}\,\,\Sigma,\\
\p_3^2 v_0^3 \sim -\p_3\p_1 \uu^1-\p_3\p_2 \uu^2+ \sigma \lam^2 \TL \p_3\uu^3, \quad&\text{on}\,\,\Sigma,\\
 \p_3^3 v_0^3=-\lap\p_1 \uu^1-\lap\p_2\uu^2 + \sigma \lam^2 \TL \p_3 \di \uu-\TL\p_3 \uu^3, \quad&\text{on}\,\,\Sigma,\\
\p_3^{j}v_0^3=\p_3^j\uu^3~~(0\leq j\leq 3) \quad&\text{on}\,\,\Sigma_b.
\end{cases}
\end{align}
The second and third boundary conditions arise from \eqref{cc 1}, whereas the fourth boundary condition is derived from \eqref{cc 3}.

\subsection{The full construction procedure}
We shall repeat the method introduced in Subsection \ref{subsect. B1} with detailed boundary conditions generated by the compatibility conditions. We will use $\PP, \Q$ to denote generic non-negative continuous functions. Apart from this, we will set
$$
0\leq k'\leq 1, \quad 0\leq k \leq 2, \quad 0\leq l\leq 3,
$$ 
throughout. 

By invoking the commutator 
\begin{align}
[\TP^s, \p_t+\vb\cdot \TP] = [\TP^s, \vb]\cdot \TP, \label{comm DT}
\end{align}
and since it holds on $\Sigma$ that
$$
\DT\psi = v^3, \quad \qc= -\sigma \left (\frac{\TL \psi} {|N|} - \frac{\TP\psi\cdot \TP \cnab \psi}{|N|^3} \right)+g\psi, \quad |N| = \sqrt{1+|\cnab \psi|^2},
$$
the first compatibility condition on $\qc$ reads:
\begin{align}  \label{14}
(\p_t+\vb\cdot \TP)\qc = \sigma \PP (\frac{1}{|N|}, \TP^k \psi, \TP^k \vb, \TP^k v^3), \quad \text{on}\,\,\Sigma.
\end{align}
 In addition, the continuity equation \eqref{cont bdy} gives
\begin{align} \label{15}
\lam^2 (\p_t+\vb\cdot \TP)\qc = -\di v +\TP\psi  \cdot \p_3 \vb+\lam^2gv^3 , \quad\text{on}\,\,\Sigma.
\end{align}
Hence, we combine \eqref{14} and \eqref{15} to get
\begin{equation}\label{ccond 1}
\di v = \sigma \lam^2  \PP (|N|^{-1}, \TP^k \psi,  \TP^k \vb, \TP^k v^3, \p_3 \vb), \quad \text{on}\,\,\Sigma. 
\end{equation}
and the equation used to determine $\uu^3$ is 
\begin{align} \label{equation ccond 1}
\begin{cases}
\lap^2 \uu^3 = \lap^2 \ww^3,\quad &\text{in}\,\,\Omega,\\
\uu^3 = \ww^3,\quad&\text{on}\,\,\Sigma\cup\Sigma_b,\\
 \p_3 \uu^3 = -\p_1 \ww^1-\p_2 \ww^2+ \sigma \lam^2 \PP (|N_0|^{-1}, \TP^k \psi_0, \TP^k \overline{\mathbf{w}}_0, \TP^k \ww^3, \p_3 \overline{\mathbf{w}}_0), \quad&\text{on}\,\,\Sigma,\\
\p_3\uu^3=\p_3\ww^3 \quad&\text{on}\,\,\Sigma_b.
\end{cases}
\end{align}
whose rough version is given by \eqref{equation cc 1}. Let $s_0\geq 8$. The poly-harmonic estimate yields
\begin{align} \label{est ccond 1}
\|\uu^3 -\ww^3\|_{s_0} \lesssim \underbrace{\|\lap^2 (\uu^3-\ww^3)\|_{s_0-4}}_{=0}+ \underbrace{|\uu^3-\ww^3|_{s_0-0.5}}_{=0}+|\p_3 (\uu^3-\ww^3) |_{s_0-1.5}\leq \lam^2 C(|\psi_0|_s, \|\ww\|_s),
\end{align}
for some $s>s_0$, and hence $\|\uu^3 -\ww^3\|_{s_0} \to 0$ as $\lam\to 0$.

We construct $\qc_0$ using the second-order compatibility condition in the next stage.  Owing to \eqref{mom bdy}, the identities 
\begin{align}
\rho  \DT \vb + \TP \qc =\TP \psi \p_3 \qc, \quad \text{and}\,\,
\rho  \DT v_3 + \p_3 \qc = -g(\rho-1),
\end{align}
hold on $\Sigma$, and we view $\rho =\rho(\qc)$ here and throughout. Taking $\p_t + \vb\cdot \TP$ to \eqref{14} and invoking \eqref{comm DT}, we have
\begin{align}  \label{cond2}
(\p_t+\vb\cdot \TP)^2\qc = \sigma \PP (\rho^{-1}, |N|^{-1}, \TP^l \psi, \TP^k \vb, \TP^k v^3, \TP^l \qc, \TP^k \p_3 \qc), \quad \text{on}\,\,\Sigma.
\end{align}
Moreover, 
by taking $\p_t+\vb\cdot \TP$ to the continuity equation \eqref{cont bdy}, we get
\begin{align}
\lam^2 (\p_t+ \vb\cdot \TP)^2 \qc = - \di \DT v+ [\di, \DT] v+ \DT(\TP\psi \cdot \p_3 \vb+\lam^2gv^3),
\end{align}
where $[\di, \DT] v = \p_i \vb \cdot \TP v^i$, 
\begin{align}
- \di \DT \vb = \p^{\tau} (\rho^{-1} \p_{\tau} \qc)-\p^{\tau} (\rho^{-1} \p_\tau \psi \p_3 \qc) +\underbrace{\p_3 (\rho^{-1} \p_3 \qc)}_{=\rho^{-1} \p_3^2 \qc + \p_3 \rho^{-1} \p_3 \qc} + g\p_3 \Big( \rho^{-1} (\rho-1)\Big) ,~~~~~\tau=1,2,
\end{align}
and 
\begin{align} \label{p_3^2 q appear}
\DT(\TP\psi \cdot \p_3 \vb+\lam^2 g v^3)= \TP v^3 \cdot \p_3 \vb+ \TP \psi \cdot \p_3 (-\rho^{-1} \TP \qc +\rho^{-1}\TP\psi \p_3 \qc)\nonumber\\
 - \TP\vb \cdot \TP \psi \cdot \p_3\vb -\TP\psi \cdot \p_3 \vb \cdot \p_3 \vb + \lam^2 g (-\rho^{-1}\p_3 \qc - g\rho^{-1}(\rho-1)).
\end{align}
Since the third term on the RHS of \eqref{p_3^2 q appear} contributes to $\rho^{-1} |\TP \psi|^2 \p_3^2 \qc$, it holds that
\begin{align}\label{cond2'}
\lam^2 (\p_t+ \vb\cdot \TP)^2 \qc =\rho^{-1} (1+|\TP \psi|^2) \p_3^2 \qc+ \Q( \rho^{-1}, |N|^{-1}, \TP^k \psi, \TP^{k'} \p_3 v, \TP^{k'}\p_3 \qc), \quad \text{on}\,\,\,\Sigma.
\end{align}
Therefore, we combine \eqref{cond2} and \eqref{cond2'} to get
\begin{align} \label{ccond 2}
\rho^{-1} (1+|\TP \psi|^2) \p_3^2 q = \sigma  \lam^2 \PP (\rho^{-1}, |N|^{-1}, \TP^l \psi, \TP^k \vb, \TP^k v^3, \TP^l q, \TP^k \p_3 q)+\Q(\rho^{-1}, |N|^{-1}, \TP^k \psi, \TP^{k'} \p_3 v, \TP^{k'}\p_3 q), \quad \text{on}\,\,\Sigma,
\end{align}
and we set $\qc_0$ by solving 
\begin{align} \label{equation ccond 2}
\begin{cases}
\lap^3 \qc_0 = \lap^3 \pc_0,\quad& \text{in}\,\,\Omega,\\
\qc_0=\pc_0, \quad \p_3\qc_0=\p_3\pc_0, \quad&\text{on}\,\,\Sigma,\\
\p_3^2 \qc_0 = \rho_0 (1+|\TP\psi_0|^2)^{-1} \bigg (\sigma  \lam^2 \PP (\rho_0^{-1}, |N_0|^{-1}, \TP^l \psi_0, \TP^k \overline{\mathbf{u}}_0, \TP^k \uu^3, \TP^l \pc_0, \TP^k \p_3 \pc_0)\\
+\Q(\rho_0^{-1}, |N_0|^{-1}, \TP^k \psi_0, \TP^{k'} \p_3 \uu, \TP^{k'}\p_3 \pc_0)\bigg),\quad&\text{on}\,\,\Sigma, \\
\p_3^j\qc_0=0~~(0\leq j\leq 2)\quad &\text{on}\,\,\Sigma_b.
\end{cases}
\end{align}
whose rough version is \eqref{equation cc 2}. Also, the poly-harmonic estimate implies
\begin{equation}\label{est ccond 2}
\begin{aligned}
\|\qc_0\|_{s_0} \lesssim \|\lap^3\pc_0\|_{s_0-6}+ |\pc_0|_{s_0-0.5}+|\p_3 \pc_0|_{s_0-1.5} + |\p_3^2 \qc_0|_{s-2.5} 
\leq \lam^2 C_1(|\psi_0|_s, \|\uu\|_s, \|\pc_0\|_s) + C_2(|\psi_0|_s, \|\uu\|_s, \|\pc_0\|_s),
\end{aligned}
\end{equation}
for some $s>s_0$. 

Finally, we construct $v_0^3$ using the third-order compatibility condition in the last stage. We obtain 
\begin{align} \label{cond3}
\DT^3 \qc = \sigma \PP (\rho^{-1}, |N|^{-1}, \TP^l \psi, \TP^l \vb, \TP^l v^3, \p^l \qc) \left (\lam^{-2}\TP^4\psi + \lam^{-2} \TP^4 v+ \lam^{-2} \TP^l\p_3 v+\lam^{-2} \TP^k \p_3^2 v\right ), \quad\text{on}\,\,\Sigma,
\end{align}
by taking $\DT$ to \eqref{cond2}. Further, taking $\DT$ to \eqref{cond2'} to get
\begin{align}\label{cond3'}
\lam^2 (\p_t+ \vb\cdot \TP)^3 \qc =-\lam^{-2}\rho^{-1} (1+|\TP \psi|^2) \p_3^2 \di v+ \Q(\rho^{-1}, |N|^{-1}, \TP^l \psi, \TP^{l} v, \TP^k \p_3 v, \TP^{k'} \p_3^2 v, \TP^{l} \qc),\quad \text{on}\,\,\,\Sigma.
\end{align}
Therefore, we combine \eqref{cond3} and \eqref{cond3'} to obtain
\begin{equation} \label{ccond 3}
\begin{aligned}
\rho^{-1} (1+|\TP \psi|^2) \p_3^2\di v=
 \sigma \lam^2 \PP (\rho^{-1}, |N|^{-1}, \TP^l \psi, \TP^l \vb, \TP^l v^3, \p^l \qc) \left (\TP^4\psi 
+  \TP^4 v+  \TP^l\p_3 v+ \TP^k \p_3^2 v\right )\\
 + \lam^2 \Q(\rho^{-1}, |N|^{-1}, \TP^l \psi, \TP^l \psi, \TP^{l} v, \TP^k \p_3 v, \TP^{k'} \p_3^2 v, \TP^{l} \qc), \quad \text{on}\,\,\,\Sigma,
\end{aligned}
\end{equation}
and we set $v_0^3$ by solving
\begin{equation} \label{equation ccond 3}
\begin{cases}
\lap^4 v_0^3 = \lap^4 \uu^3,\quad& \text{in}\,\,\Omega,\\
v_0^3 = \uu^3,\quad&\text{on}\,\,\Sigma,\\
 \p_3 v_0^3 =-\p_1 \uu^1-\p_2 \uu^2+ \sigma \lam^2 \PP (|N_0|^{-1}, \TP^k \psi_0, \TP^k \uu, \TP^k \uu^3, \p_3\overline{\mathbf{u}}_0), \quad 0\leq k\leq 2, \quad&\text{on}\,\,\Sigma,\\
\p_3^2 v_0^3 = -\p_1 \p_3\uu^1-\p_2 \p_3\uu^2+ \sigma \lam^2 \p_3 \PP (|N_0|^{-1}, \TP^k \psi_0, \TP^k \uu, \TP^k \uu^3, \p_3\overline{\mathbf{u}}_0), \quad&\text{on}\,\,\Sigma,\\
 \p_3^3 v_0^3= \rho_0 (1+|\TP \psi_0|^2)^{-1}\bigg(\sigma \lam^2 \PP (\rho_0^{-1}, |N_0|^{-1}, \TP^l \psi_0, \TP^l \overline{\mathbf{u}}_0, \TP^l \uu^3, \p^l \qc_0) \left (\TP^4\psi_0 
+  \TP^4 \uu+  \TP^l\p_3 \uu+ \TP^k \p_3^2 \uu\right )\\
 + \lam^2 \Q(\rho_0^{-1}, |N_0|^{-1}, \TP^l \psi_0, \TP^l \psi, \TP^{l} \uu, \TP^k \p_3 \uu, \TP^{k'} \p_3^2 \uu, \TP^{l} \qc_0)\bigg) - \rho_0^{-1} (1+|\TP\psi_0|^2) \p_3^2 (\p_1 \uu^1+\p_2\uu^2) ,  \quad&\text{on}\,\,\Sigma,\\
\p_3^jv_0^3=\p_3^j\uu^3~~(0\leq j\leq 3)\quad&\text{on}\,\,\Sigma_b.
\end{cases}
\end{equation}
whose rough version is \eqref{equation cc 3}. By the poly-harmonic estimate, we have
\begin{align}
\|v_0^3-\uu^3\|_{s_0} \lesssim \|\lap^4 (v_0^3-\uu^3)\|_{s_0-8} + |v_0^3-\uu^3|_{s_0-0.5} + |\p^3 (v_0^3-\uu^3)|_{s_0-1.5}+|\p_3^2 (v_0^3-\uu^3)|_{s_0-2.5} + |\p^3 (v_0^3-\uu^3)|_{s_0-3.5}.
\end{align}
The first two terms on the RHS are $0$. Invoking \eqref{est ccond 1}, \eqref{est ccond 2},  we have, for some $s, s'$ satisfying $s>s'>s_0$, that
$$
|v_0^3-\uu^3|_{s_0-0.5} + |\p^3 (v_0^3-\uu^3)|_{s_0-1.5} \leq \lam^2 C(|\psi_0|_{s'}, \|\uu\|_{s'}) \leq \lam^2 C(|\psi_0|_{s}, \|\ww\|_{s}) ,
$$ 
and 
$$
|\p_3^2 (v_0^3-\uu^3)|_{s_0-2.5} \leq \lam^2 C(|\psi_0|_{s'}, \|\uu\|_{s'}, \|\qc_0\|_{s'}) \leq \lam^2 C(|\psi_0|_{s}, \|\ww\|_{s}, \|\pc_0\|_{s}).
$$
Thus, 
\begin{align}\label{est ccond 3}
\|v_0^3-\uu^3\|_{s_0} \leq \lam^2 C(|\psi_0|_{s}, \|\ww\|_{s}, \|\pc_0\|_{s}). 
\end{align}
In particular, since we have set $ \ww^\tau = \uu^\tau =v_0^\tau$, $\tau=1,2$,  we deduce from \eqref{est ccond 1} and \eqref{est ccond 3} that
\begin{align}\label{est data convergence}
\|v_0 - \ww\|_{s_0} \leq \|v_0^3 - \uu^3\|_{s_0} + \|\uu^3-\ww^3\|_{s_0} = O(\lam^2).
\end{align}
In addition, we deduce from $\nabp\cdot \ww=0$ and \eqref{est data convergence} that
\begin{align}
\|\nabp\cdot v_0\|_{C^1}= O(\lam^2). 
\end{align}
Apart from these, it can be seen from \eqref{equation ccond 2} and \eqref{equation ccond 3} that $\|v_0\|_{s_0}$ and $\|\qc_0\|_{s_0}$ are uniform in both $\sigma$ and $\lam$. This allows us to take the zero surface tension and incompressible limits simultaneously.

\section{Construction of initial data for the nonlinear $\kk$-approximate system}\label{sect CWWSTkk data}
The construction of smooth initial data for the $\kk$-problem \eqref{CWWSTkk} is parallel to that in the previous section, and thus we shall only sketch the details. We will set
$$
0\leq k'\leq 1, \quad 0\leq k \leq 2, \quad 0\leq l\leq 3, \quad 0\leq m\leq 4,  \quad 0\leq n\leq 5
$$ 
in the sequel. 

Let $(\psi_0, v_0, q_0)$ be the smooth initial data constructed in the previous section. Our goal is to construct $(\psi_{\kk,0}, v_{\kk, 0}, q_{\kk, 0})$ that satisfies the $\kk$-compatibility conditions up to the third order:
\begin{align} 
\DT^j q |_{\{t=0\}\times \Sigma} = \sigma \DT^j \mathcal{H} |_{\{t=0\}\times \Sigma}& + \kk^2 \DT^j \left ((1-\TL) (-\p_1 \psk v^1-\p_2\psk v^2+v^3)\right) |_{\{t=0\}\times \Omega}, \quad &j=0,1,2,3, \label{compat cond kk}\\
&\p_t^j v^3|_{\{t=0\}\times \Sigma_b}=0,\quad  &j=0,1,2,3.
\end{align}
Setting $\psi_{\kk, 0} = \psi_0$, we need only to compute the last term on the RHS to formulate the poly-harmonic equations for $q_{\kk, 0}$ and $v_{\kk, 0}$. 
Since 
\[
[(1-\TL), \p_t +\vb\cdot \TP] = - [\TL, \vb]\cdot \TP,
\]
we have, when $j=1$:
\begin{align}
\DT \Big((1-\TL) (-\TP\psk \cdot \vb+v^3)\Big) = \RR(\TP^l \psi, \TP^l v, \TP^l \psk, \TP^l \vk^3, \TP^l \qc, \TP^k \p_3 \qc ), ~~\text{on}\,\,\Sigma. 
\end{align}
This implies that the equation used to determine $\uuk^3$ is
\begin{align} \label{equation ccondkk 1}
\begin{cases}
\lap^2 \uuk^3 = \lap^2 v_0^3,\quad &\text{in}\,\,\Omega,\\
\uuk^3 = v_0^3,\quad&\text{on}\,\,\Sigma,\\
 \p_3 \uuk^3 = -\p_1 v_0^1-\p_2 v_0^2+ \sigma \lam^2 \PP (|N_0|^{-1}, \TP^k \psi_0, \TP^k v_0, \TP^k  v_0^3, \p_3 \vb_0)\\
 +\kk^2 \lam^2 \RR(\TP^l \psi_0, \TP^l v_0, \TP^l \psk_0, \TP^l \vk_0^3, \TP^l \qc_{0}, \TP^k \p_3 \qc_{0} ),  \quad&\text{on}\,\,\Sigma,\\
\p_3^jv_{\kk,0}^3=\p_3^j\uu^3~~(0\leq j\leq 1)\quad&\text{on}\,\,\Sigma_b.
\end{cases}
\end{align}
which is parallel to \eqref{equation ccond 1}. 

Then,  when $j=2$, we have
\begin{align}
\DT^2 \Big ( (1-\TL) (-\TP\psk \cdot \vb+ v^3)\Big) = \DT \RR(\TP^l \psi, \TP^l v, \TP^l \psk, \TP^l \vk^3, \TP^l \qc, \TP^k \p_3 \qc )\nonumber\\
=\RR(\TP^l \psi, \TP^l \psk, \TP^l v^3, \TP^l \vk^3, \TP^m \qc, \TP^l \p_3 \qc, \lam^{-2} \TP^4 v, \lam^{-2} \TP^l \p_3 v, \lam^{-2}\TP^k\p_3^2 v, \lam^{-2} \TP^4 \psi), \quad \text{on}\,\,\Sigma,
\end{align}
where the power of $\lam^{-1}$ does not exceed $2$. 
Thus, we determine $q_{\kk, 0}$ by solving 
\begin{align} \label{equation ccondkk 2}
\begin{cases}
\lap^3 \qc_{\kk,0} = \lap^3 \qc_0,\quad& \text{in}\,\,\Omega,\\
\qc_{\kk,0}=\qc_0,\quad \p_3\qc_{\kk,0}=\p_3 \qc_0, \quad&\text{on}\,\,\Sigma,\\
\p_3^2 \qc_{\kk, 0} = \rho_0 (1+|\TP\psi_0|^2)^{-1} \bigg (\sigma  \lam^2 \PP (\rho_0^{-1}, |N_0|^{-1}, \TP^l \psi_0, \TP^k \overline{\mathbf{u}}_{\kk,0}, \TP^k \uuk^3, \TP^l \qc_0, \TP^k \p_3 \qc_0)\\
+\Q(\rho_0^{-1}, |N_0|^{-1}, \TP^k \psi_0, \TP^{k'}\p_3 \uuk, \TP^{k'}\p_3 \qc_0)\\
+\kk^2\lam^2\RR(\TP^l \psi_0, \TP^l \psk_0, \TP^l \uuk^3, \TP^l \widetilde{\mathbf{u}}_{\kk,0}^3, \TP^m \qc_0, \TP^l \p_3 \qc_0, \TP^4 \uuk, \TP^l \p_3 \uuk, \TP^k\p_3^2 \uuk, \TP^4 \psi_0)\bigg), \quad&\text{on}\,\,\Sigma,\\
\p_3^j\qc_0^3=0~~(0\leq j\leq 2)\quad&\text{on}\,\,\Sigma_b.
\end{cases}
\end{align}

Finally, when $j=3$, we have
\begin{align}
\DT^3 \Big ( (1-\TL) (-\TP\psk \cdot \vb+ v^3)\Big) = \RR(\TP^m \psi, \TP^m \psk, \TP^m v^3, \TP^m \vk^3, \TP^n \qc, \TP^m \p_3 \qc, 
\lam^{-2} \TP^5 v, \lam^{-2} \TP^m \p_3 v, \lam^{-2}\TP^l\p_3^2 v, \lam^{-2} \TP^5 \psi),\quad \text{on}\,\,\Sigma,
\end{align}
where the power of $\lam^{-1}$ does not exceed $4$. Therefore, we construct $v_{\kk, 0}^3$ by solving
{\small\begin{align}\label{equation ccondkk 3}
\begin{cases}
\lap^4 v_{\kk,0}^3 = \lap^4 \uuk^3, \quad&\text{in}\,\,\Omega,\\
v_{\kk, 0}^3 = \uuk^3,\quad&\text{on}\,\,\Sigma, \\
\p_3 v_{\kk,0}^3 = -\p_1 \uuk^1-\p_2 \uuk^2
+ \sigma \lam^2 \PP (|N_0|^{-1}, \TP^k \psi_0, \TP^k \uuk, \TP^k  \uuk^3, \p_3 \overline{\mathbf{u}}_{\kk,0})\\
 +\kk^2 \lam^2 \RR(\TP^l \psi_0, \TP^l \uuk, \TP^l \psk_0, \TP^l \widetilde{\mathbf{u}}_{\kk,0}^3, \TP^l \qc_{\kk,0}, \TP^k \p_3 \qc_{\kk,0} ),\quad&\text{on}\,\,\Sigma,\\
 \p_3^2 v_{\kk, 0}^3 = -\p_1\p_3 \uuk^1-\p_2\p_3 \uuk^2
+ \sigma \lam^2 \p_3\PP (|N_0|^{-1}, \TP^k \psi_0, \TP^k \uuk, \TP^k  \uuk^3, \p_3 \overline{\mathbf{u}}_{\kk,0})\\
 +\kk^2 \lam^2 \p_3\RR(\TP^l \psi_0, \TP^l \uuk, \TP^l \psk_0, \TP^l \widetilde{\mathbf{u}}_{\kk,0}^3, \TP^l \qc_{\kk,0}, \TP^k \p_3 \qc_{\kk,0} ),\quad&\text{on}\,\,\Sigma,\\
\p_3^3 v_{\kk,0}^3= \rho_0 (1+|\TP \psi_0|^2)^{-1}\bigg(\sigma \lam^2 \PP (\rho_0^{-1}, |N_0|^{-1}, \TP^l \psi_0, \TP^l \overline{\mathbf{u}}_{\kk,0}, \TP^l \uuk^3, \p^l \qc_0) \left (\TP^4\psi_0 
+  \TP^4 \uuk+  \TP^3\p_3 \uuk+ \TP^2 \p_3^2 \uuk\right )\\
 + \lam^2 \Q(\rho_0^{-1}, |N_0|^{-1}, \TP^l \psi_0, \TP^{l} \uu, \TP^k \p_3 \uu, \TP^{k'}\p_3^2 \uu,  \TP^{l} \qc_{\kk,0})\\
 +\RR(\TP^m \psi_0, \TP^m \psk_0, \TP^m \uuk^3, \TP^m \widetilde{\mathbf{u}}_{\kk,0}^3, \TP^n \qc_{\kk,0}, \TP^m \p_3 \qc_{\kk,0}, 
\TP^5 \uuk, \TP^m \p_3 \uuk, \TP^l\p_3^2 \uuk,  \TP^5 \psi_0)\bigg) \\
- \rho_0^{-1} (1+|\TP\psi_0|^2) \p_3^2 (\p_1 \uuk^1+\p_2\uuk^2),\quad&\text{on}\,\,\Sigma,\\
\p_3^jv_{\kk,0}^3=\p_3^j\uu^3~~(0\leq j\leq 3)\quad&\text{on}\,\,\Sigma_b.
\end{cases}
\end{align}}
Let $\lam>0$ be fixed. Invoking the poly-harmonic estimate subsequently to \eqref{equation ccondkk 1}, \eqref{equation ccondkk 2}, and \eqref{equation ccondkk 3}, we obtain that $\|v_{\kk, 0}\|_{s_0}$ and $\|\qc_{\kk, 0}\|_{s_0}$ are bounded for some $s_0\geq 8$. 
Thus, the energy $E^{\kk}(t)$ (defined as \eqref{energykk}) is bounded at $t=0$. 
In addition, 
\begin{align*}
\|v_{\kk, 0} -v_0\|_{s_0}, ~\text{and}~ \|\qc_{\kk,0} - \qc\|_{s_0} \rightarrow 0,\quad  \text{as}\,\,\,\kk\to 0. 
\end{align*}

\section{Paraproducts and the Dirichlet-to-Neumann operator}\label{sect app para}
\subsection{Bony's paraproduct decomposition}\label{sect app para defn}
We already introduced the paradifferential operator in Section \ref{sect para pre}. Here we present the relations between paradifferential operators and paraproducts. The cutoff function $\tilde{\chi}(\xi,\eta)$ in the definition of $T_a u$ is 
\[
\tilde{\chi}(\xi,\eta)=\sum_{k=0}^{\infty}\Theta_{k-3}(\xi)\vartheta(\eta),
\]where $\Theta(\xi)=1$ when $|\xi|\leq 1$ and $\Theta(\xi)=0$ when $|\xi|\geq 2$ and 
\[
\Theta_k(\xi):=\Theta(\frac{\xi}{2}),~~k\in\Z,\quad \vartheta_0=\Theta,\quad \vartheta_k:=\Theta_k-\Theta_{k-1},~~k\geq 1.
\] Based on this, we can introduce the Littlewood-Paley projections $\LP_k$ and $\LP_{\leq k}$ as follows
\begin{align*}
\widehat{\LP_k u}(\xi):=\vartheta_k(\xi)\hat{u}(\xi),~~\forall k\geq 0,\quad \LP_k u:=0~~\forall k<0,\quad \LP_{\leq k} u:=\sum_{l\leq k}\LP_l u.
\end{align*} When the symbol $a(x,\xi)$ (in the paradifferential operator $T_a$) does not depend on $\xi$, we can take $\psi(\eta)\equiv 1$ and then we have $$T_a u=\sum_k \LP_{\leq k-3} a(\LP_k u)$$ which is the usual Bony's paraproduct. In general, the well-known Bony's paraproduct decomposition is
\[
au=T_a u+T_u a+R(u,a),\quad R(u,a)=\sum_{|k-l|\leq 2}(\LP_k a) (\LP_l u).
\]

We have the following estimates for the remainder $R(u,a)$
\begin{lem}[{\cite[Section 2.3]{ABZ2014wwSTLWP}}] \label{lem para norm}
For $s\in\R,~r<d/2,~\delta>0$, we have \[
|T_a u|_{H^s}\lesssim \min\{|a|_{L^{\infty}}|u|_{H^s},|a|_{H^r}|u|_{H^{s+\frac{d}{2}-r}},|a|_{H^{\frac{d}{2}}}|u|_{H^{s+\delta}}\}
\]and for any $s>0,s_1,s_2\in\R$ satisfying $s_1+s_2=s+\frac{d}{2}$, we have
\[
|R(u,a)|_{H^s}\lesssim |a|_{H^{s_1}}|u|_{H^{s_2}}.
\]
\end{lem}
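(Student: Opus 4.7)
The plan is to establish both estimates by running a Littlewood--Paley decomposition and exploiting the very different Fourier-support structures of the two pieces in Bony's decomposition. For $T_a u$, writing $T_a u=\sum_k \LP_{\leq k-3} a\cdot \LP_k u$, a direct check of Fourier supports shows that each summand lives in a dyadic annulus at scale $2^k$: the low factor is supported in $\{|\xi|\leq 2^{k-2}\}$ while the high factor lives in $\{|\xi|\sim 2^k\}$. This yields the basic almost-orthogonality
\begin{equation*}
|T_a u|_{H^s}^2\ \approx\ \sum_k 2^{2ks}\,|\LP_{\leq k-3} a\cdot \LP_k u|_{L^2}^2,
\end{equation*}
after which the three bounds follow by progressively trading Sobolev regularity on $a$ for $L^\infty$ control on $\LP_{\leq k-3} a$. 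The first is immediate from $|\LP_{\leq k-3}a|_{L^\infty}\leq |a|_{L^\infty}$. The second combines Bernstein, $|\LP_l a|_{L^\infty}\lesssim 2^{ld/2}|\LP_l a|_{L^2}$, with Cauchy--Schwarz applied to $\sum_{l\leq k-3} 2^{l(d/2-r)}\bigl(2^{lr}|\LP_l a|_{L^2}\bigr)$; the geometric factor is summable \emph{exactly} because $r<d/2$, giving $|\LP_{\leq k-3} a|_{L^\infty}\lesssim 2^{k(d/2-r)}|a|_{H^r}$ and hence the claim. The third, endpoint bound is then a corollary of the second applied with $r=d/2-\delta$, so that $|a|_{H^{d/2-\delta}}\leq |a|_{H^{d/2}}$; the price of pushing $r$ to the critical exponent is precisely the extra $\delta$ on $u$, reflecting the logarithmic failure of the geometric summation at $r=d/2$.

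The main obstacle is the remainder $R(a,u)=\sum_{|k-l|\leq 2}\LP_k a\,\LP_l u$: each summand has Fourier support in a \emph{ball} of radius $\lesssim 2^k$ rather than in an annulus, so almost-orthogonality fails. My plan is to project onto frequency $\sim 2^j$ and note that only summands with $k\geq j-C$ contribute to $\LP_j R(a,u)$. Bounding each block by Bernstein and H\"older as $|\LP_k a\cdot \LP_l u|_{L^2}\lesssim 2^{kd/2}|\LP_k a|_{L^2}|\LP_k u|_{L^2}$, and setting $\alpha_k:=2^{ks_1}|\LP_k a|_{L^2}$, $\beta_k:=2^{ks_2}|\LP_k u|_{L^2}$ (so $\|\alpha\|_{\ell^2}\lesssim |a|_{H^{s_1}}$ and $\|\beta\|_{\ell^2}\lesssim |u|_{H^{s_2}}$), the hypothesis $s_1+s_2=s+d/2$ reduces the claim to
\begin{equation*}
\sum_j \Bigl(\sum_{k\geq j-C} 2^{(j-k)s}\,\alpha_k\beta_k\Bigr)^{\!2}\ \lesssim\ \|\alpha\|_{\ell^2}^2\,\|\beta\|_{\ell^2}^2,
\end{equation*}
a weighted discrete convolution along the diagonal.

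Closing this last inequality is where the hypothesis $s>0$ enters essentially: the kernel $m\mapsto 2^{ms}\mathbf{1}_{m\leq C}$ has finite $\ell^1$-norm \emph{only} when $s>0$, so Young's convolution inequality in $\ell^2$ (applied to $\alpha_k\beta_k$) combined with $\|\alpha\beta\|_{\ell^2}\leq \|\alpha\|_{\ell^2}\|\beta\|_{\ell^\infty}\leq \|\alpha\|_{\ell^2}\|\beta\|_{\ell^2}$ finishes the proof. The conceptual hard point is thus not any single calculation but the recognition that the low-frequency pile-up in $R(a,u)$, invisible for $T_a u$, is precisely what forces the threshold $s>0$; this threshold is sharp and mirrors the classical endpoint failure of the bilinear Sobolev embedding $H^{s_1}\cdot H^{s_2}\hookrightarrow L^2$ at $s_1+s_2=d/2$.
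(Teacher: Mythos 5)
Your proof is correct. Note first that the paper does not actually prove this lemma: it is quoted verbatim from Alazard--Burq--Zuily (their Section 2.3), so there is no in-paper argument to compare against. What you supply is the standard self-contained Littlewood--Paley proof, and each step checks out: the spectral localization of $\LP_{\leq k-3}a\cdot\LP_k u$ in an annulus of radius $\sim 2^k$ justifies the square-function reduction; the three bounds for $T_au$ follow from $|\LP_{\leq k-3}a|_{L^\infty}\lesssim |a|_{L^\infty}$, from Bernstein plus Cauchy--Schwarz with the geometric sum converging precisely because $r<d/2$, and from taking $r=d/2-\delta$; and for $R(a,u)$ the ball (rather than annulus) support correctly forces the sum over $k\geq j-C$, after which Bernstein, the normalization $s_1+s_2=s+d/2$, and Young's inequality in $\ell^1*\ell^2$ (where $s>0$ is exactly the condition making the kernel $2^{ms}\mathbf{1}_{m\leq C}$ summable) close the estimate. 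Two cosmetic points only: the almost-orthogonality step needs only the $\lesssim$ direction, not $\approx$; and $|\LP_{\leq k-3}a|_{L^\infty}\leq |a|_{L^\infty}$ should be $\lesssim$ (the low-frequency cut-off is convolution with a kernel of fixed $L^1$-norm), which is harmless since the lemma is stated with implicit constants. Your closing remark that $s>0$ is the sharp threshold for the remainder, mirroring the endpoint failure of the product law at $s_1+s_2=d/2$, is accurate.
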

\subsection{Basic properties of the Dirichlet-to-Neumann operator}
Let the space dimension $d=3$ for simplicity. Given a function $f:\Sigma=\T^2\to\R$, we define the Dirichlet-to-Neumann (DtN) operator (with respect to $\psi$ and region $\Om^\pm$) by 
\[
\dn f:= \mp N\cdot\nabp (\HE_\psi ^\pm f)|_{\Sigma},\quad -\lapp (\HE_\psi^\pm f)=0\text{ in }\Om^\pm,~~\HE_\psi^\pm f|_{\Sigma}=f,~~\p_3(\HE_\psi^\pm f)|_{\Sigma^\pm}=0.
\]Here the Laplacian operator is defined by $\lapp:=\nabp\cdot\nabp = \p_i(\bm{E}^{ij}\p_j)$ with 
\[\bm{E}=\frac{1}{\p_3 \varphi}
\begin{bmatrix}
\p_3 \varphi&0&-\TP_1\varphi\\
0&\p_3\varphi&-\TP_2\varphi\\
-\TP_1\varphi&-\TP_2\varphi&\frac{1+|\cnab\varphi|^2}{\p_3\varphi}
\end{bmatrix} = \frac{1}{\p_3\varphi}\bm{P}\bm{P}^\top,~~
\bm{P}:=\begin{bmatrix}
\p_3 \varphi&0&0\\
0&\p_3\varphi&0\\
-\TP_1\varphi&-\TP_2\varphi&1
\end{bmatrix},
\] and $\varphi(t,x):=x_3+\chi(x_3)\psi(t,x')$ is defined as the extension of $\psi$ into $\Om^\pm$. The choice of $\chi(x_3)$ is slightly different from \cite{ABZ2014wwSTLWP, ABZ2014wwLWP, AMDtN}, but it does not introduce any substantial difference because the expression of $\lapp$ is still written to be $\lapp:=\nabp\cdot\nabp = \p_i(\bm{E}^{ij}\p_j)$ and we have $\lapp \varphi=0$ in $\Om^\pm$. The DtN operators satisfy the following estimates, and we refer to \cite[Appendix A.4]{SWZ2015MHDLWP} for the proof.

\begin{lem}[Sobolev estimates for DtN operators]\label{lem DtNHs}
For $s>2+\frac{d}{2},~-\frac12\leq r\leq s-1$ and $\psi\in H^{s}(\R^d)$, we have $$|\dn f|_r  \leq C(|\psi|_s)|f|_{r+1}.$$
\end{lem}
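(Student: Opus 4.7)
The plan is to prove the Sobolev estimate by (i) translating the bound for $\dn f$ into a variable-coefficient elliptic estimate for the harmonic extension $u:=\HE_\psi^{\pm}f$, and then (ii) applying standard elliptic regularity on the fixed domain $\Omega^{\pm}$ together with a trace theorem. Recall that $u$ solves
\begin{equation*}
\p_i\bigl(\bm{E}^{ij}\p_j u\bigr)=0 \quad\text{in }\Omega^{\pm},\qquad u|_{\Sigma}=f,\qquad \p_3 u|_{\Sigma^{\pm}}=0,
\end{equation*}
and by definition $\dn f=\mp N\cdot\nabp u|_{\Sigma}$, which is a linear combination, with coefficients depending smoothly on $\cnab\psi$, of $\p_j u|_{\Sigma}$ for $j=1,2,3$. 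Since $\bm{E}=\bm{P}\bm{P}^{\top}/\p_3\varphi$, the matrix $\bm{E}$ is uniformly positive definite provided $\p_3\varphi\geq c_0>0$, which is guaranteed by $\|\cnab\psi\|_{L^\infty}$ being small (and which the hypothesis $s>2+d/2$ absorbs via Sobolev embedding). The coefficients $\bm{E}^{ij}$ belong to $H^{s-1}(\Omega^\pm)$ with norms controlled by $|\psi|_s$, through $\varphi=x_3+\chi(x_3)\psi$ and the algebra property in $H^{s-1}$ for $s-1>d/2$.

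First I would establish the basic $L^2$ estimate: testing the equation against $u$ and using the ellipticity of $\bm{E}$ yields $\|\nabla u\|_{L^2(\Omega^\pm)}\leq C(|\psi|_s)|f|_{1/2}$, and together with the trace $u|_\Sigma=f$ this gives $\|u\|_{H^1(\Omega^\pm)}\leq C(|\psi|_s)|f|_{1/2}$. This handles the endpoint $r=-1/2$ after applying the trace theorem to $N\cdot\nabp u|_\Sigma$. Next I would upgrade tangential regularity by applying $\TP^\alpha$ to the equation and using the Nirenberg difference-quotient method (or equivalently, commuting $\langle D_{x'}\rangle^{r+1}$ through the equation) to obtain
\begin{equation*}
\|\TP^\alpha u\|_{H^1(\Omega^\pm)}\leq C(|\psi|_s)\bigl(|f|_{|\alpha|+1/2}+\text{commutator terms}\bigr),
\end{equation*}
where the commutators $[\TP^\alpha,\bm{E}^{ij}]\p_j u$ are controlled by the product estimate in $H^{s-1}$ as long as $|\alpha|+1\leq s-1/2$. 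Normal regularity is then recovered from the equation itself: solving for $\bm{E}^{33}\p_3^2 u$ yields $\p_3^2 u$ in terms of tangential derivatives and lower-order terms, and iteration gives $\|u\|_{H^{r+3/2}(\Omega^\pm)}\leq C(|\psi|_s)|f|_{r+1}$ in the required range $-1/2\leq r\leq s-1$.

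Finally, the trace theorem yields $|\p_j u|_\Sigma|_r\lesssim \|u\|_{H^{r+3/2}(\Omega^\pm)}$, and since $\dn f$ is a linear combination of these boundary traces with coefficients built from $\cnab\psi\in H^{s-1/2}(\Sigma)$, the product estimate on $\Sigma$ (valid because $s-1/2>d/2$ with $d=2$, hence multiplication is bounded on $H^r$ for $|r|\leq s-1/2$) gives the desired bound $|\dn f|_r\leq C(|\psi|_s)|f|_{r+1}$.

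The main obstacle is the upper endpoint $r=s-1$, where one is asking for essentially as many derivatives on $u$ as the coefficients themselves possess. At this regularity level, the commutator $[\TP^\alpha,\bm{E}^{ij}]\p_j u$ cannot be controlled by a naive product estimate and one has to either (a) use paradifferential calculus, writing $\TP^\alpha(\bm{E}^{ij}\p_j u)=\bm{E}^{ij}\TP^\alpha\p_j u+T_{\p_j u}(\TP^\alpha\bm{E}^{ij})+\text{smoother remainder}$ so that the bad term is reabsorbed after tracing the identity back on $\Sigma$, or (b) invoke the by-now standard sharp elliptic estimate for the Dirichlet problem with coefficients of Sobolev regularity, as established in the references cited after the lemma. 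The second route is the cleanest and is exactly the one taken in \cite{SWZ2015MHDLWP}.
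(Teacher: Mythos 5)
Your outline is sound and coincides with the route the paper itself takes: the paper does not prove Lemma \ref{lem DtNHs} but defers to \cite[Appendix A.4]{SWZ2015MHDLWP}, whose argument is exactly the variable-coefficient elliptic regularity for the harmonic extension (with coefficients $\bm{E}^{ij}$ of Sobolev class controlled by $|\psi|_s$) followed by the (co)normal trace, and you correctly flag the only delicate point, the upper endpoint $r=s-1$. The one small inaccuracy is attributing uniform ellipticity to smallness of $\|\cnab\psi\|_{L^\infty}$; in this paper the lower bound $\p_3\varphi\geq c_0$ comes from the choice of cut-off $\chi$ in \eqref{chi}, which controls $\chi'\psi$ via $\|\psi\|_{L^\infty}$, not via $\cnab\psi$.
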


\begin{lem}[Remainder estimates for DtN operators]\label{lem DtNR}
For $s>2+\frac{d}{2}$ and $\psi\in H^{s}(\R^d)$, we have $$\dn f = T_{\Lam}f + R_\Lam^\psi (f)$$ with $\Lam$ defined in Proposition \ref{prop para DtN}. The remainder $ R_\Lam^\psi (f)$ satisfies
$$|R_\Lam^\psi(f)|_{r}\leq C(|\psi|_{s+\frac12})|f|_{r}.$$
\end{lem}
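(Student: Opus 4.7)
\medskip

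\noindent\textbf{Proof proposal.} The strategy is the standard paradifferential factorization of the Laplacian $\lapp$ in the flattened domain, in the spirit of Alazard--M\'etivier \cite{AMDtN} and Alazard--Burq--Zuily \cite{ABZ2014wwSTLWP}. Let $u := \HE_\psi f$ be the harmonic extension. In the fixed strip $\Omega$, $u$ satisfies
\begin{equation*}
\p_i(\bm{E}^{ij}\p_j u) = 0 \quad \text{in } \Omega, \qquad u|_\Sigma = f, \qquad \p_3 u|_{\Sigma_b} = 0.
\end{equation*}
The first step is to prove, by standard variational/elliptic regularity arguments adapted to $H^{s+\frac12}$ coefficients, that $u \in H^{r+1/2}(\Omega)$ with
\begin{equation*}
\|u\|_{H^{r+1/2}(\Omega)} \le C(|\psi|_{s+\frac12}) |f|_r,
\end{equation*}
for $r$ in the admissible range. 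This is the prerequisite that allows us to talk about traces and normal derivatives of $u$ near $\Sigma$.

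Next I would expand the elliptic operator $\lapp$ in the form
\begin{equation*}
\p_3 \bigl((\p_3\varphi)^{-1}|\NN|^2 \p_3 u - \cnab\varphi\cdot\cnab u\bigr) + \cnab\cdot\bigl(\p_3\varphi\,\cnab u - (\cnab\varphi)\p_3 u\bigr) = 0,
\end{equation*}
and paralinearize its coefficients (which depend on $\cnab\varphi$, hence on $\cnab\psi$) using Bony's decomposition, turning the equation into
\begin{equation*}
(\p_3^2 + T_{\alpha}\,\p_3 + T_{\beta}) u = F,
\end{equation*}
where $\alpha\in \Sigma^1,\,\beta\in\Sigma^2$ are explicit symbols in $\psi$ and $F$ has the requested regularity. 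The key algebraic step is then the paradifferential factorization
\begin{equation*}
\p_3^2 + T_\alpha\,\p_3 + T_\beta \;\sim\; (\p_3 - T_A)(\p_3 - T_a),
\end{equation*}
with $A,a \in \Sigma^1$ determined by solving $a\# A = \beta$, $a+A = -\alpha$. A direct calculation identifies the principal symbol $a^{(1)} = -\Lam^{(1)}$ (up to sign convention) with $\Lam^{(1)}$ exactly as in Lemma~\ref{prop para DtN}; the subprincipal part $a^{(0)}=-\Lam^{(0)}$ is recovered from the composition rule in Proposition~\ref{prop para composition}.

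With the factorization in hand, I would set $w := (\p_3 - T_a)u$ and propagate it from $\Sigma$ to $\Sigma_b$ via the forward parabolic-type operator $\p_3 - T_A$. Because $\Re A^{(1)} = -\Lam^{(1)} < 0$ (the "good" sign when integrating from $\Sigma$ downward), one obtains elliptic estimates of Schauder type: $w$ is bounded in the appropriate $H^r$ class by the $H^r$ norm of $f$ together with the source term from the factorization remainder (which lies in $\Sigma^{-1}$-smoothing by the composition estimate). Evaluating at $z=0$ then gives
\begin{equation*}
\p_3 u|_{\Sigma} = T_a f + \rho(f), \qquad |\rho(f)|_r \le C(|\psi|_{s+\frac12})|f|_r,
\end{equation*}
and contracting with $N = (-\cnab\psi,1)^\top$ (using the explicit form of $\Lam^{(1)}$ so that $N\cdot\nabp|_\Sigma$ paralinearizes to $T_{\Lam^{(1)}}$ modulo lower order) delivers the claimed identity $\dn f = T_\Lam f + R_\Lam^\psi(f)$ with the stated remainder bound.

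The main obstacle, and the reason the proof is non-trivial even with the above roadmap, is controlling the subprincipal symbol $\Lam^{(0)}$ and all remainders with only $\psi\in H^{s+\frac12}$, $s>2+d/2$. Concretely: the Bony paralinearization of the coefficients produces a remainder which must be shown to lie in $H^{r}$ rather than merely $H^{r-\delta}$, and this uses sharp paraproduct estimates (Lemma \ref{lem para norm}) together with the fact that $\cnab\psi\in H^{s-\frac12}\hookrightarrow W^{1,\infty}$. A second technical point is handling the bottom boundary $\Sigma_b$: one must verify that the backward propagation from $\Sigma_b$ (for the factor $\p_3 - T_A$) does not spoil the symbol identification at $\Sigma$, which is immediate because the Neumann datum $\p_3 u|_{\Sigma_b}=0$ is smoothing and transported by an exponentially decaying semigroup across the finite strip. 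Once these two technicalities are dispatched, the remainder estimate is a direct consequence of the symbolic calculus of Propositions \ref{prop para composition}--\ref{prop para adjoint}.
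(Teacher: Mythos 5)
The paper does not actually prove this lemma; it is quoted from Alazard--M\'etivier \cite[Sect.~4.4]{AMDtN} and Sun--Wang--Zhang \cite[Appendix A.4]{SWZ2015MHDLWP}, whose argument is precisely the elliptic paralinearization and paradifferential factorization you outline, so your approach matches the source proof. One internal inconsistency to fix before this could be written out: with $\Re A^{(1)}<0$ the factor $\p_3-T_A$ is dissipative in the direction of \emph{increasing} $x_3$, so $w=(\p_3-T_a)u$ must be propagated from $\Sigma_b$ upward to $\Sigma$ (as you correctly describe later when discussing the bottom condition), not ``from $\Sigma$ to $\Sigma_b$'' as stated in your main step.
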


\begin{lem}[Sobolev estimates for the inverse of the DtN operator]\label{lem DtN-1}
For $s>2+\frac{d}{2},~-\frac12\leq r\leq s-1$ and $\psi\in H^{s}(\R^d)$, we have $$|(\dn)^{-1} f|_{r+1}  \leq C(|\psi|_s)|f|_{r}.$$
\end{lem}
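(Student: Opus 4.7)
The plan is to derive the bound $|(\dn)^{-1} f|_{r+1} \leq C(|\psi|_s) |f|_r$ as the reverse of the mapping property in Lemma~\ref{lem DtNHs}, exploiting the fact that the principal symbol of $\dn$, namely $\Lam^{(1)} = \sqrt{(1+|\cnab\psi|^2)|\xi|^2 - (\cnab\psi\cdot\xi)^2}$, is elliptic of order one (it satisfies $\Lam^{(1)} \geq C(|\cnab\psi|_{L^\infty})^{-1}|\xi|$ for $|\xi|\geq \frac12$). Setting $u=(\dn)^{-1}f$ and invoking the paralinearization in Lemma~\ref{lem DtNR}, the equation becomes
\[
T_\Lam u = f - R_\Lam^\psi(u), \qquad \Lam=\Lam^{(1)}+\Lam^{(0)}\in \Sigma^1.
\]

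The main step is to construct a paradifferential parametrix for $T_\Lam$. I would introduce $\mu=\mu^{(-1)}+\mu^{(-2)}\in\Sigma^{-1}$ by
\[
\mu^{(-1)}=\frac{1}{\Lam^{(1)}}, \qquad \mu^{(-2)}=-\frac{1}{\Lam^{(1)}}\Big(\mu^{(-1)}\Lam^{(0)}+\tfrac{1}{i}\p_\xi\mu^{(-1)}\cdot\p_{x'}\Lam^{(1)}\Big),
\]
so that $\mu\#\Lam=1$ modulo $\Sigma^{-2}$. By Proposition~\ref{prop para composition} (in its refined form for $\Sigma^m$ classes, Prop.~4.3 of the paper), $T_\mu T_\Lam\sim \mathrm{Id}$, i.e.\ $T_\mu T_\Lam - \mathrm{Id}$ is of order $-3/2$ with operator norm bounded by $C(|\psi|_{s+1/2})$. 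Applying $T_\mu$ to both sides,
\[
u = T_\mu f - T_\mu R_\Lam^\psi(u) + (T_\mu T_\Lam - \mathrm{Id}) u.
\]
Using Proposition~\ref{prop para Hs} (mapping $H^r\to H^{r+1}$ for symbols of order $-1$), Lemma~\ref{lem DtNR} for $R_\Lam^\psi$, and the smoothing estimate for $T_\mu T_\Lam - \mathrm{Id}$, this yields
\[
|u|_{r+1} \leq C(|\psi|_{s+1/2})\bigl(|f|_r + |u|_r\bigr).
\]

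The final step is to absorb the lower-order term $|u|_r$ on the right-hand side. The idea is to combine an interpolation inequality $|u|_r \leq \eps |u|_{r+1} + C_\eps |u|_{r_0}$ for some $r_0$ close to $-\tfrac12$ with a base $L^2$-type coercivity estimate $|u|_{r_0}\lesssim_{|\psi|_s}|f|_{r_0-1}\lesssim |f|_r$, the latter coming from the self-adjointness and positivity of $\dn$: $\langle\dn u,u\rangle\geq c|u|_{\dot H^{1/2}}^2$ modulo lower-order perturbations controlled by $|\cnab\psi|_{L^\infty}$. Choosing $\eps$ small and iterating/interpolating concludes $|u|_{r+1}\leq C(|\psi|_s)|f|_r$ for $r$ in the stated range.

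The main obstacle I anticipate is this base coercivity step on the full space $\R^d$: the DtN operator has no positive lower bound at the zero-frequency mode (its kernel contains constants in the bounded-domain case, and zero is not isolated in the spectrum on $\R^d$), so $(\dn)^{-1}$ is not bounded on $L^2$ without care. The cleanest remedy is to work on the subspace where the argument has sufficient decay/vanishing mean (which is automatic in all places where this lemma is invoked in the body of the paper, since $f$ arises as $\dn(\text{Schwartz-type quantity})$), or equivalently to state the estimate for homogeneous Sobolev norms $\dot H^{r+1}$. Once this base estimate is secured, the parametrix argument above upgrades it to all $r\in[-\tfrac12,s-1]$ by a standard bootstrap.
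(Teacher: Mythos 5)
The paper does not actually prove this lemma: it is quoted from \cite[Appendix A.4]{SWZ2015MHDLWP} (see the sentence immediately preceding Lemma \ref{lem DtNHs}), so there is no internal argument to compare against. Judged on its own, your parametrix strategy is the standard one and is essentially sound: ellipticity of $\Lam^{(1)}$, a left parametrix $T_\mu$ with $\mu^{(-1)}=1/\Lam^{(1)}$, the order-zero remainder $R_\Lam^\psi$ from Lemma \ref{lem DtNR}, and an absorption of $|u|_r$ by interpolation against a base estimate. Two points deserve sharpening. First, the constant you produce is $C(|\psi|_{s+1/2})$, since the equivalence $T_\mu T_\Lam\sim\mathrm{Id}$ in the $\Sigma^m$ calculus is quantified by $|\psi|_{s+1/2}$, and $\Lam^{(0)},\mu^{(-2)}$ involve $\cnab^2\psi$; to land on $C(|\psi|_s)$ with only $s>2+\tfrac d2$ you should build the parametrix for $T_{\Lam^{(1)}}$ alone (whose symbol seminorms need only $\cnab\psi\in W^{1,\infty}$) and dump $T_{\Lam^{(0)}}u$ into the remainder, where it is harmless because it is of order $0$ and hence contributes $|u|_r$.

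Second, the low-frequency obstacle you flag is real and is in fact worse than a mere zero mode: for the finite-depth strip the flat-case symbol of $\dn$ is $|\xi|\tanh(b|\xi|)\sim b|\xi|^2$ as $\xi\to0$, so the coercivity $\langle\dn u,u\rangle\gtrsim|u|_{\dot H^{1/2}}^2$ you invoke for the base step genuinely fails at low frequencies on $\R^2$ (the Dirichlet energy of the harmonic extension only controls $\int|\xi|\tanh(b|\xi|)|\hat u|^2$). The resolution is the one the appendix itself silently adopts: there the surface is taken to be $\Sigma=\T^2$, where the only degenerate mode is the constant, $\dn$ annihilates constants, and on mean-zero functions one has $|\xi|\tanh(b|\xi|)\geq\tanh(b)|\xi|$, so your coercivity and hence the base case $|u|_{1/2}\lesssim|f|_{-1/2}$ hold verbatim. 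With that understanding (the lemma is an estimate on the range of $\dn$, equivalently modulo constants), your bootstrap closes and the argument is correct; without it, the statement as literally written for $\R^d$ is not provable, which is an inconsistency of the paper's statement rather than of your proof.
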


\begin{lem}[Commutator estimate for the DtN operator and its square root]\label{lem DtN1/2}
For $s>2+\frac{d+1}{2}$ and $\psi\in H^{s}(\R^d)$, we have $$|[\dn,a]f|_{r-1}  \leq C(|\psi|_s)|a|_{r+1}|f|_{r}\quad\forall 0<r\leq s-\frac{1}{2},$$ and $$|[(\dn)^{\frac12},a]f|_{r-\frac12}  \leq C(|\psi|_s)|a|_{r+1}|f|_{r}\quad\forall -\frac12<r\leq s-1.$$
\end{lem}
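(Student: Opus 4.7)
The plan is to paralinearize the Dirichlet-to-Neumann operator and reduce each commutator estimate to a symbolic-calculus computation on paradifferential operators, controlling the lower-order discrepancy via the remainder bound in Lemma \ref{lem DtNR} and Bony's paraproduct decomposition.

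For the first estimate, I use Lemma \ref{lem DtNR} to write $\dn f = T_\Lambda f + R_\Lambda^\psi(f)$ with $\Lambda\in\Sigma^1$ and $R_\Lambda^\psi$ effectively of order $0$ on the admissible Sobolev range. The remainder piece $[R_\Lambda^\psi,a]f$ is handled directly by the boundedness of $R_\Lambda^\psi$ on $H^{r}$ and the embedding $H^{r+1}\hookrightarrow L^\infty$, which is guaranteed by $s>2+\frac{d+1}{2}$. For the paradifferential commutator $[T_\Lambda,a]f$, I apply Bony's decomposition $af=T_af+T_fa+R(a,f)$ both to $T_\Lambda(af)$ and to $aT_\Lambda f$. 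After rearranging, the result splits into three blocks: a principal commutator $[T_\Lambda,T_a]f$; the paraproduct difference $T_\Lambda(T_fa)-T_{T_\Lambda f}a$; and a remainder pair $T_\Lambda R(a,f)-R(a,T_\Lambda f)$. By Proposition \ref{prop para composition}, $[T_\Lambda,T_a]$ equals a paradifferential operator with symbol $\tfrac{1}{i}\partial_\xi\Lambda\cdot\partial_{x'}a$ (of order $0$) modulo an error of order $-1$; its action on $f\in H^r$ lands in $H^r$ with constant $\lesssim C(|\psi|_s)|a|_{r+1}$. The remaining blocks are controlled using Lemma \ref{lem para norm}: $T_fa$ lies in $H^{r+1}$ with norm $\lesssim|f|_{L^\infty}|a|_{r+1}\lesssim|f|_r|a|_{r+1}$ (using the Sobolev embedding at the level of $f$), and $R(a,f)\in H^{r+1}$ by the product rule because $a\in H^{r+1}$ with $r+1>d/2$; applying $T_\Lambda$ (order $1$) then lands in $H^r\subset H^{r-1}$ with the claimed bound, and the symmetric pieces $T_{T_\Lambda f}a$, $R(a,T_\Lambda f)$ are handled identically using $T_\Lambda f\in H^{r-1}$.

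For the square-root commutator, the analogous strategy requires a paralinearization of $\dn^{1/2}$. The symbol $\fm^{(1/2)}:=\sqrt{\Lambda^{(1)}}$ lies in $\Sigma^{1/2}$, and by the symmetrization machinery behind Proposition \ref{prop para symm} one has $T_{\fm^{(1/2)}}T_{\fm^{(1/2)}}\sim T_{\Lambda^{(1)}}$ modulo an order $0$ operator. Combining this with Lemma \ref{lem DtNR} and using that $\dn$ is self-adjoint and nonnegative, one obtains $\dn^{1/2}=T_{\fm^{(1/2)}}+R^{\psi,1/2}$ with a remainder of order $0$ on the admissible Sobolev range. Then I repeat Step 2 with $\Lambda$ replaced by $\fm^{(1/2)}$: by Proposition \ref{prop para composition}, $[T_{\fm^{(1/2)}},T_a]$ is paradifferential of order $-1/2$ with principal symbol $\tfrac{1}{i}\partial_\xi\fm^{(1/2)}\cdot\partial_{x'}a$, which sends $H^r$ into $H^{r+1/2}\subset H^{r-1/2}$ with constant $\lesssim C(|\psi|_s)|a|_{r+1}$; the three auxiliary blocks are bounded exactly as above.

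The main technical obstacle is the rigorous construction of the paralinearization $\dn^{1/2}=T_{\sqrt{\Lambda^{(1)}}}+R^{\psi,1/2}$ with a remainder that is bounded on $H^{r}$ for $-\tfrac12<r\leq s-1$ with constant $C(|\psi|_s)$. This can be achieved either via the spectral representation $\dn^{1/2}=\pi^{-1}\int_0^\infty\dn(t^2+\dn)^{-1}t^{-1/2}\,dt$ combined with elliptic estimates for $(t^2+\dn)^{-1}$ uniform in $t$, or by iterating the symmetrizer factorization $T_{\fm^{(1/2)}}T_{\fm^{(1/2)}}\sim T_{\Lambda}$ and invoking uniqueness of positive operator square roots together with a functional-calculus identification of the remainder class. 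Once this paralinearization is in hand, every remaining manipulation reduces to the symbolic-calculus estimates already set up in Section \ref{sect para pre}.
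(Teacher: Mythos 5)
The paper does not actually prove Lemma \ref{lem DtN1/2}: it is quoted, together with the other DtN lemmas of this appendix, from \cite[Appendix A.4]{SWZ2015MHDLWP}, so your argument is being measured against that standard proof rather than an in-paper one.

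For the first estimate your Bony-decomposition strategy is the standard one and is essentially workable, but the quantitative claim in your key step does not follow from the propositions you cite. You assert that $[T_\Lam,T_a]$ is paradifferential of order $0$ modulo order $-1$ ``with constant $\lesssim C(|\psi|_s)|a|_{r+1}$''. Proposition \ref{prop para composition} with regularity index $1$ (resp.\ $2$) costs $M_1^0(a)\lesssim|a|_{W^{1,\infty}}$ (resp.\ $|a|_{W^{2,\infty}}$), and in $d=2$ these are \emph{not} controlled by $|a|_{H^{r+1}}$ unless $r>1$ (resp.\ $r>2$); the lemma is invoked in Section \ref{sect para remainder} precisely at $r=\frac12$ and $r=1$. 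The repair is that no commutator gain is needed for this block, because the target norm $H^{r-1}$ sits a full derivative below $H^r$: the crude bounds $T_a:H^\sigma\to H^\sigma$ with constant $|a|_{L^\infty}\lesssim|a|_{r+1}$ (valid for $r>0$) and $T_\Lam:H^\sigma\to H^{\sigma-1}$ already handle $T_\Lam T_af$ and $T_aT_\Lam f$ separately. One must also respect the admissible range $\frac12\le\rho\le s-\frac12$ when applying Lemma \ref{lem DtNR}, and the constraint $s_1+s_2-\frac{d}{2}>0$ in the remainder bound of Lemma \ref{lem para norm}: your block $R(a,\dn f)$ with $a\in H^{r+1}$ and $\dn f\in H^{r-1}$ lands in $H^{2r-1}$, which requires $r>\frac12$, so the low end $0<r\le\frac12$ (and, for the square-root estimate, the range $-\frac12<r\le 0$ where $a\notin L^\infty$) needs a different pairing of indices. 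These are endpoint repairs rather than conceptual errors.

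The square-root estimate, however, contains a genuine gap. Everything there rests on the paralinearization $\dn^{\frac12}=T_{\sqrt{\Lam^{(1)}}}+(\text{order }0)$ with constants $C(|\psi|_s)$, which you correctly identify as the main obstacle but do not establish; neither of your two routes is a proof. Uniqueness of positive square roots only identifies $\dn^{\frac12}$ abstractly: knowing $(T_{\sqrt{\Lam^{(1)}}})^2\sim T_{\Lam^{(1)}}\sim\dn$ modulo order $0$ does not yield that $T_{\sqrt{\Lam^{(1)}}}-\dn^{\frac12}$ has order $0$ unless one inverts $T_{\sqrt{\Lam^{(1)}}}+\dn^{\frac12}$ and controls the ensuing commutators, i.e.\ carries out a full parametrix construction. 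The spectral formula you quote is moreover wrong: on the spectrum, $\pi^{-1}\int_0^\infty\mu(t^2+\mu)^{-1}t^{-1/2}\,dt=c\,\mu^{1/4}$, so your integral represents a multiple of $\dn^{1/4}$; the correct representations are $\pi^{-1}\int_0^\infty\lambda^{-1/2}\dn(\lambda+\dn)^{-1}\,d\lambda$ or $\frac{2}{\pi}\int_0^\infty\dn(t^2+\dn)^{-1}\,dt$. With the corrected formula one can in fact bypass the paralinearization of $\dn^{\frac12}$ altogether, writing $[\dn^{\frac12},a]=\pi^{-1}\int_0^\infty\lambda^{\frac12}(\lambda+\dn)^{-1}[\dn,a](\lambda+\dn)^{-1}\,d\lambda$ and deducing the half-derivative gain from the first estimate together with resolvent bounds of the form $\|(\lambda+\dn)^{-1}\|_{H^\sigma\to H^{\sigma+\theta}}\lesssim(1+\lambda)^{\theta-1}$; but those bounds, and the low-frequency behaviour of $\dn$ (which has no spectral gap on $\R^2$), must then be proved. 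As written, the second half of the lemma is asserted rather than proved; either cite \cite{SWZ2015MHDLWP} as the paper does, or carry out one of these constructions in full.
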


\end{appendix}


\begin{thebibliography}{99}
\addcontentsline{toc}{section}{References}
\setlength{\itemsep}{0.5ex}
\begin{spacing}{0.9}
\bibitem{Alazard2005limit}
T. Alazard.
\newblock Incompressible limit of the nonisentropic Euler equations with the
  solid wall boundary conditions.
\newblock {\em Adv. Differ. Equ.}, 10(1):19--44, 2005.

\bibitem{ABZ2014wwSTLWP}
T. Alazard, N. Burq, C. Zuily.
\newblock On the water-wave equations with surface tension.
\newblock {\em Duke Math. J.}, 158(3):413--499, 2011.

\bibitem{ABZ2014wwLWP}
T. Alazard, N. Burq, C. Zuily.
\newblock On the Cauchy problem for gravity water waves.
\newblock {\em Invent. Math.}, 198(1):71--163, 2014.

\bibitem{Alazard2013GWP}
T. Alazard, J.-M. Delort.
\newblock Global solutions and asymptotic behavior for two-dimensional gravity
  water waves.
\newblock {\em Ann. Sci. {\'E}c. Norm. Sup{\'e}r.(4)}, 48(5):1149--1238, 2015.

\bibitem{AMDtN}
T. Alazard, G. M\'etivier,
Paralinearization of the Dirichlet to Neumann operator, and regularity of three-dimensional water waves.
\newblock {\em Commun. Partial Differ. Equ.}, 34(12), 1632-1704, 2009.

\bibitem{AM2005ww}
D. Ambrose, N. Masmoudi.
\newblock The zero surface tension limit of two-dimensional water waves.
\newblock  {\em Commun. Pure Appl. Math.}, 58(10), 1287-1315, 2005.

\bibitem{Alinhac1989good}
S. Alinhac.
\newblock Existence d'ondes de rarefaction pour des systèmes quasi-lineaires hyperboliques multidimensionnels.
\newblock {\em Commun. Partial differ. Equ.}, 14(2):173--230, 1989.

\bibitem{Asano1987limit}
K. Asano.
\newblock On the incompressible limit of the compressible Euler equation.
\newblock {\em Japan J. Appl. Math.}, 4(3):455--488, 1987.

\bibitem{BMSW2017GWP}
L. Bieri, S. Miao, S. Shahshahani, S. Wu.
\newblock On the motion of a self-gravitating incompressible fluid with free
  boundary.
\newblock {\em Commun. Math. Phys.}, 355(1):161--243, 2017.

\bibitem{Shkollerdivcurl}
C.-H. A. Cheng, S. Shkoller.
\newblock Solvability and regularity for an elliptic system prescribing the
  curl, divergence, and a partial trace of a vector field on Sobolev-class
  domains.
\newblock {\em J. Math. Fluid Mech.}, 19(3):375--422, 2017.

\bibitem{CL2000priori}
D. Christodoulou, H. Lindblad.
\newblock On the motion of the free surface of a liquid.
\newblock {\em Commun. Pure. Appl. Math.}, 53(12):1536--1602, 2000.

\bibitem{CS2013LWP}
D. Coutand, J. Hole, S. Shkoller.
\newblock Well-posedness of the free-boundary compressible 3-D {E}uler
  equations with surface tension and the zero surface tension limit.
\newblock {\em SIAM J. Math. Anal.}, 45(6):3690--3767, 2013.

\bibitem{CLS2010priorigas}
D. Coutand, H. Lindblad, S. Shkoller.
\newblock A priori estimates for the free-boundary 3D compressible {E}uler
  equations in a physical vacuum.
\newblock {\em Commun. Math. Phys.}, 296(2):559--587, 2010.

\bibitem{CS2007LWP}
D. Coutand, S. Shkoller.
\newblock Well-posedness of the free-surface incompressible {E}uler equations
  with or without surface tension.
\newblock {\em J. Amer. Math. Soc.}, 20(3):829--930, 2007.

\bibitem{CS2012LWPgas}
D. Coutand, S. Shkoller.
\newblock Well-posedness in smooth function spaces for the moving boundary
  three-dimensional compressible {E}uler equations in the physical vacuum.
\newblock {\em Arch. Rational Mech. Anal.}, 206(2):515--616, 2012.

\bibitem{Deng2017wwSTGWP}
Y. Deng, A. D. Ionescu, B. Pausader, F. Pusateri.
\newblock Global solutions of the gravity-capillary water-wave system in three
  dimensions.
\newblock {\em Acta Math.}, 219(2):213--402, 2017.

\bibitem{Disconzi2017limit}
M. M. Disconzi, D. G. Ebin.
\newblock Motion of slightly compressible fluids in a bounded domain. II.
\newblock {\em Commun. Contemp. Math.}, 19(04):1650054, 2017.

\bibitem{DL19limit}
M. M. Disconzi, C. Luo.
\newblock On the incompressible limit for the compressible free-boundary
  {E}uler equations with surface tension in the case of a liquid.
\newblock {\em Arch. Rational Mech. Anal.}, 237(2), 829-897, 2020.

\bibitem{Ebin1982limit}
D. G. Ebin.
\newblock Motion of slightly compressible fluids in a bounded domain. I.
\newblock {\em Commun. Pure. Appl. Math.}, 35(4):451--485, 1982.

\bibitem{GMS2012GWP}
P. Germain, N. Masmoudi, J. Shatah.
\newblock Global solutions for the gravity water waves equation in dimension 3.
\newblock {\em Ann. Math.}, 175(2), 691--754, 2012.

\bibitem{GMS2015GWP}
P. Germain, N. Masmoudi, J. Shatah.
\newblock Global existence for capillary water waves.
\newblock {\em Commun. Pure. Appl. Math.}, 68(4):625--687, 2015.

\bibitem{GLL2020LWP}
D. Ginsberg, H. Lindblad, C. Luo.
\newblock Local well-posedness for the motion of a compressible,
  self-gravitating liquid with free surface boundary.
\newblock {\em Arch. Rational Mech. Anal.}, 236(2):603--733, 2020.

\bibitem{HIT2017ww}
B. Harrop-Griffiths, M.Ifrim, D.Tataru.
\newblock Finite depth gravity water waves in holomorphic coordinates.
\newblock {\em Ann. PDE}, 3(1):4, 2017.

\bibitem{HIT2016ww}
J. K. Hunter, M. Ifrim, D. Tataru.
\newblock Two-dimensional water waves in holomorphic coordinates.
\newblock {\em Commun. Math. Phys.}, 346(2):483--552, 2016.

\bibitem{IT2016ww2}
M. Ifrim, D. Tataru.
\newblock Two-dimensional water waves in holomorphic coordinates II: global
  solutions.
\newblock {\em Bulletin de la Soci\'et\'e math\'ematique de France},
  144(2):366--394, 2016.

\bibitem{IT2016ww3}
M. Ifrim, D. Tataru.
\newblock The lifespan of small data solutions in two-dimensional capillary
  water waves.
\newblock {\em Arch. Rational Mech. Anal.}, 225(3):1279--1346, 2017.

\bibitem{IT2016ww1}
M. Ifrim, D. Tataru.
\newblock Two-dimensional gravity water waves with constant vorticity: I. cubic
  lifespan.
\newblock {\em Anal. \& PDE}, 12(4):903--967, 2018.

\bibitem{IT2020LWPgas}
M. Ifrim, D.Tataru.
\newblock The compressible Euler equations in a physical vacuum: a
  comprehensive Eulerian approach.
\newblock {\em Ann. Inst. H. Poincar\'e (C) Anal. Non Lin\'eaire}, 41 (2024), 405–495, 2024.

\bibitem{Iguchi1997limit}
T. Iguchi.
\newblock The incompressible limit and the initial layer of the compressible
  Euler equation in $\mathbb{R}^{n}_+$.
\newblock {\em Math. Methods Appl. Sci.}, 20(11):945--958, 1997.

\bibitem{Iguchi2001LWP}
T. Iguchi.
\newblock Well-posedness of the initial value problem for capillary-gravity
  waves.
\newblock {\em Funkcial. Ekvac.}, 44(2):219--242, 2001.

\bibitem{Ionescu2015wwGWP}
A. Ionescu, F. Pusateri.
\newblock Global solutions for the gravity water waves system in 2D.
\newblock {\em Invent. Math.}, 199(3):653--804, 2015.

\bibitem{Isozaki1987limit}
H. Isozaki.
\newblock{Singular limits for the compressible Euler equations in an exterior domain}
\newblock {\em J. Reine Angew. Math.}, 381:1-36, 1987.

\bibitem{Jang2009LWPgas}
J. Jang, N.Masmoudi.
\newblock Well-posedness for compressible {E}uler equations with physical
  vacuum singularity.
\newblock {\em Commun. Pure. Appl. Math.}, 62(10):1327--1385, 2009.

\bibitem{Jang2015LWPgas}
J. Jang, N. Masmoudi.
\newblock Well-posedness of compressible {E}uler equations in a physical
  vacuum.
\newblock {\em Commun. Pure. Appl. Math.}, 68(1):61--111, 2015.

\bibitem{Klainerman1981limit}
S. Klainerman, A. Majda.
\newblock Singular limits of quasilinear hyperbolic systems with large
  parameters and the incompressible limit of compressible fluids.
\newblock {\em Commun. Pure. Appl. Math.}, 34(4):481--524, 1981.

\bibitem{Klainerman1982limit}
S. Klainerman, A. Majda.
\newblock Compressible and incompressible fluids.
\newblock {\em Commun. Pure. Appl. Math.}, 35(5):629--651, 1982.

\bibitem{KTV2017LWP}
I. Kukavica, A. Tuffaha, V. Vicol.
\newblock On the local existence and uniqueness for the 3D {E}uler equation
  with a free interface.
\newblock {\em Appl. Math. \& Optim.}, 76(3):535--563, 2017.

\bibitem{JTW}
J. Jang, I. Tice, Y. Wang.
\newblock The Compressible Viscous Surface-Internal Wave Problem: Stability and Vanishing Surface Tension Limit.
\newblock {\em Commun. Math. Phys.}, 343(3): 1039-1113, 2016.

\bibitem{Lannes2005LWP}
D. Lannes.
\newblock Well-posedness of the water-waves equations.
\newblock {\em J. Amer. Math. Soc.}, 18(3):605--654, 2005.

\bibitem{lax1960local}
P. D. Lax, R. S. Phillips.
\newblock Local boundary conditions for dissipative symmetric linear
  differential operators.
\newblock {\em Commun. Pure. Appl. Math.}, 13(3):427--455, 1960.

\bibitem{Lindblad2002LWP}
H. Lindblad.
\newblock Well-posedness for the linearized motion of an incompressible liquid
  with free surface boundary.
\newblock {\em Commun. Pure. Appl. Math.}, 56(02):153--197, 2002.

\bibitem{Lindblad2003LWP}
H. Lindblad.
\newblock Well-posedness for the linearized motion of a compressible liquid
  with free surface boundary.
\newblock {\em Commun. Math. Phys.}, 236(2):281--310, 2003.

\bibitem{Lindblad2005LWP}
H. Lindblad.
\newblock Well-posedness for the motion of a compressible liquid with free surface boundary.
\newblock {\em Commun. Math. Phys.}, 260(2):319--392, 2005.

\bibitem{Lindblad2004LWP}
H. Lindblad.
\newblock Well-posedness for the motion of an incompressible liquid with free
  surface boundary.
\newblock {\em Ann. Math.}, 162(1), 109--194, 2005.

\bibitem{LL2018priori}
H. Lindblad, C. Luo.
\newblock A priori estimates for the compressible {E}uler equations for a
  liquid with free surface boundary and the incompressible limit.
\newblock {\em Commun. Pure. Appl. Math.}, 2018.

\bibitem{Lindblad2009priori}
H. Lindblad, K.H. Nordgren.
\newblock A priori estimates for the motion of a self-gravitating
  incompressible liquid with free surface boundary.
\newblock {\em J. Hyperbolic Differ. Equ.}, 6(02):407--432,
  2009.

\bibitem{Luo2018CWW}
C. Luo.
\newblock On the motion of a compressible gravity water wave with vorticity.
\newblock {\em Ann. PDE}, 4(2):1--71, 2018.

\bibitem{LuoZhang2020CWWLWP}
C. Luo, J. Zhang.
\newblock Local well-posedness for the motion of a compressible gravity water
  wave with vorticity.
\newblock {\em J. Differ. Equ.}, 332:333--403, 2022.

\bibitem{LXZ2014LWPgas}
T. Luo, Z. Xin, H. Zeng.
\newblock Well-posedness for the motion of physical vacuum of the
  three-dimensional compressible {E}uler equations with or without
  self-gravitation.
\newblock {\em Arch. Rational Mech. Anal.}, 213(3):763--831, 2014.

\bibitem{MR2012good}
N. Masmoud, F. Rouss\'et.
\newblock Uniform regularity and vanishing viscosity limit for the free surface
  {N}avier-{S}stokes equations.
\newblock {\em Arch. Rational Mech. Anal}, 223(1):301--417, 2017.

\bibitem{Metivier2004}
G. M\'etivier.
\newblock {\em Small viscosity and boundary layer methods: Theory, stability analysis, and applications.}
\newblock Springer Science \& Business Media, 2004.

\bibitem{MetivierPara}
G. M\'etivier
\newblock {\em Para-differential calculus and applications to the Cauchy problem for nonlinear systems.}
\newblock Edizioni della Normale, Pisa, (5) (2008).

\bibitem{Metivier2001limit}
G. M\'etivier, S. Schochet.
\newblock The incompressible limit of the non-isentropic {E}uler equations.
\newblock {\em Arch. Rational Mech. Anal.}, 158(1):61--90, 2001.

\bibitem{MZ2009}
M. Ming, Z. Zhang.
\newblock Well-posedness of the water-wave problem with surface tension.
\newblock {\em J. Math. Pures Appl.}, 92(5), 429-455, 2009.

\bibitem{Nalimov1974LWP}
V. Nalimov.
\newblock {The Cauchy-Poisson problem}.
\newblock {\em Dinamika Splo{\v{s}}n. Sredy,(Vyp. 18 Dinamika Zidkost. so
  Svobod. Granicami)}, 254:104--210, 1974.

\bibitem{Rauch1985}
J. Rauch.
\newblock Symmetric Positive Systems with Boundary Characteristic of Constant Multiplicity
\newblock {\em Trans. Amer. Math. Soc.}, 291(1), 167-187, 1985.

\bibitem{Schochet1986limit}
S. Schochet.
\newblock The compressible {E}uler equations in a bounded domain: Existence of
  solutions and the incompressible limit.
\newblock {\em Commun. Math. Phys.}, 104(1):49--75, 1986.

\bibitem{SZ2008geometry}
J. Shatah, C. Zeng.
\newblock Geometry and a priori estimates for free boundary problems of the
  {E}uler's equation.
\newblock {\em Commun. Pure. Appl. Math.}, 61(5):698--744, 2008.

\bibitem{SZ2008priori}
J. Shatah, C. Zeng.
\newblock A priori estimates for fluid interface problems.
\newblock {\em Commun. Pure. Appl. Math.}, 61(6):848--876, 2008.

\bibitem{SZ2011LWP}
J. Shatah, C. Zeng.
\newblock Local well-posedness for fluid interface problems.
\newblock {\em Arch. Rational Mech. Anal.}, 199(2):653--705, 2011.

\bibitem{Stevens2016CVS}
B. Stevens
\newblock Short-time structural stability of compressible vortex sheets with surface tension.
\newblock {\em Arch. Rational Mech. Anal.}, 222(2), 603-730, 2016.

\bibitem{Su2020GWP}
Q. Su.
\newblock Long time behavior of 2D water waves with point vortices.
\newblock {\em Commun. Math. Phys.}, 380(3):1173--1266, 2020.

\bibitem{SWZ2015MHDLWP}
Y. Sun, W. Wang, Z. Zhang.
\newblock Nonlinear Stability of the Current-Vortex Sheet to the Incompressible MHD Equations.
\newblock  {\em Commun. Pure Appl. Math.}, 71(2), 356-403, 2018.

\bibitem{Tao}
T. Tao. 
\newblock Nonlinear dispersive equations: local and global analysis
\newblock American Mathematical Soc., 2006.

\bibitem{Trakhinin2009LWP}
Y. Trakhinin.
\newblock Local existence for the free boundary problem for nonrelativistic and
  relativistic compressible {E}uler equations with a vacuum boundary condition.
\newblock {\em Commun. Pure. Appl. Math.}, 62(11):1551--1594, 2009.

\bibitem{TrakhininWangCMHDLWP}
Y. Trakhinin, T. Wang.
\newblock Well-posedness of free boundary problem in non-relativistic and
  relativistic ideal compressible magnetohydrodynamics.
\newblock {\em Arch. Rational Mech. Anal.}, 239(2):1131--1176, 2021.

\bibitem{TrakhininWangCMHDSTLWP}
Y. Trakhinin, T. Wang.
\newblock Well-posedness for the free-boundary ideal compressible
  magnetohydrodynamic equations with surface tension.
\newblock {\em Math. Ann.}, 383(1):761--808, 2022.

\bibitem{Ukai1986limit}
S. Ukai.
\newblock The incompressible limit and the initial layer of the compressible
  Euler equation.
\newblock {\em J. Math. Kyoto Univ.}, 26(2):323--331, 1986.

\bibitem{WZZZ2015LWP}
C. Wang, Z. Zhang, W. Zhao, Y. Zheng.
\newblock Local well-posedness and break-down criterion of the
  incompressible {E}uler equations with free boundary,
\newblock {\em Memoirs Amer. Math. Soc.}, vol. 270, 2021.

\bibitem{WangXC2018GWP2D}
X. Wang.
\newblock Global infinite energy solutions for the 2D gravity water waves
  system.
\newblock {\em Commun. Pure. Appl. Math.}, 71(1):90--162, 2018.

\bibitem{WangXin2015good}
Y. Wang, Z. Xin.
\newblock Vanishing viscosity and surface tension limits of incompressible
  viscous surface waves.
\newblock {\em SIAM J. Math. Anal.}, 53(1):574--648, 2021.

\bibitem{Wu1997LWP}
S. Wu.
\newblock Well-posedness in {S}obolev spaces of the full water wave problem in
  2-{D}.
\newblock {\em Invent. Math.}, 130(1):39--72, 1997.

\bibitem{Wu1999LWP}
S. Wu.
\newblock Well-posedness in {S}obolev spaces of the full water wave problem in
  3-{D}.
\newblock {\em J. Amer. Math. Soc.}, 12(2):445--495, 1999.

\bibitem{Wu2009GWP}
S. Wu.
\newblock Almost global wellposedness of the 2-D full water wave problem.
\newblock {\em Invent. Math.}, 177(1):45, 2009.

\bibitem{Wu2011GWP}
S. Wu.
\newblock Global wellposedness of the 3-D full water wave problem.
\newblock {\em Invent. Math.}, 184(1):125--220, 2011.

\bibitem{Yosihara1982LWP}
H. Yosihara.
\newblock Gravity waves on the free surface of an incompressible perfect fluid
  of finite depth.
\newblock {\em Publ. Res. Inst. Math. Sci.}, 18(1):49--96, 1982.

\bibitem{Zhang2021elastoLWP}
J. Zhang.
\newblock Local well-posedness and incompressible limit of the free-boundary
  problem in compressible elastodynamics.
\newblock {\em Arch. Rational Mech. Anal.}, 244(3):599--697, 2022.

\bibitem{Zhang2023CMHDVS2}
J. Zhang.
\newblock On the incompressible limit of current-vortex sheets with or without surface tension.
\newblock arXiv:2405.00421, preprint, 2024.

\bibitem{ZZ2008LWP}
P. Zhang, Z. Zhang.
\newblock On the free boundary problem of three-dimensional incompressible
  {E}uler equations.
\newblock {\em Commun. Pure. Appl. Math.}, 61(7):877--940, 2008.

\bibitem{ZhengF2019GWP}
F. Zheng.
\newblock Long-term regularity of 3D gravity water waves.
\newblock {\em Commun. Pure Appl. Math.}, 75(5):1074--1180, 2022.
\end{spacing}
\end{thebibliography}
\end{document}